\documentclass{amsart}
\usepackage{verbatim,amsmath,amssymb,amscd,color,graphics}
\usepackage[enableskew]{youngtab}
\usepackage[all]{xy}
\usepackage[dvips]{graphicx}
\usepackage{tikz}
\usetikzlibrary{arrows,matrix}
\tikzset{tab/.style={matrix of math nodes,column sep=-.4, row sep=-.4,text height=9pt,text width=8pt,align=center,inner sep=1.5}}
 
\usepackage[colorlinks=true, pdfstartview=FitV, linkcolor=blue, citecolor=blue, urlcolor=blue]{hyperref}

\usepackage{listings}
\lstdefinelanguage{Sage}[]{Python}
{morekeywords={False,sage,True},sensitive=true}
\lstset{
  frame=single,
  showtabs=False,
  showspaces=False,
  showstringspaces=False,
  commentstyle={\ttfamily\color{dgreencolor}},
  keywordstyle={\ttfamily\color{dbluecolor}\bfseries},
  stringstyle={\ttfamily\color{dgraycolor}\bfseries},
  language=Sage,
  basicstyle={\scriptsize\ttfamily},
  aboveskip=0.75em,
  belowskip=0.75em,
  xleftmargin=.15in,
}
\definecolor{dblackcolor}{rgb}{0.0,0.0,0.0}
\definecolor{dbluecolor}{rgb}{0.01,0.02,0.7}
\definecolor{dgreencolor}{rgb}{0.2,0.4,0.0}
\definecolor{dgraycolor}{rgb}{0.30,0.3,0.30}

\usepackage{xparse}

\makeatletter
\protected\def\specialmergetwolists{%
  \begingroup
  \@ifstar{\def\cnta{1}\@specialmergetwolists}
    {\def\cnta{0}\@specialmergetwolists}%
}
\def\@specialmergetwolists#1#2#3#4{%
  \def\tempa##1##2{%
    \edef##2{%
      \ifnum\cnta=\@ne\else\expandafter\@firstoftwo\fi
      \unexpanded\expandafter{##1}%
    }%
  }%
  \tempa{#2}\tempb\tempa{#3}\tempa
  \def\cnta{0}\def#4{}%
  \foreach \x in \tempb{%
    \xdef\cnta{\the\numexpr\cnta+1}%
    \gdef\cntb{0}%
    \foreach \y in \tempa{%
      \xdef\cntb{\the\numexpr\cntb+1}%
      \ifnum\cntb=\cnta\relax
        \xdef#4{#4\ifx#4\empty\else,\fi\x#1\y}%
        \breakforeach
      \fi
    }%
  }%
  \endgroup
}
\makeatother

\usepackage{xparse}
\DeclareDocumentCommand\rpp{ m m g }{
	\foreach \x [count=\s from 1] in {#1}{
	        {\ifnum\s=1
	                \draw (0,-\s)--(\x,-\s);
	                \fi}
	   \draw (0,-\s-1) to (\x,-\s-1);
	   \foreach \y in {0, ..., \x} {\draw (\y,-\s)--(\y,-\s-1);}
	}
	\specialmergetwolists{/}{#1}{#2}\ziplist
	\foreach \x/\y [count=\yi from 1] in \ziplist{
	    \node[anchor=west] at (\x,-\yi - .5) {$\y$};  
	}
	\IfValueT {#3}
	{\foreach \z [count=\zi from 1] in {#3} {\node[anchor=east] at (0,-\zi - .5) {$\z$};}}  
	{}
}


\numberwithin{equation}{section}
 
\newtheorem{theorem}{Theorem}
\newtheorem{proposition}[theorem]{Proposition}
\newtheorem{lemma}[theorem]{Lemma}
\newtheorem{corollary}[theorem]{Corollary}

\theoremstyle{definition}

\newtheorem{definition}[theorem]{Definition}
\newtheorem{remark}[theorem]{Remark}
\newtheorem{example}[theorem]{Example}
 
\numberwithin{theorem}{section}

\newcommand{\gotolowest}{\xrightarrow{\rm lowest}}
\newcommand{\p}{\mathcal{P}}
\newcommand{\rc}{\mathcal{RC}} 
\newcommand{\RC}{\operatorname{RC}} 
\newcommand{\lh}{\operatorname{lh}}
\newcommand{\lb}{\operatorname{lb}}
\newcommand{\ls}{\operatorname{ls}}
\newcommand{\rh}{\operatorname{rh}}
\newcommand{\rb}{\operatorname{rb}}
\newcommand{\rs}{\operatorname{rs}}
\newcommand{\emb}{\operatorname{emb}}
\newcommand{\wt}{\operatorname{wt}}
\newcommand{\la}{\lambda}
\newcommand{\ol}{\overline}
\newcommand{\ot}{\otimes}
\newcommand{\high}{\mathrm{high}}
\newcommand{\lusz}{\star} 

\newcommand{\iso}{\cong} 

\newcommand{\ku}{{}}
\newcommand{\mone}{\overline{1}}
\newcommand{\mtwo}{\overline{2}}
\newcommand{\mthree}{\overline{3}}
\newcommand{\mfour}{\overline{4}}
\newcommand{\mfive}{\overline{5}}
\newcommand{\msix}{\overline{6}}
\newcommand{\mseven}{\overline{7}}
\newcommand{\meight}{\overline{8}}
\newcommand{\mn}{\overline{n}}
\newcommand{\minusi}{\overline{\imath}}
\newcommand{\tone}{t_1}
\newcommand{\ttwo}{t_2}
\newcommand{\tN}{t_N}

\newcommand{\clfw}{\varpi} 

\definecolor{darkred}{rgb}{0.7,0,0} 
\newcommand{\defn}[1]{{\color{darkred}\emph{#1}}} 

\usepackage[colorinlistoftodos]{todonotes}

\begin{document}
 
\title{Type $D_n^{(1)}$ rigged configuration bijection}

\author[M.~Okado]{Masato Okado}
\address[M. Okado]{Department of Mathematics, Osaka City University, 3-3-138,
Sugimoto, Sumiyoshi-ku, Osaka, 558-8585, Japan}
\email{okado@sci.osaka-cu.ac.jp}

\author[R.~Sakamoto]{Reiho Sakamoto}
\address[R. Sakamoto]{Department of Physics, Tokyo University of Science, Kagurazaka, Shinjukuku, 
Tokyo 162-8601, Japan}
\email{reiho@rs.tus.ac.jp}
 
\author[A.~Schilling]{Anne Schilling}
\address[A. Schilling]{Department of Mathematics, University of California, One Shields
Avenue, Davis, CA 95616-8633, U.S.A.}
\email{anne@math.ucdavis.edu}
\urladdr{http://www.math.ucdavis.edu/\~{}anne}

\author[T.~Scrimshaw]{Travis Scrimshaw}
\address[T. Scrimshaw]{School of Mathematics, University of Minnesota, 206 Church St. SE, Minneapolis, MN 55455}
\email{tscrimsh@math.umn.edu}
\urladdr{http://www.math.umn.edu/~{}tscrimsh/}
 
\subjclass[2010]{Primary 17B37; Secondary: 05A19; 81R50; 82B23}
  
\begin{abstract}
We establish a bijection between the set of rigged configurations and the set of tensor 
products of Kirillov--Reshetikhin crystals of type $D^{(1)}_n$ in full generality.
We prove the invariance of rigged configurations under the action of the combinatorial 
$R$-matrix on tensor products and show that the bijection preserves certain statistics 
(cocharge and energy). As a result, we establish the fermionic formula for type $D_n^{(1)}$.
In addition, we establish that the bijection is a classical crystal isomorphism.
\end{abstract}
 
\maketitle
 
\tableofcontents

\pagebreak

\section{Introduction}
In this paper, we establish a bijection between rigged configurations and paths for type $D_n^{(1)}$
and prove that it can be extended to a classical crystal isomorphism.
Rigged configurations are combinatorial objects which were introduced by Kerov, Kirillov and 
Reshetikhin~\cite{KKR:1986} through their insightful analysis of the Bethe Ansatz 
for quantum integrable systems. Observing that the number of Bethe
vectors is equal to the number of irreducible components of the multiple
tensor product of the vector representation of $\mathfrak{sl}_2$, 
they constructed a bijection from rigged configurations to standard tableaux.
This work was generalized to the symmetric tensor representations of $\mathfrak{sl}_n$ in~\cite{KR86}
and to rectangular shape ones in~\cite{KSS:2002}. In these two works, the charge
statistic is introduced for rigged configurations and it was shown to agree with
Lascoux--Sch\"{u}tzenberger's charge~\cite{LascouxSchuetzenberger.1978} for tableaux.

Paths~\cite{DJKMO.1989} (sometimes called Kyoto paths to avoid confusion with Littelmann's path
model~\cite{L94}) also originated from quantum integrable systems; not from the
Bethe ansatz, but from Baxter's corner transfer matrix~\cite{B89}. Thanks to
Kashiwara's crystal basis theory~\cite{Kashiwara:1991}, the notion of a path was 
reformulated as an element of the tensor product of crystal bases of 
certain finite-dimensional modules of quantum affine algebras, called
Kirillov--Reshetikhin (KR) modules~\cite{KR:1990}, and then related with affine Lie
algebra characters~\cite{KKMMNN91,KKMMNN92}. In this paper, a path is a highest weight element
in the crystal; that is, an element that is annihilated by the Kashiwara
operator $e_i$ for any index $i$ of the Dynkin diagram of the affine Lie algebra
except $0$.

There are two physical methods, the corner transfer matrix method and the Bethe ansatz, which
are used to analyze quantum integrable systems based on KR modules. Combinatorially, they give rise to 
a conjectural equality of generating functions $X=M$ of paths with the energy statistic $X$ and of rigged 
configurations with the charge statistic $M$~\cite{HKOTY99,HKOTT02}. Although the equality $X=M$ is yet 
to be proven bijectively in full generality except for type $A_n^{(1)}$, there is plenty of evidence for its 
correctness (for proofs in special cases, see below).
In fact, it was shown to be true when $q=1$ and the relevant affine algebra is of non-twisted type. 
In this case, $X$ turns out to be a branching number
of KR modules with respect to the quantized enveloping algebra corresponding
to the underlying finite-dimensional simple Lie algebra. The relations between characters
were shown to be $Q$-systems~\cite{Nakajima.2003,Hernandez.2006}.
They are known to imply the fermionic formula $M$ at $q=1$ in a weak sense, meaning that it 
may contain the binomial coefficient $p+m\choose m$ with $p < 0$~\cite{HKOTY99,KNT.2002}.
This last gap was filled eventually in~\cite{kedem.difrancesco.2008}. 
Naoi~\cite{naoi.2012} showed $X=M$ for types $A_n^{(1)}$ and
$D_n^{(1)}$ using fusion products of the current algebra and Demazure operators, but his proof is
not bijective in nature.

One of the aims of the present paper is to prove $X=M$ for type $D_n^{(1)}$ in full generality
by constructing an explicit bijection from paths to rigged configurations.
To explain previous developments of this bijective method, we note that KR
crystals are parameterized by two integers $r,s$, where $r$ refers to a node of the 
Dynkin diagram of the underlying simple Lie algebra, $D_n$ in our case, and $s$ is
any positive integer. Let $B^{r,s}$ denote the corresponding KR crystal. 
The existence of $B^{r,s}$ and its combinatorial structure is known for all
nonexceptional types~\cite{FOS:2009,O07,OS08}. 
Returning to the history of the bijective method, the simplest case when $B=(B^{1,1})^{\ot k}$ 
is treated in~\cite{OSS:2003a} and the case when $\bigotimes_{i=1}^k B^{1,s_i}$ in~\cite{SS:X=M},
not only for type $D_n^{(1)}$ but also for nonexceptional types. In type $D_n^{(1)}$, the bijection for 
$\bigotimes_{i=1}^k B^{r_i,1}$ was constructed in~\cite{S:2005} and for the 
single KR crystal $B=B^{r,s}$ in~\cite{OSS:2013}. In fact, we rely on these two papers
for many properties of combinatorial procedures used in this paper.

In~\cite{Sch:2006}, a crystal structure on rigged configurations of simply-laced types was defined.
This led to the generalization of rigged configurations to unrestricted rigged configurations,
which form the completion under the crystal operators. Unrestricted rigged configurations in type $A_n^{(1)}$
were characterized in~\cite{DS06}. This generalization turns out to be extremely
powerful. In particular, the bijection $\Phi$ from paths to rigged configurations can be shown
to extend to a crystal isomorphism. 

Let us briefly explain how our bijection $\Phi$ from tensor products of KR crystals to unrestricted
rigged configurations is constructed.
We consider the general case $B=B^{r_k,s_k}\ot\cdots\ot B^{r_2,s_1}\ot B^{r_1,s_1}$. 
As is revealed in~\cite{OSS:2013}, we regard an element of the single KR crystal $B^{r,s}$ as a tableau, 
called a Kirillov--Reshetikhin tableau, of $r\times s$ rectangular shape. For $r_k\le n-2$, we then define 
three procedures $\ls$, $\lb$, $\lh$ on an element of $B$. The left-split $\ls$ splits off the leftmost 
column of the leftmost KR crystal $B^{r_k,s_k}$. The left-box $\lb$ splits off the lowest box
of the leftmost column when the leftmost KR crystal is a column, that is $s_k=1$.
The left-hat $\lh$ deletes the box when the leftmost KR crystal is a box, that is $r_k=s_k=1$.
(If some $r_i$ are $n$ or $n-1$, we use a ``spin'' version of the operator $\lh$ called left-hat-spin
$\lh_s$).
These three operations on the path side correspond to $\gamma$, $\beta$, $\delta$ (resp. $\delta_s$ in the
spin case) on rigged configurations. The bijection intertwines these operators and proceeds inductively on the total 
number of boxes $\sum_{i=1}^kr_is_i$. 

Our main results are threefold. Firstly, we show that the above bijection $\Phi$ is 
well-defined (see Theorem~\ref{th:welldef_highest}).  At the same time, Lusztig's involution $\lusz$ on 
$B$ is shown to be related to $\theta$, which exchanges riggings and coriggings of rigged configurations. 
Secondly, we prove the $R$-invariance of rigged configurations (see Theorem~\ref{th:main}).
For the tensor product of KR crystals, we have a nontrivial bijection 
$R \colon B^{r_1,s_1} \otimes B^{r_2, s_2} \to B^{r_2,s_2} \otimes B^{r_1, s_1}$,
called the combinatorial $R$-matrix, that commutes with all of the Kashiwara operators 
$e_i,f_i$. It can be applied to any two successive factors of a multiple tensor product
of KR crystals. $R$-invariance means that this application of $R$ does not have
any effect on a rigged configuration. We prove this by using the combinatorial $R$-matrix involving 
the spin representation $B^{n,1}$ and to reduce the problem to the $R$-invariance 
for the type $A^{(1)}_n$ case, which was shown in~\cite{KSS:2002}. 
In the proof, even though we consider the $R$-invariance for highest weight elements,
we need to use the fact that the bijection is a classical crystal isomorphism~\cite{Sak:2013}
(see Theorem~\ref{th:welldef}).
Finally, we show that under the bijection $\theta\circ\Phi$, the coenergy statistic
on a path is transferred to the cocharge on a rigged configuration (see Theorem~\ref{thm:stat}), 
thereby proving the $X=M$ conjecture for type $D_n^{(1)}$ in a bijective fashion
(see Corollary~\ref{corollary.X=M}).

Let us address the question on why a bijective proof of the identity $X=M$ is very powerful.
We give three reasons here. The first one is the computation of the image of $R$.
Although $R$ is defined naturally in a representation-theoretical fashion, it is quite
nontrivial combinatorially. However, denoting the bijection $\Phi$ from 
$B_1\ot B_2$ by $\Phi_{B_1\ot B_2}$, $R:B_1\ot B_2\rightarrow B_2\ot B_1$
is simply realized as $\Phi^{-1}_{B_2\ot B_1}\circ\Phi_{B_1\ot B_2}$ thanks to
the $R$-invariance. As a second reason, we mention the application to box-ball 
systems~\cite{TS}. These are certain integrable dynamical systems formulated
on the tensor product of KR crystals. Time evolution on the box-ball system
is defined using $R$ and considered to be nonlinear. However in~\cite{KOSTY:2006}, 
it was found that $\Phi$ linearizes its motion. Finally, as we mentioned before the bijection
in fact extends to a crystal isomorphism.

Rigged configurations of simply-laced types, and hence in particular type $D_n^{(1)}$,
are of fundamental importance since those of non-simply-laced types can be constructed
from simply-laced types by Dynkin diagram foldings. On the level of crystals, this is the virtual crystal construction 
carried out in~\cite{OSS03III,OSS03II}. For rigged configurations, the virtual crystal
construction is studied in~\cite{schilling.scrimshaw.2015}. In particular, the crystal operators on
rigged configurations for simply-laced types~\cite{Sch:2006} are extended to non-simply-laced types 
in~\cite{schilling.scrimshaw.2015}.
The algorithm $\delta$ is known for arbitrary non-exceptional affine algebras~\cite{OSS:2003a},
as well as type $E^{(1)}_6$~\cite{OSano} and $D_4^{(3)}$~\cite{scrimshaw.2015}.
Moreover, $\delta$ was shown to commute with the virtual crystal construction for types $B_n^{(1)}$
and $A_{2n-1}^{(2)}$ in~\cite{schilling.scrimshaw.2015} and $D_4^{(3)}$ in~\cite{scrimshaw.2015},
all of which are constructed as foldings of type $D_n^{(1)}$.

This paper is organized as follows.
In Section~\ref{sec:review}, we review necessary facts from crystal base theory, define KR crystals,
and prove some results about the affine crystal structure as well as properties of the left and right-splitting
map that we need.
A review of type $D_n^{(1)}$ rigged configurations is given in Section~\ref{sec:RC}.
Section~\ref{sec:weldef} contains the heart of this paper with a proof that the bijection $\Phi$ is well-defined.
In Section~\ref{sec:Rinv}, we prove the $R$-invariance of the rigged configuration bijection.
We conclude in Section~\ref{sec:energy} with a proof that $\Phi$ preserves statistics (energy and
cocharge), which implies the fermionic formula of~\cite{HKOTY99}. The appendix is reserved for an
example of the bijection.

\subsection*{Acknowledgments}
This work benefited from computations in {\sc SageMath}~\cite{combinat,sage} (using implementations
of crystals and rigged configurations by Schilling and Scrimshaw) and {\sc Mathematica} (using
an implementation of rigged configurations by Sakamoto~\cite{Sak:web}).

AS and TS would like to thank Osaka City University for kind hospitality during their stay in July 2015.
Both authors were partially supported by the JSPS Program for Advancing Strategic International Networks to Accelerate
the Circulation of Talented Researchers "Mathematical Science of Symmetry, Topology and Moduli, Evolution of
International Research Network based on OCAMI".
MO was partially supported by the Grants-in-Aid for Scientific Research No. 23340007 and No. 15K13429 from JSPS.
The work of RS was partially supported by Grants-in-Aid for Scientific Research No. 25800026 from JSPS.
AS was partially supported by NSF grants OCI--1147247 and DMS--1500050.
TS was partially supported by NSF grant OCI--1147247 and RTG grant NSF/DMS--1148634.

\section{Crystals and tableaux}\label{sec:review}

\subsection{Affine algebra of type $D^{(1)}_n$}
We consider the Kac--Moody Lie algebra of affine type $D_n^{(1)}$ whose Dynkin diagram
is depicted in Figure \ref{fig:dynkin_diagram}. We denote the index set of the Dynkin
diagram by $I=\{0,1,\ldots,n\}$ and set $I_0=I\setminus\{0\}$.

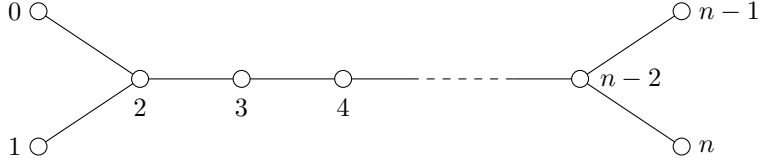
\begin{figure}[t]
\begin{tikzpicture}[scale=0.45]
\draw (0,2 cm) -- (3 cm,0);
\draw (0,-2 cm) -- (3 cm,0);
\draw (3 cm,0) -- (11 cm,0);
\draw[dashed] (11 cm,0) -- (14 cm,0);
\draw (14 cm,0) -- (16 cm,0);
\draw (16 cm,0) -- (19 cm,2 cm);
\draw (16 cm,0) -- (19 cm,-2 cm);
\draw[fill=white] (0 cm, 2 cm) circle (.25cm) node[left=3pt]{$0$};
\draw[fill=white] (0 cm, -2 cm) circle (.25cm) node[left=3pt]{$1$};
\draw[fill=white] (3 cm, 0 cm) circle (.25cm) node[below=4pt]{$2$};
\draw[fill=white] (6 cm, 0 cm) circle (.25cm) node[below=4pt]{$3$};
\draw[fill=white] (9 cm, 0 cm) circle (.25cm) node[below=4pt]{$4$};
\draw[fill=white] (16 cm, 0 cm) circle (.25cm) node[right=4pt]{$n-2$};
\draw[fill=white] (19 cm, 2 cm) circle (.25cm) node[right=3pt]{$n-1$};
\draw[fill=white] (19 cm, -2 cm) circle (.25cm) node[right=3pt]{$n$};
\end{tikzpicture}
\caption{The Dynkin diagram of type $D_n^{(1)}$.}
\label{fig:dynkin_diagram}
\end{figure}

Let $\alpha_i$, $\alpha^\vee_i$, and $\Lambda_i$ ($i\in I$) be
the simple roots, simple coroots, and fundamental weights of $D_n^{(1)}$, respectively.
Set $\varpi_i = \Lambda_i-\Lambda_0$ ($i\in I_0$), which are known as the level $0$
fundamental weights. In particular, $\alpha_i$, $\alpha^\vee_i$, and $\varpi_i$ ($i\in I_0$) can be identified with
the simple roots, simple coroots, and fundamental weights of the underlying
simple Lie algebra $D_n$. Using the standard orthonormal vectors
$\epsilon_i$ ($i\in I_0$) in the weight lattice of type $D_n$, the simple roots are
represented as
\begin{equation}
\label{simple_root}
\begin{split}
	\alpha_i&=\epsilon_i-\epsilon_{i+1}\quad (1\leq i\leq n-1),\\
	\alpha_n&=\epsilon_{n-1}+\epsilon_n,
\end{split}
\end{equation}
and the fundamental weights as
\begin{equation}
\label{fundamental_weight}
\begin{split}
	&\varpi_i = \epsilon_1+\cdots+\epsilon_i \qquad (1\leq i\leq n-2),\\
	&\varpi_{n-1} = (\epsilon_1+\cdots+\epsilon_{n-1}-\epsilon_n)/2,\\
	&\varpi_{n} = (\epsilon_1+\cdots+\epsilon_{n-1}+\epsilon_n)/2.
\end{split}
\end{equation}

Let $Q$, $Q^\vee$, and $P$ be the root, coroot, and weight lattices of type $D_n$, respectively. 
Let $\langle \cdot, \cdot \rangle \colon Q^\vee \times P \to \mathbb{Z}$ be the pairing such that 
$\langle \alpha^\vee_i, \varpi_j \rangle = \varpi_j(\alpha^\vee_i) = \delta_{i,j}$ is the Kronecker delta.
Note that $\langle \alpha^\vee_i, \alpha_j \rangle = \alpha_j(\alpha^\vee_i) = A_{i,j}$ is the Cartan matrix
of type $D_n$. The above can also be extended to the affine type $D_n^{(1)}$.

\subsection{Crystals and Kashiwara--Nakashima tableaux}
We refer to \cite{Kashiwara:1991} for the basics of crystal basis theory.
We denote by $e_i$ and $f_i$ the \defn{Kashiwara raising} and \defn{lowering operators}, repsectively.
For an element $b$ of a crystal $B$, we use the following standard notation for the 
length of the $i$-strings through $b$:
\[
\varepsilon_i(b)=
\max\{m\geq 0\, |\,e^m_i(b)\neq 0\},\qquad
\varphi_i(b)=
\max\{m\geq 0\, |\,f^m_i(b)\neq 0\}.
\]
They are related to the weight $\wt(b)$ by $\langle \alpha^\vee_i,\wt(b)\rangle=
\varphi_i(b)-\varepsilon_i(b)$.

For crystals $B_1, B_2$ of the same type, we can define their tensor product $B_2\ot B_1$ as follows.
As a set, it is the Cartesian product $B_2\times B_1$ of $B_2$ and $B_1$.
The action of the Kashiwara operators $e_i,f_i$ on 
$b_2\otimes b_1\in B_2\otimes B_1$ is given by
\begin{align*}
e_i(b_2\otimes b_1)&=
\left\{
\begin{array}{ll}
e_ib_2\otimes b_1&\mbox{ if }\varepsilon_i(b_2)>\varphi_i(b_1),\\
b_2\otimes e_ib_1&\mbox{ if }\varepsilon_i(b_2)\leq\varphi_i(b_1),\\
\end{array}
\right.\\
f_i(b_2\otimes b_1)&=
\left\{
\begin{array}{ll}
f_ib_2\otimes b_1&\mbox{ if }\varepsilon_i(b_2)\geq\varphi_i(b_1),\\
b_2\otimes f_ib_1&\mbox{ if }\varepsilon_i(b_2)<\varphi_i(b_1),\\
\end{array}
\right.
\end{align*}
where the result is declared to be $0$ if either of its tensor factors are $0$.
The weight is defined as $\wt(b_2\otimes b_1)=\wt(b_2)+\wt(b_1)$. Note that this is \emph{opposite} to the convention of Kashiwara for tensor products of crystals.

Let $B_1$ and $B_2$ be two crystals with index set $I$.  
A bijection $\psi \colon B_1 \rightarrow B_2$ is called a \defn{crystal isomorphism} if it is a bijection that
commutes with $e_i$ and $f_i$ by defining $\psi(0)=0$.

The crystals that we are concerned with in this paper are $U_q'(D_n^{(1)})$-crystals
and $U_q(D_n)$-crystals. For a subset $J \subseteq I$, we also use the terminology $J$-crystal to mean 
the crystal over the quantized enveloping algebra corresponding to the Levi subalgebra associated to $J$. 
Hence an $I_0$-crystal is nothing but a $U_q(D_n)$-crystal.

For a dominant integral weight $\la$, let $B(\la)$ be the crystal basis of the 
highest weight module of highest weight $\la$ of $U_q(D_n)$. 
$B(\la)$ has a unique element $u_\la$ satisfying 
$e_iu_\la=0$ for all $i\in I_0$. We call $u_\lambda$ the \defn{highest weight element}.
In order to perform explicit calculations on the crystal $B(\la)$,
it is convenient to use the realization by tableaux, called 
\defn{Kashiwara--Nakashima (KN) tableaux}~\cite{KN:1994}.
They are defined for $U_q(\mathfrak{g})$-crystals for Lie algebra $\mathfrak{g}$ of type
$A_n,B_n,C_n$ and $D_n$. In all of these cases, we start by looking at $B(\varpi_1)$.
In type $D_n$, the crystal graph $B(\varpi_1)$ is given as follows:
\smallskip
\begin{center}
\unitlength 10pt
\begin{picture}(36,7)(0,-0.2)
\put(0,3){$\Yvcentermath1\young(1)$}
\put(1.5,3.2){\vector(1,0){2.2}}
\put(2.3,3.5){1}
\put(4,3){$\Yvcentermath1\young(2)$}
\put(5.5,3.2){\vector(1,0){2.2}}
\put(6.3,3.5){2}
\multiput(8.2,3.2)(0.3,0){8}{\circle*{0.12}}
\put(10.7,3.2){\vector(1,0){2.7}}
\put(10.9,3.5){$n-2$}
\put(13.7,2.7){\frame{$\,n-1\,$\rule{0pt}{10pt}}}
\put(15.4,4.0){\vector(1,1){1.8}}
\put(13.6,5){$n-1$}
\put(15.4,2.4){\vector(1,-1){1.8}}
\put(15.5,0.7){$n$}
\put(17.5,6.2){$\Yvcentermath1\young(n)$}
\put(18.9,0.6){\vector(1,1){1.8}}
\put(18.9,5.7){\vector(1,-1){1.8}}
\put(20.2,5){$n$}
\put(20.2,0.7){$n-1$}
\put(19.5,2.7){\frame{$\,\overline{n-1}$\rule{0pt}{10pt}\,}}
\put(17.5,-0.2){$\Yvcentermath1\young(\mn)$}
\put(22.4,3.2){\vector(1,0){2.7}}
\put(22.6,3.5){$n-2$}
\multiput(25.6,3.2)(0.3,0){8}{\circle*{0.12}}
\put(28.1,3.2){\vector(1,0){2.2}}
\put(28.9,3.5){2}
\put(30.7,3){\frame{$\;\mtwo$\rule{0pt}{10pt}\;}}
\put(32.2,3.2){\vector(1,0){2.2}}
\put(33.0,3.5){1}
\put(34.8,3){\frame{$\;\mone$\rule{0pt}{10pt}\;}}
\end{picture}
\end{center}
\smallskip
Here $b\overset{i}{\longrightarrow}b'$ stands for $f_ib=b'$ or equivalently $b=e_ib'$.
The weight is given by $\wt\Bigl(\Yvcentermath1\young(i)\Bigr)=\epsilon_i$
and $\wt\Bigl(\begin{array}{|c|}\hline\raisebox{-.2ex}
{$\minusi$}\\\hline\end{array}\Bigr)=-\epsilon_i$.

We now explain KN tableaux for $B(\la)$. Suppose $\la=\sum_{i=1}^n\la_i\epsilon_i$.
Note that $\la_1\ge\cdots\ge \la_{n-1}\ge|\la_n|\ge 0$. We first assume that all $\la_i$ are integers.
The highest weight element $u_\la$ corresponds to the tableau of partition shape 
$(\la_1,\ldots,\la_{n-1},|\la_n|)$ whose entries in the $i$-th row are $\ol{n}$
if $i=n$ and $\la_n<0$, and $i$ otherwise. 
Note that we use English convention for partitions and draw the Young diagram corresponding to a partition 
with the largest row on the top.
For a given tableau
\begin{center}
\unitlength 12pt
\begin{picture}(13,10)
\put(0,5){$t=$}
\put(2,3){
\put(7,6.7){\line(1,0){4}}
\put(7,5.7){\line(1,0){4}}
\put(7,4.7){\line(1,0){4}}
\put(7,2.7){\line(1,0){4}}
\put(7,1.7){\line(1,0){4}}
\put(7,0.7){\line(1,0){2.7}}
\put(7,-0.3){\line(1,0){2.7}}
\put(11,6.7){\line(0,-1){5}}
\put(9.7,6.7){\line(0,-1){7}}
\put(7.8,6.7){\line(0,-1){7}}
\put(10,6){$t_1$}
\put(10,5){$t_2$}
\multiput(10.3,4.4)(0,-0.2){8}{\circle*{0.1}}
\put(10,2){$t_r$}
\put(8,6){$t_{r+1}$}
\put(8,5){$t_{r+2}$}
\multiput(8.3,4.4)(0,-0.2){8}{\circle*{0.1}}
\put(8,2){$t_{2r}$}
\multiput(8.3,1.5)(0,-0.2){4}{\circle*{0.1}}
\put(8,0){$t_{r'}$}
\multiput(2.4,6.7)(0.2,0){23}{\circle*{0.1}}
\put(0,6.7){\line(1,0){2.3}}
\put(0,6.7){\line(0,-1){9}}
\put(0,-2.3){\line(1,0){1.3}}
\put(0,-1.3){\line(1,0){1.3}}
\put(1.3,-2.3){\line(0,1){9}}
\multiput(0.6,-1.0)(0,0.2){10}{\circle*{0.1}}
\put(0.2,-2){$t_N$}
\multiput(1.4,-2.3)(0.2,0.07){28}{\circle*{0.1}}
}
\end{picture}
\, \raisebox{55pt}{,}
\end{center}
we introduce the so-called \defn{column reading word} 
$t_N\cdots t_2t_1$ of $t$ and regard it as an element of $B(\varpi_1)^{\otimes N}$ as
\[
\Yvcentermath1
t\longmapsto\young(\tN)\otimes\cdots\otimes
\young(\ttwo)\otimes\young(\tone)\, .
\]
Via this identification, we introduce an action of Kashiwara operators on $t$ using the tensor product rule. 
The whole set $B(\la)$ is generated from $u_\la$ by applying $f_i$ ($i\in I_0$). 

Next we introduce a representation of elements of $B(s\varpi_n)$ and 
$B(s\varpi_{n-1})$.
We consider $B(\varpi_n)$ and $B(\varpi_{n-1})$ first. As a set they are given by 
\begin{equation}
\label{equation.spin}
\begin{split}
	B(\varpi_n) &= \{(s_1,s_2,\ldots,s_n)\mid s_i=\pm,s_1s_2\cdots s_n=+\},\\
	B(\varpi_{n-1}) &= \{(s_1,s_2,\ldots,s_n)\mid s_i=\pm,s_1s_2\cdots s_n=-\}.
\end{split}
\end{equation}
The Kashiwara operators act by
\begin{subequations}
\label{eq:kashiwara_spin}
\begin{align}
e_i(s_1,\ldots,s_n)&=
\left\{
\begin{array}{ll}
(s_1,\ldots,+,-,\ldots,s_n)
&\mbox{ if }i\neq n,\,(s_i,s_{i+1})=(-,+),\\
(s_1,\ldots,s_{n-2},+,+)
&\mbox{ if }i=n,\,(s_{n-1},s_n)=(-,-),
\label{e_on_spin}
\\
0&\mbox{ otherwise,}
\end{array}
\right.\\
f_i(s_1,\ldots,s_n)&=
\left\{
\begin{array}{ll}
(s_1,\ldots,-,+,\ldots,s_n)
&\mbox{ if }i\neq n,\,(s_i,s_{i+1})=(+,-),\\
(s_1,\ldots,s_{n-2},-,-)
&\mbox{ if }i=n,\,(s_{n-1},s_n)=(+,+),\\
0&\mbox{ otherwise.}
\end{array}
\right.
\label{f_on_spin}
\end{align}
\end{subequations}
The weight is given by
\begin{equation}
\label{wt_on_spin}
\wt(s_1,\ldots,s_n)=
\frac{1}{2}(s_1\epsilon_1+\cdots+s_n\epsilon_n).
\end{equation}
In view of this, it is natural to associate to $(s_1,\ldots,s_n)$ a tableau
whose shape has half width and height $n$. In the $i$-th row, we put $s_i$.
We call it a \defn{spin column}.

For general $s$, we embed $B(s\varpi_n)$ (resp. $B(s\varpi_{n-1})$)
into $B(\varpi_n)^{\ot s}$ (resp. $B(\varpi_{n-1})^{\ot s}$) by 
$c_s\cdots c_1\longmapsto c_s\ot\cdots\ot c_1$ where $c_j$ are 
spin columns. In this way, we represent elements of $B(s\varpi_n)$ or 
$B(s\varpi_{n-1})$ by $s$ spin columns. The highest weight elements are given by
\smallskip
\begin{center}
\unitlength 10pt
\begin{picture}(11,6)(0.5,0)
\put(2,2.8){$u_{s\varpi_{n}}=$}
\put(6.2,0){\line(0,1){6}}
\put(7.1,0){\line(0,1){6}}
\multiput(8.0,0)(.9,0){2}{\line(0,1){6}}
\multiput(6.2,0)(0,1.5){5}{\line(1,0){5.2}}
\put(10.5,0){\line(0,1){6}}
\put(11.4,0){\line(0,1){6}}
\put(6.28,4.9){$+$}
\put(6.28,2.0){$+$}
\multiput(6.7,4.1)(0,-0.37){3}{\circle*{0.1}}
\put(6.28,0.5){$+$}
\put(0.8,0){
\put(6.35,4.9){$+$}
\put(6.35,2.0){$+$}
\multiput(6.7,4.1)(0,-0.37){3}{\circle*{0.1}}
\put(6.35,0.5){$+$}
}
\put(1.7,0){
\put(6.35,4.9){$+$}
\put(6.35,2.0){$+$}
\multiput(6.7,4.1)(0,-0.37){3}{\circle*{0.1}}
\put(6.35,0.5){$+$}
}
\put(4.2,0){
\put(6.35,4.9){$+$}
\put(6.35,2.0){$+$}
\multiput(6.7,4.1)(0,-0.37){3}{\circle*{0.1}}
\put(6.35,0.5){$+$}
}
\multiput(9.35,0.7)(0.37,0){3}{\circle*{0.1}}
\multiput(9.35,4.1)(0.37,-0.37){3}{\circle*{0.1}}
\multiput(9.35,2.2)(0.37,0){3}{\circle*{0.1}}
\multiput(9.35,5.2)(0.37,0){3}{\circle*{0.1}}
\end{picture}
\raisebox{0.9cm}{,}
\qquad
\unitlength 10pt
\begin{picture}(11,6)(0.5,0)
\put(1.2,2.8){$u_{s\varpi_{n-1}}=$}
\put(6.2,0){\line(0,1){6}}
\put(7.1,0){\line(0,1){6}}
\multiput(8.0,0)(.9,0){2}{\line(0,1){6}}
\multiput(6.2,0)(0,1.5){5}{\line(1,0){5.2}}
\put(10.5,0){\line(0,1){6}}
\put(11.4,0){\line(0,1){6}}
\put(6.28,4.9){$+$}
\put(6.28,2.0){$+$}
\multiput(6.7,4.1)(0,-0.37){3}{\circle*{0.1}}
\put(6.28,0.5){$-$}
\put(0.8,0){
\put(6.35,4.9){$+$}
\put(6.35,2.0){$+$}
\multiput(6.7,4.1)(0,-0.37){3}{\circle*{0.1}}
\put(6.35,0.5){$-$}
}
\put(1.7,0){
\put(6.35,4.9){$+$}
\put(6.35,2.0){$+$}
\multiput(6.7,4.1)(0,-0.37){3}{\circle*{0.1}}
\put(6.35,0.5){$-$}
}
\put(4.2,0){
\put(6.35,4.9){$+$}
\put(6.35,2.0){$+$}
\multiput(6.7,4.1)(0,-0.37){3}{\circle*{0.1}}
\put(6.35,0.5){$-$}
}
\multiput(9.35,0.7)(0.37,0){3}{\circle*{0.1}}
\multiput(9.35,4.1)(0.37,-0.37){3}{\circle*{0.1}}
\multiput(9.35,2.2)(0.37,0){3}{\circle*{0.1}}
\multiput(9.35,5.2)(0.37,0){3}{\circle*{0.1}}
\end{picture}
\raisebox{0.9cm}{.}
\end{center}

\subsection{Kirillov--Reshetikhin crystals}

In \cite{O07,OS08}, it was shown that Kirillov--Reshetikhin (KR) modules have
crystal bases for any nonexceptional affine Lie algebra $\mathfrak{g}$. We call them 
\defn{Kirillov--Reshetikhin (KR) crystals}. KR modules are finite-dimensional
$U_q'(\mathfrak{g})$-modules, where $U_q'(\mathfrak{g}) = U_q([\mathfrak{g},\mathfrak{g}])$. 
The combinatorial structure of KR crystals is explicitly given in~\cite{Sch:2008,FOS:2009}, which we 
briefly recall in this section for type $D_n^{(1)}$.

KR crystals are parametrized by $(r,s)$ ($r\in I_0,s\ge1$). The KR crystal 
indexed by $(r,s)$ is denoted by $B^{r,s}$. Since $U'_q(D_n^{(1)})$
contains $U_q(D_n)$ as a subalgebra, $B^{r,s}$ is decomposed into $U_q(D_n)$-crystals.
For $1\le r\le n-2$, we have
\begin{align}\label{eq:classical_decomposition_1}
B^{r,s}\iso\bigoplus_\lambda B(\lambda)\qquad\mbox{as }U_q(D_n)\mbox{-crystals}
\end{align}
where, identifying the weight $\la$ with the partition shape of the elements of $B(\la)$,
the direct sum is taken over all Young diagrams obtained by removing vertical
dominoes $\Yboxdim5pt\yng(1,1)$ from the rectangular shape $(s^r)$.
If $r=n-1,n$, we have
\begin{align}\label{eq:classical_decomposition_2}
B^{r,s}\iso B(s\varpi_r)\qquad\mbox{as }U_q(D_n)\mbox{-crystals}.
\end{align}

To define the affine Kashiwara operators $e_0$ and $f_0$, we first need a map $\sigma$ that is the analogue 
of the Dynkin diagram automorphism that interchanges nodes 0 and 1; see~\cite{Sch:2008}.
We begin by recalling the notion of $\pm$-diagrams. A \defn{$\pm$-diagram} $P$ is a sequence of shapes 
$\lambda \subseteq \eta \subseteq \mu$ such that $\mu / \eta$ and $\eta / \lambda$ are horizontal strips. 
We call $\lambda$ and $\mu$ the inner and outer shapes of $P$, respectively. We depict 
$P$ as a skew shape $\mu / \lambda$, where we fill the boxes of $\mu / \eta$ with $-$ and those of $\eta / \lambda$ 
with $+$. Next we define an involution $\mathfrak{S}$ on $\pm$-diagrams, where $\mathfrak{S}(P)$ is the 
$\pm$-diagram that interchanges the number of columns of a given height $h$ with only a $+$ and those
of height $h$ containing only $-$. In addition, it interchanges the number of columns
of height $2\le h\le r$ containing $\mp$ with the number of columns containing no sign of height $h-2$.

For $J\subseteq I$, we say an element $b$ of a crystal is a 
\defn{$J$-highest weight element}, if $e_ib=0$ for all $i\in J$.

\begin{proposition}[{\cite{Sch:2008,FOS:2009}}]
There exists a bijection $\kappa$ from $\pm$-diagrams to $\{2, \dotsc, n\}$-highest 
weight elements in $B^{r,s}$ with $1\le r \leq n-2$ as follows. Let $P = (\lambda \subseteq \eta \subseteq \mu)$.
Then we construct $\kappa(P)$ as follows:
\begin{enumerate}
\item Start with shape $\mu$ and add a $\mone$ in every cell that contains a $-$;
\item Fill the remainder of the columns with $23 \dotsm k$;
\item As we read the $\pm$-diagram from bottom to top (in English convention),
left to right, for every $+$ at height $h$ that is encountered, 
do one of the following, moving in the current tableau from bottom to top and left to right:
\begin{enumerate}
\item if we are at a $\mone$, replace it by $\overline{h+1}$;
\item otherwise if one encounters a $2$, replace the string $23 \dotsm k$ with $12 \dotsm h (h+2) \dotsm k$.
\end{enumerate}
\end{enumerate}
\end{proposition}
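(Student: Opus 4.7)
The plan is to combine the classical decomposition \eqref{eq:classical_decomposition_1} with the standard branching rule for $D_n \downarrow D_{n-1}$. Since the sub-Dynkin diagram indexed by $\{2, \ldots, n\}$ is of type $D_{n-1}$, the $\{2,\ldots,n\}$-highest weight elements of $B^{r,s}$ are enumerated by pairs $(\mu, \nu)$, where $\mu$ labels a summand in $\bigoplus_\mu B(\mu)$ and $\nu$ is a $D_{n-1}$-dominant weight appearing in the restriction $B(\mu)|_{D_{n-1}}$. The branching rule states that the multiplicity of the irreducible of highest weight $\nu$ in $B(\mu)|_{D_{n-1}}$ equals the number of intermediate partitions $\eta$ with $\lambda \subseteq \eta \subseteq \mu$ such that both $\mu/\eta$ and $\eta/\lambda$ are horizontal strips, where $\lambda$ is the partition corresponding to $\nu$. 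This is exactly the data of a $\pm$-diagram $P = (\lambda \subseteq \eta \subseteq \mu)$, so the two sets have the same cardinality, and it suffices to show that $\kappa$ is well-defined and injective.

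For well-definedness, note that after steps (i) and (ii) the filling of shape $\mu$ looks like the usual $D_n$ highest-weight tableau (entries $1, 2, \ldots, h$ in a column of height $h$), except that the top cell of any column carrying a $-$ is replaced by $\mone$. The rewrite in step (iii) processes the $+$ signs in the prescribed order and either promotes an existing $\mone$ to $\overline{h+1}$ or replaces a column prefix $23 \cdots k$ by $12 \cdots h (h+2) \cdots k$. Column-strictness is immediate from the rewrite rule, and the KN admissibility condition controlling bar/unbar pairs at each height is checked by induction on the number of $+$ signs processed, using the constraint $\lambda \subseteq \eta \subseteq \mu$. To see that $\kappa(P)$ is $\{2,\ldots,n\}$-highest weight, compute its column reading word and apply the signature rule: before step (iii) each column is separately $\{2,\ldots,n\}$-highest, and the rewrites in (iii) introduce only the letters $1$ and $\overline{h+1}$, which by a direct bracketing check preserve the $\{2,\ldots,n\}$-highest condition.

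Injectivity follows because from $\kappa(P)$ one recovers $\mu$ as its shape, recovers $\eta / \lambda$ from the positions of the letters $1$ and $\overline{h+1}$ produced in step (iii), and recovers $\mu / \eta$ from the positions of the $\mone$'s that survive after step (iii). Combined with the cardinality match of the previous paragraph, this yields the bijection. The main obstacle is the careful case analysis in step (iii): the rewrite rule must be shown to interact correctly with neighboring columns of differing heights and with pre-existing $\mone$'s coming from $-$ signs, and the inductive argument has to verify that both semistandardness and the KN bar/unbar admissibility condition are maintained throughout the procedure.
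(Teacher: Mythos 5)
The paper does not actually prove this proposition: it is imported verbatim from \cite{Sch:2008,FOS:2009}, so there is no internal argument to compare against. Your strategy is the standard one used in those references: restrict along the Levi of type $D_{n-1}$ generated by $\{2,\dotsc,n\}$, use the classical decomposition \eqref{eq:classical_decomposition_1} together with the two-step branching $D_n\downarrow B_{n-1}\downarrow D_{n-1}$ (two interlacing conditions, i.e.\ two horizontal strips $\mu/\eta$ and $\eta/\lambda$) to see that $\pm$-diagrams and $\{2,\dotsc,n\}$-highest weight elements are equinumerous, and then verify that the explicit map $\kappa$ is well-defined and injective. Note that the interlacing argument silently uses $\mu_{n-1}=\mu_n=0$ (so no spin weights appear for $D_{n-1}$); this is guaranteed by $r\le n-2$ and should be said explicitly.

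As a proof, however, the proposal has concrete gaps. First, the entire technical content of well-definedness is deferred: you must show that when a $+$ at height $h$ is processed there always \emph{is} a $\mone$ or a $2$ to act on, that the substitution $23\dotsm k \mapsto 12\dotsm h(h+2)\dotsm k$ and the promotion $\mone\mapsto\overline{h+1}$ never violate column strictness, the row conditions, or the KN admissibility constraints on $(i,\overline{\imath})$ pairs, and that the result is annihilated by $e_2,\dotsc,e_n$; asserting ``a direct bracketing check'' and ``induction on the number of $+$ signs'' does not discharge this, and it is exactly where the case analysis against neighboring columns and pre-existing $\mone$'s lives. Second, your injectivity argument is too quick: the rewrite triggered by a $+$ scans the \emph{whole} current tableau bottom-to-top, left-to-right, so it need not act in the column of the $\pm$-diagram containing that $+$. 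From $\kappa(P)$ you can read off the multiset of heights $h$ at which $+$'s occurred (from the letters $\overline{h+1}$ and the columns containing a $1$ with $h+1$ missing), but recovering $\eta/\lambda$ from this requires the additional observation that, given $\lambda\subseteq\mu$, a horizontal strip $\eta/\lambda$ with $\eta\subseteq\mu$ is uniquely determined by the multiset of heights of its cells (the $+$'s at a given height are forced into the leftmost admissible columns); the analogous remark is needed to recover $\mu/\eta$ from the surviving $\mone$'s. Either supply these arguments or, more cleanly, exhibit the inverse algorithm $\kappa^{-1}$ explicitly.
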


We define $\sigma \colon B^{r,s} \to B^{r,s}$ for $1\le r \leq n-2$ on $\{2, \dotsc, n\}$-highest weight elements
as $\sigma = \kappa \circ \mathfrak{S} \circ \kappa^{-1}$ and extend it to all elements in $B^{r,s}$ by
making it a $\{2, \dotsc, n\}$-crystal isomorphism. Explicitly, we have
\begin{equation}
\label{eq:01_involution}
\sigma = f_{\bf{a}^r} \circ \kappa \circ \mathfrak{S} \circ \kappa^{-1} \circ e_{\bf{a}},
\end{equation}
where 
\begin{equation} \label{e sequence}
e_{\bf{a}} b = e_{a_1} \cdots e_{a_\ell} b\quad\text{for}\quad
{\bf{a}} = a_1\cdots a_\ell
\end{equation}
such that $e_{\bf{a}} b$ is $\{2, \dotsc, n\}$-highest weight and 
$f_{\bf{a}^r} b' = f_{a_\ell} \dotsm f_{a_1} b'$.
The map $\sigma$ is an involution on $B^{r,s}$~\cite[Definition 4.2]{Sch:2008}. 
Then we define
\begin{subequations}
\label{e0 f0}
\begin{align}
e_0 & = \sigma \circ e_1 \circ \sigma,
\label{eq:e0}
\\ f_0 & = \sigma \circ f_1 \circ \sigma.
\label{eq:f0}
\end{align}
\end{subequations}

Let us introduce the following convention. By $c(i_1,\ldots,i_\ell)$, we denote the spin column whose 
$i_a$-th entry is $-$ for $1\le a\le \ell$ and $+$ elsewhere. Let $c^t d^{t'}$ stand for the tableau whose 
left $t$ columns are $c$ and right $t'$ columns are $d$.

Note that when $r=n-1,n$, we can define $\sigma$ as an involutive map from $B^{n,s}$ to $B^{n-1,s}$ and 
vice versa~\cite[Definition 6.3]{FOS:2009}. 
This $\sigma$ is also defined to be a $\{2,\ldots,n\}$-crystal isomorphism.
A $\{2,\ldots,n\}$-highest weight element of $B(s\varpi_n)$ (resp. $B(s\varpi_{n-1})$)
is given by the tableau $c()^{\alpha} c(1,n)^{s-\alpha}$ (resp. $c(n)^{\alpha} c(1)^{s-\alpha}$). The map
$\sigma$ is defined by~\cite[(2.7)]{Mohamad}
\begin{equation}
\label{eq:spin_involution}
\sigma \colon c()^{\alpha} c(1,n)^{s-\alpha} \longleftrightarrow c(n)^{s-\alpha} c(1)^{\alpha},
\end{equation}
or pictorially, we have
\[
\arraycolsep=1.1pt
\raisebox{5pt}{$\sigma \colon$} \;
\begin{array}{|c|c|c|c|c|c|}
\hline
+ & \cdots & + & - & \cdots & - \\\hline
+ & \cdots & + & + & \cdots & + \\\hline
\vdots & \ddots & \vdots & \vdots & \ddots & \vdots \\\hline
+ & \cdots & + & - & \cdots & - \\\hline
\multicolumn{3}{c}{\raisebox{10pt}{$\underbrace{\hspace{34pt}}_{\alpha}$}} & \multicolumn{3}{c}{\raisebox{10pt}{$\underbrace{\hspace{34pt}}_{s-\alpha}$}}
\end{array}
\; \raisebox{5pt}{$\longleftrightarrow$} \;
\begin{array}{|c|c|c|c|c|c|}
\hline
+ & \cdots & + & - & \cdots & - \\\hline
+ & \cdots & + & + & \cdots & + \\\hline
\vdots & \ddots & \vdots & \vdots & \ddots & \vdots \\\hline
- & \cdots & - & + & \cdots & + \\\hline
\multicolumn{3}{c}{\raisebox{10pt}{$\underbrace{\hspace{34pt}}_{s-\alpha}$}} & \multicolumn{3}{c}{\raisebox{10pt}{$\underbrace{\hspace{34pt}}_{\alpha}$}}
\end{array}
\; \raisebox{5pt}{.}
\]
With this $\sigma$, we can again define the affine crystal operators $e_0$ and $f_0$
by~\eqref{e0 f0}.

In particular, we have $B^{r,1} \iso B(\clfw_r)$ as $I_0$-crystals
for $r=n-1,n$ with the affine crystal operators, given in~\cite{S:2005}, explicitly as
\begin{subequations}
\label{eq:affine_kashiwara_spin}
\begin{align}
\label{eq:e0_spin}
e_0(s_1, \dotsc, s_n) & = \begin{cases}
(-, -, s_3, \dotsc, s_n) & \text{if $(s_1, s_2) = (+, +)$,} \\
0 & \text{otherwise,}
\end{cases}
\\ \label{eq:f0_spin}
f_0(s_1, \dotsc, s_n) & = \begin{cases}
(+, +, s_3, \dotsc, s_n) & \text{if $(s_1, s_2) = (-, -)$,} \\
0 & \text{otherwise.}
\end{cases}
\end{align}
\end{subequations}

We prepare two lemmas that will be used later. We use the notation
$e_i^{\max}b=e_i^{\varepsilon_i(b)}b$. In the following lemmas, we note that we read
columns of a KN tableau from bottom to top in accordance with our reading word. 
Spin columns are displayed in tuple notation as in~\eqref{equation.spin}.
The following lemma will be used in Section~\ref{sec:energy}.

\begin{lemma} \label{lem:e0 action}
\mbox{}
\begin{itemize}
\item[(1)] Let $2\le r\le n-2$. Let $b(\alpha) = c^{s-\alpha} c'^{\alpha} \in B^{r,s}$ $(0\le\alpha\le s)$, 
where $c = r\cdots21$ and $c' = nr\cdots31$.
Then $\varepsilon_0\bigl(b(\alpha)\bigr)=2s-\alpha$, $\varphi_0\bigl(b(\alpha)\bigr)=0,$ and 
$e_0^{\max}b(\alpha)$ is the tableau whose left $\alpha$ columns are 
$\ol{2}nr\cdots3$ and right $(s-\alpha)$ columns are $\ol{1}\ol{2}r\cdots3$.
\item[(2)] Let $b(\alpha) = c(n)^{s-\alpha} c(2)^{\alpha} \in B^{n-1,s}$ $(0\le\alpha\le s)$.
Then $\varepsilon_0\bigl(b(\alpha)\bigr) = s-\alpha$, $\varphi_0\bigl(b(\alpha)\bigr) = 0$, and 
$e_0^{\max}b(\alpha) = c(2)^{\alpha} c(1,2,n)^{s-\alpha}$.
Columns here are spin columns.
\end{itemize}
\end{lemma}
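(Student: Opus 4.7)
The proof rests on the identity $e_0 = \sigma \circ e_1 \circ \sigma$, together with the facts that $\sigma$ is a $\{2,\ldots,n\}$-crystal isomorphism (so it commutes with $e_i, f_i$ for $i \in \{2,\ldots,n\}$) and that $\sigma$ has an explicit form on $\{2,\ldots,n\}$-highest weight elements---via $\pm$-diagrams in part~(1), and via~\eqref{eq:spin_involution} in part~(2).

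For part~(2), one first verifies by iterated application of the tensor product rule that
\[
b(\alpha) = f_2^\alpha f_3^\alpha \cdots f_{n-1}^\alpha \, c(n)^s,
\]
where $c(n)^s = u_{s\varpi_{n-1}}$ is classically highest weight. Since $\sigma(c(n)^s) = c(1,n)^s$ by~\eqref{eq:spin_involution} (the case $\alpha = 0$) and $\sigma$ commutes with $f_2,\ldots,f_{n-1}$, a direct computation yields $\sigma(b(\alpha)) = c(1,n)^{s-\alpha} c(1,2)^\alpha \in B^{n,s}$. A signature count for $e_1$---each $c(1,n)$ contributes an unmatched $-$ and each $c(1,2)$ contributes no sign---gives $\varepsilon_1(\sigma(b(\alpha))) = s-\alpha$ and $\varphi_1(\sigma(b(\alpha))) = 0$, whence the claimed values of $\varepsilon_0$ and $\varphi_0$. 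Iterating $e_1$ (which at each step converts the leftmost $c(1,n)$ into $c(2,n)$ by the tensor rule) produces $e_1^{s-\alpha}\sigma(b(\alpha)) = c(2,n)^{s-\alpha} c(1,2)^\alpha$. The second application of $\sigma$ is then carried out by raising this tensor to its $\{2,\ldots,n\}$-highest representative $c()^{s-\alpha} c(1,n)^\alpha$ via a sequence of $e_i$'s ($i\in\{2,\ldots,n\}$), applying~\eqref{eq:spin_involution} to obtain $c(n)^\alpha c(1)^{s-\alpha}$, and propagating back by the same $f$-sequence (which commutes with $\sigma$). The result is $e_0^{\max} b(\alpha) = c(2)^\alpha c(1,2,n)^{s-\alpha}$.

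Part~(1) is analogous. One first identifies a sequence of $f_i$'s, $i \in \{2,\ldots,n\}$, that transforms a single column $c = (1,2,\ldots,r)$ into $c' = (1,3,\ldots,r,n)$: namely, the composition
\[
F := f_2 f_3 \cdots f_{r-1} \, f_{n-1} f_{n-2} \cdots f_r
\]
(with the $f_2\cdots f_{r-1}$ block empty when $r = 2$) satisfies $F(c) = c'$, as verified by a column-level signature computation. The corresponding power version $f_2^\alpha f_3^\alpha \cdots f_{r-1}^\alpha f_{n-1}^\alpha \cdots f_r^\alpha$ applied to $u_{s\varpi_r} = c^s$ yields $b(\alpha)$. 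Since $\sigma$ commutes with these $f_i$'s, $\sigma(b(\alpha))$ is obtained from $\sigma(u_{s\varpi_r})$ by the same sequence; the element $\sigma(u_{s\varpi_r})$ is computed using $\kappa \circ \mathfrak{S} \circ \kappa^{-1}$ on the $\pm$-diagram corresponding to $u_{s\varpi_r}$. A signature count then gives $\varepsilon_0(b(\alpha)) = 2s-\alpha$ and $\varphi_0(b(\alpha)) = 0$, and iterating $e_1$ followed by a second application of $\sigma$ (again via $\pm$-diagrams) produces the claimed form of $e_0^{\max} b(\alpha)$.

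The main obstacle is the $\pm$-diagram computation in part~(1): tracking both $\sigma(u_{s\varpi_r})$ and the final $\sigma$ requires careful bookkeeping of the involution $\mathfrak{S}$ and the filling algorithm $\kappa$, especially since the intermediate element obtained after iterating $e_1$ need not be $\{2,\ldots,n\}$-highest weight. The remaining ingredients---signature counts and iterated applications of $e_1$ via the tensor product rule---are routine.
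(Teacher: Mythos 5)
Part (2) of your proposal is essentially the paper's own argument: you raise $b(\alpha)$ to $c(n)^s$ by operators with indices in $\{2,\dotsc,n\}$, use~\eqref{eq:spin_involution} to get $\sigma\bigl(b(\alpha)\bigr)=c(1,n)^{s-\alpha}c(1,2)^{\alpha}$, read off $\varepsilon_1,\varphi_1$ to obtain $\varepsilon_0,\varphi_0$, and compute $e_0^{\max}$ by applying $e_1^{s-\alpha}$ and then $\sigma$ once more after raising to a $\{2,\dotsc,n\}$-highest weight element. That matches the paper step for step and is fine.

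Part (1) is where you diverge, and where the proposal has a genuine gap. Your preliminary factorization $b(\alpha)=f_2^{\alpha}\cdots f_{r-1}^{\alpha}f_{n-1}^{\alpha}\cdots f_r^{\alpha}\,u_{s\varpi_r}$ is plausible and not the problem; the problem is that all of the actual content of the statement --- the values $\varepsilon_0\bigl(b(\alpha)\bigr)=2s-\alpha$, $\varphi_0\bigl(b(\alpha)\bigr)=0$, and the explicit columns $\overline{2}\,n\,r\cdots3$ and $\overline{1}\,\overline{2}\,r\cdots3$ of $e_0^{\max}b(\alpha)$ --- is deferred to ``a signature count'' and two $\pm$-diagram computations of $\sigma$ that are never carried out. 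You even flag the crux yourself: after $e_1^{\max}$ the element is no longer $\{2,\dotsc,n\}$-highest weight, so the second application of $\sigma$ requires exhibiting the raising word $e_{\mathbf{a}}$, the image under $\mathfrak{S}$, and the lowering word $f_{\mathbf{a}^r}$ as in~\eqref{eq:01_involution}; none of this bookkeeping is done, and nothing in the write-up verifies the nonobvious value $2s-\alpha$ (as opposed to, say, $s-\alpha$) or rules out $\varphi_0>0$. The paper sidesteps all of this with a different reduction: since nodes $1$ and $r+1,\dotsc,n-1$ are not adjacent to node $0$ in the affine Dynkin diagram, the operators $f_1^{\alpha}$ and $e_{n-1}^{\alpha},\dotsc,e_{r+1}^{\alpha}$ commute with $e_0,f_0$, and they carry $b(\alpha)$ to the tableau with left $s-\alpha$ columns $r\cdots21$ and right $\alpha$ columns $(r+1)\cdots32$, whose $e_0$-structure is already known from~\cite[Lemma~9.4]{LOS12}. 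So either supply the full $\pm$-diagram computation you sketch (it is the entire difficulty of part (1)), or use the commuting-operator reduction together with the cited lemma, as the paper does.
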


\begin{proof}
The $\{2,\dotsc, n\}$-highest weight element $e_{(r+1)^\alpha\cdots(n-1)^\alpha}f_{1^\alpha}b(\alpha)$ is 
the tableau whose left $(s-\alpha)$ columns are $r\cdots21$ and right $\alpha$ columns are 
$(r+1)\cdots32$. Note that the Kashiwara operators appearing above commute with
$e_0,f_0$. Thus (1) follows from~\cite[Lemma 9.4]{LOS12}.

Next consider (2). Notice that $e_{\bf a}b(\alpha) = c(n)^s$ where ${\bf a}=(n-1)^\alpha\cdots2^\alpha$.
Using~\eqref{eq:spin_involution}, we have
\[
	\sigma\bigl(b(\alpha)\bigr) = (f_{{\bf a}^{\bf r}}\circ\sigma\circ e_{\bf a})\bigl(c(n)^{s-\alpha}c(2)^\alpha \bigr) 
	= c(1,n)^{s-\alpha}c(1,2)^\alpha = b'.
\]

Since $\varepsilon_1(b')=s-\alpha$ and $\varphi_1(b')=0$, we have the result for 
$\varepsilon_0$ and $\varphi_0$. We then have $e_1^{s-\alpha} b'= c(2,n)^{s-\alpha}c(1,2)^\alpha=b''$.
Noting $e_{\bf a'}b'' = c()^{s-\alpha}c(1,n)^\alpha$ where ${\bf a'} = n^{s-\alpha}
(n-1)^\alpha(n-2)^s \cdots 2^s$ and calculating similarly, we obtain the desired result.
\end{proof}

For the next lemma, which will be used in the proof of Proposition~\ref{prop:spin_Rinv},
we need to characterize the elements $b \in B^{r,s}$ for $1 \leq r \leq n-2$ such that
$\varepsilon_i(b) \leq \delta_{i,n}$ for $i \in I_0$. These are the elements that differ
from the $I_0$-highest weight element $u_{\overline{\lambda}}$ by the addition of a vertical strip 
whose (column) reading word is given by $w = \cdots n \mn n \mn$, where 
$\overline{\lambda} := \wt(b) - \wt(w)$. Note that $\wt(w) \in \{\pm \epsilon_n, 0\}$ and 
$\overline{\lambda} \in P^+$.

\begin{example}
Consider the $\{1,2,3,4,5,6,7\}$-highest weight element
\[
b = \begin{tikzpicture}[baseline]
\matrix [matrix of math nodes,column sep=-.44, row sep=-.4,text height=9pt,text width=9pt,align=center,inner sep=1.4] 
 {
 	\node[draw]{1}; & 
	\node[draw]{1}; & 
	\node[draw]{1}; & 
	\node[draw,fill=gray!40]{\meight}; \\
 	\node[draw]{2}; & 
	\node[draw]{2}; & 
	\node[draw]{2}; & 
	\node[draw,fill=gray!40]{8}; \\
	\node[draw]{3}; & 
	\node[draw,fill=gray!40]{\meight}; \\
	\node[draw]{4}; & 
	\node[draw,fill=gray!40]{8}; \\
	\node[draw]{5}; \\
	\node[draw,fill=gray!40]{\meight}; \\
 };
\end{tikzpicture}
\in B(2\clfw_2+\clfw_4+\clfw_6) \subseteq B^{6,4}
\]
of type $D_8^{(1)}$. We have
\begin{align*}
	\wt(b) & = 2\clfw_2 + \clfw_5 + \clfw_7 - \clfw_8,\\ 
	w & = \meight 8 \meight 8 \meight, \\
	\wt(w) &= -\epsilon_8,\\
	\overline{\lambda} & = 2\clfw_2 + \clfw_5 = \wt(b) + \epsilon_8.
\end{align*}
Using the notation in the proof of Lemma~\ref{lemma:f0_bump} below, we have $\xi = 4$ since $y_5 = 6$ and 
$y_i = i$ for $i \leq 4$.
\end{example}

\begin{lemma}
\label{lemma:f0_bump}
Consider $b \in B(\mu) \subseteq B^{r,s}$ for $1\le r \leq n-2$ such that 
$\varepsilon_i(b) \leq \delta_{i,n}$ for all $i \in I_0$. Then $f_0 b$ is given by doing exactly one of the following:
\begin{enumerate}
\item \label{f0bump_case:column} Suppose there exists a column of height $h \ge 1$ with column reading word
$\cdots n \mn n \mn$. Then replace it with $\cdots n \mn 21$ of height $h+2$ if this yields 
a valid tableau and $h < r$.

\item \label{f0bump_case:odd} Suppose there exists a column $\Yvcentermath1\young(\mn)$ of height 1 and a column 
of height $h \geq 1$ with column reading word $\mn n \cdots \mn n 1$. Then replace the largest column of height $h$ 
with the column of height $h+2$ with column reading $\mn n \mn \cdots n \mn n \mn 21$ and the column $\young(\mn)$ 
with the column $\young(1)$ if this yields a valid tableau and $h < r$.

\item \label{f0bump_case:other} In all other cases, slide in a vertical domino $\Yvcentermath1\young(1,2)$ from 
the left at height 0 unless $\mu_1 = s$, in which case $f_0 b = 0$.
\end{enumerate}
\end{lemma}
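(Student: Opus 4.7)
The plan is to unpack the definition $f_0 = \sigma \circ f_1 \circ \sigma$ and track each factor using the explicit description of $\sigma$ via~\eqref{eq:01_involution}, the $\pm$-diagram bijection $\kappa$, and the involution $\mathfrak{S}$. The hypothesis $\varepsilon_i(b) \leq \delta_{i,n}$ for $i \in I_0$ forces $b$ to be built from the $I_0$-highest weight element $u_{\overline{\lambda}}$ by adjoining a vertical strip whose column reading word is $\cdots n\,\mn n\,\mn$; in particular $b$ is $\{2,\dots,n\}$-highest up to at most one application of $e_n$ to trim a trailing $\mn$. I would first produce the KN tableau $b_0 := e_{\mathbf a} b$ explicitly and then read off its $\pm$-diagram $P := \kappa^{-1}(b_0)$ by inverting the three-step recipe defining $\kappa$, so each column of $P$ is identified as a no-sign, $+$-only, $-$-only, or $\mp$ column depending on how the $n$s and $\mn$s in $b_0$ are distributed.

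Next I would apply $\mathfrak{S}$, which swaps $+$-only and $-$-only columns of each height $h$ and swaps $\mp$ columns of height $h\ge 2$ with no-sign columns of height $h-2$; reading the result back via $\kappa$ and applying $f_{\mathbf a^r}$ produces $\sigma(b)$ explicitly. I would then apply $f_1$, which acts by changing a unique $1$ to a $2$ in the specific location coming from the image of the vertical strip, or annihilates $\sigma(b)$ when no such $1$ is available (corresponding to $f_0 b = 0$). A final application of $\sigma$ returns us to $B^{r,s}$. The verification that this composition coincides with (1)--(3) reduces to a case analysis keyed on the shape of the vertical strip of $b$: in case (1) the longest column carries a tail $\cdots n\,\mn n\,\mn$ of height $h\ge 1$, so the corresponding column of $P$ is of $\mp$ type and contracts by $2$ under $\mathfrak{S}$, producing after the final $\sigma$ a lengthened column capped with $21$; in case (2) a lone height-$1$ column $\Yvcentermath1\young(\mn)$ is paired with a column of reading $\mn n \cdots \mn n 1$, so the interaction of these column types under $\mathfrak{S}$ plus $f_1$ yields the claimed substitution; and case (3) is the remaining situation, where the only admissible move corresponds to sliding a $\Yvcentermath1\young(1,2)$ domino in from the left.

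The main obstacle is the careful bookkeeping through $\mathfrak{S}$, particularly the distinction between (1) and (2): one must identify precisely which columns of $P$ are of $\mp$ type versus $-$-only type, so as to predict whether a height-$h$ column contracts to height $h-2$ (as required for the column-lengthening in (1) and (2)) or simply swaps sign (as in (3)). The constraints $h < r$ and $\mu_1 < s$ arise because the resulting $\pm$-diagram must remain a valid one for $B^{r,s}$, namely have inner and outer shapes fitting inside the $r \times s$ rectangle; when they fail, $f_0 b$ must vanish.
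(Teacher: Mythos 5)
Your overall skeleton is forced by the definition $f_0=\sigma\circ f_1\circ\sigma$ and coincides with the paper's proof: compute $\sigma(b)$ through $e_{\bf a}$, $\kappa^{-1}$, $\mathfrak{S}$, $\kappa$, $f_{{\bf a}^r}$, apply $f_1$, and return via $\sigma$, with a case analysis keyed to the vertical strip. However, two of your concrete claims about the bookkeeping are wrong, and they are precisely the points on which the case analysis rests. First, $b$ is \emph{not} ``$\{2,\dotsc,n\}$-highest up to at most one application of $e_n$.'' The hypothesis only gives $\varepsilon_i(b)=0$ for $i<n$ and $\varepsilon_n(b)\le 1$; once a single $e_n$ (or $e_{n-1}$) turns an $\mn$ into $n-1$, the letters $n-1,n-2,\dotsc$ become raisable, and reaching the $\{2,\dotsc,n\}$-highest weight element requires the full cascade
$e_{\bf a} b = (e_{y_k}\dotsm e_{n-2}e_{n'})\dotsm(e_{y_1+1}\dotsm e_{n-2}e_n)b$,
one block for every cell of the strip. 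The combinatorial parameter extracted from this sequence --- the length $\xi$ of the initial run of strip cells with $y_j=j$ --- together with the height $\chi$ of the neighboring column, is exactly what separates cases (\ref{f0bump_case:column}), (\ref{f0bump_case:odd}), (\ref{f0bump_case:other}) and produces the conditions $h<r$, $\mu_1<s$ and the vanishing statements. With your truncated $\bf a$ you would compute the wrong $\{2,\dotsc,n\}$-highest representative, hence the wrong $\pm$-diagram, and you would also lose the essential interaction of the lowering string $f_{{\bf a}^r}$ with $\kappa\bigl(\mathfrak{S}(P_b)\bigr)$ (e.g.\ converting the bottom-left entry $\overline{\xi+2}$ into $\overline{\chi+1}$), which is where the distinction between the column-lengthening cases and the domino case is actually decided.

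Second, your description of $P=\kappa^{-1}(e_{\bf a}b)$ is off. Because $e_{\bf a}b$ is close to $u_\mu$, the diagram $P_b$ has a $+$ in essentially every column, with at most one column (of height $\xi$) carrying no sign; it contains no $\mp$ or $-$-only columns. The column that eventually grows by two in cases (\ref{f0bump_case:column}) and (\ref{f0bump_case:odd}) does not arise from a $\mp$ column of $P$ contracting under $\mathfrak{S}$; rather, the unsigned column of height $\xi$ in $P_b$ \emph{expands} under $\mathfrak{S}$ to a $\mp$ column of height $\xi+2$, and only after applying $f_{{\bf a}^r}$, $f_1$, and comparing $P_{f_1(\sigma(b))}$ with $\mathfrak{S}(P_b)$ does one see that $f_0b$ differs from $b$ by the claimed column replacement, while the ``slide in $\young(1,2)$'' and $f_0b=0$ alternatives come from whether $\sigma(b)$ contains a $1$ or $\mtwo$ at all (equivalently whether $\mathfrak{S}(P_b)$ has height-one $+$ columns, i.e.\ $\mu_1<s$). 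As written, your case (1)/(2) mechanism would not reproduce the statement, so the proposal has a genuine gap in execution even though the strategy is the right one.
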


\begin{proof}
We use the notation introduced just before the lemma.
Let $y_i$ denote the heights of cells of $\mu / \overline{\lambda}$ read from top to bottom. Let $\xi$ be the 
largest value such that $y_j = j$ for all $1 \leq j \leq \xi$ (which could be 0).
The following element is the $\{2,\dotsc,n\}$-highest weight element in the same component as $b$:
\begin{align*}
	e_{\bf{a}} b := \, & (e_{y_k} \dotsm e_{n-2} e_{n'}) \dotsm (e_{y_{\xi+1}} e_{y_{\xi+1}+1} \dotsm e_{n-2} e_{n'''})\\ 
	& (e_{y_\xi+1} \dotsm e_{n-2} e_{n''}) \dotsm (e_{y_1+1} \dotsm e_{n-2} e_n) b,
\end{align*}
where $k = \lvert \mu / \overline{\lambda} \rvert$ and $n'' = n-1,n$ depending on the parity of $\xi$ (equivalently $r$ 
since the column heights of $\mu$ must also have the same parity) and $n''' = n,n-1$ respectively (i.e., reversed parity of 
$\xi$). Let $P_b := \kappa^{-1}(e_{\bf{a}} b)$ be the corresponding $\pm$-diagram.

We first assume that there are no $\mn$ letters in the first row, that is, we are in Case~(\ref{f0bump_case:other}). 
Note that we have $e_{\bf{a}} b = u_{\mu}$, the corresponding $\pm$-diagram $P_b$ is of outer shape $\mu$ with 
only a $+$ in all columns, and $\xi = 0$ (note that all of these conditions are equivalent).
We first consider the case when $\mu_1 = s$. Then $\mathfrak{S}(P_b)$ is also of outer shape $\mu$ with 
only a $-$ in all columns. Thus $\kappa\bigl(\mathfrak{S}(P_b)\bigr)$ is given by columns of the form 
$23\dotsm h\mone$, and we obtain $\sigma(b)$ by making the entry at height $i$ an $n$ (resp. $\mn$) 
if there is a $n$ (resp. $\mn$) in the same column in $b$ at height $i+1$.
Hence there are no $1$ nor $\mtwo$ entries in $\sigma(b)$, and we have $f_1\bigl(\sigma(b) \bigr) = 0$. 
Therefore, $f_0 b = 0$ as desired.

Now we consider the case when $\mu_1 < s$. Here, $\mathfrak{S}(P_b)$ contains $s - \mu_1$ columns with a $\pm$ 
of height 2. Note that these are the only $+$ signs occurring in $\mathfrak{S}(P_b)$ since $e_{\bf{a}} b = u_{\mu}$. 
Thus each of the $+$ signs in these columns changes either a $2$ to a $1$ or a $\mone$ to a $\mtwo$ when computing 
$\kappa\bigl(\mathfrak{S}(P_b)\bigr)$, and we obtain $n,\mn$ in $\sigma(b)$ as in the previous case. 
Hence $f_1$ changes the last $1$ or $\mtwo$ in the reading word in $\sigma(b)$. So $P_{f_1(\sigma(b))}$ differs 
from $\mathfrak{S}(P_b)$ by removing the leftmost $+$ sign, and therefore $f_0 b$ differs from $b$ by the addition 
of a column $\Yvcentermath1\young(1,x)$, where $x = 2$ or $\mn$ is the rightmost entry in the second row of $b$, as desired.

Now assume we are in Case~(\ref{f0bump_case:odd}), that is, there is a (necessarily unique) column 
$\Yvcentermath1\young(\mn)$ and a column $c = \dotsm \mn n 1$ of height $h$ (we pick the largest leftmost such column
if several exist). Note that $r$ must be odd, $h \leq \xi$, and $\mu_1 = s$. We first consider the case when $\xi < r$.  
Then $P_b$ is of outer shape $\mu$ with only $+$ in every column except for a column of height $\xi$ with no sign. There 
is exactly one column $c'$ with a $+$ in $\mathfrak{S}(P_b)$, and the $+$ is at height $\xi+1$. Hence 
$\kappa\bigl(\mathfrak{S}(P_b)\bigr)$ is the tableau where the leftmost column is of the form 
$\overline{\xi + 2} k \dotsm 32$ and all other columns of height $k$ are of the form $\mone k \dotsm 32$ (or only 
contain $\mone$ if of height 1). Next we apply the sequence $f_{\bf{a}^r}$.
This will change $\overline{\xi+2}$ to $\overline{\chi+1}$, where $\chi$ is the height of the column to the right of $c'$.
Note that $\chi = 1$ precisely when $\xi = h$ and the column to the left of $c$ has height strictly greater than $c$
(which is also precisely when Case~(\ref{f0bump_case:odd}) applies and yields a valid tableau).
It is straightforward to check that $\sigma(b)$ does not contain any additional $1$ or $\mtwo$ entries; 
more explicitly, the other changed entries either become $n$ or $\mn$. Thus 
$f_1\bigl(\sigma(b) \bigr)$ changes the $\mtwo$ to a $\mone$ if Case~(\ref{f0bump_case:odd}) applies
and $f_1\bigl(\sigma(b) \bigr) = 0$ otherwise. Therefore it is easy to see that our claim follows using the fact that 
$P_{f_1(\sigma(b))}$ has a $-$ in all columns of shape $\mu'$, which is the outer shape of $\mathfrak{S}(P_b)$. 
If $\xi = r$, then $\mathfrak{S}(P_b)$ contains no $+$ signs, and it is easy to see that 
$f_1\bigl( \sigma(b) \bigr) = 0$ as there are no $1$ nor $\mtwo$ entries in $\sigma(b)$.

Lastly, assume that there exists a column $\dotsm n \mn n \mn$ of height $\ge 1$ and we are not in Case~(\ref{f0bump_case:odd}).
We first consider Case~(\ref{f0bump_case:other}), where either $\xi > h$ or $\mu_1 - \mu_{h+1} > 1$ 
(if $\ell(\mu) \leq h$, we consider $\mu_{h+1} = 0$). Therefore $P_b$ has a $+$ in every non-empty column 
except the leftmost column of height $\xi$. Hence the leftmost $+$ in $\mathfrak{S}(P_b)$ is at height $\xi+1$, and if 
$\xi = r$, then there is no such $+$. Moreover, the remaining $s - \mu_1$ number of $+$ signs occur at height 1. 
Thus $\kappa\bigl(\mathfrak{S}(P_b)\bigr)$ has a bottom left entry of $\overline{\xi+2}$ if $\xi < r$, of 
$\mtwo$ if $\mu_1 < s$ and $\xi = r$, or of $r+1$ otherwise. Thus from the description of $\bf{a}$, we have the bottom 
left entry $x$ of $\sigma(b)$ as follows. If $\xi < r$, then $x = \overline{\chi+1}$, since it transforms under $f_{y_j}$ for 
each $\chi < j < \xi$, where $\chi$ is the height of the column to the right of the column which contains the leftmost $+$ 
in $\mathfrak{S}(P_b)$, similar to above. 
If $\mu_1 < s$ and $\xi = r$, then $x = \mtwo$. Otherwise $x = n$ or $x = \mn$ depending 
on the parity of $\xi$. In addition, all $1$ and $\mtwo$ entries, of which there are $s - \mu_1$ many of them, are 
unchanged from $\kappa\bigl(\mathfrak{S}(P_b)\bigr)$. For $j > \xi$, all other changed entries are $\mn, n$ in 
$\sigma(b)$ as above. If $\mu_1 = s$, then there are no additional $+$ signs in $\mathfrak{S}(P_b)$ other 
than the one at height $\xi + 1$, and hence there are no $1, \mtwo$ entries in $\sigma(b)$. Therefore 
$f_1\bigl(\sigma(b) \bigr) = 0$ and $f_0 b = 0$ as desired. Now if $\mu_1 < s$, then $f_1$ changes the 
rightmost $1$ or $\mtwo$ in the reading word in $\sigma(b)$. Thus $P_{f_1(\sigma(b))}$ differs from 
$\mathfrak{S}(P_b)$ by removing the leftmost $+$ sign in a column of height 1. Thus it is easy to see 
that $f_0 b$ differs from $b$ as claimed.

Now we consider Case~(\ref{f0bump_case:column}), so that $\xi = h$ and $\mu_1 - \mu_{h+1} = 1$. The tableau
$\sigma(b)$ is similar to the above except that the bottom left entry has to be $\mtwo$ since it transforms under 
$f_{y_j}$ for all $j$. Thus in this case, $P_{f_1(\sigma(b))}$ differs from $\mathfrak{S}(P_b)$ by removing the 
leftmost $+$ sign at height $h+1$. Thus it can be easily checked that $f_0 b$ is as claimed.
\end{proof}

\subsection{Lusztig's involution on $B^{r,s}$}
\label{subsection.lusztig}

Let $w_0$ be the longest element of the Weyl group of type $D_n$. There exists a type $D_n$
Dynkin diagram automorphism $\tau \colon I_0 \to I_0$ satisfying
\[
w_0 \varpi_i  = -\varpi_{\tau(i)}\quad\text{and}\quad
w_0 \alpha_i  = -\alpha_{\tau(i)}.
\]
In fact, $\tau$ is the identity if $n$ is even, and interchanges $n-1$ and $n$ and
fixes all other Dynkin nodes if $n$ is odd. 

On a $U_q(D_n)$-crystal $B(\la)$, it is known~\cite{S:2005,SS:X=M} that there exists a unique involution,
called \defn{Lusztig's involution}, $\lusz \colon B(\la) \to B(\la)$ satisfying
\begin{equation}
\label{eq:dual_map}
	\wt(b^{\lusz}) = w_0 \wt(b),\quad
	(e_i b)^{\lusz} = f_{\tau(i)}b^{\lusz},\quad
	(f_ib)^{\lusz} = e_{\tau(i)}b^{\lusz}.
\end{equation}
As seen from~\eqref{eq:dual_map}, $\lusz$ sends the $I_0$-highest weight element of $B(\la)$ to the $I_0$-lowest weight element,
which is the element that satisfies $f_i b=0$ for all $i\in I_0$. By defining $\tau(0)=0$, we extend
$\tau$ to the Dynkin diagram of type $D_n^{(1)}$ and the involution $\lusz$ on the KR
crystal $B^{r,s}$.
For a crystal $B$, let $B^{\lusz}$ be the crystal with the same set as $B$, but with the crystal structure given by~\eqref{eq:dual_map}. There is a natural isomorphism of crystals
\begin{equation} \label{* tensor}
	(B_2 \otimes B_1)^{\lusz} \iso B_1^{\lusz} \otimes B_2^{\lusz}
\end{equation}
such that $(b_2 \otimes b_1)^{\lusz} = b_1^{\lusz} \otimes b_2^{\lusz}$.

\subsection{Left and right-split on $B^{r,s}$} \label{subsec:lr-split}

In~\cite{OSS:2013}, we defined the \defn{filling map} $\mathrm{fill}$ on $B^{r,s}$ for $1\le r\le n-2,s\ge1$. 
For an $I_0$-highest weight element $u_\la$, $\mathrm{fill}(u_\la)$ is a tableau of rectangular shape $(s^r)$
which does not necessarily satisfy the conditions of KN tableaux in general. However, the filling map
is necessary for the path to rigged configuration bijection.

The filling map can be defined inductively by cutting the leftmost column.

\begin{definition} \label{def:left split}
Let $\la=k_p\varpi_p+k_q\varpi_q+\sum_{0\le j<q}k_j\varpi_j$ 
($p>q,k_p,k_q>0,k_p+k_q+\sum_{0\le j<q}k_j=s$). Here we have set $\varpi_0=0$.
We define the map, which we call \defn{left-split}, $\ls \colon B^{r,s} \to B^{r,1}\ot B^{r,s-1}$
for $1\le r\le n-2,s\ge2$ as follows.
For $\la$ define a pair $(c,\la')$ of a column $c$ of height $r$ and a weight $\la'$ by:
\begin{enumerate}
\item \label{ls_p_eq_r} If $p=r$, then
\begin{align*}
c &= r \cdots 21,
\\ \la' &= (k_r-1)\varpi_r+k_q\varpi_q+\sum_{j<q}k_j\varpi_j.
\end{align*}
\item \label{ls_p_lt_r} If $p<r$ and $k_p\ge2$, then 
\begin{align*}
c &= \ol{p+1}\cdots \ol{r} p \cdots 21,
\\ \la' &= \varpi_r+(k_p-2)\varpi_p+k_q\varpi_q+\sum_{j<q}k_j\varpi_j.
\end{align*}
\item \label{ls_kp1} If $p<r$ and $k_p=1$, then
\begin{align*}
c & = \ol{p+1} \cdots \ol{r} r \cdots q(r-p+q+1) \cdots 21,
\\ \la' & = \varpi_{r-p+q}+(k_q-1)\varpi_q+\sum_{j<q}k_j\varpi_j.
\end{align*}
\end{enumerate}
We remark that one can regard $c$ as an element of the $I_0$-crystal $B^{r,1}$ by embedding
into $B(\varpi_1)^{\ot r}$ via the column reading. 
For $u_\la\in B^{r,s}$ we define $\ls(u_\la)=c\ot u_{\la'}\in B^{r,1}\ot B^{r,s-1}$.
The image $\ls(b)$ for an arbitrary element $b\in B^{r,s}$ is defined in such a way
that $e_i,f_i$ ($i\in I_0$) commute with $\ls$.
\end{definition}

For $B^{r,s}$ with $r=n-1$ or $n$, the left split map 
$\ls \colon B^{r,s} \to B^{r,1}\ot B^{r,s-1}$ is defined
for the unique $I_0$-highest weight element $u_{s\varpi_r}$ as $\ls(u_{s\varpi_r})=
u_{\varpi_r}\ot u_{(s-1)\varpi_r}$ and extended to any element again by the commutativity
with  $e_i,f_i$ ($i\in I_0$).

The \defn{right-split} map $\rs \colon B^{r,s} \to  B^{r,s-1}\ot B^{r,1}$ is defined by 
$\rs=\lusz \circ\ls\circ \lusz$ using Definition \ref{def:left split} and \eqref{* tensor}.
It also commutes with $e_i,f_i$ ($i\in I_0$). 
For $u_{s\varpi_r}\in B^{r,s}$, the right split map is given by
$\rs(u_{s\varpi_r})=u_{(s-1)\varpi_r}\ot u_{\varpi_r}$. 
For the explicit form of $\rs(u_\la)$ for general $u_\la\in B^{r,s}$ when $r\le n-2$ 
we have the following proposition.

\begin{proposition} \label{lem:rc on KN}
Let $\la=\varpi_p+\varpi_q+\mu$ where $p\ge q$ and $\mu=\sum_{a,j_a\le q}\varpi_{j_a}$.
For $u_\la\in B^{r,s}$ ($1\le r\le n-2,s\ge2$), $\rs(u_\la)$ is given by $t\ot u_{\varpi_r}$ 
where $t$ is represented as a KN tableau as follows. The first column is 
\[
\ol{p+1} \cdots \ol{r-1} \ol{r} q \cdots 21
\]
and the other part is the KN tableau for $u_{\mu}$. 
\end{proposition}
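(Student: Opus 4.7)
The plan is to unwind the defining equation $\rs = \lusz \circ \ls \circ \lusz$. Applying the tensor-swap formula \eqref{* tensor}, the claim $\rs(u_\la) = t \otimes u_{\varpi_r}$ is equivalent to
\[
\ls(u_\la^\lusz) = u_{\varpi_r}^\lusz \otimes t^\lusz
\]
in $B^{r,1} \otimes B^{r,s-1}$. Since $p$, $q$, and every $j_a$ lie in $\{0,1,\ldots,r\}$ with $r \le n-2$, the Dynkin involution $\tau$ fixes every node appearing in $u_\la$, so $u_\la^\lusz$ is the KN tableau of shape $\la$ whose row $i$ is uniformly filled with $\ol{i}$; in particular its leftmost column reads $\ol{1}\cdots\ol{p}$ from top to bottom, while $u_{\varpi_r}^\lusz$ is the single column $\ol{1}\cdots\ol{r}$.

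I would then invoke the $I_0$-equivariance of $\ls$ to identify $\ls(u_\la^\lusz)$ as the $I_0$-lowest weight element of the $B(\la)$-subcrystal of $B^{r,1} \otimes B^{r,s-1}$ generated by $\ls(u_\la) = c \otimes u_{\la'}$, where $c$ and $\la'$ come from Definition~\ref{def:left split}. Writing $\la = \varpi_p + \varpi_q + \mu$, exactly one of the three sub-cases of that definition applies: (a) $p = r$, (b) $p < r$ with $p = q$ (so $k_p \ge 2$), or (c) $p < r$ with $p > q$ (so $k_p = 1$). In each sub-case one writes down $c$ and $\la'$ directly from the definition.

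The final step is to verify that $u_{\varpi_r}^\lusz \otimes t^\lusz$ is this $I_0$-lowest weight element. This amounts to (i) checking $f_i(u_{\varpi_r}^\lusz \otimes t^\lusz) = 0$ for every $i \in I_0$ via the tensor-product rule and the explicit column structure of $t^\lusz$, and (ii) exhibiting an explicit $e$-chain from $u_{\varpi_r}^\lusz \otimes t^\lusz$ back to $c \otimes u_{\la'}$ inside $B^{r,1} \otimes B^{r,s-1}$. The central combinatorial observation is that the barred fill-up letters $\ol{p+1},\ldots,\ol{r}$ appearing at the bottom of the split-off column $c$ in Definition~\ref{def:left split} correspond under $\lusz$ exactly to the barred fill-up letters $\ol{p+1},\ldots,\ol{r}$ in the first column of $t$, while the remaining columns on both sides arise from $u_{\la'}$ and $u_\mu$ matched in the obvious way.

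The main obstacle is part (ii): the classical decomposition of $B^{r,1} \otimes B^{r,s-1}$ may have $B(\la)$ with multiplicity greater than one, so being $I_0$-lowest weight of weight $w_0\la$ does not by itself determine the component. I would handle this by constructing the $e$-chain in two stages: first, a straightening sequence $e_{p+1}e_{p+2}\cdots e_r$ applied to the first column of $t^\lusz$, which replaces its barred fill-up entries with unbarred ones that match the leftmost column of $u_{\la'}$, and second, the standard raising sequence carrying $u_{\la'}^\lusz$ up to $u_{\la'}$ inside $B^{r,s-1}$. Checking that each step is compatible with the tensor-product rule for $e_i$ then completes the identification of $u_{\varpi_r}^\lusz \otimes t^\lusz$ with the desired lowest weight element and yields the claimed formula for $\rs(u_\la)$.
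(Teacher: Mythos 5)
Your overall frame is the same as the paper's: unwind $\rs=\lusz\circ\ls\circ\lusz$, reduce the claim to identifying $\ls(u_\la^{\lusz})$, and, because $B(\la)$ may occur with multiplicity in $B^{r,1}\otimes B^{r,s-1}$, pin down the component by an explicit chain of Kashiwara operators joining the candidate element to $\ls(u_\la)=c\otimes u_{\la'}$. The problem is that both concrete ingredients you supply for this plan are incorrect. First, your description of $u_\la^{\lusz}$ is wrong: the array whose row $i$ is filled with $\ol{\imath}$ is not a KN tableau (entries must strictly increase down a column in the order $1<\cdots<n,\ol{n}<\cdots<\ol{1}$, so $\ol{1}$ cannot sit above $\ol{2}$). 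The actual $I_0$-lowest weight tableau has each column of height $h$ equal to $\ol{h},\ol{h-1},\dots,\ol{1}$ read top to bottom (column word $\ol{1}\,\ol{2}\cdots\ol{h}$); this is what the paper uses when it says the leftmost column of $u_\la^{\lusz}$ is $\ol{1}\cdots\ol{r-1}\,\ol{r}$ in the case $p=r$.

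Second, and fatally, the ``straightening sequence $e_{p+1}e_{p+2}\cdots e_r$'' cannot exist: for $i\le n-2$ the operator $e_i$ only changes a letter $i+1$ into $i$ or a letter $\ol{\imath}$ into $\ol{i+1}$, so no sequence of such operators ever converts the barred fill-up letters $\ol{p+1},\dots,\ol{r}$ of the split column into the unbarred letters needed to reach $c\otimes u_{\la'}$ (whose column $c$ in cases~(\ref{ls_p_lt_r}) and~(\ref{ls_kp1}) of Definition~\ref{def:left split} itself mixes barred and unbarred entries). In type $D_n$ such a conversion forces the chain to pass through the nodes $n-1$ and $n$, and the precise word is long and delicate --- this is exactly the content of the paper's Lemma~\ref{lem:rs}(2), whose word $\mathbf{a}$ is built from blocks $\mathbf{b}_i$ containing $(n-1)$, $n$ and repeated $(n-2)$'s, together with the explicit non-KN two-column lowest weight elements of Lemma~\ref{lem:rs}(1). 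Without a substitute for that lemma your step~(ii), which you correctly identified as the crux of the multiplicity issue, does not go through, so the cases $p<r$ remain unproved (the case $p=r$ does essentially work in your framework, as in the paper). A further point your chain sidesteps only because it wrongly avoids the spin nodes: once operators $e_{n-1},e_n$ enter, the twist in $(e_ib)^{\lusz}=f_{\tau(i)}b^{\lusz}$ with $\tau$ interchanging $n-1$ and $n$ for odd $n$ has to be accounted for.
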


\begin{proof}
For an $I_0$-highest weight element $u$, let $u\gotolowest v$ represent
that $v$ is the $I_0$-lowest weight element corresponding to $u$.

(1) Consider the case when $p=r$. Write $\lambda = \varpi_r+\mu$. 
Then as an $I_0$-crystal we can regard $u_\la$ as $u_{\varpi_r}\ot u_\mu$ where
$u_{\varpi_r}$ corresponds to the column $c$ split in Definition~\ref{def:left split}~(i).
Since $u_{\varpi_r}\otimes u_\mu \gotolowest v_{-\varpi_r}\otimes v_{-\mu}$,
where $v_\xi$ stands for the lowest weight element of weight $\xi$, 
we see that the leftmost column of $u_\la^{\lusz}$ is $\mone\cdots\overline{r-1}\overline{r}$ and the rest is 
$v_{-\mu}$ in the KN tableau representation. Hence, by applying $\lusz \circ\ls$ we obtain $u_{\varpi_q+\mu}$,
which is the desired result.

To prove the other cases we need the following lemma.
\begin{lemma}
\label{lem:rs}
Let $r > p\ge q$. We have:

(1)
\[
\begin{array}{|c|c|}
\hline
1&1\\\hline
\vdots&\vdots\\\hline
q&\\\hline
r-p+q+1&\\\hline
\vdots&\\\hline
r&r-p+q\\\hline
\overline{r}\\\cline{1-1}
\vdots\\\cline{1-1}
\raisebox{-2pt}{$\overline{p+1}$} \\\cline{1-1}
\end{array}
\quad
\gotolowest
\quad
\begin{array}{|c|c|}
\hline
\overline{r}&p+1\\\hline
\vdots&\vdots\\\hline
&r\\\hline
&\overline{q}\\\hline
&\vdots\\\hline
&\raisebox{-2pt}{$\overline{1}$}\\\hline
\raisebox{-2pt}{$\overline{1}$}\\\cline{1-1}
\end{array}
\]
where the LHS is an $I_0$-highest weight element of weight $\varpi_p+\varpi_q$, when
viewed inside $B(\varpi_1)^{\otimes (p+q)}$. Note that these are not KN tableaux.

(2)
\[
\begin{array}{|c|}
\hline
p+1\\\hline
\vdots\\\hline
r\\\hline
\overline{q}\\\hline
\vdots\\\hline
\raisebox{-2pt}{$\overline{1}$}\\\hline
\end{array}
=e_{\bf{a}}\;
\begin{array}{|c|}
\hline
\raisebox{-1pt}{$\overline{r-p+q}$}\\\hline
\vdots\\\hline
\raisebox{-2pt}{$\overline{1}$}\\\hline
\end{array}
\]
where 
\begin{align*}
\bf{a}&={\bf{b}}_0{\bf{b}}_1\cdots{\bf{b}}_{r-p+1}{\bf{c}}_{n-r+p-q-1}{\bf{c}}_{n-r+p-q-2}\cdots{\bf{c}}_1,\\
{\bf{b}}_i&=(r-i)(r-i+1)\cdots(n-i-2)(n-i-1)^2\cdots(n-2)^2(n-1)n,\\
{\bf{c}}_j&=(q+j)(q+j+1)\cdots(r-p+q+j-1),
\end{align*}
and for a word $\bf{a}$ $e_{\bf{a}}$ is defined in \eqref{e sequence}. $\bf{a}$ contains
only letters larger than $q$.
\end{lemma}

(2) Consider the case when $r>p=q$. We have $\lambda = 2\varpi_p + \mu$. 
Then we can regard $u_\la$ as $u_{2\varpi_p}\ot u_\mu$, where $u_{2\varpi_p}$ corresponds
to the LHS of Lemma~\ref{lem:rs} (1) with $p=q$ split two times by Definition~\ref{def:left split}~(ii) and~(i). 
Then we have
\[
u_\lambda = u_{2\varpi_p}\otimes u_\mu \gotolowest
v_{-2\varpi_p} \otimes v_{-\mu}=
\begin{array}{|c|c|}
\hline
\overline{r}&p+1\\\hline
\vdots&\vdots\\\hline
&r\\\hline
&\overline{p}\\\hline
&\vdots\\\hline
\raisebox{-2pt}{$\overline{1}$}&\raisebox{-2pt}{$\overline{1}$}\\\hline
\end{array}
\otimes v_{-\mu}.
\]
Hence, the rest of $u_\la$ by cutting the leftmost column is 
\[
\begin{array}{|c|}
\hline
p+1\\\hline
\vdots\\\hline
r\\\hline
\overline{p}\\\hline
\vdots\\\hline
\raisebox{-2pt}{$\overline{1}$}\\\hline
\end{array}
\otimes v_{-\mu}=e_{\bf{a}}v_{-\varpi_r}\otimes v_{-\mu}=e_{\bf{a}}(v_{-\varpi_r}\otimes v_{-\mu}).
\]
Here we have used Lemma~\ref{lem:rs}~(2). By applying $\lusz$ we obtain 
\[
f_{\bf{a}}(u_{\varpi_r}\otimes u_\mu)=f_{\bf{a}}
\left(
\begin{array}{|c|}
\hline
1\\\hline
\vdots\\\hline
r\\\hline
\end{array}
\otimes u_\mu
\right)=
\begin{array}{|c|}
\hline
1\\\hline
\vdots\\\hline
p\\\hline
\overline{r}\\\hline
\vdots\\\hline
\raisebox{-2pt}{$\overline{p+1}$}\\\hline
\end{array}
\otimes u_\mu
\]
as desired.

(3) Finally consider the case when $r>p>q$. 
We follow the same procedure as (2) and write $\lambda =\varpi_p+\varpi_q+\mu$. 
Then we can regard $u_\la$ as $u_{\varpi_p+\varpi_q}\ot u_\mu$ where $u_{\varpi_p+\varpi_q}$
corresponds to the LHS of Lemma \ref{lem:rs} (1). Its left column is split by Definition 
\ref{def:left split} (iii). By using Lemma \ref{lem:rs} (2) again we have
\[
u_\lambda=u_{\varpi_p+\varpi_q}\otimes u_\mu \gotolowest
v_{-\varpi_p-\varpi_q}\otimes v_{-\mu}=
\begin{array}{|c|}
\hline
\overline{r}\\\hline
\vdots\\\hline
\raisebox{-2pt}{$\overline{1}$} \\\hline
\end{array}
\otimes e_{\bf{a}}(v_{-\varpi_{r-p+q}}\otimes v_{-\mu}).
\]
Applying $\lusz$ we obtain 
\[
f_{\bf{a}}(u_{\varpi_{r-p+q}}\otimes u_\mu)=f_{\bf{a}}
\left(
\begin{array}{|c|}
\hline
1\\\hline
\vdots\\\hline
r-p+q\\\hline
\end{array}
\otimes u_\mu
\right)=
\begin{array}{|c|}
\hline
1\\\hline
\vdots\\\hline
q\\\hline
\overline{r}\\\hline
\vdots\\\hline
\raisebox{-2pt}{$\overline{p+1}$}\\\hline
\end{array}
\otimes u_\mu.
\]
\end{proof}

\subsection{Paths and various operations on them} \label{subsec:path}

Let $B$ be a tensor product of KR crystals.
We start with the definition of a path of $B$.

\begin{definition}
\label{definition.paths}
A \defn{path} of $B$ is an $I_0$-highest weight element of the crystal $B$.
The set of paths of $B$ is denoted by $\mathcal{P}(B)$. We also set
$\mathcal{P}(B,\lambda)=\{b\in \mathcal{P}(B)\,|\,\wt(b)=\lambda\}$ for
a dominant integral weight $\la$.
\end{definition}

For later purposes, we introduce several operations on paths.

\begin{definition}\label{def:operations_on_tableaux}
For a path $b=b_k\otimes b_{k-1}\otimes\cdots\otimes b_1\in
B^{r_k,s_k}\otimes B^{r_{k-1},s_{k-1}}\otimes\cdots\otimes B^{r_1,s_1}$
we define the following operations:
\begin{enumerate}
\item[(1)]
Suppose that $B^{r_k,s_k}=B^{1,1}$. Then we define the operation called \defn{left-hat} by
\[
\lh(b)=b_{k-1}\otimes\cdots\otimes b_1\in
B^{r_{k-1},s_{k-1}}\otimes\cdots\otimes B^{r_1,s_1}.
\]
\item[(1')] Suppose that $B^{r_k,s_k} = B^{r_k,1}$ and $r_k = n-1,n$. Then we define the operation 
called \defn{left-hat-spin} by
\[
\lh_s(b) = b_{k-1} \otimes \cdots \otimes b_1 \in
B^{r_{k-1},s_{k-1}} \otimes \cdots \otimes B^{r_1,s_1}.
\]
\item[(2)]
Suppose that $B^{r_k,s_k}=B^{r_k,1}$ and $2 \leq r_k \leq n-2$, so that
$b_k$ has the form
$b_k=
\begin{array}{|c|}
\hline
t_1\\
\hline
\vdots\\
\hline
t_{r_k-1}\\
\hline
t_{r_k}\\
\hline
\end{array}$ .
Then we define the operation called \defn{left-box} by
\[
\mathrm{lb}(b)=
\begin{array}{|c|}
\hline
t_{r_k}\\
\hline
\end{array}\otimes
\begin{array}{|c|}
\hline
t_1\\
\hline
\vdots\\
\hline
t_{r_k-1}\\
\hline
\end{array}
\otimes b_{k-1}\otimes\cdots\otimes b_1\in B,
\]
where $B = 
B^{1,1}\otimes B^{r_k-1,1}\otimes
B^{r_{k-1},s_{k-1}}\otimes\cdots\otimes B^{r_1,s_1}$.
\item[(3)]
Suppose that $s_k>1$.
We define the operation called \defn{left-split} by
\[
\mathrm{ls}(b)=
\mathrm{ls}(b_k)\otimes b_{k-1}\otimes\cdots\otimes b_1\in B,
\]
where $B=B^{r_k,1}\otimes B^{r_k,s_k-1}\otimes B^{r_{k-1},s_{k-1}}\otimes
\cdots\otimes B^{r_1,s_1}$.
See Definition \ref{def:left split} for $\mathrm{ls}$ in the single KR crystal.
\end{enumerate}
\end{definition}

Next we define right analogs of $\lh$, $\lh_s$, $\lb$, and $\ls$. Let 
$B=B^{r_k,s_k}\ot\cdots\ot B^{r_1,s_1}$. For $b\in\p(B)$ we define
$\diamond(b) = \high(b_1^{\lusz} \ot \cdots \ot b_k^{\lusz}) \in \p(B^{\lusz})$. Here $\high(b)$ 
denotes the highest weight element in the same $I_0$-component as $b$.
\begin{enumerate}
\item[(1)] We define \defn{right-hat} $\rh \colon B \otimes B^{1,1} \to B$ by $\rh = \high \circ \lusz \circ \lh \circ\,\lusz= \diamond \circ \lh \circ \,\diamond$.

\item[(1')] We define \defn{right-hat-spin} $\rh_s \colon B \otimes B^{r,1} \to B$, where $r = n-1,n$, by 
$\rh_s = \high\circ \lusz \circ\lh_s\circ\,\lusz = \diamond \circ \lh_s \circ \,\diamond$.

\item[(2)] We define \defn{right-box} $\rb \colon B \otimes B^{r,1} \to B \otimes B^{r-1,1} \otimes B^{1,1}$ for 
$2 \leq r \leq n-2$ by $\rb = \lusz \circ \lb \circ \,\lusz = 
\diamond \circ \lb \circ \,\diamond$.

\item[(3)] We define \defn{right-split} $\rs \colon B \otimes B^{r,s} \to B \otimes B^{r,s-1} \otimes B^{r,1}$ for 
$s \geq 2$ by $\rs = \lusz \circ \ls \circ \,\lusz=\diamond \circ \ls \circ \,\diamond$.
\end{enumerate}

\begin{proposition} \label{prop:comm1}
When there are at least two KR crystals in the tensor product $B$, the left operation
$\mathrm{lx}$ commutes with the right one $\mathrm{ry}$ for any pair of
$(\mathrm{x,y})$ where $\mathrm{x,y=h,h_s,b,s}$ as long as they are well-defined.
\end{proposition}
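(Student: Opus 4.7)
The plan is to argue that when $k \ge 2$ the left operation $\lx$ and the right operation $\ry$ act on disjoint tensor factors of a path $b = b_k \otimes b_{k-1} \otimes \cdots \otimes b_1$, from which commutativity is immediate.

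I would first verify that each left operation of Definition~\ref{def:operations_on_tableaux} alters only the leftmost factor $b_k$. This is transparent from the definitions: $\lh$ and $\lh_s$ simply delete $b_k$; $\lb$ extracts the bottom cell of the column $b_k$; and $\ls$ invokes the single-factor left-split of Definition~\ref{def:left split} on $b_k$ alone. In every case the remaining factors $b_{k-1}, \ldots, b_1$ are preserved unchanged.

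Dually, I would show that each right operation $\ry = \diamond \circ \ly \circ \diamond$ alters only the rightmost factor $b_1$. Writing $\diamond = \high \circ \theta$ with $\theta(b) = b_1^{\lusz} \otimes \cdots \otimes b_k^{\lusz}$, one first checks that $\diamond$ is an involution on $\p(B)$: since $\theta(b)$ is $I_0$-lowest in its connected component, $\diamond(b)$ is the $I_0$-highest element of that same component, and a second application of $\diamond$ returns to $b$. Under this involution the leftmost factor of $\diamond(b)$ corresponds to the rightmost factor $b_1$ of $b$, so applying $\ly$ to the leftmost factor of $\diamond(b)$ and then conjugating back by $\diamond$ produces a modification of $b_1$ alone, leaving $b_2, \ldots, b_{k-1}, b_k$ untouched. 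For non-spin leftmost factors the required explicit description of $\diamond$ is provided by Proposition~\ref{lem:rc on KN}; for spin factors one uses the formula~\eqref{eq:spin_involution}.

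Combining the two localities, when $k \ge 2$ the tensor positions modified by $\lx$ and $\ry$ are disjoint, and the restructurings (splits or box removals) happen on opposite ends of the tensor product. Consequently $\lx \circ \ry = \ry \circ \lx$ for every pair $(\mathrm{x}, \mathrm{y}) \in \{\mathrm{h}, \mathrm{h}_s, \mathrm{b}, \mathrm{s}\}^2$ on which both compositions are defined. The main obstacle is the locality of $\ry$, since the global $\high$ inside $\diamond$ could in principle propagate the action of $\ly$ across several tensor factors; this is resolved by the fact that $\ly$ commutes with the $I_0$-crystal operators $e_i, f_i$ used to realize $\high$ (built into Definition~\ref{def:left split} for $\ls$, and verifiable directly in the other cases), so the action of $\ly$ remains confined to a single tensor factor under conjugation by $\diamond$.
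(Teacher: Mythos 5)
Your disjoint-support argument is fine when the right operation is $\rb$ or $\rs$ (this is exactly the paper's first observation), and your description of the left operations is correct, but there is a genuine gap precisely where the difficulty sits: the claim that $\rh$ and $\rh_s$ modify only the rightmost factor is false. By definition $\rh = \high\circ\lusz\circ\lh\circ\lusz$, so for a path $b = b_k\ot\cdots\ot b_2\ot b_1$ one has $\rh(b) = \high(b_k\ot\cdots\ot b_2)$; with the paper's tensor convention only the right-hand truncations of a highest weight element are forced to be highest weight, so $b_k\ot\cdots\ot b_2$ is in general not a path and the projection $\high$ may alter every remaining factor. Concretely, in $(B^{1,1})^{\ot 4}$ the path $2\ot 2\ot 1\ot 1$ satisfies $\rh(2\ot2\ot1\ot1) = \high(2\ot2\ot1) = 1\ot2\ot1$, so the leftmost factor changes. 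Your attempted repair (that the inner operation $\mathrm{ly}$ commutes with the $I_0$-crystal operators used to realize $\high$, ``verifiable directly in the other cases'') fails exactly in the relevant cases: $\lh$ and $\lh_s$ are not crystal morphisms, since deleting a tensor factor does not commute with $e_i,f_i$ (if $f_i$ acts on the leftmost factor then $\lh(f_i b) = \lh(b) \neq f_i\,\lh(b)$); only $\lb$ and $\ls$ commute with the classical crystal operators. As a minor point, Proposition~\ref{lem:rc on KN} describes $\rs$ on a single KR crystal, not $\diamond$, so it cannot serve as the explicit description of $\diamond$ you invoke.

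The statement is of course still true, but the missing ingredient is a commutation of $\high$ with the \emph{outer} left operation $\mathrm{lx}$, which is how the paper argues: write $\rh = \high\circ m$ with $m = \lusz\circ\lh\circ\lusz$, note that $m$ merely deletes the rightmost factor and hence commutes with $\mathrm{lx}$ by disjointness, and then check $\mathrm{lx}\circ\high = \high\circ\mathrm{lx}$. For $\mathrm{lx} = \lb,\ls$ this holds because they commute with all $e_i$ ($i\in I_0$); for $\mathrm{lx} = \lh,\lh_s$ it holds because, in any raising sequence taking $c_k\ot c'$ to its highest weight representative $d_k\ot d'$, the operators acting on the right part form a raising sequence for $c'$ and $d'$ is itself highest weight, whence $d' = \high(c')$, i.e.\ $\lh(\high(c)) = \high(\lh(c))$. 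Supplying this step (or the paper's equivalent of it) is necessary before the pairs $(\mathrm{x},\mathrm{h})$ and $(\mathrm{x},\mathrm{h_s})$ can be considered proved.
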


\begin{proof}
If $\rh$ or $\rh_s$ are not involved, they clearly commute with each other since
the left operation only changes the leftmost component and the right one does
the rightmost one. Suppose the right operation is $\rh$. It commutes with any
left operations since $\lusz \circ \lh \circ \lusz$ only changes the rightmost component
and high commutes with left operations. The case of $\rh_s$ is similar.
\end{proof}

It is known~\cite{KKMMNN91} that for KR crystals $B^{r,s},B^{r',s'}$ there exists an isomorphism of crystals
\[
	R \colon B^{r,s}\ot B^{r',s'}\longrightarrow B^{r',s'}\ot B^{r,s},
\]
called \defn{combinatorial $R$-matrix}. Note that $R$ commutes with $e_i,f_i\,(i\in I)$.

\section{Rigged configurations of type $D^{(1)}_n$}
\label{sec:RC}

\subsection{Definition of rigged configurations}
\label{section.definition rc}

We define two classes of rigged configurations of type $D_n^{(1)}$: \defn{rigged configurations} and 
\defn{unrestricted rigged configurations}. Rigged configurations will turn out to be in bijection with the paths
of Definition~\ref{definition.paths}. Unrestricted rigged configurations are obtained by defining the classical 
Kashiwara operators $e_i$ and $f_i$ for $i\in I_0$ on the rigged configurations and then considering the connected
crystal components generated by the rigged configurations (which turn out to correspond to the $I_0$-highest 
weight vectors).

Let us prepare the definition of several combinatorial objects which constitute \defn{rigged configurations}
and then give the defining conditions imposed on them.
A rigged configuration consists of a sequence of partitions $\nu=\bigl(\nu^{(1)},\ldots,\nu^{(n)}\bigr)$,
called a \defn{configuration}, together with riggings $J=\bigl(J^{(1)},\ldots,J^{(n)}\bigr)$.
More precisely, if the partition $\nu^{(a)}=\bigl(\nu^{(a)}_1,\ldots,\nu^{(a)}_\ell\bigr)$ has $\ell$ parts, then
$J^{(a)}=\bigl(J^{(a)}_1,\ldots,J^{(a)}_\ell\bigr)$ is a sequence of integers called the \defn{riggings}.
The riggings are paired with the rows of the configuration as the multisets
$\bigl(\nu^{(a)},J^{(a)}\bigr) = \bigl\{\bigl(\nu^{(a)}_1,J^{(a)}_1\bigr),\ldots,\bigl(\nu^{(a)}_\ell,J^{(a)}_\ell\bigr)\bigr\}$ 
for each $a \in I_0$. We call the pairs $\bigl(\nu^{(a)}_i,J^{(a)}_i\bigr)$ \defn{strings} and associate
the pair $\bigl(\nu^{(a)},J^{(a)}\bigr)$ to the node $a\in I_0$ of the Dynkin diagram of $D_n$.
We denote by $m^{(a)}_i(\nu)$ the number of rows of length $i$ of $\nu^{(a)}$.
We identify two rigged configurations $(\nu, J)$ and $(\widetilde{\nu}, \widetilde{J})$ when 
$\bigl(\nu^{(a)}, J^{(a)}\bigr) = \bigl(\widetilde{\nu}^{(a)}, \widetilde{J}^{(a)}\bigr)$ as multisets for all $a \in I_0$.

Rigged configurations also depend on the tensor product $B=B^{r_k,s_k}\otimes\cdots\otimes B^{r_1,s_1}$.
Let $L = \bigl\{\bigl( L^{(a)}_i \bigr) \bigr\}_{a \in I_0, i \in \mathbb{Z}_{>0}}$ be the number of components $B^{a,i}$ 
within $B$, also called the \defn{multiplicity array} of $B$.
We can define a configuration $\mu(L) = (\mu^{(1)}, \dotsc, \mu^{(n-1)},\mu^{(n)})$ from $L$ by $m_i^{(a)}(\mu) = L_i^{(a)}$.
Thus the rigged configuration $(\nu,J)$ is described by the configuration and a sequence of riggings
\[
(\nu,J)=
\Bigl(
\bigl(\nu^{(1)},J^{(1)}\bigr),\ldots,\bigl(\nu^{(n-1)},J^{(n-1)}\bigr),\bigl(\nu^{(n)},J^{(n)}\bigr)
\Bigr)
\]
together with additional data under certain constraints to be described below. We denote by $Q^{(a)}_i(\nu)$
the number of boxes in the first $i$ columns of $\nu^{(a)}$:
\[
Q^{(a)}_i(\nu)=\sum_{j\ge 1}\min(i,j)m^{(a)}_j(\nu).
\]
Therefore we have $Q^{(a)}_\infty(\nu)=\lvert\nu^{(a)}\rvert$, where $\lvert \nu^{(a)} \rvert$
is the total number of boxes in $\nu^{(a)}$.
From the data $\mu(L)$ and $\nu$ we define the \defn{vacancy number}
$P^{(a)}_i(\mu(L),\nu)$ (usually abbreviated by $P^{(a)}_i(\nu)$) by the formula
\begin{equation}\label{eq:def_vacancy}
\begin{split}
P^{(a)}_i(\nu) & = Q^{(a)}_i\bigl( \mu(L) \bigr) + \sum_{b \in I_0} A_{a,b} Q^{(b)}_i(\nu)
\\ & = Q^{(a)}_i\bigl( \mu(L) \bigr) - 2Q^{(a)}_i(\nu) + \sum_{\substack{b\in I_0 \\ b \sim a}} Q^{(b)}_i(\nu),
\end{split}
\end{equation}
where $A_{a,b}$ is the Cartan matrix of $D_n$ and $a \sim b$ means that the vertices $a$ and $b$ are connected
by a single edge in the Dynkin diagram.

\begin{definition}
Fix a multiplicity array $L$. 
Then $(\nu,J)$ is a \defn{rigged configuration} of type $D_n^{(1)}$ if all the strings $\bigl(\nu^{(a)}_i,J^{(a)}_i\bigr)$ 
and the corresponding vacancy numbers satisfy the following condition for all $a\in I_0$ and $i\ge 1$
\begin{align}\label{eq:def_RC}
0\leq J^{(a)}_i\leq P^{(a)}_{\nu^{(a)}_i}(\nu).
\end{align}
The \defn{weight} $\lambda$ of the rigged configuration is defined by the relation
(sometimes called the $(L,\lambda)$-configuration condition)
\begin{equation}
\label{wt_RC_1}
\sum_{a\in I_0,i>0}im^{(a)}_i(\nu)\alpha_a=
\sum_{a\in I_0,i>0}iL^{(a)}_i\varpi_a-\lambda.
\end{equation}
The set of rigged configurations of weight $\lambda$ and multiplicity array
$L$ is denoted by $\rc(L,\la)$. We also let $\rc(L) = \bigsqcup_{\lambda \in P^+} \rc(L, \lambda)$.
\end{definition}

\begin{example}
\label{ex:rigged_config}
The following object is a rigged configuration corresponding to the tensor product
$B = B^{3,2}\otimes B^{3,1}\otimes B^{2,2}\otimes B^{1,2}\otimes B^{1,1}$ of type $D^{(1)}_5$:
\[
\begin{tikzpicture}[baseline=-30, scale=0.4]
\rpp{2,1}{1,1}{2,1}
\begin{scope}[xshift=5cm]
\rpp{3,2,1}{0,1,0}{0,1,1}
\end{scope}
\begin{scope}[xshift=11cm]
\rpp{3,2,1,1}{0,1,1,0}{0,1,1,1}
\end{scope}
\begin{scope}[xshift=17cm]
\rpp{1,1}{0,0}{0,0}
\end{scope}
\begin{scope}[xshift=22cm]
\rpp{2,1}{0,0}{0,0}
\end{scope}
\end{tikzpicture}.
\]
Here we put the vacancy number (resp. rigging) on the left (resp. right)
of the corresponding row of the configuration represented by a Young diagram.
We order the riggings for rows of the same length in the same partition weakly decreasingly from top to bottom
(since recall that we view $\bigl(\nu^{(a)},J^{(a)}\bigr) = \bigl\{\bigl(\nu^{(a)}_1,J^{(a)}_1\bigr),\ldots,
\bigl(\nu^{(a)}_\ell,J^{(a)}_\ell\bigr)\bigr\}$ as multisets).
\end{example}

Note that if we expand the weight $\lambda$ by the basis $\epsilon_i$, we can rewrite~\eqref{wt_RC_1} as follows:
\begin{equation}
\label{wt_RC_2}
\lambda=
\sum_{i \in I_0}\lambda_i\epsilon_i=
\sum_{a\in I_0}\Bigl(\lvert \mu^{(a)} \rvert \varpi_a - \lvert \nu^{(a)} \rvert \alpha_a\Bigr).
\end{equation}
Then we can use the expressions~\eqref{simple_root} and~\eqref{fundamental_weight}
to obtain the explicit expressions for the weight $\lambda_i$.
We write the weight of the rigged configuration by $\wt(\nu,J)$.

Following \cite{Sch:2006}, we introduce the classical Kashiwara operators on the rigged configurations
and use them to define the unrestricted rigged configurations.
For the string $(i,x) \in \bigl(\nu^{(a)},J^{(a)}\bigr)$, we call the quantity $P^{(a)}_i(\nu) - x$ the \defn{corigging}.

\begin{definition}
\label{definition.crystal operators on rc}
The \defn{unrestricted rigged configurations} are obtained by all possible applications of
the Kashiwara operators $f_a$ ($a \in I_0$) on the rigged configurations.
Here the Kashiwara operators $f_a$ and $e_a$ for $a\in I_0$ on unrestricted rigged configurations are defined 
as follows. Let $x$ be the smallest rigging of $\bigl(\nu^{(a)},J^{(a)}\bigr)$.
\begin{enumerate}
\item[$e_a$:]
Let $\ell$ be the minimal length of the strings of $\bigl(\nu^{(a)},J^{(a)}\bigr)$
with the rigging $x$. If $x \geq 0$, define $e_a(\nu,J)=0$.
Otherwise $e_a(\nu,J)$ is obtained by replacing the string $(\ell,x)$
by $(\ell-1,x+1)$ while changing all other riggings to keep coriggings fixed.
\item[$f_a$:]
Let $\ell$ be the maximal length of the strings of $\bigl(\nu^{(a)},J^{(a)}\bigr)$
with the rigging $x$. Then $f_a(\nu,J)$ is obtained by the following procedure.
If $x>0$, add a string $(1,-1)$ to $\bigl(\nu^{(a)},J^{(a)}\bigr)$.
Otherwise replace the string $(\ell,x)$ by $(\ell+1,x-1)$. Change other riggings to keep coriggings fixed.
If the new rigging is strictly larger than the corresponding new vacancy number,
define $f_a(\nu,J)=0$.
\end{enumerate}
Let $\RC(L)$ denote the set of all unrestricted rigged configurations generated from $\rc(L)$
by the Kashiwara operators. Let
\[
	\RC(L, \lambda) = \{ (\nu, J) \in \RC(L) \mid \wt(\nu, J) = \lambda\}.
\]
\end{definition}

\begin{example}
\label{ex:rc_kashiwara}
Consider $(\nu, J)$ in Example~\ref{ex:rigged_config}. Then $\wt(\nu, J) = 3 \clfw_1 + 3 \clfw_4 + \clfw_5$,
\begin{equation*}
f_4(\nu, J) = 
\begin{tikzpicture}[baseline=-30, scale=0.4]
\rpp{2,1}{1,1}{2,1}
\begin{scope}[xshift=5cm]
\rpp{3,2,1}{0,1,0}{0,1,1}
\end{scope}
\begin{scope}[xshift=11cm]
\rpp{3,2,1,1}{1,2,1,0}{1,2,1,1}
\end{scope}
\begin{scope}[xshift=17cm]
\rpp{2,1}{-1,0}{0,0}
\end{scope}
\begin{scope}[xshift=22cm]
\rpp{2,1}{0,0}{0,0}
\end{scope}
\end{tikzpicture},
\end{equation*}
and $f_2(\nu, J) = 0$.
\end{example}

\subsection{Operations $\delta$, $\beta$ and $\gamma$}
\label{sec:def_delta}
The rigged configuration bijection 
\[
	\Phi \colon \mathcal{P}(B,\lambda) \to \rc(L,\lambda)
\]
is a bijection between paths, the set of $I_0$-highest weight elements in a tensor products of KR crystals $B$, 
and the set of rigged configurations. Here $L$ is the multiplicity array of $B$. In this section, we prepare the main 
ingredients of the bijection. In fact, we will define the various maps not just on $\rc(L)$, but more generally
on $\RC(L)$. One of the basic operations is
\[
\delta \colon (\nu,J)\longmapsto \{(\nu',J'),k\} \; ,
\]
where $(\nu,J)$ and $(\nu',J')$ are rigged configurations
and $k \in \{1,2,\ldots,n,\overline{n},\ldots,\overline{2},\overline{1}\}$.
In the description, we call a string $(\ell,x)$ of $\bigl(\nu^{(a)},J^{(a)}\bigr)$ \defn{singular}
if we have $x = P^{(a)}_\ell(\nu)$, that is, the rigging takes the maximal possible value.
To begin with, we consider the generic case (non-spin case).

\begin{definition}\label{def:delta}
Let us consider a rigged configuration $(\nu,J) \in \RC(L)$ corresponding to the tensor product of the form 
$B^{1,1}\otimes B'$. The map
\[
	\delta \colon (\nu,J)\longmapsto \{(\nu',J'),k\}
\]
is defined by the following procedure.
Set $\ell^{(0)}=1$.
\begin{enumerate}
\item[(1)]
For $1\leq a\leq n-2$, suppose that $\ell^{(a-1)}$ is already determined.
Then we search for the shortest singular string in $(\nu^{(a)},J^{(a)})$
that is longer than or equal to $\ell^{(a-1)}$.
\begin{enumerate}
\item If there exists such a string, set $\ell^{(a)}$ to be
the length of the selected string and continue the process recursively.
If there is more than one such string, choose any of them.
\item If there is no such string, set $\ell^{(a)}=\infty$,
$k=a$ and stop.
\end{enumerate}
\item[(2)]
Suppose that $\ell^{(n-2)}<\infty$.
Then we search for the shortest singular string in $(\nu,J)^{(n-1)}$
(resp. $(\nu,J)^{(n)}$) that is longer than or equal to $\ell^{(n-2)}$
and define $\ell^{(n-1)}$ (resp. $\ell^{(n)}$) as in part (1).
\begin{enumerate}
\item If $\ell^{(n-1)}=\infty$ and $\ell^{(n)}=\infty$,
set $k=n-1$ and stop.
\item If $\ell^{(n-1)}<\infty$ and $\ell^{(n)}=\infty$,
set $k=n$ and stop.
\item If $\ell^{(n-1)}=\infty$ and $\ell^{(n)}<\infty$,
set $k=\overline{n}$ and stop.
\item If $\ell^{(n-1)}<\infty$ and $\ell^{(n)}<\infty$,
set $\ell_{(n-1)}=\max(\ell^{(n-1)},\ell^{(n)})$ and continue.
\end{enumerate}
\item[(3)]
For $1\leq a\leq n-2$, assume that $\ell_{(a+1)}$ is already defined.
Then we search for the shortest singular string in $(\nu,J)^{(a)}$
that is longer than or equal to $\ell_{(a+1)}$ and has not yet been selected
as $\ell^{(a)}$. Let $\ell_{(a)}$ be the length of this string if it exists and set $\ell_{(a)}=\infty$ otherwise.
If $\ell_{(a)}=\infty$, set $k=\overline{a+1}$ and stop.
Otherwise continue.
If $\ell_{(1)}<\infty$, set $k=\overline{1}$ and stop.
\item[(4)]
Once the process has stopped, remove the rightmost box of each selected row
specified by $\ell^{(a)}$ or $\ell_{(a)}$.
The result gives the output $\nu'$.
\item[(5)]
Define the new riggings $J'$ as follows.
For the rows that are not selected by $\ell^{(a)}$ or $\ell_{(a)}$,
take the corresponding riggings from $J$.
In order to define the remaining riggings, we use $B'$ to compute all the vacancy numbers for $\nu'$.
Then the remaining riggings are defined so that all the corresponding
rows become singular with respect to the new vacancy number.
\end{enumerate}
\end{definition}

As we will see in Section~\ref{sec:weldef}, $\delta$ on the rigged configuration side
corresponds to the left-hat operation of Definition~\ref{def:operations_on_tableaux} on the path side.
Although the above definition only deals with paths of the form $B^{1,1} \otimes B'$,
the essence of the rigged configuration bijection is contained in the operation $\delta$
which was originally discovered by \cite{KKR:1986} for type $A^{(1)}_n$
and generalized for type $D^{(1)}_n$ by \cite{OSS:2003a}.

The remaining maps involved in the rigged configuration bijection are the
counterparts of left-box and left-split of Definition~\ref{def:operations_on_tableaux}.

\begin{definition}\label{def:RC_operations}
Let $B=B^{r_k,s_k}\otimes B^{r_{k-1},s_{k-1}}\otimes\cdots\otimes B^{r_1,s_1}$ with multiplicity array $L$
and $(\nu,J) \in \RC(L)$.
\begin{enumerate}
\item[(1)]
If $B^{r_k,s_k}=B^{r,s}$ with $s>1$, then $\gamma$ replaces a length $s$ row of $\mu^{(r)}$
by two rows of lengths $s-1$ and $1$ of $\mu^{(r)}$ and otherwise leaves $(\nu,J)$ unchanged.
\item[(2)]
If $B^{r_k,s_k}=B^{r,1}$ with $1<r\le n-2$, then $\beta$ removes a length one row from $\mu^{(r)}$, adds a length 
one row to each of $\mu^{(1)}$ and $\mu^{(r-1)}$ and adds a length one singular string to each of 
$\bigl(\nu^{(a)},J^{(a)}\bigr)$ for $1\le a<r$.
\end{enumerate}
\end{definition}

We give a summary of the correspondence of the operations on the rigged configuration side and the path
side in Table~\ref{table:operations}. It will be shown in Proposition~\ref{prop:Phi_welldef1} that these operations
intertwine under $\Phi$.

\begin{table}[h]
\[
\begin{array}{|c|c|}
\hline
B & \RC(L) \\
\hline
\ls & \gamma \\
\lb & \beta \\
\lh & \delta \\
\hline
\end{array}
\]
\caption{The operations on rigged configurations and paths and their correspondence, where $L$
is the multiplicity array of $B$.}
\label{table:operations}
\end{table}

A formal definition of the rigged configuration bijection $\Phi$ and its inverse $\Phi^{-1}$ will be given in 
Section~\ref{sec:weldef}. Roughly speaking, the algorithm for $\Phi^{-1}$ is given by successive applications of 
the operators $\gamma$, $\beta$ and $\delta$ and filling in the bottom leftmost unfilled entry in $B$ with $k$ 
on each application of $\delta$, where $k$ is as given in Definition~\ref{def:delta}. The algorithm for $\Phi$ 
is the reverse procedure of each step, where for $\delta^{-1}$ we add a box to the largest singular row at most 
as long as the previously selected row. However, the well-definedness of $\Phi$ and $\Phi^{-1}$ is a highly 
nontrivial fact and will be the subject of Section~\ref{sec:weldef}. A detailed example of $\Phi^{-1}$ is given 
in Appendix~\ref{appendix:def_RCbijection}.

\begin{example}
\label{ex:rc_to_crystal}
Consider the rigged configuration $(\nu, J)$ of Example~\ref{ex:rigged_config}. Then $(\nu,J)$ corresponds 
under $\Phi^{-1}$ to the following $I_0$-highest weight element in $B$:
\[
\Phi^{-1}(\nu,J)=
\Yvcentermath1
\young(13,4,\mfive)\otimes\young(1,3,4)\otimes\young(12,2\mone)
\otimes\young(11)\otimes\young(1)\,.
\]
\end{example}

\begin{remark}
We can perform the composition $\delta \circ \beta \circ \gamma$ at once
(see, for example, \cite{OSS:2013,Sak:2013}). Suppose that we consider $B^{r,s}$ with $s>1$.
Then $\delta \circ \beta \circ \gamma$ is a modification of $\delta$. Set $\ell^{(r-1)}=s$ and start 
from the corresponding part of Step~(1) of Definition \ref{def:delta}.
For practical purposes, especially for hand computations, it is convenient to do $\delta \circ \beta \circ \gamma$ 
simultaneously. However, for the proof of several important properties of the rigged configuration bijection in
Section~\ref{sec:weldef}, it is convenient to consider three separate operations $\gamma$, $\beta$ and $\delta$.
\end{remark}

In~\cite[Proposition 3.3]{OSS:2013}, the following result is proved
with the help of~\cite{Kleber:1998}.

\begin{proposition}\label{prop:highestRC}
Let $u_\lambda$ be the $I_0$-highest weight element of $B^{r,s}$ $(1\le r\leq n-2)$ of weight $\lambda$.
Denote by $\lambda^c$ the Young diagram obtained as the complement of $\lambda$
within the rectangle of height $r$ and width $s$.
Then $\Phi(u_\lambda)$ has the following form:
\begin{itemize}
\item
$\nu^{(a)}=\lambda^c$ for $r\leq a\leq n-2$.
$\nu^{(n-1)}=\nu^{(n)}$ is obtained by replacing all dominoes $\Yboxdim4pt\yng(1,1)$
of $\lambda^c$ by $\Yboxdim4pt\yng(1)$.
\item
$\nu^{(a-i)}$ is obtained by removing the top $i$ rows from $\lambda^c$.
If $i$ exceeds the length of $\lambda^c$, we understand that $\nu^{(a-i)}=\emptyset$.
\item
All riggings are 0.
\end{itemize}
\end{proposition}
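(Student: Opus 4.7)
The plan is to verify directly that the proposed rigged configuration $(\nu,J)$ is a valid element of $\rc(L,\lambda)$ that is $I_0$-highest weight, where $L$ is the multiplicity array of $B^{r,s}$, and then conclude by the uniqueness of the $I_0$-highest weight rigged configuration of a prescribed weight --- a consequence of Kleber's tree construction in~\cite{Kleber:1998}. Since $u_\lambda$ is the unique $I_0$-highest weight element of $B(\lambda) \subseteq B^{r,s}$ and $\Phi$ preserves weights, this forces $\Phi(u_\lambda) = (\nu, J)$.

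First, I would verify the $(L,\lambda)$-configuration condition~\eqref{wt_RC_1}. With $\mu(L)$ consisting of a single row of length $s$ at node $r$, this reduces to
\[
\sum_{a \in I_0} |\nu^{(a)}|\, \alpha_a = s \varpi_r - \lambda.
\]
Inserting $|\nu^{(a)}| = |\lambda^c|$ for $r \le a \le n-2$, $|\nu^{(n-1)}| = |\nu^{(n)}| = |\lambda^c|/2$, and $|\nu^{(r-i)}| = |\lambda^c|$ minus the lengths of the top $i$ rows of $\lambda^c$, and expanding in the $\epsilon$-basis via~\eqref{simple_root} and~\eqref{fundamental_weight}, the identity reduces to a telescoping sum that is routine to verify row by row of $\lambda^c$.

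Next, I would verify that all vacancy numbers $P^{(a)}_i(\nu)$ of~\eqref{eq:def_vacancy} are nonnegative, so that the zero riggings are admissible. For interior indices $r < a < n-2$, the neighboring shapes $\nu^{(a-1)} = \nu^{(a)} = \nu^{(a+1)} = \lambda^c$ force $P^{(a)}_i(\nu) = 0$ identically. At $a = r$ the contribution $Q^{(r)}_i\bigl(\mu(L)\bigr) = \min(i,s)$ compensates the truncation present in $\nu^{(r-1)}$, giving a manifestly nonnegative difference; the cases $a < r$ follow by an analogous telescoping from the removed-rows structure. The cases $a = n-2, n-1, n$ are the most delicate: they are precisely where the ``domino to singleton'' rule on $\nu^{(n-1)} = \nu^{(n)}$ is forced, since halving the column heights of $\lambda^c$ is exactly the prescription that makes $P^{(n-2)}_i \ge 0$ for every $i$ and also ensures $P^{(n-1)}_i, P^{(n)}_i \ge 0$.

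Finally, since every rigging equals $0$ and each vacancy is nonnegative, the smallest rigging in every block is $\ge 0$, so $e_a(\nu,J) = 0$ for all $a \in I_0$ by Definition~\ref{definition.crystal operators on rc}. Hence $(\nu,J) \in \rc(L,\lambda)$ is $I_0$-highest weight of weight $\lambda$, and Kleber's uniqueness concludes the argument. The main obstacle is the vacancy-number check at the branching nodes $n-2, n-1, n$, which is precisely where Kleber's algorithm becomes essential: the specific shape of $\nu^{(n-1)} = \nu^{(n)}$ in the second bullet is dictated exactly by the requirement of nonnegative vacancy numbers there, and everything else is fixed by the Cartan-matrix cancellations at interior nodes.
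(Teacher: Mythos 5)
Your strategy identifies $\Phi(u_\lambda)$ purely by a ``weight plus uniqueness'' argument, and that is where the genuine gap lies. To conclude $\Phi(u_\lambda)=(\nu,J)$ from the fact that $(\nu,J)$ is the unique element of $\rc(L,\lambda)$, you must already know that the algorithm $\Phi$, applied to $u_\lambda\in B^{r,s}$, is defined and lands in $\rc(L,\lambda)$ --- i.e.\ the well-definedness and weight-preservation of $\Phi$ on a single KR crystal $B^{r,s}$. But in this paper that is established \emph{afterwards}, using the present proposition: in the proof of Proposition~\ref{prop:Phi_welldef1}(3) the single-$B^{r,s}$ case is settled by ``directly computing the bijection based on Proposition~\ref{prop:highestRC}'' (via \cite{OSS:2013}). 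So, as the development is structured, your argument is circular; and even apart from that, the statement is a claim about the output of the recursive algorithm ($\ls,\lb,\lh$ intertwined with $\gamma,\beta,\delta$), yet your proof never uses a single property of that algorithm, so it cannot distinguish $\Phi$ from any other weight-preserving assignment of rigged configurations to highest weight elements. The paper itself does not reprove the proposition but imports it from \cite{OSS:2013}, where the Kleber-algorithm analysis is combined with an explicit computation along the splitting procedure; your proposal reproduces only the first half.

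The uniqueness step is also weaker than you state. The weight condition \eqref{wt_RC_1} does fix the sizes $\lvert\nu^{(a)}\rvert$, but you must then show (i) that the displayed shapes are the \emph{only} partitions of those sizes with all vacancy numbers nonnegative, and (ii) that for the displayed configuration the vacancy numbers are in fact \emph{zero}: mere nonnegativity, which is all you propose to verify, does not force the riggings to be $0$ and hence does not make $\rc(L,\lambda)$ a singleton (compare the remark after the proposition that all vacancy numbers of $\Phi(u_\lambda)$ vanish, so that \eqref{eq:def_RC} pins down the riggings). Neither (i) nor (ii) is a quotable theorem of \cite{Kleber:1998}; that paper provides the tree algorithm with which one carries out this computation, and doing so for $B^{r,s}$ in type $D_n$ (in particular the analysis at the branching nodes $n-2,n-1,n$ and the exclusion of competing shapes such as trading $\Yboxdim4pt\yng(1,1)$ for $\Yboxdim4pt\yng(2)$, which can produce negative vacancy numbers elsewhere) is the substantive content you defer to a citation. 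Your checks of \eqref{wt_RC_1} and of the vanishing of vacancy numbers at the interior nodes are fine, but they do not close either gap.
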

Note that all the vacancy numbers of $\Phi(u_\lambda)$ are $0$ so that the
requirement~\eqref{eq:def_RC} determines the riggings uniquely.

\subsection{Dual operations on the rigged configurations}
\begin{definition}
Let $(\nu,J) \in \rc(L)$. For each string $\bigl(\nu^{(a)}_i, J^{(a)}_i\bigr) \in \bigl(\nu^{(a)},J^{(a)}\bigr)$ of a 
given rigged configuration $(\nu,J)$, the operation $\theta$ is defined by
\begin{align*}
	\theta \colon \quad \qquad \rc(L) &\longrightarrow \rc(L)\\
	\Bigl(\nu^{(a)}_i,J^{(a)}_i\Bigr) &\longmapsto \Bigl(\nu^{(a)}_i,P^{(a)}_{\nu^{(a)}_i}(\nu)-J^{(a)}_i\Bigr)
	\in \Bigl(\widetilde{\nu}^{(a)},\widetilde{J}^{(a)}\Bigr),
\end{align*}
where $\theta \colon (\nu,J)\longmapsto (\widetilde{\nu},\widetilde{J})$. We then extend $\theta$ to 
all unrestricted rigged configurations $\RC(L)$ by extending it as a classical crystal automorphism.
\end{definition}

Rather non-trivially, we will show in Proposition~\ref{prop:Phi_welldef1}(7)
that the operator $\diamond$ on paths corresponds to the operation $\theta$
on rigged configuration under the bijection $\Phi$.
By using $\theta$, we define the dual operations of $\gamma$, $\beta$ and $\delta$:
\begin{align*}
\widetilde{\gamma}&:=\theta\circ\gamma\circ\theta,\\
\widetilde{\beta}&:=\theta\circ\beta\circ\theta,\\
\widetilde{\delta}&:=\theta\circ\delta\circ\theta.
\end{align*}
We provide direct descriptions of these operators in the case when $(\nu,J) \in \rc(L)$.
Recall that for the string $(\nu^{(a)}_i,J^{(a)}_i) \in (\nu^{(a)},J^{(a)})$, the quantity 
$P^{(a)}_{\nu^{(a)}_i}(\nu)-J^{(a)}_i$ is called the corigging.
If the rigging is 0, we call such a string \defn{cosingular}.
Let $B=B^{r_k,s_k}\otimes B^{r_{k-1},s_{k-1}}\otimes\cdots\otimes B^{r_1,s_1}$,
$L$ of the corresponding multiplicity array, and $\mu=\mu(L)$.
\begin{enumerate}
\item[$\widetilde{\gamma}$:]
If $B^{r_k,s_k}=B^{r,s}$ with $s>1$, then $\widetilde{\gamma}$ replaces a length $s$ row of $\mu^{(r)}$
by two rows of lengths $s-1$ and $1$ of $\mu^{(r)}$ and add 1 to the rigging of
each string in $(\nu^{(r)},J^{(r)})$ of length strictly less than $s$.
\item[$\widetilde{\beta}$:]
If $B^{r_k,s_k}=B^{r,1}$, where $1 < r \le n-2$, then $\widetilde{\beta}$ removes a length one row from 
$\mu^{(r)}$, adds a length one row to each of $\mu^{(1)}$ and $\mu^{(r-1)}$ and
adds a length one cosingular string to each of $(\nu^{(a)},J^{(a)})$ for $a<r$.
\item[$\widetilde{\delta}$:] Suppose $B^{r_k,s_k}=B^{1,1}$.
This is a corigging version of $\delta$.
Instead of selecting singular strings, it selects cosingular strings and makes them
into cosingular strings of lengths shortened by 1.
For unselected strings, it keeps coriggings constant by changing riggings.
\end{enumerate}

Note that $\gamma$ preserves riggings whereas $\widetilde{\gamma}$ preserves coriggings. Also note that 
$\widetilde{\beta}$ preserves vacancy numbers since $\beta$ preserves vacancy numbers~\cite[Lemma~4.2]{S:2005}.

The following very important result is quite hard to prove.

\begin{theorem}[Appendix C of~\cite{S:2005}]\label{th:delta_commute}
Let $L$ be a multiplicity array with $L_1^{(1)}\ge 2$. Then on $\rc(L)$
the operators $\delta$ and $\widetilde{\delta}$ commute:
\[
[\delta,\widetilde{\delta}]=0.
\]
\end{theorem}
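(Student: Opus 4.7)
The plan is to prove $[\delta,\widetilde{\delta}]=0$ by a direct combinatorial analysis on the level of rigged configurations, rather than by transporting the statement across the bijection. A ``slick'' proof would observe that, under $\Phi$, $\delta$ corresponds to $\lh$ and $\widetilde{\delta}$ (via $\widetilde{\delta}=\theta\circ\delta\circ\theta$ and the eventual identification of $\theta$ with $\diamond$) corresponds to $\rh$, and Proposition~\ref{prop:comm1} gives $\lh\circ\rh=\rh\circ\lh$. However, the well-definedness of $\Phi$ is established later in the paper and in fact uses this theorem, so a self-contained combinatorial proof is required. The hypothesis $L^{(1)}_1\ge 2$ ensures that there are at least two $B^{1,1}$ factors on the left, so that both $\delta$ and $\widetilde{\delta}$ are defined on $\rc(L)$ and their composition stays within $\rc(L)$.

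First I would unfold the definitions. For $(\nu,J)\in\rc(L)$ let $\{\ell^{(a)}\}_{1\le a\le n-2}$, $\{\ell^{(n-1)},\ell^{(n)}\}$, $\{\ell_{(a)}\}_{1\le a\le n-2}$ be the lengths of the strings selected by $\delta$, with outcome $k$, and let $\{\widetilde{\ell}^{(a)}\}$, $\{\widetilde{\ell}_{(a)}\}$ be the cosingular strings selected by $\widetilde{\delta}$, with outcome $\widetilde{k}$. The claim to establish is that the selection data produced by $\widetilde{\delta}$ applied after $\delta$ coincides with that produced by $\delta$ after $\widetilde{\delta}$, and that $(k,\widetilde{k})$ is symmetric in the two orders.

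The key observation is that $\delta$ only shortens singular strings and, on unselected strings, modifies riggings in precisely the way needed to preserve their rigging (since the vacancy number change at level $a$ is compensated by the rigging change); dually, $\widetilde{\delta}$ only shortens cosingular strings and preserves coriggings. Hence, on any string which is singular but not cosingular, only $\delta$ can act; on any string that is cosingular but not singular, only $\widetilde{\delta}$ can act; and the effect on disjoint strings commutes. The bulk of the proof is then a careful case analysis showing that $\delta$ preserves the cosingularity of the strings $\widetilde{\delta}$ would select (and conversely), including the monotonicity conditions $\ell^{(a-1)}\le\ell^{(a)}$ and $\widetilde{\ell}^{(a-1)}\le\widetilde{\ell}^{(a)}$ needed to continue the selection chain, and that the outputs $k,\widetilde{k}$ agree in both orders.

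The main obstacle is the overlapping case: a string with both $J^{(a)}_i=0$ and $P^{(a)}_{\nu^{(a)}_i}(\nu)=0$, i.e.\ simultaneously singular and cosingular, where both operators could compete for the same row. Here one must use the freedom to choose among shortest strings of a given length to route $\delta$ and $\widetilde{\delta}$ onto distinct rows whenever possible, and otherwise verify by hand that shortening such a row by $\delta$ produces a rigged configuration on which $\widetilde{\delta}$ still finds a compatible cosingular row of the right length (and vice versa). A secondary subtlety, which I expect to be the most technically demanding part, is the behaviour near the spin nodes $n-1$ and $n$: the stopping criterion of $\delta$ depends on whether the singular chain enters one, both, or neither of $(\nu^{(n-1)},J^{(n-1)})$ and $(\nu^{(n)},J^{(n)})$, controlling whether the output is $n-1$, $n$, $\overline{n}$, or a ``turnaround'' value $\overline{a+1}$. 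Matching the stopping behaviour of $\delta\widetilde{\delta}$ and $\widetilde{\delta}\delta$ at this fork, together with an analogous matching for the return chain $\ell_{(a)}$, requires a delicate case split on the relative positions of the stopping points of the two operators, and this is where the appendix of~\cite{S:2005} concentrates its effort.
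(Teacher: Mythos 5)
There is a genuine gap here, though of a particular kind: the paper itself does not prove this statement at all---it is imported wholesale, with the remark that it ``is quite hard to prove,'' from Appendix~C of~\cite{S:2005}, where it occupies a long case-by-case analysis. Your proposal correctly recognizes that an argument through $\Phi$ (via $\lh$, $\rh$ and Proposition~\ref{prop:comm1}) would be circular, since the well-definedness of $\Phi$ in Section~\ref{sec:weldef} uses this very commutation, and it correctly locates the danger zones (strings that are simultaneously singular and cosingular, and the stopping/turnaround behaviour at the fork $n-1$, $n$). But what you have written is a plan, not a proof: the entire content of the theorem is the assertion that the selection chains $\{\ell^{(a)},\ell_{(a)}\}$ of $\delta$ and the cosingular chains of $\widetilde{\delta}$ are unaffected by performing the other operation first, and that the returned letters $k,\widetilde{k}$ and the final riggings agree; you defer exactly this to ``a careful case analysis'' without carrying out any of it. That case analysis is the theorem.

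Moreover, your ``key observation'' is not correct as stated, and it would not reduce the problem to the overlapping case. Applying $\delta$ keeps the riggings of unselected strings but changes the configuration, hence changes vacancy numbers and coriggings everywhere, and the shortened strings are re-rigged to be singular with respect to the new vacancy numbers---so new cosingular strings (rigging $0$) can be created among them. Since $\widetilde{\delta}$ chooses the \emph{shortest} admissible cosingular string at each stage, such a newly created string can in principle hijack its selection chain even when the two operators' selected string sets are disjoint; ruling this out (together with preserving the monotonicity $\widetilde{\ell}^{(a-1)}\le\widetilde{\ell}^{(a)}$ and the spin-fork stopping data) is precisely what the appendix of~\cite{S:2005} establishes, and it is not a consequence of ``disjoint strings commute.'' A further small inaccuracy: the proposed ``freedom to choose among shortest strings of a given length to route $\delta$ and $\widetilde{\delta}$ onto distinct rows'' buys nothing, because $(\nu^{(a)},J^{(a)})$ is a multiset---rows of equal length and equal rigging are indistinguishable, so the only genuinely problematic situation is a string both operators must shorten, and that case has to be analysed directly, not routed around. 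As it stands, the proposal identifies the right battlefield but does not fight the battle.
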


We will also need the following propositions.

\begin{proposition} \label{prop:comm2}
We have the following relations on rigged configurations in $\rc(L)$:
\begin{enumerate}
\item $[\delta,\widetilde{\delta}] = [\delta,\widetilde{\beta}] = [\delta,\widetilde{\gamma}] = 0$;
\item $[\beta,\widetilde{\delta}] = [\beta,\widetilde{\beta}] = [\beta,\widetilde{\gamma}] = 0$;
\item $[\gamma,\widetilde{\delta}] = [\gamma,\widetilde{\beta}] = [\gamma,\widetilde{\gamma}] = 0$.
\end{enumerate}
\end{proposition}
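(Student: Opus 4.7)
The plan is to reduce Proposition~\ref{prop:comm2} to Theorem~\ref{th:delta_commute} together with direct verifications in the remaining eight cases. A useful first move is a symmetry reduction: since $\widetilde{X}=\theta\circ X\circ\theta$ and $\theta$ is an involution on $\rc(L)$, the identity $[\delta,\widetilde{X}]=0$ is equivalent to $[X,\widetilde{\delta}]=0$ after conjugation by $\theta$, and the same for $\beta,\gamma$. This already disposes of half the commutators modulo the deep input $[\delta,\widetilde{\delta}]=0$, which is Theorem~\ref{th:delta_commute}. So it suffices to verify $[\gamma,\widetilde{\gamma}]$, $[\beta,\widetilde{\beta}]$, $[\beta,\widetilde{\gamma}]$, $[\delta,\widetilde{\gamma}]$, and $[\delta,\widetilde{\beta}]$.

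For the three pairs not involving $\delta$, I would proceed by inspection of the explicit descriptions from Section~\ref{sec:def_delta} and the dual descriptions listed after Theorem~\ref{th:delta_commute}. The operators $\gamma$ and $\beta$ touch only those rows of $\mu$ associated to the leftmost KR factor, while $\widetilde{\gamma}$ and $\widetilde{\beta}$ touch only those associated to the rightmost; the corresponding rows of $\mu$ are therefore disjoint, and the modifications can be performed in either order. The crucial supporting fact, already recorded in the text, is that $\beta$ (hence $\widetilde{\beta}$) preserves every vacancy number, so the length-1 singular string that $\beta$ adds to $(\nu^{(a)},J^{(a)})$ for $a<r_k$ remains singular after $\widetilde{\beta}$ inserts its cosingular length-1 strings at $a<r_1$, and conversely. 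The rigging shift of $+1$ that $\widetilde{\gamma}$ applies to strings of length strictly less than $s_1$ in $(\nu^{(r_1)},J^{(r_1)})$ is matched exactly by the $+1$ shift in the corresponding vacancy numbers, so it is invisible to $\gamma$ and $\beta$ acting on the left.

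The case $[\delta,\widetilde{\gamma}]=0$ hinges on the observation that $\widetilde{\gamma}$ preserves the singular/non-singular status of every string. On the strings whose rigging is incremented by $\widetilde{\gamma}$, the vacancy number also increases by exactly one; all other vacancy numbers and riggings are unchanged. Consequently $\delta$'s greedy selection of the shortest singular string of length $\ge\ell^{(a-1)}$ yields the same choices before and after applying $\widetilde{\gamma}$, and the new riggings that $\delta$ assigns so as to keep the shortened rows singular are compatible with the $\widetilde{\gamma}$ shift. I would verify that compatibility by writing out, for a selected row of length $\ell^{(a)}$, the rigging $\delta$ produces and checking that it differs by $+1$ under $\widetilde{\gamma}$ precisely when the row has length $<s_1$ in $(\nu^{(r_1)},J^{(r_1)})$.

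The main obstacle is $[\delta,\widetilde{\beta}]=0$. Here $\widetilde{\beta}$ inserts new length-1 cosingular strings into $(\nu^{(a)},J^{(a)})$ for $1\le a<r_1$, and such a string is singular exactly when $P^{(a)}_1(\nu)=0$, which is precisely when $\delta$ could choose it over an existing string. The plan is to do a careful case analysis on the path of $\delta$: in each $(\nu^{(a)},J^{(a)})$ compare the string $\delta$ would select before $\widetilde{\beta}$ acts with what it selects afterwards. Because $\widetilde{\beta}$ preserves vacancy numbers, any newly selectable length-1 string has trivial interaction in the sense that shortening it produces the empty row and leaves the remaining data unchanged; moreover, the singular-ifying riggings that $\delta$ assigns to unselected strings are built from vacancy numbers that are the same in both orders. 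Checking these bookkeeping equalities at each of the nodes $a=1,\dotsc,n$, and separately for the forward and backward legs of $\delta$'s path through the Dynkin diagram, completes the argument. With $[\delta,\widetilde{\beta}]=0$ in hand, the symmetry reduction of the first paragraph yields $[\beta,\widetilde{\delta}]=0$, and the remaining commutators follow from the parallel cases handled above, concluding the proof.
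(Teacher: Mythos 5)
Your overall architecture matches the paper's: invoke Theorem~\ref{th:delta_commute} for $[\delta,\widetilde{\delta}]=0$, check the remaining commutators directly using the facts that $\gamma$ preserves riggings, $\widetilde{\gamma}$ preserves coriggings, and $\beta,\widetilde{\beta}$ preserve vacancy numbers, and then transfer half of the relations by conjugating with $\theta$. Those parts of your argument ($[\gamma,\widetilde{\gamma}]$, $[\beta,\widetilde{\beta}]$, $[\beta,\widetilde{\gamma}]$, $[\delta,\widetilde{\gamma}]$, and the $\theta$-symmetry) are sound and essentially what the paper does.

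The gap is in $[\delta,\widetilde{\beta}]=0$, which the paper does not prove by an easy bookkeeping check but by citing \cite[Lemma~5.4]{S:2005} (a nontrivial lemma; compare the analogous in-paper argument for $[\delta,\widetilde{\beta}^e]=0$ inside Proposition~\ref{prop:delta_s_commute}). Your key claim that ``any newly selectable length-1 string has trivial interaction'' is only true when $\delta$ already selected a length-1 string at that node before $\widetilde{\beta}$ acted: in that case the original singular length-1 string and the new cosingular one coincide as strings $(1,0)$ and the multiset output is unchanged. The dangerous case is a node $a<r_1$ with $P^{(a)}_1(\nu)=0$, $\ell^{(a-1)}=1$, and no length-1 string in the original $\nu^{(a)}$: then after $\widetilde{\beta}$ the algorithm would select the new length-1 string, whereas before it selected a strictly longer one (or terminated differently), and the two outputs of $\delta$ genuinely differ --- one removes a box from a long row of $\nu^{(a)}$, the other deletes the new short row and leaves the long row intact, and the subsequent selections $\ell^{(a+1)},\dotsc$ can change as well. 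So the content of the lemma is not that the interaction is harmless but that this scenario cannot occur; ruling it out requires the convexity relation
\[
-P^{(a)}_{i-1}+2P^{(a)}_i-P^{(a)}_{i+1}=L^{(a)}_i-\sum_{b\in I_0}A_{ab}m^{(b)}_i
\]
together with the structure of the earlier selections, exactly as in the paper's proof that $[\delta,\widetilde{\beta}^e]=0$ (where $P^{(a)}_1=m^{(a)}_1=0$ is forced and then a contradiction with $L^{(1)}_1>0$ or $m^{(a-1)}_1>0$ is derived). Without this argument, or an explicit appeal to \cite[Lemma~5.4]{S:2005}, your proof of $[\delta,\widetilde{\beta}]=0$ (and hence, via $\theta$, of $[\beta,\widetilde{\delta}]=0$) is incomplete.
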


\begin{proof}
The relation $[\delta,\widetilde{\delta}] = 0$ is Theorem~\ref{th:delta_commute}. The relation 
$[\delta, \widetilde{\beta}] = 0$ is proven in~\cite[Lemma~5.4]{S:2005}, whereas $[\delta,\widetilde{\gamma}] = 0$ 
follows from the fact that $\widetilde{\gamma}$ preserves coriggings, specifically singular strings stay singular. 
The relations $[\beta,\widetilde{\beta}] = [\beta,\widetilde{\gamma}] = 0$ follow from the fact that $\beta$ and 
$\widetilde{\beta}$ preserve vacancy numbers and that $\widetilde{\gamma}$ preserves coriggings. The relation 
$[\gamma,\widetilde{\gamma}] = 0$ follows from the fact that $\gamma$ and $\widetilde{\gamma}$ preserve 
riggings and coriggings respectively. The remaining relations can be deduced by conjugation by $\theta$.
\end{proof}

\begin{proposition} \label{prop:rs vs gamma-tilde}
Let $1\le r\le n-2,s\ge2$. For $u_\la\in B^{r,s}$ we have
\begin{equation} \label{rs vs gamma-tilde}
\Phi(\rs(u_\la))=\widetilde{\gamma}(\Phi(u_\la)).
\end{equation}
\end{proposition}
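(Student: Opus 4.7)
My plan is to reduce Proposition~\ref{prop:rs vs gamma-tilde} to two intertwining properties of $\Phi$ together with two elementary observations. The intertwinings are $\Phi \circ \ls = \gamma \circ \Phi$ (the first row of Table~\ref{table:operations}, part of Proposition~\ref{prop:Phi_welldef1}) and $\Phi \circ \diamond = \theta \circ \Phi$ (Proposition~\ref{prop:Phi_welldef1}(7)). The two observations are: first, $\diamond$ fixes $u_\la$, because $u_\la^{\lusz}$ is the $I_0$-lowest weight element in the same $I_0$-component as $u_\la$, so $\high(u_\la^{\lusz}) = u_\la$; second, $\theta$ fixes $\Phi(u_\la)$, because by Proposition~\ref{prop:highestRC} all riggings and all vacancy numbers of $\Phi(u_\la)$ vanish, hence the coriggings vanish as well.

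Given these inputs, using the definitions $\rs = \diamond \circ \ls \circ \diamond$ and $\widetilde{\gamma} = \theta \circ \gamma \circ \theta$, the proof is the chain
\begin{align*}
\Phi(\rs(u_\la))
&= \Phi(\diamond(\ls(u_\la))) = \theta(\Phi(\ls(u_\la))) = \theta(\gamma(\Phi(u_\la))) \\
&= \theta(\gamma(\theta(\Phi(u_\la)))) = \widetilde{\gamma}(\Phi(u_\la)),
\end{align*}
where the first equality uses $\diamond(u_\la) = u_\la$, the second uses the intertwining for $\diamond$ applied to the path $\ls(u_\la) \in \p(B^{r,1} \otimes B^{r,s-1})$, the third uses the intertwining for $\ls$, and the fourth rewrites $\Phi(u_\la) = \theta(\Phi(u_\la))$ inside $\theta \circ \gamma$.

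The main obstacle is logical rather than computational: the intertwining $\Phi \circ \diamond = \theta \circ \Phi$ is the nontrivial Proposition~\ref{prop:Phi_welldef1}(7), which is proved only in a later section. So either Proposition~\ref{prop:rs vs gamma-tilde} is deferred until that result is in place, or a self-contained verification is supplied. The direct route combines the explicit form $\rs(u_\la) = t \otimes u_{\varpi_r}$ from Proposition~\ref{lem:rc on KN} with the explicit form of $\Phi(u_\la)$ from Proposition~\ref{prop:highestRC}, and computes $\Phi(t \otimes u_{\varpi_r})$ by running the $\delta$-algorithm successively on the $r$ columns of $t \otimes u_{\varpi_r}$. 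This boils down to a routine case analysis over the three cases ($p = r$; $p < r$, $k_p \ge 2$; $p < r$, $k_p = 1$) of Proposition~\ref{lem:rc on KN}, checking in each case that the outcome matches the effect of $\widetilde{\gamma}$ on $\Phi(u_\la)$, namely replacing a row of length $s$ in $\mu^{(r)}$ by rows of lengths $s-1$ and $1$ and incrementing the rigging of every row of $\nu^{(r)}$ of length strictly less than $s$.
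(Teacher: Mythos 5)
Your reduction has a circularity problem that you half-notice but do not resolve. The chain you write is algebraically fine (and your two observations, $\diamond(u_\la)=u_\la$ and $\theta\bigl(\Phi(u_\la)\bigr)=\Phi(u_\la)$ via Proposition~\ref{prop:highestRC}, are correct), but it hinges on Proposition~\ref{prop:Phi_welldef1}(7) applied to $\ls(u_\la)\in\p(B^{r,1}\otimes B^{r,s-1})$. The difficulty is not merely that (7) ``is proved later'': Proposition~\ref{prop:rs vs gamma-tilde} is itself an ingredient of the proof of Proposition~\ref{prop:Phi_welldef1} --- it is exactly what is invoked to establish case (6) for a single KR crystal $B^{r,s}$, and in the paper's cube-induction the proof of (7) for any tensor product containing a factor $B^{r,s'}$ with $s'\ge 2$ eventually bottoms out at (6) for that single factor. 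So ``deferring the proposition until (7) is in place'' is not an option: as the argument is structured, (7) for $B^{r,1}\otimes B^{r,s-1}$ rests on the statement you are trying to prove (at width $s-1$). One could in principle salvage your route by interleaving it into the induction (your chain proves the claim at width $s$ from (7) at widths involving only $s-1$, with base case $s=2$ avoiding (6) altogether), but that requires restructuring the induction of Proposition~\ref{prop:Phi_welldef1} and verifying there is no hidden cycle; you do neither.

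Your fallback --- compare $\rs(u_\la)=t\otimes u_{\varpi_r}$ from Proposition~\ref{lem:rc on KN} with $\Phi(u_\la)$ from Proposition~\ref{prop:highestRC} by tracking the $\delta$-algorithm column by column --- is precisely the paper's proof, but you dismiss it as a ``routine case analysis'' and do not carry it out. That verification is the actual content of the proposition: the paper splits into the three cases of Definition~\ref{def:left split}, and while cases (i) and (ii) reduce quickly to a parallel-removal argument, case (iii) ($p<r$, $k_p=1$) requires, for $s>2$ and separately for $s=2$, an explicit bookkeeping of which row length receives a box removal at each of the $r$ applications of $\delta$ (with parity subcases in $a$), matched against the description in Proposition~\ref{prop:highestRC} and against the effect of $\widetilde{\gamma}$ (which you state correctly). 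Without either this computation or a carefully restructured induction, the proposal does not constitute a proof.
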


\begin{proof}
Let $\la=k_p\varpi_p+k_q\varpi_q+\sum_{0\le j<q}k_j\varpi_j$ ($p>q,k_p,k_q>0,k_p+k_q+\sum_{0\le j<q}k_j=s$).
We prove the statement by dividing into the same cases as in Definition \ref{def:left split}.

Consider Case~(\ref{ls_p_eq_r}), where $p=r$. In this case, $\ls(u_\la)=c\ot u_{\la-\varpi_r}$, where $c=r \cdots 21$
by Definition \ref{def:left split} (\ref{ls_p_eq_r}). Hence, during the process of removing the leftmost column of $u_\la$,
the corresponding rigged configuration never changes. Proposition~\ref{lem:rc on KN} says that the left
component of $\rs(u_\la)$ is $u_{\la-\varpi_r}$. Therefore, the removing procedures of $\delta$ performed to obtain
the corresponding rigged configuration is completely parallel from $u_{\la-\varpi_r}$ and from $\rs(u_\la)$,
which justifies \eqref{rs vs gamma-tilde}.

Next consider Case~(\ref{ls_p_lt_r}), where $p<r$ and $k_p\ge2$. In this case, the left component of $\ls(u_\la)$
is the same as the leftmost column of $\rs(u_\la)$, which is $\ol{p+1}\cdots \ol{r}p \cdots 1$, the leftmost 
column of the right component of $\ls(u_\la)$ is $r \cdots 1$, and the right $s-2$ columns of $\ls(u_\la)$
and the right $s-2$ columns of the left component of $\rs(u_\la)$ are the same. Hence, the applications
of $\delta$ to remove the leftmost column of $u_\la$ and the one of the left component of $\rs(u_\la)$
are completely parallel and arrive at Case~(\ref{ls_p_eq_r}). Thus, the proof is done also in this case.

Finally, we consider Case~(\ref{ls_kp1}), where $p<r$ and $k_p=1$. Divide further into
two cases: (a) $s>2$ and (b) $s=2$. First consider Case~(a), and set 
$\la=\varpi_p+\varpi_q+\varpi_{q'}+\dots$ ($r>p>q\ge q'$). Just as in Case~(\ref{ls_p_lt_r}),
the right $s-2$ columns of $u_\la$ and the right $s-2$ columns of the left 
component of $\rs(u_\la)$ are the same. Hence, it is sufficient to show that
the process of removing the leftmost column of the left component of $\rs(u_\la)$
is parallel to removing the left two columns of $u_\la$. We list below the length of 
the row in which a box is removed by the $i$th application of $\delta$ along the 
leftmost column of the left component of $\rs(u_\la)$. Here $a$ stands for the 
position of the configuration when we move along it in the increasing order of $a$
during the process of $\delta$ while $\ol{a}$ the one when we move in the decreasing 
order, and $\ell$ is the length of the row in which a box is removed.
\begin{itemize}
\item[] When $1\le i\le p-q$:
\begin{itemize}
\item[] if $i$ is odd,\\ $a=r-i+1,\dots,r-1\rightarrow \ell=s-2$ and
	$a=r,\dots,\ol{q+i}\rightarrow \ell=s-1$;
\item[] if $i$ is even,\\ $a=r-i+1,\dots,\ol{r}\rightarrow \ell=s-2$ and
	$a=\ol{r-1},\dots,\ol{q+i}\rightarrow \ell=s-1$.
\end{itemize}
\item[]When $p-q< i\le r-q$, set $j=i-(p-q)$:
\begin{itemize}
\item[] if $i$ is odd,\\ $a=r-i+1,\dots,r-j\rightarrow \ell=s-2$, 
	$a=r-j+1,\dots,r-1\rightarrow \ell=s-1$ and
	$a=r,\dots,\ol{q+i}\rightarrow \ell=s$;
\item[] if $i$ is even,\\ $a=r-i+1,\dots,r-j\rightarrow \ell=s-2$, 
	$a=r-j+1,\dots,\ol{r}\rightarrow \ell=s-1$ and
	$a=\ol{r-1},\dots,\ol{q+i}\rightarrow \ell=s$.
\end{itemize}
\item[]When $r-q<i\le r-q'$:
\begin{itemize}
\item[] $a=r-i+1,\dots,r-i+p-q\rightarrow \ell=s-2$.
\end{itemize}
\item[]When $r-q'<i\le r$:
\begin{itemize}
\item[] no box is removed.
\end{itemize}
\end{itemize}
In view of \cite[Proposition 3.3]{OSS:2013}, we see that the above deletions
are in fact parallel to removing the left two columns of $u_\la$, and hence the
proof is done.

For Case~(b), set $\la=\varpi_p+\varpi_q$ ($r>p>q$) and $z=(p+q)/2$. We provide 
a similar list.
\begin{itemize}
\item[] When $1\le i\le (p-q)/2$:
\begin{itemize}
\item[] $a=r-i+1,\dots,\ol{q+i}\rightarrow \ell=1$.
\end{itemize}
\item[]When $(p-q)/2< i\le r-z$, set $j=i-(p-q)/2$:
\begin{itemize}
\item[] if $i$ is odd,\\ $a=r-i+1,\dots,r-1\rightarrow \ell=1$ and
	$a=r,\dots,\ol{p+j}\rightarrow l=2$;
\item[] if $i$ is even,\\ $a=r-i+1,\dots,\ol{r}\rightarrow \ell=1$ and
	$a=\ol{r-1},\dots,\ol{p+j}\rightarrow \ell=2$.
\end{itemize}
\item[]When $r-z<i\le r$:
\begin{itemize}
\item[] no box is removed.
\end{itemize}
\end{itemize}
\end{proof}

\subsection{Spin cases}
Following~\cite{S:2005}, let us introduce the analogue of $\delta$ for the spin cases $B^{r,1}$ where $r=n-1$
or $n$. Fix a multiplicity array $L$ and let $\mu = \mu(L)$. We introduce the embedding of the unrestricted 
rigged configurations
\begin{equation*}
\begin{split}
	\emb \colon \RC(L) &\longrightarrow \RC(L')\\
	(\nu,J) & \longmapsto (\nu',J'),
\end{split}
\end{equation*}
where $\nu'^{(a)}_i=2\nu^{(a)}_i$, $J'^{(a)}_i=2J^{(a)}_i$, and $L'$ is determined by
$\mu_i'^{(a)}=2\mu^{(a)}_i$ for $\mu'=\mu(L')$. Note that $\emb$ is a similarity map with a scaling factor of $2$ 
as described in~\cite[Theorem~3.1]{K96} in the following sense:
\begin{subequations}
\label{eq:emb_crystal_morphism}
\begin{align}
\emb( e_i b ) & = e_i^2\bigl( \emb(b) \bigr),
\\ \emb( f_i b ) & = f_i^2\bigl( \emb(b) \bigr),
\\ \wt\bigl( \emb(b) \bigr) & = 2 \wt(b).
\end{align}
\end{subequations}

\begin{remark}
The image of $\emb$ is characterized by the condition that all parts of $\mu$ and $\nu$ as well as all 
riggings are even.
\end{remark}

The analogue of $\delta$ for the spin case is denoted by $\delta_s$. It corresponds to removing an
entire spin column $B^{r,1}$ for $r=n-1$ or $n$ and is defined as
\begin{equation}
\label{eq:delta_spin}
	\delta_s = \emb^{-1} \circ (\delta \circ \beta)^{n-2} \circ \delta \circ \overline{\beta} \circ \delta \circ
	\beta^{(r)} \circ \emb,
\end{equation}
where $\beta^{(r)}$ and $\overline{\beta}$ are given as follows:
\begin{itemize}
\item Define $\beta^{(n)}$ as the map which replaces a length two row of $\mu^{(n)}$ by a length one row,
adds a length one row to each of $\mu^{(1)}$ and $\mu^{(n-1)}$, and
adds a length one singular string to each of $(\nu^{(a)},J^{(a)})$ for $a\leq n-2$ and $a=n-1$.
The map $\beta^{(n-1)}$ is defined in the same way with $n$ and $n-1$ interchanged.

\item Define $\overline{\beta}$ as the map which removes a length one row from each of $\mu^{(n-1)}$ and $\mu^{(n)}$,
adds a length one row to each of $\mu^{(1)}$ and $\mu^{(n-2)}$, and
adds a length one singular string to each of $(\nu^{(a)},J^{(a)})$ for $a\leq n-2$.
\end{itemize}

The following lemma is a straightforward computation.
\begin{lemma}
\label{lemma:spin_betas_preserve}
The maps $\beta^{(r)}$ and $\overline{\beta}$ preserve the vacancy numbers.
\end{lemma}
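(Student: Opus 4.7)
The plan is a direct node-by-node computation via the vacancy-number identity~\eqref{eq:def_vacancy}:
\[
P^{(a)}_i(\nu) = Q^{(a)}_i\bigl(\mu(L)\bigr) - 2Q^{(a)}_i(\nu) + \sum_{b\sim a} Q^{(b)}_i(\nu).
\]
I would first tabulate the change $\Delta L$ and the changes $\Delta m^{(a)}_j(\nu)$ prescribed by each of $\beta^{(n)}$, $\beta^{(n-1)}$, and $\overline{\beta}$, then propagate them through $Q^{(a)}_i(\lambda) = \sum_j \min(i,j)\, m^{(a)}_j(\lambda)$, and finally verify that $\Delta P^{(a)}_i = 0$.

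For $\beta^{(n)}$, the $L$-changes yield
\[
\Delta Q^{(a)}_i(\mu) = \delta_{a,1} + \delta_{a,n-1} + \bigl(\min(i,1)-\min(i,2)\bigr)\delta_{a,n},
\]
while the insertion of length-one singular rows in $\nu^{(a)}$ for $1\le a\le n-1$ gives $\Delta Q^{(a)}_i(\nu) = 1$ for $a\le n-1$ and $\Delta Q^{(n)}_i(\nu) = 0$. Substituting and walking through the $D_n$ Dynkin structure---the endpoint $a=1$ (sole neighbor $2$), the generic interior nodes $2\le a\le n-3$ (neighbors $a-1, a+1$), the three-neighbor fork $a=n-2$ (neighbors $n-3, n-1, n$), and the spin nodes $a=n-1, n$ (each with sole neighbor $n-2$)---forces $\Delta P^{(a)}_i = 0$ node by node. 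The calculation for $\beta^{(n-1)}$ follows by swapping $n$ and $n-1$ via the Dynkin automorphism $\tau$ that exchanges these two spin nodes.

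For $\overline{\beta}$ the bookkeeping is symmetric in $n-1, n$: one reads off $\Delta Q^{(a)}_i(\mu) = \delta_{a,1} + \delta_{a,n-2} - \delta_{a,n-1} - \delta_{a,n}$, and length-one singular rows are inserted in $\nu^{(a)}$ only for $1\le a\le n-2$. The same node-by-node substitution immediately gives $\Delta P^{(a)}_i = 0$ everywhere. The argument is pure bookkeeping; no ingredient beyond~\eqref{eq:def_vacancy} and the neighbor relations encoded in the Cartan matrix of $D_n$ is required, and the only place requiring care is the fork $\{n-2, n-1, n\}$, where the asymmetric maps $\beta^{(n)}$ and $\beta^{(n-1)}$ contrast with the symmetric $\overline{\beta}$.
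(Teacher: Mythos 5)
Your strategy---tabulating the changes in $L$ and in $\nu$ and pushing them through \eqref{eq:def_vacancy}---is exactly what the paper intends (it gives no details, calling the lemma a straightforward computation), and your bookkeeping for $\overline{\beta}$ is correct at every node. However, your claimed conclusion for $\beta^{(n)}$ fails at the node $a=n$ when $i=1$, and this is visible from your own table: there
\[
\Delta P^{(n)}_1 \;=\; \bigl(\min(1,1)-\min(1,2)\bigr) \;-\; 2\cdot 0 \;+\; \Delta Q^{(n-2)}_1(\nu) \;=\; 0+1 \;=\; 1 ,
\]
not $0$. Trading a length-two row of $\mu^{(n)}$ for a length-one row leaves $Q^{(n)}_1(\mu)$ unchanged (both contribute $1$ to the first column), so nothing cancels the $+1$ contributed through the neighbour term by the new length-one string in $\nu^{(n-2)}$; only for $i\ge 2$ does the $-1$ from the $\mu^{(n)}$ change produce the cancellation you assert. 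A minimal sanity check: for $B^{n,1}$ alone in type $D_4$ the rigged configuration is empty, after $\emb$ one has $P^{(4)}_1=1$, and after $\beta^{(4)}$ one computes $P^{(4)}_1=2$. The same defect occurs at $(a,i)=(n-1,1)$ for $\beta^{(n-1)}$.

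So the node-by-node verification does not close as written, and you must say something at that one spot. The accurate statement is that $\beta^{(r)}$ preserves $P^{(a)}_i$ for all $(a,i)$ except $(a,i)=(r,1)$, where the vacancy number increases by $1$, and that this exception is harmless for the way the lemma is used (inside $\delta_s$ in \eqref{eq:delta_spin} and in Proposition~\ref{prop:delta_s_commute}): $\beta^{(r)}$ is only applied to rigged configurations coming from the image of $\emb$, where every part of $\nu^{(r)}$ is even, so no string of length one at node $r$ exists whose rigging, corigging or singularity could be affected; equivalently, what is needed and what holds is that the vacancy numbers attached to all strings are preserved. Either add this remark (proving the lemma in that form), or find the cancellation you believed occurs at $(n,1)$---as it stands, that step of your argument is false, while the $\overline{\beta}$ half and all other nodes for $\beta^{(r)}$ are fine.
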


It was shown in~\cite{S:2005} that $\delta_s$ is well-defined.
Similarly, we define $\widetilde{\delta}_s = \theta \circ \delta_s \circ \theta$.

\begin{proposition}
\label{prop:delta_s_commute}
We have the following relations on rigged configurations in $\rc(L)$:
\begin{enumerate}
\item \label{item:delta} $[\delta_s,\widetilde{\delta}_s] = [\delta_s,\widetilde{\delta}] = [\widetilde{\delta}_s, \delta] = 0$,
\item \label{item:other_tilde} $[\delta_s, \widetilde{\beta}] = [\delta_s, \widetilde{\gamma}] = 0$,
\item \label{item:other} $[\widetilde{\delta}_s, \beta] = [\widetilde{\delta}_s, \gamma] = 0$.
\end{enumerate}
\end{proposition}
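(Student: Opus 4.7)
The plan is to reduce each commutator to elementary commutations already established, using the explicit composition formula
\[
\delta_s = \emb^{-1} \circ (\delta \circ \beta)^{n-2} \circ \delta \circ \ol{\beta} \circ \delta \circ \beta^{(r)} \circ \emb
\]
together with Proposition~\ref{prop:comm2}, Theorem~\ref{th:delta_commute} and Lemma~\ref{lemma:spin_betas_preserve}.

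First I check that $\theta$ commutes with $\emb$. Under $\emb$ all parts of $\mu(L)$, all parts of $\nu$ and all riggings are multiplied by $2$, so the vacancy numbers (being $\mathbb{Z}$-linear in these data) also double, hence so do the coriggings; the involution $\theta$ exchanging riggings and coriggings is therefore compatible with this scaling. Conjugating the defining formula for $\delta_s$ by $\theta$ then yields
\[
\widetilde{\delta}_s = \emb^{-1} \circ (\widetilde{\delta} \circ \widetilde{\beta})^{n-2} \circ \widetilde{\delta} \circ \widetilde{\ol{\beta}} \circ \widetilde{\delta} \circ \widetilde{\beta^{(r)}} \circ \emb,
\]
where $\widetilde{\ol{\beta}} = \theta\circ \ol{\beta}\circ \theta$ and $\widetilde{\beta^{(r)}} = \theta \circ \beta^{(r)} \circ \theta$. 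Consequently each relation in part~(\ref{item:other}) is the $\theta$-conjugate of the corresponding relation in part~(\ref{item:other_tilde}), and $[\widetilde{\delta}_s,\delta]=0$ in part~(\ref{item:delta}) is the $\theta$-conjugate of $[\delta_s,\widetilde{\delta}]=0$. Thus it suffices to prove
\[
[\delta_s,\widetilde{\delta}_s] = [\delta_s,\widetilde{\delta}] = [\delta_s,\widetilde{\beta}] = [\delta_s,\widetilde{\gamma}] = 0.
\]

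Next I extend Proposition~\ref{prop:comm2} to include $\ol{\beta}$ and $\beta^{(r)}$. By Lemma~\ref{lemma:spin_betas_preserve} both operators preserve vacancy numbers, and the arguments in the proof of Proposition~\ref{prop:comm2} apply verbatim to show that each of $\ol{\beta}, \beta^{(r)}$ commutes with every $\widetilde{\delta}, \widetilde{\beta}, \widetilde{\gamma}, \widetilde{\ol{\beta}}, \widetilde{\beta^{(r)}}$. Combined with Theorem~\ref{th:delta_commute} and Proposition~\ref{prop:comm2}, this implies that every elementary factor appearing inside the composition for $\delta_s$ commutes with every elementary factor appearing inside the composition for $\widetilde{\delta}_s$; factor-by-factor exchange inside the common embedding then yields $[\delta_s,\widetilde{\delta}_s]=0$. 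For the three remaining commutators $[\delta_s,X]$ with $X \in \{\widetilde{\delta}, \widetilde{\beta}, \widetilde{\gamma}\}$, I conjugate $X$ through $\emb$ to the corresponding operation $X'$ on $\emb(\rc(L))$ and verify that $X'$ again commutes with each of $\delta, \beta, \ol{\beta}, \beta^{(r)}$ by the same vacancy-number and singular/cosingular-string arguments used in Proposition~\ref{prop:comm2}; this gives $[\Delta_s, X'] = 0$ for the interior composition $\Delta_s$, and hence $[\delta_s, X]=0$.

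The main obstacle is the conjugation of $\widetilde{\delta}$ through $\emb$: a single application of $\widetilde{\delta}$ to an embedded configuration produces an odd-length row, which is not itself in $\emb(\rc(L))$. The resolution is to interpret the conjugate $\emb \circ \widetilde{\delta} \circ \emb^{-1}$ as the two-step operation that selects and shortens a cosingular row by two boxes, so that evenness is preserved; one then verifies, following the strategy used for Theorem~\ref{th:delta_commute} in~\cite{S:2005} and Proposition~\ref{prop:comm2}, that this doubled operation commutes with each interior factor $\delta, \beta, \ol{\beta}, \beta^{(r)}$. The remaining cases $X = \widetilde{\beta}, \widetilde{\gamma}$ are more transparent because they act on the multiplicity array $\mu(L)$ by removing or splitting rows, and these manipulations commute cleanly with the doubling induced by $\emb$.
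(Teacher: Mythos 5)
Your overall architecture coincides with the paper's: commute $\theta$ with $\emb$, use $\theta$-conjugation to reduce part~(iii) and $[\widetilde{\delta}_s,\delta]=0$ to the untilded statements, and conjugate the remaining operators through $\emb$ to doubled operators that are then commuted past the interior factors $\delta,\beta,\ol{\beta},\beta^{(r)}$. However, the two steps carrying the real content are asserted rather than proved, and the claim that the arguments of Proposition~\ref{prop:comm2} ``apply verbatim'' is not correct. The conjugate of $\widetilde{\beta}$ through $\emb$ is the operator $\widetilde{\beta}^e$ which adds a \emph{length two} cosingular string (rigging $0$) to each $\bigl(\nu^{(a)},J^{(a)}\bigr)$, $a<r$, and the key commutator $[\delta,\widetilde{\beta}^e]=0$ is not a formal consequence of vacancy-number preservation: even the length-one statement $[\delta,\widetilde{\beta}]=0$ in Proposition~\ref{prop:comm2} is imported from \cite[Lemma~5.4]{S:2005}, and the length-two version needs a genuinely new argument ruling out that $\delta$ selects the newly added string of length $2$ where it previously selected a longer one. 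In the paper this is the bulk of the proof, done via the convexity relation~\eqref{eq:explicit_convexity}: if $a$ is minimal with $s^{(a)}=2<\ell^{(a)}$, then $P^{(a)}_2=m^{(a)}_2=0$, convexity forces $P^{(a)}_1=L^{(a)}_2=m^{(b)}_2=0$ for $b\sim a$, and a short chain of contradictions involving $m^{(a)}_1$, $m^{(a\pm 1)}_1$ and $L^{(1)}_1$ concludes. Nothing of this kind appears in your proposal. The same objection applies to your claim that $\ol{\beta}$ and $\beta^{(r)}$ commute with $\widetilde{\delta}$ by Lemma~\ref{lemma:spin_betas_preserve} alone: these maps add singular strings, which are simultaneously cosingular whenever the relevant vacancy number vanishes and can therefore interfere with the cosingular selection of $\widetilde{\delta}$; a \cite[Lemma~5.4]{S:2005}-type argument (or its $\theta$-conjugate) is needed there as well, and such commutators are exactly what your factor-by-factor treatment of $[\delta_s,\widetilde{\delta}_s]$ relies on.

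The second gap is your treatment of $\emb\circ\widetilde{\delta}\circ\emb^{-1}$. Describing it as ``the two-step operation that selects and shortens a cosingular row by two boxes'' is a heuristic, not an identity one can compute with, and it is not even the correct description. What the paper uses is the precise intertwining relation $\emb\circ\delta=\delta\circ\delta\circ\gamma\circ\emb$ (the type $D_n^{(1)}$ analogue of \cite[Lemma~3.5]{O:15}), whose $\theta$-conjugate $\emb\circ\widetilde{\delta}=\widetilde{\delta}\circ\widetilde{\delta}\circ\widetilde{\gamma}\circ\emb$ expresses the conjugate in terms of the elementary operators $\widetilde{\delta}$ and $\widetilde{\gamma}$, so that Theorem~\ref{th:delta_commute}, Proposition~\ref{prop:comm2} and the spin-$\beta$ statements can be brought to bear; note that the conjugate involves $\widetilde{\gamma}$, and that the assertion that two successive applications of $\delta$ on the doubled configuration reproduce the doubling of $\delta$ is itself nontrivial and must be proved or cited, not merely described. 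Until you supply this identity and the $[\delta,\widetilde{\beta}^e]=0$ argument (together with the analogous commutators involving $\ol{\beta}$ and $\beta^{(r)}$), your reduction to ``elementary commutations already established'' does not go through, because the elementary commutations you invoke are precisely the ones that are not yet established.
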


\begin{proof}
We note that $\theta$ commutes with $\emb$ and $\emb^{-1}$. Additionally, we have
\[
\emb \circ \delta = \delta \circ \delta \circ \gamma \circ \emb
\]
as a similar statement and proof of~\cite[Lemma~3.5]{O:15} holds for type $D_n^{(1)}$. Hence the 
relations for~(\ref{item:delta}) follow from Theorem~\ref{th:delta_commute}, Proposition~\ref{prop:comm2},
and the definitions of $\widetilde{\delta}_s$ and $\delta_s$.

Fix an $1\le r\le n$ such that $\mu^{(r)}$ has a row of length $s > 2$.
Define $\widetilde{\gamma}^e$ by replacing a length $s$ row of $\mu^{(r)}$ by two rows of length 
$s-2$ and $2$ of $\mu^{(r)}$, leaving $\nu$ unchanged, and preserving all colabels. Also for $1\le r \leq n - 2$, define 
$\widetilde{\beta}^e$ by removing a length $2$ row from $\mu^{(r)}$, adding a length $2$ row 
to each of $\mu^{(1)}$ and $\mu^{(r-1)}$, and adding a length $2$ string with rigging $0$ to each 
of $(\nu^{(a)}, J^{(a)})$ for $1 \leq a < r$. It is straightforward to verify that
\[
\emb \circ \widetilde{\gamma} = \widetilde{\gamma}^e \circ \emb,
\qquad
\emb \circ \widetilde{\beta} = \widetilde{\beta}^e \circ \emb.
\]

We have $[\delta, \widetilde{\gamma}^e] = [\beta, \widetilde{\gamma}^e] = [\beta^{(m)}, \widetilde{\gamma}^e] 
= [\overline{\beta}, \widetilde{\gamma}^e] = 0$, for $m = n-1, n$, since $\widetilde{\gamma}^e$ preserves 
colabels and since Lemma~\ref{lemma:spin_betas_preserve} holds, similar to the proof of Proposition~\ref{prop:comm2}. 
Hence $[\delta_s, \widetilde{\gamma}] = 0$ follows from the definition of $\delta_s$.

The proof of $[\delta, \widetilde{\beta}^e] = 0$ follows the proof for $[\delta, \widetilde{\beta}] = 0$ 
given in~\cite[Lemma~5.4]{S:2005}.
We need to show that for the selected strings $\ell^{(a)}$ (resp. $s^{(a)}$) by $\delta$ in $(\nu, J)$ 
(resp. $\widetilde{\beta}^e(\nu, J)$), we have $\ell^{(a)} = s^{(a)}$ for all $a \in I_0$. The case for 
$\overline{\ell}^{(a)} = \overline{s}^{(a)}$ is similar. First, we have that $\ell^{(a)} = 1$
if and only if $s^{(a)} = 1$ for any $a \in I_0$ as $\delta$ selects the smallest singular string and 
$\widetilde{\beta}^e$ preserves the vacancy numbers and adds a string of length $2$.
Note that we cannot have $s^{(a)} > \ell^{(a)}$ because $\widetilde{\beta}^e$ preserves the vacancy numbers 
and possibly only adds a row of length 2 to $\nu^{(a)}$.
Next, suppose $a$ is minimal such that $2 = s^{(a)} < \ell^{(a)}$, thus we must have $P_2^{(a)} = 0$ 
as the rigging of the string added by $\widetilde{\beta}^e$ is $0$ (i.e., $\widetilde{\beta}^e$ adds a 
cosingular string of length $2$). Hence we must have $m_2^{(a)} = 0$. From the definition of the vacancy 
numbers, we have
\begin{equation}
\label{eq:explicit_convexity}
-P_{i-1}^{(a)} + 2P_i^{(a)} - P_{i+1}^{(a)} = L_i^{(a)} - \sum_{b \in I_0} A_{ab} m_i^{(b)},
\end{equation}
and in particular for $i = 2$, we have
\[
-P_1^{(a)} - P_3^{(a)} = L_2^{(a)} + \sum_{b \sim a} m_2^{(b)}.
\]
Since $P_i^{(a)} \geq 0$ for all $i > 0$, we must have $P_1^{(a)} = L_2^{(a)} = m_2^{(b)} = 0$ for all $b \sim a$.

Recall from the definition of $\delta$ that $s^{(b)} \leq s^{(a)}$ for all $b < a$, and hence
$s^{(a-1)} \leq 2$. If $s^{(a-1)} = 2$, then by the assumption $a$ is minimal such that $s^{(a)} < \ell^{(a)}$, 
we have $2 = s^{(a-1)} = \ell^{(a-1)}$. However, this contradicts that $m_2^{(a-1)} = 0$, and therefore we 
must have $s^{(a-1)} = \ell^{(a-1)} = 1$.
This implies that $s^{(b)} = \ell^{(b)} = 1$ for all $b < a$. If $m_1^{(a)} > 0$, then $\ell^{(a)} = 1$ because 
$0 \leq x \leq P_1^{(a)} = 0$, which contradicts our assumption that $2 < \ell^{(a)}$. Hence we have 
$m_1^{(a)} = 0$. Consider the case when $a = 1$, and~\eqref{eq:explicit_convexity} for $i = 1$ results in
\[
0 = L_1^{(1)} + m_1^{(2)},
\]
which is a contradiction since $L_1^{(1)} > 0$ as we are applying $\delta$.
Now consider the case $a > 1$. Hence,~\eqref{eq:explicit_convexity} for $i = 1$ results in
\[
0 = m_1^{(a-1)} + m_1^{(a+1)} + L_1^{(a)},
\]
and therefore $m_1^{(a-1)} = 0$, which is a contradiction. Thus, we have $[\delta, \widetilde{\beta}^e] = 0$.

Next we have $[\beta^{(m)}, \widetilde{\beta}^e] = [\overline{\beta}, \widetilde{\beta}^e] = 0$ as all these maps 
preserve vacancy numbers. Thus $[\delta_s, \widetilde{\beta}^e] = 0$ from the definition of $\delta_s$.
This proves~\eqref{item:other_tilde}.

The remaining relations follow from conjugation by $\theta$.
\end{proof}

\section{Proof of the well-definedness of the bijection}\label{sec:weldef}
The main purpose of this section is to show that the bijection $\Phi$ is well-defined.

In the proof, we will use diagrams of the following kind as in \cite{KSS:2002}:
\begin{equation*}
\xymatrix{
 {\bullet} \ar[rrr]^{A} \ar[ddd]_{C} \ar[dr] & & &
        {\bullet} \ar[ddd]^{B} \ar[dl] \\
 & {\bullet} \ar[r] \ar[d] & {\bullet} \ar[d] & \\
 & {\bullet} \ar[r]   & {\bullet}  & \\
 {\bullet} \ar[rrr]_{D} \ar[ur] & & & {\bullet} \ar[ul]_{i}
}
\end{equation*}
We regard this as a cube with front face given by the large square.
Suppose that the square diagrams given by the faces of the cube except
for the front face commute and $i$ is the injective map.
Then the front face also commutes since we have
\[
i\circ B\circ A=i\circ D\circ C
\]
by diagram chasing.

\begin{proposition}\label{prop:Phi_welldef1}
Let $B=B^{r_k,s_k} \otimes \cdots \otimes B^{r_2,s_2} \otimes B^{r_1,s_1}$ be a tensor
product of KR crystals with multiplicity array $L$.
Then there exists a unique family of injections $\Phi \colon \p(B,\lambda) \to \rc (L,\lambda)$
such that the empty path maps to the empty rigged configuration and satisfy
the following sequence of commutative diagrams.
\begin{enumerate}
\item[(1)]
Suppose $B=B^{1,1}\otimes B'$.
Let $\lh(B)=B'$ with multiplicity array $\lh(L)$.
Then the diagram
\[
\xymatrix{
\p(B,\lambda) \ar[r]^{\Phi} \ar[d]_{\lh} & \rc(L,\lambda) \ar[d]^{\delta} \\
\bigcup_{\mu}\p(\lh(B),\mu) \ar[r]_{\Phi} & \bigcup_{\mu}\rc(\lh(L),\mu)
}
\]
commutes.

\item[(1')]
Suppose $B=B^{r,1}\otimes B'$ for $r=n-1$ or $n$.
Let $\lh_s(B)=B'$ with multiplicity array $\lh_s(L)$.
Then the diagram
\[
\xymatrix{
\p(B,\lambda) \ar[r]^{\Phi} \ar[d]_{\lh_s} & \rc(L,\lambda) \ar[d]^{\delta_s} \\
\bigcup_{\mu}\p(\lh_s(B),\mu) \ar[r]_{\Phi} &
\bigcup_{\mu}\rc(\lh_s(L),\mu)
}
\]
commutes.

\item[(2)]
Suppose $B=B^{r,1}\otimes B'$ with $2\le r\leq n-2$.
Let $\lb (B)=B^{1,1}\otimes B^{r-1,1}\otimes B'$
with multiplicity array $\lb (L)$.
Then the diagram
\[
\xymatrix{
\p(B,\lambda) \ar[r]^{\Phi} \ar[d]_{\lb} & \rc(L,\lambda) \ar[d]^{\beta} \\
\p(\lb(B),\lambda) \ar[r]_{\Phi} & \rc(\lb(L),\lambda)
}
\]
commutes.

\item[(3)]
Suppose $B=B^{r,s}\otimes B'$ with $s\geq 2$.
Let $\ls(B)=B^{r,1}\otimes B^{r,s-1}\otimes B'$ with multiplicity array $\ls(L)$.
Then the diagram
\[
\xymatrix{
\p(B,\lambda) \ar[r]^{\Phi} \ar[d]_{\ls} & \rc(L,\lambda) \ar[d]^{\gamma}\\
\p(\ls(B),\lambda) \ar[r]_{\Phi} & \rc(\ls(L),\lambda)
}
\]
commutes.

\item[(4)]
Suppose $B=B'\otimes B^{1,1}$.
Let $\rh(B)=B'$ with multiplicity array $\rh(L)$.
Then the diagram
\[
\xymatrix{
\p(B,\lambda) \ar[r]^{\Phi} \ar[d]_{\rh} & \rc(L,\lambda) \ar[d]^{\widetilde{\delta}} \\
\bigcup_{\mu}\p(\rh(B),\mu) \ar[r]_{\Phi} & \bigcup_{\mu}\rc(\rh(L),\mu)
}
\]
commutes.

\item[(4')]
Suppose $B=B'\otimes B^{r,1}$ for $r=n-1$ or $n$.
Let $\rh_s(B)=B'$ with multiplicity array $\rh_s(L)$.
Then the diagram
\[
\xymatrix{
\p(B,\lambda) \ar[r]^{\Phi} \ar[d]_{\rh_s} & \rc(L,\lambda) \ar[d]^{\widetilde{\delta}_s} \\
\bigcup_{\mu}\p(\rh_s(B),\mu) \ar[r]_{\Phi} & \bigcup_{\mu}\rc(\rh_s(L),\mu)
}
\]
commutes.

\item[(5)]
Suppose $B=B'\otimes B^{r,1}$ with $2\le r\le n-2$.
Let $\rb(B)=B'\otimes B^{r-1,1}\otimes B^{1,1}$
with multiplicity array $\rb(L)$.
Then the diagram
\[
\xymatrix{
\p(B,\lambda) \ar[r]^{\Phi} \ar[d]_{\rb} & \rc(L,\lambda) \ar[d]^{\widetilde{\beta}} \\
\p(\rb(B),\lambda) \ar[r]_{\Phi} & \rc(\rb(L),\lambda)
}
\]
commutes.

\item[(6)]
Suppose $B=B'\otimes B^{r,s}$ with $s\geq 2$.
Let $\rs(B)=B'\otimes B^{r,s-1}\otimes B^{r,1}$ with multiplicity array $\rs(L)$.
Then the diagram
\[
\xymatrix{
\p(B,\lambda) \ar[r]^{\Phi} \ar[d]_{\rs} & \rc(L,\lambda) \ar[d]^{\widetilde{\gamma}} \\
\p(\rs(B),\lambda) \ar[r]_{\Phi} & \rc(\rs(L),\lambda)
}
\]
commutes.

\item[(7)]
The diagram
\[
\xymatrix{
\p(B,\lambda) \ar[r]^{\Phi} \ar[d]_{\diamond} &\rc(L,\lambda) \ar[d]^{\theta} \\
\p(B^{\lusz},\lambda) \ar[r]_{\Phi} & \rc(L,\lambda)
}
\]
commutes.
\end{enumerate}
\end{proposition}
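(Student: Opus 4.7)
The plan is to argue by induction on the total box count $N(B) = \sum_i r_i s_i$, using statements (1), (1'), (2), (3) as the \emph{definition} of $\Phi$, and deriving statements (4), (4'), (5), (6), (7) from the inductive hypothesis together with the commutation relations already established. The base case $N(B)=0$ is trivial (empty path to empty rigged configuration). For the inductive step, the leftmost factor $B^{r_k,s_k}$ falls into exactly one of four mutually exclusive cases — $s_k \geq 2$, or $s_k=1$ with $2\leq r_k \leq n-2$, or $s_k=1$ with $r_k \in \{n-1,n\}$, or $r_k=s_k=1$ — so one simply sets $\Phi := \gamma^{-1}\circ \Phi \circ \ls$, $\beta^{-1}\circ \Phi \circ \lb$, $\delta_s^{-1}\circ \Phi \circ \lh_s$, or $\delta^{-1}\circ \Phi \circ \lh$, respectively, with the inner $\Phi$ supplied by induction. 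Uniqueness follows from this recursion. The existence of $\gamma^{-1}$, $\beta^{-1}$, $\delta^{-1}$, $\delta_s^{-1}$ on the image, together with preservation of the vacancy constraints~\eqref{eq:def_RC}, is part of the induction and is checked directly via the explicit algorithms in Definitions~\ref{def:delta},~\ref{def:RC_operations} and formula~\eqref{eq:delta_spin}. Injectivity follows because each inverse step on the RC side reads off a letter or a column, and these assemble the original $b \in \p(B,\lambda)$ unambiguously.

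For (4), (4'), (5), (6), I would use the cube-chase technique indicated just before the proposition. Taking (4) as a model, consider the cube whose top and bottom faces are (1) applied to $B$ and to $\rh(B)$ respectively, whose side faces are (4) applied to the smaller path $\lh(B)$ (the inductive hypothesis), whose back face is the commutativity $[\lh,\rh]=0$ from Proposition~\ref{prop:comm1}, and whose front face is the diagram to be proved, involving the commutativity $[\delta,\widetilde{\delta}]=0$ from Proposition~\ref{prop:comm2}. Since $\delta$ (together with the output letter $k$) is injective on the appropriate domain, diagram-chasing forces the front face to commute. Exactly analogous cubes — with $(\lh,\rh)$ replaced by $(\lh,\rh_s)$, $(\lh_s,\rh)$, $(\lh,\rb)$, $(\lb,\rh)$, $(\ls,\rh)$, etc., and correspondingly $(\delta,\widetilde{\delta})$ replaced by the appropriate pair — handle (4'), (5), (6). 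All necessary RC-side commutativities were collected in Propositions~\ref{prop:comm2} and~\ref{prop:delta_s_commute}. Low-depth base cases where $B$ consists of a single KR factor must be verified directly: Proposition~\ref{prop:rs vs gamma-tilde} already provides the single-factor instance of (6), and the analogues for $\rh,\rh_s,\rb$ on a single factor follow from Proposition~\ref{prop:highestRC} by direct inspection of the highest-weight image (where all riggings and vacancy numbers vanish).

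Statement (7) is the subtlest. I would first verify it on $I_0$-highest weight elements $u_\lambda$ of a single KR factor $B^{r,s}$: by Proposition~\ref{prop:highestRC}, $\Phi(u_\lambda)$ has all riggings zero \emph{and} all vacancy numbers zero, so $\theta$ fixes it; on the other hand, $\diamond(u_\lambda) = \high(u_\lambda^{\lusz})$ is the unique $I_0$-highest weight element of weight $\lambda$ in the relevant component, which equals $u_\lambda$, so $\Phi(\diamond(u_\lambda)) = \Phi(u_\lambda)$. The general case is handled by induction on $N(B)$: using statement (6) (already proved above) to commute $\theta$ with $\gamma$ through $\widetilde{\gamma} = \theta \circ \gamma \circ \theta$, and using the identity $\diamond \circ \rs = \ls \circ \diamond$ (immediate from the definition of $\diamond$), the claim on $B$ reduces to the claim on the smaller tensor product $\rs(B)$. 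A parallel reduction using $\rb$ or $\rh$/$\rh_s$ deals with the case $s_1 = 1$.

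The main obstacle will be the book-keeping in the cube chase when spin factors interact with the other operations. The relations $[\delta, \widetilde{\delta}_s] = 0$, $[\delta_s, \widetilde{\delta}] = 0$, and $[\delta_s, \widetilde{\delta}_s] = 0$ from Proposition~\ref{prop:delta_s_commute} rely on the similarity embedding $\emb$ and the identity $\emb \circ \delta = \delta \circ \delta \circ \gamma \circ \emb$, and the corresponding left-right commutations on the path side must be matched against these. A secondary difficulty is handling small tensor products: when $B$ has only one or two factors, the reductions used in the cube chase cannot always be invoked, so these base cases must be dispatched by hand — using Proposition~\ref{prop:highestRC} and Proposition~\ref{prop:rs vs gamma-tilde} — to seed the induction cleanly.
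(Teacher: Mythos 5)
The architecture you propose (define $\Phi$ by the left operations (1), (1$'$), (2), (3), then obtain (4)--(7) by cube chases using Propositions~\ref{prop:comm1}, \ref{prop:comm2}, \ref{prop:delta_s_commute} and injectivity on the RC side) is the same as the paper's, but there is a genuine gap at the single most difficult point: the well-definedness of $\Phi:=\gamma^{-1}\circ\Phi\circ\ls$ when $B=B^{r,s}\otimes B'$ with $s\ge 2$ and $B'$ nonempty. You assert that ``the existence of $\gamma^{-1}$ \dots on the image, together with preservation of the vacancy constraints~\eqref{eq:def_RC}, is \dots checked directly via the explicit algorithms.'' It cannot be checked directly: $\gamma$ raises the vacancy numbers $P^{(r)}_i$ by $1$ for $i<s$, so $(\Phi\circ\ls)(b)$ lies in $\gamma\bigl(\rc(L)\bigr)$ if and only if every string of $\nu^{(r)}$ of length $<s$ has strictly positive corigging. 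This is a global condition that depends on $b$ being an $I_0$-highest weight element of the \emph{whole} tensor product, and nothing in Definitions~\ref{def:delta} or \ref{def:RC_operations} yields it locally. The paper's proof of (3) is devoted precisely to this: one passes through the $\diamond$/$\theta$ cube (so the inductive hypothesis of (7) is load-bearing for the \emph{definition} of $\Phi$, not merely the ``subtlest'' afterthought), reduces the claim to $(\Phi\circ\rs)(\p(B^{\lusz}))\subseteq\widetilde{\gamma}(\rc(L))$, i.e.\ to strict positivity of the riggings of short strings in $\nu^{(r)}$, and then proves that positivity by a case analysis on the leftmost factor of $B$ ($B^{1,1}$, spin column, non-spin column, general $B^{r',s'}$), each case via a further cube pairing $\rs$ with $\lh/\delta$, $\lh_s/\delta_s$, $\lb/\beta$ or $\ls/\gamma$ and comparing how $\delta$ (resp.\ $\beta$, $\gamma$) interacts with $\widetilde{\gamma}$. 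Without some argument of this kind your recursion simply does not define a map into $\rc(L,\lambda)$. (For (1), (1$'$), (2) and the single-factor case of (3) the analogous image statements are genuinely nontrivial as well; the paper imports them from \cite{OSS:2003a}, \cite{S:2005} and \cite{OSS:2013} rather than ``checking directly.'')

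A secondary, fixable flaw is the induction measure: $\ls$ and $\lb$ (and dually $\rs$, $\rb$) preserve the total box count $\sum_i r_is_i$, so ``the inner $\Phi$ supplied by induction'' is not justified when $s_k\ge 2$ or $s_k=1<r_k$. You need a finer well-founded order, such as the lexicographic triple $\| B\|$ used in the paper, and you must check that each reduction you invoke (including the $\rs$-based reduction in your proof of (7), which also leaves the box count unchanged) strictly decreases it. Once the measure is repaired and the image-containment argument for (3) is supplied, the rest of your plan --- the cubes for (4), (4$'$), (5), (6) and the reduction of (7) --- matches the paper's proof in substance.
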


\begin{proof}
For $B=B^{r_k,s_k}\otimes\cdots\otimes B^{r_2,s_2}\otimes B^{r_1,s_1}$, we set
\[
	\| B\|=\left(\sum_ir_is_i,\,\sum_i(r_i-1),\,\sum_i(s_i-1)\right).
\]
We prove the statement by induction on the lexicographic order of $\| B\|$.
More precisely, at each induction level, we check that $\Phi$ is well-defined
from (1), (1'), (2), (3) and show that this $\Phi$ satisfies (4)---(7) for the next induction step.

The well-definedness of (1) is shown in~\cite{OSS:2003a}.

The well-definedness of (1') is shown in~\cite{S:2005}.

The well-definedness of (2) is shown in~\cite{S:2005}.

Now we prove the well-definedness of (3).
When $B=B^{r,s}$ for $1\le r\le n-2$, the well-definedness of $\Phi$ and, in particular, (3)
are established in \cite[Theorem 5.9]{OSS:2013} by directly computing
the bijection based on Proposition \ref{prop:highestRC} and checking
that the result agrees with the definition of the Kirillov--Reshetikhin tableau
for the $I_0$-highest weight element of $B^{r,s}$. 
When $B=B^{r,s}$ for $r=n-1$ or $n$, both $\p(B^{r,s})$ and $\rc(L)$ consist of a single
element and the property is easy to check.

For more general $B = B^{r,s} \otimes B'$ with $s\ge 2$, consider the following diagram:
\begin{equation*}
\xymatrix{
 {\p(B)} \ar@{-->}[rrr]^{\Phi} \ar[ddd]_{\diamond} \ar[dr]_{\ls}& & & {\rc(L)} \ar[ddd]^{\theta} \ar[dl]_{\gamma} \\
 {\hspace{11mm}}& {\p(\ls{(B)})} \ar[r]^{\Phi} \ar[d]_{\diamond} & {\rc(\ls(L))} \ar[d] ^{\theta}&\\
 & {\p(\rs(B^{\lusz}))} \ar[r]^{\Phi}  & {\rc(\rs(L))}  &\\
 {\p(B^{\lusz})}  \ar[ur] ^{\rs}& & &{\rc(L)} \ar[ul]_{\widetilde{\gamma}}
}
\end{equation*}
We see that the left face and right face commute by definition
and the back face commutes by the induction hypothesis that (7) holds.
We wish to show that $(\Phi\circ\ls)(\p(B))\subseteq\gamma(\rc(L))$.
If we have this relation, we can define $\Phi(\p(B))=(\gamma^{-1}\circ\Phi\circ\ls)(\p(B))$
since $\gamma$ is invertible on $\mathrm{Im}(\gamma)$.
Since $\diamond$ and $\theta$ are bijections,
the commutativity of the left, back and right faces of the above diagram shows that
it is enough to prove that  $(\Phi\circ\rs)(\p(B^{\lusz}))\subseteq\widetilde{\gamma}(\rc(L))$.

Let us show $(\Phi\circ\rs)(\p(B))\subseteq\widetilde{\gamma}(\rc(L))$.
For this it is enough to check that the strings of $\nu^{(r)}$
of the image of $\Phi\circ\rs$ have strictly positive riggings.
We prove this claim by dividing into cases.

When $B=B^{1,1}\otimes B'\otimes B^{r,s}$ consider the following diagram:
\begin{equation*}
\xymatrix{
 {\p(B)} \ar[rrr]^{\Phi} \ar[ddd]_{\lh} \ar[dr]_{\rs}& & &  {\rc(L)} \ar[ddd]^{\delta_1}\\
 {\hspace{17mm}} & {\p(\rs{(B)})} \ar[r]^{\Phi} \ar[d]_{\lh} & {\rc(\rs(L))} \ar[d] ^{\delta_2}& \\
 & {\p(\lh(\rs(B)))} \ar[r]^{\Phi}  & {\rc(\lh(\rs(L)))}  &\\
 {\p(\lh(B))} \ar[rrr]^{\Phi} \ar[ur] ^{\rs}& & &{\rc(\lh(L))} \ar[ul]_{\widetilde{\gamma}}
}
\end{equation*}
The front face commutes by already proved (1),
the left face commutes by definition of $\lh$ and $\rs$,
and the bottom face and the back face commute by induction hypothesis.
There are two $\delta$ which we distinguish by denoting them by $\delta_1$ and $\delta_2$.
By diagram chasing we see that
$(\delta_2\circ\Phi\circ\rs)(\p(B)) = (\widetilde{\gamma}\circ\delta_1\circ\Phi)(\p(B))$.
Then by definition of $\widetilde{\gamma}$ we see that the algorithm for
$\delta_1$ and $\delta_2$ choose the same strings
and the only difference is the fact that, during the process of $\delta_2$,
the riggings of the strings $\{(l, x) \in \bigl(\nu^{(r)}, J^{(r)} \bigr) \mid l < s\}$ are greater by 1
compared with the case of $\delta_1$.
Since the riggings of the elements of $\rc(L)$ are nonnegative,
we see that the riggings for the strings of $\nu^{(r)}$ of the corresponding elements of $\rc(\rs(L))$
which are shorter than $s$ are strictly positive.
Therefore we have $(\Phi\circ\rs)(\p(B))\subseteq\widetilde{\gamma}(\rc(L))$ in this case.

For $B=B^{r',1}\otimes B'\otimes B^{r,s}$ when $r'=n-1$ or $n$, the same arguments as above
go through with $\delta$ and $\lh$ replaced by $\delta_s$ and $\lh_s$, respectively.  

When $B=B^{r',1}\otimes B'\otimes B^{r,s}$ for $2\le r'\le n-2$ consider the following diagram:
\begin{equation*}
\xymatrix{
 {\p(B)} \ar[rrr]^{\Phi} \ar[ddd]_{\lb} \ar[dr]_{\rs}& & &  {\rc(L)} \ar[ddd]^{\beta_1}\\
 {\hspace{17mm}} & {\p(\rs{(B)})} \ar[r]^{\Phi} \ar[d]_{\lb} & {\rc(\rs(L))} \ar[d] ^{\beta_2}& \\
 & {\p(\lb(\rs(B)))} \ar[r]^{\Phi}  & {\rc(\lb(\rs(L)))}  &\\
 {\p(\lb(B))} \ar[rrr]^{\Phi} \ar[ur] ^{\rs}& & &{\rc(\lb(L))} \ar[ul]_{\widetilde{\gamma}}
}
\end{equation*}
The front face commutes by already proved (2),
the left face commutes by definition of $\lb$ and $\rs$,
and the bottom face and the back face commute by induction hypothesis.
In this case we have
$(\beta_2\circ\Phi\circ\rs)(\p(B)) = (\widetilde{\gamma}\circ\beta_1\circ\Phi)(\p(B))$.
Recall that $\beta$ does not change riggings of untouched strings.
Therefore, by definition of $\widetilde{\gamma}$ we see that the riggings for strings of $\nu^{(r)}$
of the elements of $\rc(\rs(L))$, which are shorter than $s$ are greater by 1
compared with the corresponding elements of $\rc(L)$.
Therefore we have $(\Phi\circ\rs)(\p(B))\subseteq\widetilde{\gamma}(\rc(L))$ in this case.

Finally let us consider the case $B=B^{r',s'}\otimes B'\otimes B^{r,s}$.
In this case, we have the following diagram:
\begin{equation*}
\xymatrix{
 {\p(B)} \ar[ddd]_{\ls} \ar[dr]_{\rs}& & & \\
 {\hspace{17mm}} & {\p(\rs{(B)})} \ar[r]^{\Phi} \ar[d]_{\ls} & {\rc(\rs(L))} \ar[d]^{\gamma}& \\
 & {\p(\ls(\rs(B)))} \ar[r]^{\Phi}  & {\rc(\ls(\rs(L)))}  &\\
 {\p(\ls(B))} \ar[rrr]^{\Phi} \ar[ur] ^{\rs}& & &{\rc(\ls(L))} \ar[ul]_{\widetilde{\gamma}}
}
\end{equation*}
The left face commutes by definition of $\ls$ and $\rs$,
and the bottom face and the back face commute by induction hypothesis.
In this case we have
$(\gamma\circ\Phi\circ\rs)(\p(B)) = (\widetilde{\gamma}\circ\Phi\circ\ls)(\p(B))$.
Since $\gamma$ does not change any strings, we see that riggings for the
strings of $\nu^{(r)}$ of the elements of $\rc(\rs(L))$ which are shorter than
$s$ have strictly positive riggings.
Therefore we have $(\Phi\circ\rs)(\p(B))\subseteq\widetilde{\gamma}(\rc(L))$ in this case.
This completes the proof of (3).

We move to the proof of (4). We again divide into cases.

When $B=B^{1,1}$, this follows from a special case of \cite{S:2005}.

When $B=B^{1,1}\otimes B'\otimes B^{1,1}$ consider the following diagram:
\begin{equation}
\label{equation.rh tilde delta}
\xymatrix{
 {\p(B)} \ar[rrr]^{\Phi} \ar[ddd]_{\rh} \ar[dr]_{\lh}& & &  {\rc(L)} \ar[ddd]^{\widetilde{\delta}} \ar[dl]^{\delta}\\
 {\hspace{17mm}} & {\p(\lh{(B)})} \ar[r]^{\Phi} \ar[d]_{\rh} & {\rc(\lh(L))} \ar[d] ^{\widetilde{\delta}}& \\
 & {\p(\rh(\lh(B)))} \ar[r]^{\Phi}  & {\rc(\rh(\lh(L)))}  &\\
 {\p(\rh(B))} \ar[rrr]^{\Phi} \ar[ur] ^{\lh}& & &{\rc(\rh(L))} \ar[ul]_{\delta}
}
\end{equation}
The top face commutes by (1).
The left face commutes by definition of $\lh$ and $\rh$, and the right face
commutes by the fundamental relation $[\delta,\widetilde{\delta}]=0$ (Theorem \ref{th:delta_commute}).
The back and bottom faces commute by induction hypothesis.
Since $\delta$ is injective, the front face commutes.

When $B=B^{r',1}\otimes B'\otimes B^{1,1}$ for $r'=n-1$ or $n$, we have the same commutative
diagram as in~\eqref{equation.rh tilde delta} with $\lh$ and $\delta$ replaced by
$\lh_s$ and $\delta_s$, respectively. The commutativity 
$[\delta_s, \widetilde{\delta}]=0$ was shown in 
Proposition~\ref{prop:delta_s_commute} (i).

When $B=B^{r',1}\otimes B'\otimes B^{1,1}$ for $2\le r'\le n-2$
consider the following diagram and similarly show the front face commutes, using 
Proposition~\ref{prop:comm2} (ii):
\begin{equation*}
\xymatrix{
 {\p(B)} \ar[rrr]^{\Phi} \ar[ddd]_{\rh} \ar[dr]_{\lb}& & &  {\rc(L)} \ar[ddd]^{\widetilde{\delta}} \ar[dl]^{\beta}\\
 {\hspace{17mm}} & {\p(\lb{(B)})} \ar[r]^{\Phi} \ar[d]_{\rh} & {\rc(\lb(L))} \ar[d] ^{\widetilde{\delta}}& \\
 & {\p(\rh(\lb(B)))} \ar[r]^{\Phi}  & {\rc(\rh(\lb(L)))}  &\\
 {\p(\rh(B))} \ar[rrr]^{\Phi} \ar[ur] ^{\lb}& & &{\rc(\rh(L))} \ar[ul]_{\beta}
}
\end{equation*}

When $B=B^{r',s'}\otimes B'\otimes B^{1,1}$
consider the following diagram and similarly show the front face commutes, using 
Proposition~\ref{prop:comm2} (iii):
\begin{equation*}
\xymatrix{
 {\p(B)} \ar[rrr]^{\Phi} \ar[ddd]_{\rh} \ar[dr]_{\ls}& & &  {\rc(L)} \ar[ddd]^{\widetilde{\delta}} \ar[dl]^{\gamma}\\
 {\hspace{17mm}} & {\p(\ls{(B)})} \ar[r]^{\Phi} \ar[d]_{\rh} & {\rc(\ls(L))} \ar[d] ^{\widetilde{\delta}}& \\
 & {\p(\rh(\ls(B)))} \ar[r]^{\Phi}  & {\rc(\rh(\ls(L)))}  &\\
 {\p(\rh(B))} \ar[rrr]^{\Phi} \ar[ur] ^{\ls}& & &{\rc(\rh(L))} \ar[ul]_{\gamma}
}
\end{equation*}

The proofs of (5) and (6) are parallel to the argument of (4).
When $B=B^{r,1}$,
(5) follows from \cite{S:2005}. When $B=B^{r,s}$, (6) follows from 
Proposition~\ref{prop:rs vs gamma-tilde} for $r\le n-2$, and from the fact that both
$\p(B^{r,s})$ and $\rc(L)$ consist of a single element for $r=n-1,n$. We also need
Propositions~\ref{prop:comm2} and~\ref{prop:delta_s_commute} (ii).

Case (4') follows from cases (4) and (5) by noting that $\delta_s$ is realized by a composition
of the doubling map, a sequence of $n$ compositions of $\delta \circ \beta$, and a halving map. 

Finally, we prove (7).
We again divide into cases.

When $B=B^{1,1}\otimes B'$ we can use the following diagram
to show that the front face commutes:
\begin{equation*}
\xymatrix{
 {\p(B)} \ar[rrr]^{\Phi} \ar[ddd]_{\diamond} \ar[dr]_{\lh}& & &  {\rc(L)} \ar[ddd]^{\theta} \ar[dl]^{\delta}\\
 & {\p(\lh{(B)})} \ar[r]^{\Phi} \ar[d]_{\diamond} & {\rc(\lh(L))} \ar[d] ^{\theta}&  {\hspace{10mm}}\\
 & {\p(\lh(B)^{\lusz})} \ar[r]^{\Phi}  & {\rc(\lh(L))}  &\\
 {\p(B^{\lusz})} \ar[rrr]^{\Phi} \ar[ur] ^{\rh}& & &{\rc(L)} \ar[ul]_{\widetilde{\delta}}
}
\end{equation*}

The proofs when $B=B^{r,1}\otimes B'$ or $B=B^{r,s}\otimes B'$
are almost the same.
\end{proof}

We can now state the main result of this section.

\begin{theorem}\label{th:welldef_highest}
Let $B$ be a tensor product of KR crystals.
Then $\Phi \colon \p(B,\lambda) \to \rc (L,\lambda)$ is a well-defined bijection.
\end{theorem}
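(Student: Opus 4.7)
The plan is to upgrade the injection $\Phi$ of Proposition~\ref{prop:Phi_welldef1} to a bijection by induction on $\|B\|$ in the lexicographic order, paralleling the inductive construction there. The base case is the empty tensor product, where $\p(B,\lambda)$ and $\rc(L,\lambda)$ are both singletons when $\lambda = 0$ and empty otherwise, and injectivity and weight preservation come for free from Proposition~\ref{prop:Phi_welldef1}. So the one thing to supply at each level is surjectivity onto $\rc(L,\lambda)$.

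For the inductive step, given $(\nu,J) \in \rc(L,\lambda)$, I would perform the rigged-configuration-side operation matching the leftmost factor $B^{r_k,s_k}$ of $B$: apply $\gamma$ when $s_k \ge 2$, $\beta$ when $s_k = 1$ and $2 \le r_k \le n-2$, $\delta$ when $r_k = s_k = 1$, or $\delta_s$ when $s_k = 1$ with $r_k \in \{n-1,n\}$. In each case the output lies in $\rc(L',\lambda')$ with $\|B'\| < \|B\|$ (with a weight shift by the letter, or spin column, read off in the $\delta$ and $\delta_s$ cases), so by the inductive hypothesis it equals $\Phi(b')$ for a unique $b' \in \p(B',\lambda')$. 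Prepending the output letter or column and applying the corresponding inverse $\ls^{-1}$, $\lb^{-1}$, $\lh^{-1}$, or $\lh_s^{-1}$ produces a candidate preimage $b \in \p(B,\lambda)$, and the relevant commutative square from Proposition~\ref{prop:Phi_welldef1} gives $\Phi(b) = (\nu,J)$, closing the induction. Weight preservation $\wt\Phi(b)=\wt(b)$ drops out along the way from~\eqref{wt_RC_1} and~\eqref{wt_RC_2}, since each RC-side map alters the weight in exactly the same way as its path-side counterpart.

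The main obstacle is verifying that these inverse path-side operations actually land in $\p(B,\lambda)$, that is, that the reassembled object is a genuine $I_0$-highest weight KN tableau (or spin column) — this is precisely where surjectivity of the peeling algorithm is used. For $\delta$ this is the single-column surjectivity established for type $D_n^{(1)}$ in~\cite{OSS:2003a}; for $\delta_s$ it is proved in~\cite{S:2005} via the doubling embedding $\emb$ of~\eqref{eq:emb_crystal_morphism}; for $\beta$ it is the left-box analysis in~\cite{S:2005}; and for $\gamma$ the required highest-weight computations are contained in Proposition~\ref{prop:highestRC} together with the explicit splitting recipe of Definition~\ref{def:left split}. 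Invoking these inputs at each level, the induction proceeds without additional combinatorial obstruction, and the uniqueness clause of Proposition~\ref{prop:Phi_welldef1} guarantees that the bijection so produced agrees with $\Phi$.
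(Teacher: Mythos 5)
Your route is genuinely different from the paper's: the paper does \emph{not} prove surjectivity combinatorially at all. Its proof of Theorem~\ref{th:welldef_highest} is two lines: Proposition~\ref{prop:Phi_welldef1} gives a well-defined weight-preserving injection $\Phi\colon\p(B,\lambda)\to\rc(L,\lambda)$, and the cardinality identity $\lvert\p(B,\lambda)\rvert=\lvert\rc(L,\lambda)\rvert$, imported from the non-bijective fermionic-formula results of~\cite{kedem.difrancesco.2008} or~\cite{naoi.2012}, then forces the injection between finite sets to be a bijection. Your plan to get surjectivity directly by peeling $(\nu,J)$ with $\gamma,\beta,\delta,\delta_s$ and reassembling on the path side would, if completed, give a more self-contained proof, but as written it has a genuine gap.

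The gap is in the reassembly steps for $\ls$ and $\lb$ (and, in a milder form, in the final identification $\Phi(b)=(\nu,J)$). For the $\lh/\delta$ step the highest-weight condition is indeed essentially automatic: since $B(\varpi_1)$ has $\varepsilon_i\le 1$ and the weight of $(\nu,J)$ and of $\delta(\nu,J)$ are dominant, prepending the output letter to the inductively obtained path stays highest weight; a similar minuscule argument works for $\lh_s/\delta_s$. But for $B=B^{r,s}\otimes B'$ with $s\ge 2$ (resp.\ $B^{r,1}\otimes B'$, $2\le r\le n-2$), knowing that $b''=\Phi^{-1}(\gamma(\nu,J))\in\p(\ls(B),\lambda)$ (resp.\ $\Phi^{-1}(\beta(\nu,J))\in\p(\lb(B),\lambda)$) is not enough: you must show $b''$ lies in the \emph{image} of $\ls$ (resp.\ $\lb$), i.e.\ that the split-off column re-glues with the remaining $r\times(s-1)$ part to an actual element of $B^{r,s}$ (resp.\ that the box and the height-$(r-1)$ column glue to an admissible KN column). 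This is exactly the hard combinatorial content of the well-definedness of $\Phi^{-1}$, and it is \emph{not} supplied by the sources you invoke: \cite{OSS:2003a} treats only $(B^{1,1})^{\otimes N}$, \cite{S:2005} only $\bigotimes B^{r_i,1}$, and Proposition~\ref{prop:highestRC}/\cite{OSS:2013} only a single $B^{r,s}$ with no tail $B'$; none of them gives the statement for an arbitrary mixed tail, and the paper itself only proves the one inclusion $(\Phi\circ\ls)(\p(B))\subseteq\gamma(\rc(L))$ in the proof of Proposition~\ref{prop:Phi_welldef1}(3) — the reverse inclusion is precisely your missing surjectivity and is obtained in the paper only a posteriori from the counting. (A smaller point: the commutative squares alone do not give $\Phi(b)=(\nu,J)$, because $\delta$ without its output letter is not injective; you additionally need that $\delta$ and the box-adding algorithm with that letter are mutually inverse on valid rigged configurations.) So either you must prove these image characterizations for general $B'$ — essentially redoing the KSS-style analysis of~\cite{KSS:2002} in type $D_n^{(1)}$ — or you fall back on the paper's counting argument.
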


\begin{proof}
By Proposition~\ref{prop:Phi_welldef1}, $\Phi$ is a well-defined injection.
By~\cite{kedem.difrancesco.2008} or~\cite{naoi.2012}
we have $\lvert \p(B,\lambda) \rvert = \lvert \rc (L,\lambda) \rvert$, which proves that $\Phi$ is a bijection.
\end{proof}

In order to generalize the rigged configuration bijection to include
non-highest weight elements, we invoke the following result stated
in~\cite[Thm.~4.1]{Sak:2013}.

\begin{theorem}[{\cite{Sak:2013}}]
\label{th:welldef}
The rigged configuration bijection $\Phi \colon \p(B) \to \rc (L)$ can be extended 
to a bijection $\Phi \colon B \to \RC(L)$ by requiring $\Phi$ to be a classical crystal isomorphism:
\begin{equation}
\label{equation.Phi commute f}
	[\Phi,e_i]=[\Phi,f_i]=0\qquad (i\in I_0).
\end{equation}
\end{theorem}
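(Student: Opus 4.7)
The plan is to extend $\Phi$ component-by-component along the classical crystal decomposition of $B$, using the fact that $\RC(L)$ is already equipped with a $U_q(D_n)$-crystal structure. Uniqueness of the extension is automatic: any classical crystal isomorphism is determined by its values on $I_0$-highest weight elements, since every $b\in B$ can be written as $f_{i_1}\cdots f_{i_\ell}\, u$ for a unique $u\in\p(B)$ and the relation $[\Phi,f_i]=0$ forces $\Phi(b)=f_{i_1}\cdots f_{i_\ell}\,\Phi(u)$. So the real content is existence.

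First, I would invoke the result of \cite{Sch:2006} that the operators $e_i,f_i$ ($i\in I_0$) from Definition~\ref{definition.crystal operators on rc} endow $\RC(L)$ with the structure of a classical $U_q(D_n)$-crystal whose $I_0$-highest weight elements are precisely $\rc(L)$, with weights as computed by~\eqref{wt_RC_1}. Because $D_n$ is simply-laced, one can verify the Stembridge axioms directly on rigged configurations; this forces each $I_0$-connected component of $\RC(L)$ to be isomorphic to the irreducible $B(\lambda)$ whose highest weight element is the unique $(\nu,J)\in\rc(L)$ in that component. Consequently
\[
\RC(L)\;\iso\;\bigsqcup_{\lambda\in P^+}\bigsqcup_{(\nu,J)\in\rc(L,\lambda)}B(\lambda)
\]
as $I_0$-crystals.

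Second, $B$ admits the analogous classical decomposition $B\iso\bigsqcup_{\lambda}\bigsqcup_{u\in\p(B,\lambda)} B(\lambda)$. By Theorem~\ref{th:welldef_highest}, the map $\Phi$ already supplies, for every $\lambda$, a weight-preserving bijection $\p(B,\lambda)\to\rc(L,\lambda)$ between the indexing sets of the two decompositions. For each $I_0$-component $C\subseteq B$ with highest weight element $u$, there is a unique $U_q(D_n)$-crystal isomorphism from $C$ onto the component $C'\subseteq\RC(L)$ with highest weight element $\Phi(u)$ sending $u$ to $\Phi(u)$, because $B(\lambda)$ has no nontrivial automorphisms. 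Define $\Phi$ on $C$ to be this isomorphism, and glue the definitions across all components. The resulting map commutes with every $e_i$ and $f_i$ ($i\in I_0$) by construction and is a bijection because it is a bijection on each component and the component decompositions match up in type and multiplicity.

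The main obstacle is the very first step: establishing that Definition~\ref{definition.crystal operators on rc} really does give a $U_q(D_n)$-crystal whose connected components are the irreducible highest weight crystals $B(\lambda)$ rather than some larger combinatorial object. This amounts to checking that the operators are well-defined (the new rigging stays within its bounds, string lengths behave correctly), that they satisfy $\varphi_i(x)-\varepsilon_i(x)=\langle\alpha_i^\vee,\wt(x)\rangle$, and that Stembridge's local axioms hold on $\RC(L)$. This is where the work of \cite{Sch:2006,Sak:2013} is essential; once it is in hand, the component-by-component extension above is formal.
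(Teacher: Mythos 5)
Your proposal is a correct proof of the statement as it is phrased, but be aware that the paper itself offers no internal proof to compare against: Theorem~\ref{th:welldef} is imported wholesale from \cite[Thm.~4.1]{Sak:2013}. Your reduction is the standard route and is sound: granted Schilling's theorem \cite{Sch:2006} that the operators of Definition~\ref{definition.crystal operators on rc} satisfy the Stembridge axioms in simply-laced type, so that each connected component of $\RC(L)$ generated by an element of $\rc(L,\lambda)$ is isomorphic to $B(\lambda)$ and the $I_0$-highest weight elements of $\RC(L)$ are precisely $\rc(L)$, the extension exists because Theorem~\ref{th:welldef_highest} matches the highest weight elements of equal weight bijectively, and it is unique and bijective because $B(\lambda)$ has a unique highest weight element and no nontrivial crystal automorphisms, so the component-by-component gluing is forced. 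The one caveat worth recording is that the actual content of \cite{Sak:2013} is stronger than this formal gluing: it establishes that the crystal-theoretic extension is compatible with the combinatorial algorithm defining $\Phi$ (the operations $\delta$, $\beta$, $\gamma$, $\theta$ acting on unrestricted rigged configurations), which is what makes the extended $\Phi$ computable and is what the present paper tacitly relies on when it runs the algorithm on non-highest-weight elements, as in Example~\ref{ex:combR}. Your argument does not, and for the statement as quoted need not, establish that stronger compatibility; it proves existence and uniqueness of the bijective classical crystal isomorphism extension, with the genuinely nontrivial input correctly identified as the crystal structure result of \cite{Sch:2006}.
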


\begin{example}
Consider the rigged configuration $(\nu, J)$ in Example~\ref{ex:rigged_config}, $f_4(\nu, J)$ from Example~\ref{ex:rc_kashiwara}, and the tensor product of KR crystal elements in Example~\ref{ex:rc_to_crystal}. We have
\[
\Phi^{-1}\bigl(f_4(\nu,J)\bigr) =
\Yvcentermath1
\young(13,4,\mfive)\otimes\young(1,3,5)\otimes\young(12,2\mone)
\otimes\young(11)\otimes\young(1) = f_4 \Phi^{-1}(\nu, J).
\]
\end{example}

\section{$R$-invariance of rigged configurations}\label{sec:Rinv}
In this section we prove that the combinatorial $R$-matrix on unrestricted rigged configurations is the identity
under the bijection $\Phi$. We do so in several steps. First we show that the combinatorial $R$-matrix
$R \colon B^{n,1} \otimes B^{r,s} \to B^{r,s} \otimes B^{n,1}$ corresponds to the identity map under $\Phi$
(see Proposition~\ref{prop:spin_Rinv} which follows from Lemmas~\ref{lem:Rinv_spin1} and~\ref{lem:Rinv_spin2}
and Proposition~\ref{prop:mohamad}). The general statement (see Theorem~\ref{th:main}) can then be deduced
by passing a spin column through enough times to reduce to the type $A_n^{(1)}$ case which was proven 
in~\cite{KSS:2002}.

We begin with the $R$-matrix $R \colon B^{n,1}\otimes B^{r,s} \to B^{r,s} \otimes B^{n,1}$ for $1\le r\le n-2$ 
in type $D_n^{(1)}$. Let $b \otimes u_\eta \in B^{n,1}\otimes B^{r,s}$ be an $I_0$-highest weight element,
where $u_\eta \in B(\eta) \subseteq B^{r,s}$ is the unique $I_0$-highest weight element of highest
weight $\eta$. Note that, for fixed $\eta$, the element $b\otimes u_\eta$ is uniquely specified by the weight 
$\lambda = \wt(b\otimes u_\eta)$ since the multiplicity of any weight space in 
$B(\clfw_n) \iso B^{n,1}$ (as classical crystals) is at most 1.

Let us introduce the following notation. Write $\lambda = \sum_{i\in I_0} m_i \clfw_i$
in terms of the classical fundamental weights $\clfw_i$, where all $m_i$ are nonnegative integers
since $\lambda$ is dominant. Define $\overline{\lambda} = \sum_{i \in I_0 \setminus \{n-1,n\}} m_i \clfw_i$.
We can interpret $\overline{\lambda}$ as a partition (where each fundamental weight
$\clfw_i$ for $1\le i\le n-2$ contributes a column of height $i$).
Then let $\overline{\lambda}^c$ be the complement of the partition $\overline{\lambda}$
in the $r\times s$ rectangle. Similarly, $\eta^c$ is the complement of $\eta$ (interpreted as a partition)
in the $r\times s$ rectangle. The skew partition $\overline{\lambda}^c / \eta^c$ can have at most one box
in each row. Denote the cells in $\overline{\lambda}^c / \eta^c$ from top to bottom (in English convention for partitions)
by $c_1,c_2,\ldots,c_\ell$. Finally, $\mathring{\eta}$ is obtained by replacing all 
$\Yboxdim4pt\yng(1,1)$ of $\eta^c$ by $\Yboxdim4pt\yng(1)$ (recall that this is well-defined since $\eta$
is obtained from the $r\times s$ rectangle by removing vertical dominoes).

Note that $m_{n-1}+m_n=1$ due to the fact that $r\le n-2$ and $b \in B^{n,1}$. Also, $\ell+m_{n-1}$ is even
since $b\in B^{n,1}$ (it would be odd for $b\in B^{n-1,1}$).

We now define the configuration that we will show under the bijection $\Phi$ corresponds to the $I_0$-highest weight
elements $b \otimes u_\eta \in B^{n,1} \otimes B^{r,s}$ for $1\le r\le n-2$. Recall that $b\otimes u_\eta$ is uniquely 
determined by the two weights $\eta$ and $\lambda$. Let $\nu=\Gamma(\eta, \lambda)$ be the following configuration:
\begin{enumerate}
\item
For $r\leq a\le n-2$, $\nu^{(a)}=\overline{\lambda}^c$.
\item
For $1\le a<r$, $\nu^{(a)}$ is obtained from $\nu^{(n-2)}$ by removing $r-a$ rows
starting from longer rows.
\item
If $m_{n-1}=0$, then $\nu^{(n-1)}$ (resp. $\nu^{(n)}$) is obtained from $\mathring{\eta}$ by adding the cells 
$c_1, c_3,\ldots$ (resp. $c_2, c_4,\ldots$) to the same row length as in $\overline{\lambda}^c / \eta^c$.\newline
If $m_{n-1}=1$, then $\nu^{(n-1)}$ (resp. $\nu^{(n)}$) is obtained from $\mathring{\eta}$ by adding the cells 
$c_2, c_4,\ldots$ (resp. $c_1, c_3,\ldots$) to the same row length as in $\overline{\lambda}^c / \eta^c$.
\end{enumerate}

Let us examine the vacancy numbers for elements $\nu \in \Gamma(\eta,\lambda)$. It is not hard to check
that $P_i^{(a)}(\nu)=0$ for all $1\le a \le n-2$ and $i\ge 0$. Furthermore, from weight considerations we have 
$P_i^{(n-1)}(\nu)=m_{n-1}$ and $P_i^{(n)}(\nu)=1-m_{n-1}$ for large $i$. Call $1\le h_1\le h_2 \le \cdots \le h_\ell$
the ordered row labels of the cells in $\nu^{(n-1)}/\mathring{\eta}$ and $\nu^{(n)}/\mathring{\eta}$.
Starting with the largest row in $\nu^{(n-1)}$ (resp. $\nu^{(n)}$), the vacancy number switches from 1 to 0
(or 0 to 1) whenever one of the row lengths $h_i$ is crossed. In particular,
$P_i^{(n-1)}(\nu)+P_i^{(n)}(\nu)=1$ for all $i\ge 0$.

\def\lr#1{\multicolumn{1}{|@{\hspace{.6ex}}c@{\hspace{.6ex}}|}{\raisebox{-.3ex}{{\color{blue}$\mathbf #1$}}}}
\begin{example}
\label{ex:minimal_sp_nonsp}
We illustrate the construction of $\Gamma(\eta,\lambda)$ with two examples.
Consider type $D_{10}^{(1)}$ and $B = B^{10,1} \otimes B^{8,5}$. Let
$b=(+,-,+,+,+,-,+,+,+,+)$ and
\[
u_{\eta} =
\begin{tikzpicture}[baseline]
\matrix [tab] 
 {
 	\node[draw]{1}; & 
 	\node[draw]{1}; & 
 	\node[draw]{1}; & 
 	\node[draw]{1}; & 
 	\node[draw]{1}; \\
 	\node[draw]{2}; & 
 	\node[draw]{2}; & 
 	\node[draw]{2}; & 
 	\node[draw]{2}; & 
 	\node[draw]{2}; \\
 	\node[draw]{3}; & 
 	\node[draw]{3}; & 
 	\node[draw]{3}; & 
 	\node[draw]{3}; \\
 	\node[draw]{4}; & 
 	\node[draw]{4}; & 
 	\node[draw]{4}; & 
 	\node[draw]{4}; \\
	\node[draw]{5}; & 
 	\node[draw]{5}; & 
 	\node[draw]{5}; & 
 	\node[draw]{5}; \\
 	\node[draw]{6}; & 
 	\node[draw]{6}; & 
 	\node[draw]{6}; & 
 	\node[draw]{6}; \\
 	\node[draw]{7}; & 
 	\node[draw]{7}; & 
 	\node[draw]{7}; \\
 	\node[draw]{8}; & 
 	\node[draw]{8}; & 
 	\node[draw]{8}; \\
 };
\end{tikzpicture}\, ,
\]
where $\eta = \clfw_2 + \clfw_6 + 3 \clfw_8$ and $\lambda = \clfw_1 + \clfw_5 + 3 \clfw_8 + \clfw_{10}$, and hence
$m_{10}=1$ and $m_9=0$. Thus we have
as partitions $\eta^c = 221111$ and $\overline{\lambda}^c = 2221111$, and so
\[
\overline{\lambda}^c / \eta^c =
\begin{tikzpicture}[baseline]
\matrix [tab] 
 {
 	\node[draw,fill=gray!40]{}; & 
	\node[draw,fill=gray!40]{}; \\
	\node[draw,fill=gray!40]{}; & 
	\node[draw,fill=gray!40]{}; \\
	\node[draw,fill=gray!40]{}; & 
	\node[draw]{{\color{blue}\mathbf 1}}; \\
  	\node[draw,fill=gray!40]{}; \\
	\node[draw,fill=gray!40]{}; \\
	\node[draw,fill=gray!40]{}; \\
	\node[draw]{{\color{blue}\mathbf 2}}; \\
 };
\end{tikzpicture}\; ,
\]
where we have indicated the cells $c_1$ and $c_2$ in the construction by blue letters $1$ and $2$, respectively.
Then $\Phi(b \otimes u_{\eta})$ is equal to
\[
\scalebox{0.6}{$
\raisebox{-10pt}{$\emptyset$} 
\quad
 {
\begin{array}[t]{r|c|l}
 \cline{2-2} 0 &\phantom{|}& 0 \\
 \cline{2-2} 
\end{array}
} 
\quad
 {
\begin{array}[t]{r|c|l}
 \cline{2-2} 0 &\phantom{|}& 0 \\
 \cline{2-2}  &\phantom{|}& 0 \\
 \cline{2-2} 
\end{array}
} 
\quad
 {
\begin{array}[t]{r|c|l}
 \cline{2-2} 0 &\phantom{|}& 0 \\
 \cline{2-2}  &\phantom{|}& 0 \\
 \cline{2-2}  &\phantom{|}& 0 \\
 \cline{2-2} 
\end{array}
} 
\quad
 {
\begin{array}[t]{r|c|l}
 \cline{2-2} 0 &\phantom{|}& 0 \\
 \cline{2-2}  &\phantom{|}& 0 \\
 \cline{2-2}  &\phantom{|}& 0 \\
 \cline{2-2}  &\phantom{|}& 0 \\
 \cline{2-2} 
\end{array}
} 
\quad
 {
\begin{array}[t]{r|c|c|l}
 \cline{2-3} 0 &\phantom{|}&\phantom{|}& 0 \\
 \cline{2-3} 0 &\phantom{|}& \multicolumn{2 }{l}{ 0 } \\
 \cline{2-2}  &\phantom{|}& \multicolumn{2 }{l}{ 0 } \\
 \cline{2-2}  &\phantom{|}& \multicolumn{2 }{l}{ 0 } \\
 \cline{2-2}  &\phantom{|}& \multicolumn{2 }{l}{ 0 } \\
 \cline{2-2} 
\end{array}
} 
\quad
 {
\begin{array}[t]{r|c|c|l}
 \cline{2-3} 0 &\phantom{|}&\phantom{|}& 0 \\
 \cline{2-3}  &\phantom{|}&\phantom{|}& 0 \\
 \cline{2-3} 0 &\phantom{|}& \multicolumn{2 }{l}{ 0 } \\
 \cline{2-2}  &\phantom{|}& \multicolumn{2 }{l}{ 0 } \\
 \cline{2-2}  &\phantom{|}& \multicolumn{2 }{l}{ 0 } \\
 \cline{2-2}  &\phantom{|}& \multicolumn{2 }{l}{ 0 } \\
 \cline{2-2} 
\end{array}
} 
\quad
 {
\begin{array}[t]{r|c|c|l}
 \cline{2-3} 0 &\phantom{|}&\phantom{|}& 0 \\
 \cline{2-3}  &\phantom{|}&\phantom{|}& 0 \\
 \cline{2-3}  &\phantom{|}&\lr{1}& 0 \\
 \cline{2-3} 0 &\phantom{|}& \multicolumn{2 }{l}{ 0 } \\
 \cline{2-2}  &\phantom{|}& \multicolumn{2 }{l}{ 0 } \\
 \cline{2-2}  &\phantom{|}& \multicolumn{2 }{l}{ 0 } \\
 \cline{2-2}  &\lr{2}& \multicolumn{2 }{l}{ 0 } \\
 \cline{2-2} 
\end{array}
} 
\quad
 {
\begin{array}[t]{r|c|c|l}
 \cline{2-3} 0 &\phantom{|}&\phantom{|}& 0 \\
 \cline{2-3}  &\phantom{|}&\lr{1}& 0 \\
 \cline{2-3} 1 &\phantom{|}& \multicolumn{2 }{l}{ 0 } \\
 \cline{2-2} 
\end{array}
} 
\quad
 {
\begin{array}[t]{r|c|c|l}
 \cline{2-3} 1 &\phantom{|}&\phantom{|}& 0 \\
 \cline{2-3} 0 &\phantom{|}& \multicolumn{2 }{l}{ 0 } \\
 \cline{2-2}  &\phantom{|}& \multicolumn{2 }{l}{ 0 } \\
 \cline{2-2}  &\lr{2}& \multicolumn{2 }{l}{ 0 } \\
 \cline{2-2} 
\end{array}
}
$}\raisebox{-15pt}{,}
\]
where again we labelled the cells added to $\eta^c$ in $\nu^{(n-2)}$ and
$\mathring{\eta}$ in $\nu^{(n-1)}$ and $\nu^{(n)}$ by blue letters $1$ and $2$.

Next we consider $b' = (+,-,+,+,+,-,-,-,+,+)$ and 
$\mu := \wt(b' \otimes u_\eta)= \clfw_1 + \clfw_5 + \clfw_6 + 2\clfw_8 + \clfw_{10}$.
Note that $\lambda \neq \mu$ and
\[
\overline{\mu}^c / \eta^c =
\begin{tikzpicture}[baseline]
\matrix [tab] 
 {
 	\node[draw,fill=gray!40]{}; & 
 	\node[draw,fill=gray!40]{}; & 
	\node[draw]{{\color{blue}\mathbf 1}}; \\
	\node[draw,fill=gray!40]{}; & 
	\node[draw,fill=gray!40]{}; & 
	\node[draw]{{\color{blue}\mathbf 2}}; \\
	\node[draw,fill=gray!40]{}; & 
	\node[draw]{{\color{blue}\mathbf 3}}; \\
  	\node[draw,fill=gray!40]{}; \\
	\node[draw,fill=gray!40]{}; \\
	\node[draw,fill=gray!40]{}; \\
	\node[draw]{{\color{blue}\mathbf 4}}; \\
 };
\end{tikzpicture}\; .
\]
Then $\Phi(b' \otimes u_{\eta})$ is
\[
\scalebox{0.6}{$
\raisebox{-10pt}{$\emptyset$} 
\quad
 {
\begin{array}[t]{r|c|l}
 \cline{2-2} 0 &\phantom{|}& 0 \\
 \cline{2-2} 
\end{array}
} 
\quad
 {
\begin{array}[t]{r|c|l}
 \cline{2-2} 0 &\phantom{|}& 0 \\
 \cline{2-2}  &\phantom{|}& 0 \\
 \cline{2-2} 
\end{array}
} 
\quad
 {
\begin{array}[t]{r|c|l}
 \cline{2-2} 0 &\phantom{|}& 0 \\
 \cline{2-2}  &\phantom{|}& 0 \\
 \cline{2-2}  &\phantom{|}& 0 \\
 \cline{2-2} 
\end{array}
} 
\quad
 {
\begin{array}[t]{r|c|l}
 \cline{2-2} 0 &\phantom{|}& 0 \\
 \cline{2-2}  &\phantom{|}& 0 \\
 \cline{2-2}  &\phantom{|}& 0 \\
 \cline{2-2}  &\phantom{|}& 0 \\
 \cline{2-2} 
\end{array}
} 
\quad
 {
\begin{array}[t]{r|c|c|l}
 \cline{2-3} 0 &\phantom{|}&\phantom{|}& 0 \\
 \cline{2-3} 0 &\phantom{|}& \multicolumn{2 }{l}{ 0 } \\
 \cline{2-2}  &\phantom{|}& \multicolumn{2 }{l}{ 0 } \\
 \cline{2-2}  &\phantom{|}& \multicolumn{2 }{l}{ 0 } \\
 \cline{2-2}  &\phantom{|}& \multicolumn{2 }{l}{ 0 } \\
 \cline{2-2} 
\end{array}
} 
\quad
 {
\begin{array}[t]{r|c|c|c|l}
 \cline{2-4} 0 &\phantom{|}&\phantom{|}&\phantom{|}& 0 \\
 \cline{2-4} 0 &\phantom{|}&\phantom{|}& \multicolumn{2 }{l}{ 0 } \\
 \cline{2-3} 0 &\phantom{|}& \multicolumn{3 }{l}{ 0 } \\
 \cline{2-2}  &\phantom{|}& \multicolumn{3 }{l}{ 0 } \\
 \cline{2-2}  &\phantom{|}& \multicolumn{3 }{l}{ 0 } \\
 \cline{2-2}  &\phantom{|}& \multicolumn{3 }{l}{ 0 } \\
 \cline{2-2} 
\end{array}
} 
\quad
 {
\begin{array}[t]{r|c|c|c|l}
 \cline{2-4} 0 &\phantom{|}&\phantom{|}&\lr{1}& 0 \\
 \cline{2-4}  &\phantom{|}&\phantom{|}&\lr{2}& 0 \\
 \cline{2-4} 0 &\phantom{|}&\lr{3}& \multicolumn{2 }{l}{ 0 } \\
 \cline{2-3} 0 &\phantom{|}& \multicolumn{3 }{l}{ 0 } \\
 \cline{2-2}  &\phantom{|}& \multicolumn{3 }{l}{ 0 } \\
 \cline{2-2}  &\phantom{|}& \multicolumn{3 }{l}{ 0 } \\
 \cline{2-2}  &\lr{4}& \multicolumn{3 }{l}{ 0 } \\
 \cline{2-2} 
\end{array}
} 
\quad
 {
\begin{array}[t]{r|c|c|c|l}
 \cline{2-4} 0 &\phantom{|}&\phantom{|}&\lr{1}& 0 \\
 \cline{2-4} 0 &\phantom{|}&\lr{3}& \multicolumn{2 }{l}{ 0 } \\
 \cline{2-3} 1 &\phantom{|}& \multicolumn{3 }{l}{ 0 } \\
 \cline{2-2} 
\end{array}
} 
\quad
 {
\begin{array}[t]{r|c|c|c|l}
 \cline{2-4} 1 &\phantom{|}&\phantom{|}&\lr{2}& 1 \\
 \cline{2-4} 0 &\phantom{|}& \multicolumn{3 }{l}{ 0 } \\
 \cline{2-2}  &\phantom{|}& \multicolumn{3 }{l}{ 0 } \\
 \cline{2-2}  &\lr{4}& \multicolumn{3 }{l}{ 0 } \\
 \cline{2-2} 
\end{array}
}$}\raisebox{-15pt}{.}
\]
\end{example}

\begin{lemma}\label{lem:Rinv_spin1}
Consider the $I_0$-highest weight element $b \otimes u_\eta \in B^{n,1}\otimes B^{r,s}$
of weight $\lambda$ with $1\le r\leq n-2$, where $u_\eta$ is the $I_0$-highest weight element of highest weight $\eta$.
Then the rigged configuration 
\[
	(\nu,J)=\Phi(b\otimes u_\eta)
\]
satisfies $\nu = \Gamma(\eta,\lambda)$ and $J^{(a)}_i=0$ for all $1\le a \le n-2$ and $i\ge 0$.
$J_i^{(n-1)}$ and $J_i^{(n)}$ are determined as follows. Since $P_i^{(n-1)}(\nu)+P_i^{(n)}(\nu)=1$
either $J_i^{(n-1)}$ or $J_i^{(n)}$ can contain 1's and the other riggings must be 0. The number of 1's
in $J_i^{(n-1)}$ (or $J_i^{(n)}$) is equal to the number of vertical dominoes in
column $i$ of $\overline{\lambda}^c/ \eta^c$.
\end{lemma}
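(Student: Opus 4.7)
The plan is to verify by explicit computation that the candidate rigged configuration $(\nu, J)$ described in the statement coincides with $\Phi(b \otimes u_\eta)$. By Proposition~\ref{prop:Phi_welldef1}(1') we have $\delta_s \circ \Phi = \Phi \circ \lh_s$, and since $\lh_s(b \otimes u_\eta) = u_\eta$ this yields
\[
\delta_s\bigl(\Phi(b \otimes u_\eta)\bigr) = \Phi(u_\eta),
\]
where $\Phi(u_\eta)$ is known explicitly by Proposition~\ref{prop:highestRC}. Since $\Phi$ is a bijection by Theorem~\ref{th:welldef_highest}, it is enough to show that the $(\nu, J)$ of the statement lies in $\rc(L, \lambda)$ and satisfies $\delta_s(\nu, J) = \Phi(u_\eta)$.

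First I would check the $(L,\lambda)$-configuration condition~\eqref{wt_RC_1} by computing $\lvert \nu^{(a)} \rvert$ directly from the definition of $\Gamma(\eta,\lambda)$ and matching against $\sum_{a,i} i L_i^{(a)} \varpi_a - \lambda$; the calculation amounts to accounting for each minus of $b$ as a pair of cells distributed to $\nu^{(n-1)}$ and $\nu^{(n)}$, together with the removal of vertical dominoes passing from $\eta^c$ to $\overline{\lambda}^c$. Next I would evaluate the vacancy numbers from~\eqref{eq:def_vacancy} and confirm $P_i^{(a)}(\nu) = 0$ for $1 \le a \le n-2$ and $P_i^{(n-1)}(\nu) + P_i^{(n)}(\nu) = 1$ with the alternation controlled by the row labels $h_1 \le \cdots \le h_\ell$ claimed in the statement; this in particular shows that the proposed riggings satisfy $0 \le J_i^{(a)} \le P_i^{(a)}$ and hence $(\nu, J) \in \rc(L, \lambda)$.

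The heart of the argument is the verification $\delta_s(\nu, J) = \Phi(u_\eta)$ via the expansion~\eqref{eq:delta_spin}. After applying $\emb$, every rigging at nodes $1,\ldots,n-2$ is $0$ so every string there is singular, which makes each subsequent application of $\delta$ travel through nodes $1,\ldots,n-2$ deterministically, stripping a single box from each row of $\overline{\lambda}^c$, while $\beta^{(r)}$, $\overline{\beta}$, and the interleaved $\beta$'s adjust the multiplicity data $\mu$ exactly as required by Proposition~\ref{prop:highestRC}. The first $\delta$ in the composition selects singular strings at nodes $n-1$ and $n$ whose lengths correspond to the cells $c_1, c_3, \ldots$ (respectively $c_2, c_4, \ldots$, with parities dictated by $m_{n-1}$); after $\overline{\beta}$ has shifted the vacancy numbers the second $\delta$ picks up the complementary set, and together they remove precisely the cells that distinguish $\nu^{(n-1)}, \nu^{(n)}$ from $\mathring{\eta}$. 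After the remaining $(n-2)$-fold alternations of $\delta$ and $\beta$ and inverting $\emb$, one recovers $\Phi(u_\eta)$ in exactly the form of Proposition~\ref{prop:highestRC}.

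The main obstacle will be the careful bookkeeping at nodes $n-1$ and $n$, where the zig-zag of $P_i^{(n-1)}, P_i^{(n)}$ between $0$ and $1$ must interact correctly with the $1$-riggings---placed so as to record the vertical dominoes of $\overline{\lambda}^c / \eta^c$---and with the two $\delta$'s flanking $\overline{\beta}$ in the definition of $\delta_s$. A case analysis on the parity of $m_{n-1}$ (equivalently of $\ell$, since $\ell + m_{n-1}$ is even) will show that the assignment of the cells $c_i$ of a fixed parity to $\nu^{(n-1)}$ versus $\nu^{(n)}$ is forced by the requirement that the selected strings be singular at each stage, and that the $1$-riggings appear in exactly the predicted positions. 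Once this matching is set up, the identification $\delta_s(\nu, J) = \Phi(u_\eta)$ follows by direct tracking, completing the proof.
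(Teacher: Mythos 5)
Your overall strategy is legitimate and genuinely different from the paper's: you reduce the lemma to checking that the candidate $(\nu,J)$ lies in $\rc(L,\lambda)$ and satisfies $\delta_s(\nu,J)=\Phi(u_\eta)$, and then appeal to Proposition~\ref{prop:Phi_welldef1}(1') and Theorem~\ref{th:welldef_highest}, whereas the paper works in the forward direction, starting from $\Phi(u_\eta)$ (Proposition~\ref{prop:highestRC}), doubling via $\emb$, and building $\Phi(b\otimes u_\eta)$ by adding the letters of the doubled spin column one at a time through $\delta'=\beta^{-1}\circ\delta^{-1}$, with two inductive claims controlling which strings are singular at each intermediate stage. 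For your reduction you should also make explicit why the fibre of $\Phi(u_\eta)$ under $\delta_s$ inside $\rc(L,\lambda)$ is a singleton: it is, but this uses both the bijectivity of $\Phi$ on paths and the fact that weight multiplicities in $B^{n,1}$ are at most one (so that $b$ is determined by $\lambda-\eta$); ``$\Phi$ is a bijection'' alone does not give it.

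The genuine gap is in the heart of your argument, the tracking of $\delta_s$ through \eqref{eq:delta_spin}. You assert that the first $\delta$ in the composition selects the strings of $\nu^{(n-1)}$ corresponding to the cells $c_1,c_3,\ldots$ (resp.\ of $\nu^{(n)}$ corresponding to $c_2,c_4,\ldots$) and that the second $\delta$, after $\overline{\beta}$, removes the complementary set. This cannot happen: by Definition~\ref{def:delta}, a single application of $\delta$ selects at most one string in $\bigl(\nu^{(n-1)},J^{(n-1)}\bigr)$ and at most one in $\bigl(\nu^{(n)},J^{(n)}\bigr)$, so as soon as $\ell>2$ no two applications of $\delta$ can strip all of the cells $c_1,\ldots,c_\ell$. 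In reality the removal is distributed over the $n$ applications of $\delta$ inside $\delta_s$: roughly one box is taken from each of $\nu^{(n-1)}$ and $\nu^{(n)}$ (in the doubled picture) for each minus sign of $b$, interleaved with removals at the nodes $a\le n-2$ both on the way up and on the way down, and whether the correct strings are singular at each such moment depends on how the $1$-riggings and the $0/1$ zig-zag of $P_i^{(n-1)},P_i^{(n)}$ have been altered by the preceding steps. This interleaving is precisely the delicate bookkeeping that constitutes the proof (it is what the paper's inductive claims control letter by letter), and your sketch both omits it and replaces it with an incorrect mechanism; as written, the verification $\delta_s(\nu,J)=\Phi(u_\eta)$ is not established.
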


\begin{proof}
Let us define $d = \emb(b)$. Let $x$ be the letter in $d$ at height $h$. Define $\delta' = \beta^{-1} \circ \delta^{-1}$ 
unless $h = 1$ in which case $\delta' = \delta^{-1}$. Thus $\delta'$ under the bijection $\Phi$ corresponds
to adding the letter $x$ to the step in the path using the algorithm for $\delta^{-1}$, except we terminate after 
adding the last box to $(\nu^{(h)}, J^{(h)})$. Let $w_i$ denote the row length of $\eta^c$ corresponding to the row
in $\overline{\la}^c/\eta^c$ containing the cell $c_i$ as given in the construction $\Gamma(\eta, \lambda)$ 
(recall $\ell = \lvert \overline{\lambda}^c/\eta^c \rvert$). Set $w_0 = \infty$ and $w_i = 0$ for all $i > \ell$. Define $\widetilde{w}_i = w_{\ell - i}$. In particular, we have $w_i \leq w_{i+1}$ for all 
$0 \leq i < \ell$. Define $\mu(x) = \#\{x \leq x' \leq n-2 \mid \overline{x}' \in d\}$.

First note that $\Phi(u_{\eta})$ is given by Proposition~\ref{prop:highestRC}. Next we want to add $b$, 
which by the definition of $\delta_s$ involves the doubling map $\emb$. Hence we need to double $\Phi(u_{\eta})$.
Denote the rigged configuration before having added the letter $x$ by $(\overline{\nu}, \overline{J})$ and $(\overline{\nu}',\overline{J}') =\delta'(\overline{\nu}, \overline{J})$.
Suppose we are adding the letter $x \leq n-1$ to $d$ at height $h$.

\noindent {\bf Claim:}
We obtain $\delta'(\overline{\nu}, \overline{J})$ from $(\overline{\nu},\overline{J})$ by adding
a box to a (singular) string of length $2 \widetilde{w}_{a-h}$ to $\overline{\nu}^{(a)}$ for $h \leq a < x$.
Note that this is the $(a - h+1)$-th cell from the bottom of $\overline{\lambda}^c / \eta^c$. 
Moreover, all riggings are $0$.

We prove the claim by induction on $h$. Suppose the letter $y$ was added at height $h-1$ in the
previous step (which might be vacuous if $x$ is at height $1$ in which case we set $y=0$).
Note that $P_i^{(a)}(\overline{\nu}) > 0$ for all $i$ such that $2 \widetilde{w}_{a-h} < i \le 2 \widetilde{w}_{a-h+1} $ and $h-1 \leq a \leq y<x$.

Observe that the number of minus signs appearing in $b$ before position $x$ is $x-h$. 
The application of $\delta'$ adds a box to the row of length $2\widetilde{w}_{x-h-1}$ in $\overline{\nu}^{(x-1)}$. This 
follows from the fact that no previous application of $\delta'$ changed $\overline{\nu}^{(x-1)}$ (our induction hypothesis),
the description of $\overline{\nu}$ as given in Proposition~\ref{prop:highestRC}, and that all rows strictly longer than 
$2 \widetilde{w}_{x-h-1}$ are non-singular. Next $\delta'$ adds a box to a row of length $2\widetilde{w}_{x-h-2}$ in $\overline{\nu}^{(x-2)}$ 
since all rows of length strictly between $2\widetilde{w}_{x-h-2}$ and $2\widetilde{w}_{x-h-1}$ are nonsingular, $\overline{\nu}^{(x-2)}$
is the same as $\overline{\nu}^{(x-1)}$ with the top row removed, and we must select a row $i$ 
such that $\overline{\nu}_i^{(x-2)} \leq 2 \widetilde{w}_{x-h-1}$ by the definition of $\delta'$. A similar argument holds for 
$\overline{\nu}^{(a)}$ for the remaining $h \leq a < x-1$ for the application of $\delta'$. From~\eqref{eq:def_vacancy}, the resulting riggings are all $0$, and 
$P_i^{(a)}(\overline{\nu}') > 0$ for all $i$ such that $2 \widetilde{w}_{a-h-1} < i \le 2 \widetilde{w}_{a-h} $ and $h \leq a \leq x$.

For $x = n, \mn$, the application of $\delta'$ is similar to the above except we do not change 
$\overline{\nu}^{(n)}, \overline{\nu}^{(n-1)}$ respectively. Next suppose we are adding $\overline{x}$ at height $h$ with $1\le x\le n-1$.

\noindent {\bf Claim:}
We obtain $\delta'(\overline{\nu}, \overline{J})$ from $(\overline{\nu},\overline{J})$ by the following steps:
\begin{enumerate}
\item\label{claim:step1} add a box to the largest odd length string in $\overline{\nu}^{(a)}$ for $x \leq a \leq n-2$,
\item\label{claim:step2} for both $\overline{\nu}^{(n-1)}, \overline{\nu}^{(n)}$, either add to the largest odd length row or the row of 
length $2 w_{\mu(x)}$ depending on if there is an odd length row or not, respectively, and
\item\label{claim:step3} add a box to a row of length $2 \widetilde{w}_{a - h}$ in $\overline{\nu}^{(a)}$ for $h \leq a < n-1$.
\end{enumerate}

We prove the claim by using induction on $\overline{x}$ (i.e., as $x$ decreases).
Note that the largest singular row in $\overline{\nu}^{(x)}$ has length $2 w_{\mu(x)} + 1$ and it is the first row of $\overline{\nu}^{(x)}$.

By our induction assumption and the definition of $\delta'$, we add a box to a row of length $2 w_{\mu(x)} + 1$ in all $\overline{\nu}^{(a)}$ for $x \leq a \leq n-2$. Thus step~(\ref{claim:step1}) follows.
For step~(\ref{claim:step2}), let $a = n-1,n$. Note that if there exists an odd length row, then from our induction, there is exactly one such row and it is of length $2 w_{\mu(x)} + 1$. Therefore we add a box to this row. Otherwise all rows are currently of even length, so we add a box to a row of length $2 w_{\mu(x)}$. Note that $P_{2w_{\mu(x)}}^{(a)}(\overline{\nu}) = 0$ because we have only added boxes to $\overline{\nu}^{(n-2)}$ in rows of length at least $2w_{\mu(x)}$ (i.e., the resulting rows are strictly larger and don't contribute to $P_{2w_{\mu(x)}}^{(a)}(\overline{\nu})$).
The proof of step~(\ref{claim:step3}) of the claim is similar to the proof of the previous claim for $x \leq n - 2$.

From the aforementioned claims, it immediately follows that we obtain the desired configuration of $\Gamma(\eta, \lambda)$ after applying $\emb^{-1}$.

Let us now turn to the statement about the riggings. A straightforward check from the above claim shows that all riggings are $0$ in $\overline{\nu}^{(a)}$ for $a \leq n - 2$.
Next, each removable domino in $\overline{\lambda}^c/\eta^c$ corresponds to two consecutive minus signs in $b$,
which in turns corresponds to two barred letters $\overline{x}$ and $\overline{x+1}$.
Note that when adding $\overline{x}$, the resulting vacancy number 
$P^{(a)}_{2 w_{\mu(x)}+2}(\overline{\nu}')=2$ for either $a=n-1$ or $a=n$ since the vertical domino implies we have an additional (as compared with $\Phi(u_{\eta})$) contribution of 6 boxes in $\overline{\nu}^{(n-2)}$, but only 2 boxes in $\overline{\nu}^{(a)}$. Hence the corresponding rigging
is 2. From the above description of $\delta'$, we do not change the corresponding row again when we add the 
remaining letters, thus $\emb^{-1}(\overline{\nu},\overline{J}) = \Phi(b \otimes u_\eta)$ is the desired rigged configuration.
\end{proof}

\begin{example}
We consider $B = B^{8,1} \otimes B^{6,5}$ in type $D_8^{(1)}$ and fix $\lambda = \clfw_1 + 2 \clfw_6 + \clfw_7$. 
We now look at the set $\{ b \otimes u_{\eta} \in \p(B) \mid \wt(b \otimes u_{\eta}) = \lambda \}$. We start with 
$\eta = \clfw_2 + 2 \clfw_6$ and $b = (+,-,+,+,+,+,+,-)$. Thus we have
\[
\raisebox{-15pt}{$\Phi(b \otimes u_{\eta}) = $} \hspace{10pt} \scalebox{0.7}{$
\raisebox{-10pt}{$\emptyset$}
\quad
 {
\begin{array}[t]{r|c|l}
 \cline{2-2} 0 &\phantom{|}& 0 \\
 \cline{2-2} 
\end{array}
} 
\quad
 {
\begin{array}[t]{r|c|l}
 \cline{2-2} 0 &\phantom{|}& 0 \\
 \cline{2-2}  &\phantom{|}& 0 \\
 \cline{2-2} 
\end{array}
} 
\quad
 {
\begin{array}[t]{r|c|l}
 \cline{2-2} 0 &\phantom{|}& 0 \\
 \cline{2-2}  &\phantom{|}& 0 \\
 \cline{2-2}  &\phantom{|}& 0 \\
 \cline{2-2} 
\end{array}
} 
\quad
 {
\begin{array}[t]{r|c|l}
 \cline{2-2} 0 &\phantom{|}& 0 \\
 \cline{2-2}  &\phantom{|}& 0 \\
 \cline{2-2}  &\phantom{|}& 0 \\
 \cline{2-2}  &\phantom{|}& 0 \\
 \cline{2-2} 
\end{array}
} 
\quad
 {
\begin{array}[t]{r|c|l}
 \cline{2-2} 0 &\phantom{|}& 0 \\
 \cline{2-2}  &\phantom{|}& 0 \\
 \cline{2-2}  &\phantom{|}& 0 \\
 \cline{2-2}  &\phantom{|}& 0 \\
 \cline{2-2}  &\phantom{|}& 0 \\
 \cline{2-2} 
\end{array}
} 
\quad
 {
\begin{array}[t]{r|c|l}
 \cline{2-2} 1 &\phantom{|}& 0 \\
 \cline{2-2}  &\phantom{|}& 0 \\
 \cline{2-2} 
\end{array}
} 
\quad
 {
\begin{array}[t]{r|c|l}
 \cline{2-2} 0 &\phantom{|}& 0 \\
 \cline{2-2}  &\phantom{|}& 0 \\
 \cline{2-2}  &\phantom{|}& 0 \\
 \cline{2-2} 
\end{array}
}$}\raisebox{-15pt}{.}
\]
Next we consider $\eta' = \clfw_4 + 2 \clfw_6$ and $b' = (+,-,-,-,+,+,+,-)$, and we have
\[
\raisebox{-15pt}{$\Phi(b' \otimes u_{\eta'}) = $} \hspace{10pt} \scalebox{0.7}{$
\raisebox{-10pt}{$\emptyset$}
\quad
 {
\begin{array}[t]{r|c|l}
 \cline{2-2} 0 &\phantom{|}& 0 \\
 \cline{2-2} 
\end{array}
} 
\quad
 {
\begin{array}[t]{r|c|l}
 \cline{2-2} 0 &\phantom{|}& 0 \\
 \cline{2-2}  &\phantom{|}& 0 \\
 \cline{2-2} 
\end{array}
} 
\quad
 {
\begin{array}[t]{r|c|l}
 \cline{2-2} 0 &\phantom{|}& 0 \\
 \cline{2-2}  &\phantom{|}& 0 \\
 \cline{2-2}  &\phantom{|}& 0 \\
 \cline{2-2} 
\end{array}
} 
\quad
 {
\begin{array}[t]{r|c|l}
 \cline{2-2} 0 &\phantom{|}& 0 \\
 \cline{2-2}  &\phantom{|}& 0 \\
 \cline{2-2}  &\phantom{|}& 0 \\
 \cline{2-2}  &\phantom{|}& 0 \\
 \cline{2-2} 
\end{array}
} 
\quad
 {
\begin{array}[t]{r|c|l}
 \cline{2-2} 0 &\phantom{|}& 0 \\
 \cline{2-2}  &\phantom{|}& 0 \\
 \cline{2-2}  &\phantom{|}& 0 \\
 \cline{2-2}  &\phantom{|}& 0 \\
 \cline{2-2}  &\phantom{|}& 0 \\
 \cline{2-2} 
\end{array}
} 
\quad
 {
\begin{array}[t]{r|c|l}
 \cline{2-2} 1 &\phantom{|}& 1 \\
 \cline{2-2}  &\phantom{|}& 0 \\
 \cline{2-2} 
\end{array}
} 
\quad
 {
\begin{array}[t]{r|c|l}
 \cline{2-2} 0 &\phantom{|}& 0 \\
 \cline{2-2}  &\phantom{|}& 0 \\
 \cline{2-2}  &\phantom{|}& 0 \\
 \cline{2-2} 
\end{array}
}$}\raisebox{-15pt}{.}
\]
Finally, we take $\eta'' = 3 \clfw_6$ and $b'' = (+,-,-,-,-,-,+,-)$, so that
\[
\raisebox{-15pt}{$\Phi(b'' \otimes u_{\eta''}) = $} \hspace{10pt} \scalebox{0.7}{$
\raisebox{-10pt}{$\emptyset$}
\quad
 {
\begin{array}[t]{r|c|l}
 \cline{2-2} 0 &\phantom{|}& 0 \\
 \cline{2-2} 
\end{array}
} 
\quad
 {
\begin{array}[t]{r|c|l}
 \cline{2-2} 0 &\phantom{|}& 0 \\
 \cline{2-2}  &\phantom{|}& 0 \\
 \cline{2-2} 
\end{array}
} 
\quad
 {
\begin{array}[t]{r|c|l}
 \cline{2-2} 0 &\phantom{|}& 0 \\
 \cline{2-2}  &\phantom{|}& 0 \\
 \cline{2-2}  &\phantom{|}& 0 \\
 \cline{2-2} 
\end{array}
} 
\quad
 {
\begin{array}[t]{r|c|l}
 \cline{2-2} 0 &\phantom{|}& 0 \\
 \cline{2-2}  &\phantom{|}& 0 \\
 \cline{2-2}  &\phantom{|}& 0 \\
 \cline{2-2}  &\phantom{|}& 0 \\
 \cline{2-2} 
\end{array}
} 
\quad
 {
\begin{array}[t]{r|c|l}
 \cline{2-2} 0 &\phantom{|}& 0 \\
 \cline{2-2}  &\phantom{|}& 0 \\
 \cline{2-2}  &\phantom{|}& 0 \\
 \cline{2-2}  &\phantom{|}& 0 \\
 \cline{2-2}  &\phantom{|}& 0 \\
 \cline{2-2} 
\end{array}
} 
\quad
 {
\begin{array}[t]{r|c|l}
 \cline{2-2} 1 &\phantom{|}& 1 \\
 \cline{2-2}  &\phantom{|}& 1 \\
 \cline{2-2} 
\end{array}
} 
\quad
 {
\begin{array}[t]{r|c|l}
 \cline{2-2} 0 &\phantom{|}& 0 \\
 \cline{2-2}  &\phantom{|}& 0 \\
 \cline{2-2}  &\phantom{|}& 0 \\
 \cline{2-2} 
\end{array}
}$}\raisebox{-15pt}{.}
\]
\end{example}

\bigskip

Let us now turn our attention to $I_0$-highest weight elements in $B^{r,s} \otimes B^{n,1}$ for $1\le r\le n-2$
of weight $\lambda$. They are of the form $b \otimes u$ with $u:=u_{\clfw_n}=(+,\ldots,+) \in B^{n,1}$ being 
(the unique) $I_0$-highest weight element and 
$b\in B(\mu) \subseteq B^{r,s}$ for some $\mu$. Since $\varphi_i(u)=0$ for $1\le i\le n-1$ and $\varphi_n(u)=1$,
we have that the part of $b$ as a KN tableau without the letters $n$ and $\overline{n}$ is highest weight 
of highest weight $\overline{\lambda}$ and the skew shape $\mu/\overline{\lambda}$ is a vertical strip. In $b$, 
the vertical strip $\mu/\overline{\lambda}$ contains the letter $n$ and $\overline{n}$ in alternating order.
Define $\eta$ as follows. In column $i$ of $b$, let $x$ be the number of removable 
$n$ and $\overline{n}$ pairs and $y$ the number of addable $n$ and $\overline{n}$ pairs.
Form a new tableau with the same $\overline{\lambda}$, but $x$ and $y$ interchanged for each column $i$. 
Then $\eta$ is the shape of this tableau.

\begin{example}
\label{example:otherside}
Consider type $D_{10}^{(1)}$ and
\[
\begin{tikzpicture}[baseline]
\matrix [matrix of math nodes,column sep=-.4, row sep=-.4,text height=9pt,text width=12pt,align=center,inner sep=1.5] 
 {
 	\node[draw,fill=gray!40]{1}; & 
	\node[draw,fill=gray!40]{1}; & 
	\node[draw,fill=gray!40]{1}; \\
 	\node[draw,fill=gray!40]{2}; & 
	\node[draw,fill=gray!40]{2}; & 
	\node[draw,fill=gray!15]{\overline{10}}; \\
	\node[draw,fill=gray!40]{3}; & 
	\node[draw,fill=gray!40]{3}; & 
	\node[draw]{10}; \\
	\node[draw,fill=gray!40]{4}; & 
	\node[draw,fill=gray!40]{4}; & 
	\node[draw]{\overline{10}}; \\
	\node[draw,fill=gray!40]{5}; & 
	\node[draw,fill=gray!40]{5}; \\
	\node[draw,fill=gray!40]{6}; & 
	\node[draw,fill=gray!15]{10}; \\
	\node[draw,fill=gray!40]{7}; \\
	\node[draw,fill=gray!15]{\overline{10}}; \\
 };
\end{tikzpicture}
\otimes
\begin{tikzpicture}[baseline]
\matrix [tab] 
 {
	\node[draw]{+}; \\
	\node[draw]{+}; \\
	\node[draw]{+}; \\
	\node[draw]{+}; \\
	\node[draw]{+}; \\
	\node[draw]{+}; \\
	\node[draw]{+}; \\
	\node[draw]{+}; \\
	\node[draw]{+}; \\
 };
\end{tikzpicture}
= b \otimes u \in B^{8,3} \otimes B^{10,1}.
\]
Thus we have $\wt(b \otimes u) = \lambda = \clfw_1 + \clfw_5 + \clfw_7 + \clfw_9$ and $\eta = \clfw_2 + \clfw_6 + \clfw_8$.
\end{example}

We use~\eqref{equation.Phi commute f} in order to simplify the proof of the following lemma (instead of proving it directly using $\Phi$).

\begin{lemma}\label{lem:Rinv_spin2}
Consider the $I_0$-highest weight element $b\otimes u\in B^{r,s}\otimes B^{n,1}$ of weight $\lambda$ 
with $1\le r\leq n-2$ and $\eta$ as defined above. Then the rigged configuration
\[
	(\nu,J) = \Phi(b \otimes u)
\]
is the same as in Lemma~\ref{lem:Rinv_spin1}.
\end{lemma}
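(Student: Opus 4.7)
The plan is to avoid a direct execution of the $\delta$-algorithm and instead reduce to Lemma~\ref{lem:Rinv_spin1} by applying the Lusztig-highest-weight operator $\diamond$ and invoking Proposition~\ref{prop:Phi_welldef1}(7), which asserts $\Phi\circ\diamond=\theta\circ\Phi$. The key point, consistent with the hint about \eqref{equation.Phi commute f}, is that $\Phi$ is a classical crystal isomorphism, so we are free to pass from the tensor product $B^{r,s}\otimes B^{n,1}$ to a related one and track only the RC-side modification $\theta$.

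First, I would compute $\diamond(b\otimes u)=\high(u^{\lusz}\otimes b^{\lusz})\in\p(B^{n,1,\lusz}\otimes B^{r,s,\lusz})$. Via the Dynkin automorphism $\tau$ (the identity if $n$ is even, and the transposition $(n-1,n)$ if $n$ is odd), the classical crystal $B^{n,1,\lusz}$ identifies with $B^{n,1}$ or $B^{n-1,1}$, and $B^{r,s,\lusz}\cong B^{r,s}$. Under this identification, $\diamond(b\otimes u)$ is a path with a spin column on the left and a KN tableau on the right, which is precisely the setup of Lemma~\ref{lem:Rinv_spin1}. Applying that lemma produces an explicit rigged configuration $(\nu^\ast,J^\ast)=\Phi(\diamond(b\otimes u))$, and then Proposition~\ref{prop:Phi_welldef1}(7) yields
\[
\Phi(b\otimes u)=\theta(\nu^\ast,J^\ast).
\]

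Next, I would verify that $\theta(\nu^\ast,J^\ast)$ agrees with the rigged configuration $(\nu,J)=\Gamma(\eta,\lambda)$ of Lemma~\ref{lem:Rinv_spin1} attached to the data $(\eta,\lambda)$ produced by the addable/removable $\{n,\mn\}$-pair swap in $b$. Because all riggings $J^{(a)}_i$ and vacancy numbers $P^{(a)}_i$ vanish for $a\le n-2$ in the target configuration, the operator $\theta$ acts trivially there. At $a=n-1,n$ the relation $P^{(n-1)}_i+P^{(n)}_i=1$, combined with the fact that riggings are $0$ or $1$ and sit at positions where the corresponding vacancy is $1$, forces $\theta$ to either preserve $(J^{(n-1)},J^{(n)})$ or to interchange the roles of $\nu^{(n-1)}$ and $\nu^{(n)}$, according to the parity of $n$; this exactly matches the combinatorial rule in Lemma~\ref{lem:Rinv_spin1}.

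The main obstacle is the combinatorial identification in the first step: one must show that the addable/removable-pair swap rule defining $\eta$ from $b$ coincides with the effect of $\lusz$ composed with $\high$ on the $\{n,\mn\}$-strip of $b$. This requires tracking how $\lusz$ (which conjugates entries via $i\mapsto\overline{\tau(i)}$ and reverses the reading word) acts on the vertical strip, and how the subsequent $\high$ transports the result to the $I_0$-highest weight element in the target classical component. A parity-sensitive case analysis together with the $w_0$-action on weights is needed to match this with the stated data. Once this identification is in hand, everything else is a mechanical unwinding of the compatibilities \eqref{equation.Phi commute f} and~\ref{prop:Phi_welldef1}(7) together with Lemma~\ref{lem:Rinv_spin1}.
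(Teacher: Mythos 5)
Your reduction via $\diamond$ and Proposition~\ref{prop:Phi_welldef1}(7) is a genuinely different strategy from the paper's (the paper never dualizes here: it first observes that $\Phi(b\otimes u)$ and $\Phi(b)$ differ only by a shift of $1$ in the riggings of $\nu^{(n)}$, and then computes $\Phi(b)$ by induction on $\lvert \mu/\overline{\lambda}\rvert$, using $[\Phi,e_i]=0$ to track the explicit effect of the sequence $e_h e_{h+1}\cdots e_{n-2}e_n$ on the rigged configuration). However, as written your argument has a genuine gap, and it sits exactly where the content of the lemma lies. To use Lemma~\ref{lem:Rinv_spin1} you must compute $\diamond(b\otimes u)=\high(u^{\lusz}\otimes b^{\lusz})$ explicitly, i.e.\ determine which pair $(\eta'',\lambda)$ labels this $I_0$-highest weight element of $B^{n,1}\otimes B^{r,s}$, and then show that $\theta$ applied to the Lemma~\ref{lem:Rinv_spin1} output for $(\eta'',\lambda)$ reproduces the Lemma~\ref{lem:Rinv_spin1} output for $(\eta,\lambda)$, where $\eta$ comes from the addable/removable $(n,\mn)$-pair swap in $b$. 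You defer this identification entirely ("the main obstacle"), and it is a combinatorial computation of at least the same order of difficulty as the induction the paper actually carries out, so the proposal does not yet constitute a proof.

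In addition, your description of the action of $\theta$ is incorrect. By definition $\theta$ fixes every partition $\nu^{(a)}$ and replaces each rigging $x$ of a length-$i$ string by $P^{(a)}_i(\nu)-x$; it never ``interchanges the roles of $\nu^{(n-1)}$ and $\nu^{(n)}$,'' and it has nothing to do with the parity of $n$. Nor does it act trivially on the spin rows: every string of $\nu^{(n-1)}$ or $\nu^{(n)}$ whose length lies in a region where the vacancy number equals $1$ has its rigging flipped $0\leftrightarrow 1$. Consequently $\theta\bigl(\Phi(\diamond(b\otimes u))\bigr)$ has, in general, a different pattern of $1$'s from $\Phi(\diamond(b\otimes u))$, and the claim that ``this exactly matches the combinatorial rule in Lemma~\ref{lem:Rinv_spin1}'' is unsubstantiated until you pin down $\eta''$ and verify that the complemented riggings count the vertical dominoes of $\overline{\lambda}^c/\eta^c$ column by column. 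The overall route (dualize, apply Lemma~\ref{lem:Rinv_spin1}, complement) could in principle be made to work, but without these two verifications it does not close; the paper's inductive computation, or an equally explicit computation of $\diamond(b\otimes u)$, is required.
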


\begin{proof}
Recall that $\Phi(u)$ is the empty rigged configuration $\nu^{(a)}=\emptyset$.
Therefore, by the combinatorial procedure of $\Phi$, we see that the only
difference between $\Phi(b\otimes u)$ and $\Phi(b)$ is that the riggings
of $\nu^{(n)}$ for the latter case is smaller than those for the former case by 1
since there is a contribution to the vacancy number by the tensor factor $B^{n,1}$
to $P_i^{(n)}(\nu)$. Thus it is enough to show that for $(\widetilde{\nu}, \widetilde{J})=\Phi(b)$, we
have $\widetilde{\nu}=\Gamma(\eta,\lambda)$ and 
$\widetilde{J}_i^{(a)}=0$ whenever $J_i^{(a)}=1$ in Lemma~\ref{lem:Rinv_spin1} and
$\widetilde{J}_i^{(a)}=-1$ otherwise for $a=1,2$.

We are going to prove the lemma by induction on $\lvert \mu/\overline{\lambda} \rvert$. In fact, for simplicity
we prove the statement simultaneously for the case stated as well as with all letters $n$ and $\overline{n}$
as well as $u_{\varpi_n}$ and $u_{\varpi_{n-1}}$ interchanged. The claim for $\Phi(b\otimes u_{\varpi_{n-1}})$
is the same as in Lemma~\ref{lem:Rinv_spin1} with $(\nu,J)^{(n-1)}$ and $(\nu,J)^{(n)}$ interchanged.
The base case $\lvert \mu/\overline{\lambda} \rvert = 0$ follows directly from Proposition~\ref{prop:highestRC}. 

Now suppose that $\lvert \mu/\overline{\lambda} \rvert > 0$ and assume that the topmost cell in $\mu/\overline{\lambda}$
is filled with $\overline{n}$ (resp. $n$). Suppose this cell is at height $h$.
Consider
\[
	b' = e_h e_{h+1} \cdots e_{n-2} e_n (b) \qquad (\text{resp. } b' = e_h e_{h+1} \cdots e_{n-2} e_{n-1} (b)).
\]
Compared to $b$, the element $b'$ has a letter $h$ instead of $\overline{n}$ (resp. $n$) in this cell, so that 
$\overline{\lambda}' = \overline{\lambda}+\epsilon_h$. Hence by induction hypothesis
$\Phi(b')$ is given as stated by the extension of the lemma. Furthermore, by Theorem~\ref{th:welldef}
the Kashiwara operators $e_i$ and $\Phi$ commute, so that it suffices to check that
\[
	e_h e_{h+1} \cdots e_{n-2} e_n (\widetilde{\nu}, \widetilde{J}) \qquad (\text{resp. }
	e_h e_{h+1} \cdots e_{n-2} e_{n-1} (\widetilde{\nu}, \widetilde{J}))
\]
is indeed $(\widetilde{\nu}',\widetilde{J}')=\Phi(b')$ as stated in the extension of the lemma. Recall the action of $e_i$ 
on rigged configurations as given in Definition~\ref{definition.crystal operators on rc}. The smallest string with
rigging -1 in $(\widetilde{\nu},\widetilde{J})^{(n)}$ (resp. $(\widetilde{\nu},\widetilde{J})^{(n-1)}$)
is of length $h$, so that $e_n$ (resp. $e_{n-1}$) removes a box from this string. By the form of 
$(\widetilde{\nu},\widetilde{J})$ as stated in the lemma, it is not hard to see that 
$e_{i+1} e_{i+2} \cdots e_{n-2} e_n(\widetilde{\nu}, \widetilde{J})$ 
(resp. $e_{i+1} e_{i+2} \cdots e_{n-2} e_{n-1}(\widetilde{\nu}, \widetilde{J})$) has a negative rigging $-1$
of smallest length in a string of length $h$ in the $i$-th rigged partition for each $h\le i \le n-2$. 
This shows that $\widetilde{\nu}^{'(n-2)} = (\overline{\lambda}')^{c}$ as required and all $\widetilde{\nu}^{'(a)}$ with 
$1\le a <n-2$ are as desired.
The cells $c_1', c_2', \ldots$ in the construction of $\widetilde{\nu}^{'(n-1)}$ and $\widetilde{\nu}^{'(n)}$
are obtained from $(\overline{\lambda}')^c/(\eta')^c$. There are two cases to consider:
\begin{enumerate}
\item[(1)] $e_n$ (resp. $e_{n-1}$) removes one of the cells $c_i$ in $(\widetilde{\nu},\widetilde{J})^{(a)}$
for $a=n$ (resp. $a=n-1$).
\item[(2)] $e_n$ (resp. $e_{n-1}$) does not remove any of the cells $c_i$ in $(\widetilde{\nu},\widetilde{J})^{(a)}$
for $a=n$ (resp. $a=n-1$).
\end{enumerate}
Recall that the cells $c_j$ are those in $\overline{\lambda}^c/\eta^c$ or alternatively $\eta/\overline{\lambda}$.

In Case~(1), the cell containing the letter $\overline{n}$ (resp. $n$) at height $h$, which is also $c_i$, is part of $\eta$
and not part of a removable $(n,\overline{n})$-pair. Hence $\eta'=\eta$. 
This implies that the list $c_1',c_2',\ldots$ is the same as the list $c_1,c_2,\ldots$, but with $c_i$ missing.
If in the algorithm for $(\widetilde{\nu},\widetilde{J})$, the cell $c_1$ is added to the $a$-th rigged partition
for $a=n-1$ or $n$, then in $(\widetilde{\nu}',\widetilde{J}')$, the cell $c_1'$ is added to the opposite rigged partition.
All addable $(n,\overline{n})$-pairs at height $<h$ come in pairs of the same row length. Hence still cells are added to the
same rows. The cell $c_i$ is missing from the list $c_j'$ and for $j\ge i$, $c_j'$ adds the same cell as $c_{j+1}$ to the 
same rigged partition, which proves the claim.

In Case~(2), we have $\eta'=\eta+\epsilon_h+\epsilon_{h+1}$ since the $\overline{n}$ (resp. $n$) removed by the string
of $e_j$ is part of a removable $(n,\overline{n})$-pair. Hence $(\overline{\lambda}')^c/(\eta')^c$ compared to 
$\overline{\lambda}^c/\eta^c$ has an extra cell at height $h+1$, which implies that the sequence $c_j'$ compared
to $c_j$ has an extra cell $c_i'$. As in Case~(1), the cells at height $<h$ come in pairs and hence it does not matter
whether one first adds them to the $a$-th rigged partitions for $a=n-1$ or $n$ or vice versa.
Since $c_j'$ contains an extra cell $c_i$, $c_j'$ for $j>i$ adds the same cell as $c_{j-1}$ to the 
same rigged partition, which proves the claim.
\end{proof}

To deal with the combinatorial $R$-matrices between two spin cases, we recall the following
result by Mohammad~\cite{Mohamad}.

\begin{proposition}[\cite{Mohamad}]
\label{prop:mohamad}
For $r = n-1, n$, the $I_0$-highest weight decomposition of $B^{n,1} \otimes B^{r,s}$ and 
$B^{r,s} \otimes B^{n,1}$ is multiplicity free. As a consequence, the combinatorial $R$-matrix
\[
R \colon B^{n,1} \otimes B^{r,s} \to B^{r,s} \otimes B^{n,1}
\]
is uniquely determined by being an $I_0$-crystal isomorphism.
\end{proposition}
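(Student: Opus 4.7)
My plan is to first establish the multiplicity-freeness claim, from which the uniqueness of $R$ follows essentially by bookkeeping.

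For the first part, I would use the fact that $\varpi_n$ is a minuscule fundamental weight of $D_n$. As $U_q(D_n)$-crystals we have
$B^{n,1}\cong B(\varpi_n)$ and $B^{r,s}\cong B(s\varpi_r)$ for $r=n-1,n$, by~\eqref{eq:classical_decomposition_2}. Since $\varpi_n$ is minuscule, the Weyl group $W$ of $D_n$ acts transitively on the set of weights of $B(\varpi_n)$ (each of the form $\tfrac12(\pm\epsilon_1\pm\cdots\pm\epsilon_n)$ with an even number of minus signs), and every weight space of $B(\varpi_n)$ is one-dimensional. It is classical that tensoring with a minuscule crystal produces a multiplicity-free decomposition:
\[
B(\varpi_n)\otimes B(s\varpi_r)\;\cong\;\bigoplus_{\nu\in D}B(\nu),
\]
where $D$ is the set of dominant weights of the form $s\varpi_r+w\varpi_n$ $(w\in W)$ and each summand occurs with multiplicity exactly one. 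I would give this step either by invoking the standard Weyl character identity (since $\dim B(\varpi_n)_\mu\le 1$ for all $\mu$), or more in the spirit of the present paper, by explicitly parametrizing the $I_0$-highest weight elements $b\otimes u_{s\varpi_r}\in B^{n,1}\otimes B^{r,s}$ using the spin-column description in~\eqref{equation.spin}, and checking directly that distinct such elements have distinct weights.

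For the opposite ordering, $B^{r,s}\otimes B^{n,1}$ has the same character and hence, as a $U_q(D_n)$-crystal, the same multiplicity-free decomposition. Alternatively, one can parametrize its $I_0$-highest weight elements $b'\otimes u_{\varpi_n}$ directly (as done in the discussion leading to Lemma~\ref{lem:Rinv_spin2}) and match them with those on the other side by weight.

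For the second part, once both sides decompose as the same multiplicity-free sum $\bigoplus_{\nu\in D}B(\nu)$, any $I_0$-crystal isomorphism $\psi\colon B^{n,1}\otimes B^{r,s}\to B^{r,s}\otimes B^{n,1}$ must send each $B(\nu)$-component in the source to the unique $B(\nu)$-component in the target. Because an irreducible highest weight $U_q(D_n)$-crystal $B(\nu)$ admits only the trivial $I_0$-crystal automorphism (the action of the $e_i,f_i$ determines every element from the unique highest weight vector), this matching forces $\psi$ to coincide with the combinatorial $R$-matrix. Hence $R$ is the unique $I_0$-crystal isomorphism between these two tensor products.

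The main obstacle is the multiplicity-freeness step; once that is in place, the uniqueness conclusion is immediate. In a self-contained write-up I would favor the direct combinatorial enumeration of $I_0$-highest weight elements via spin columns (in parallel with Lemmas~\ref{lem:Rinv_spin1} and~\ref{lem:Rinv_spin2}) over invoking the general minuscule decomposition, since it keeps the argument consistent with the explicit spin-column framework used throughout Section~\ref{sec:Rinv}.
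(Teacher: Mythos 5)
Your argument is correct, but it is worth noting that the paper does not actually prove this statement at all: Proposition~\ref{prop:mohamad} is imported wholesale from~\cite{Mohamad}, so there is no in-paper proof to match. Your route is a clean, self-contained replacement. The key point you use --- that $\varpi_n$ is minuscule in type $D_n$, so that $B(\varpi_n)\otimes B(s\varpi_r)$ decomposes multiplicity-freely into $B(s\varpi_r+w\varpi_n)$ over those Weyl translates making the sum dominant --- is the classical minuscule Pieri rule, and together with~\eqref{eq:classical_decomposition_2} it gives the multiplicity-freeness for both orderings (same character). The uniqueness step is then exactly as you say: a multiplicity-free $I_0$-decomposition forces any $I_0$-crystal isomorphism to match components of equal highest weight, and an isomorphism of irreducible highest weight crystals is unique, so every $I_0$-crystal isomorphism coincides with $R$ (which commutes with all $e_i,f_i$, $i\in I$, hence in particular with the $I_0$-operators).

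Two small remarks. First, your parenthetical ``since $\dim B(\varpi_n)_\mu\le 1$'' is not by itself the reason for multiplicity-freeness; what you really need is the full minuscule property $\langle w\varpi_n,\alpha^\vee\rangle\in\{-1,0,1\}$, which guarantees that in the Brauer--Klimyk sum each shifted weight $s\varpi_r+w\varpi_n+\rho$ is either dominant regular or lies on a wall, so there are no cancellations or coincidences --- you do state minusculeness first, so the argument stands, but the justification should rest on that and not merely on one-dimensional weight spaces. Second, your alternative of directly enumerating the $I_0$-highest weight elements $b\otimes u_{s\varpi_r}$ (the condition $\varepsilon_i(b)\le s\delta_{i,r}$ on spin columns, whose weights are pairwise distinct) is closer in spirit to the explicit spin-column combinatorics of Section~\ref{sec:Rinv} and of the cited source, and would make the statement verifiable entirely within the paper's framework; either version is acceptable.
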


By combining these results, we now establish the $R$-invariance of
the rigged configuration bijection for the case of $B^{n,1}\otimes B^{r,s}$
as follows:

\begin{proposition}\label{prop:spin_Rinv}
Consider an $I_0$-highest element $s_1 \otimes b_1$ of $B^{n,1}\otimes B^{r,s}$.
Suppose that 
\[
	R \colon s_1 \otimes b_1 \longmapsto b_2\otimes s_2\in B^{r,s}\otimes B^{n,1}
\]
under the combinatorial $R$-matrix. Then we have
\[
	\Phi(s_1\otimes b_1)=\Phi(b_2\otimes s_2).
\]
\end{proposition}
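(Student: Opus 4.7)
The plan is to combine Lemmas~\ref{lem:Rinv_spin1} and~\ref{lem:Rinv_spin2} with Proposition~\ref{prop:mohamad}, splitting the argument into two cases according to the value of $r$.

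For $1\le r\le n-2$, the fact that $s_1\otimes b_1$ is $I_0$-highest together with the tensor product rule forces $b_1$ to be $I_0$-highest in $B^{r,s}$, so $b_1=u_\eta$ for some dominant $\eta$. Lemma~\ref{lem:Rinv_spin1} then prescribes $\Phi(s_1\otimes u_\eta)$ as an explicit rigged configuration determined by the pair $(\eta,\lambda)$, where $\lambda=\wt(s_1\otimes u_\eta)$. On the other hand, since $R$ commutes with every $e_i$ and $f_i$, the image $b_2\otimes s_2$ is $I_0$-highest of weight $\lambda$. Applying the tensor product rule together with~\eqref{eq:e0_spin}--\eqref{eq:f0_spin} to $B^{r,s}\otimes B^{n,1}$ shows that the spin factor must be $s_2=u_{\clfw_n}$ and that $b_2$ satisfies $\varepsilon_i(b_2)\le \delta_{i,n}$ for all $i\in I_0$; this is exactly the hypothesis of Lemma~\ref{lem:Rinv_spin2}. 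That lemma then expresses $\Phi(b_2\otimes u_{\clfw_n})$ in the same form as Lemma~\ref{lem:Rinv_spin1} but indexed by $(\eta',\lambda)$, where $\eta'$ is read off from the shape of $b_2$ via the recipe stated just before Lemma~\ref{lem:Rinv_spin2}. The problem thus reduces to verifying $\eta'=\eta$.

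To establish $\eta'=\eta$, I would observe that both parametrizations $\eta\mapsto s_1\otimes u_\eta$ and $\eta'\mapsto b_2\otimes u_{\clfw_n}$ are injective (the first because weights in $B^{n,1}$ have multiplicity one, so $s_1$ is recovered from $\lambda-\eta$; the second because the construction in Lemma~\ref{lem:Rinv_spin2} is reversible) and that both enumerate the $I_0$-highest vectors of weight $\lambda$ in the respective tensor products. Thus $R$ induces a permutation $\pi_\lambda$ of this common indexing set of $\eta$'s, and the claim is $\pi_\lambda=\mathrm{id}$. To pin this down I would exploit that $R$ also commutes with $e_0$ via~\eqref{eq:e0}, and track the $e_0$-action through the description of $f_0$ on the right-hand side elements $b_2$ supplied by Lemma~\ref{lemma:f0_bump}, matching it against the action of $e_0$ on $s_1$ computed through~\eqref{eq:e0_spin}. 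Since $\pi_\lambda$ must intertwine these two $e_0$-structures and the affine move on each side admits a unique pre-image, this forces $\pi_\lambda$ to be the identity. For the remaining case $r\in\{n-1,n\}$, Proposition~\ref{prop:mohamad} asserts that both $B^{n,1}\otimes B^{r,s}$ and $B^{r,s}\otimes B^{n,1}$ have multiplicity-free classical decompositions, so $R$ is the unique $I_0$-crystal isomorphism. Here I would compute $\Phi$ directly by applying the doubling $\emb$, expressing the algorithm via the spin operator $\delta_s$ of~\eqref{eq:delta_spin} (which is built from $\delta,\beta,\beta^{(r)}$, and $\overline\beta$), and using the commutation relations of Proposition~\ref{prop:delta_s_commute} together with Lemma~\ref{lemma:spin_betas_preserve} to identify the two rigged configurations cell by cell before halving.

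The principal obstacle is the case $1\le r\le n-2$, where the classical decomposition fails to be multiplicity-free and a naive uniqueness argument as in the spin case is unavailable. The matching of the $\eta$-parameters must instead be extracted from the affine compatibility of $R$ through the careful bookkeeping in Lemma~\ref{lemma:f0_bump}; this is the technical heart of the argument, while all other steps follow cleanly from the two explicit descriptions of the relevant rigged configurations provided by Lemmas~\ref{lem:Rinv_spin1} and~\ref{lem:Rinv_spin2}.
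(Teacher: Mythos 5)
Your reduction of the case $1\le r\le n-2$ to the statement ``the tuple $(\eta,\lambda)$ is preserved by $R$'' is exactly the paper's strategy, and your identification of Lemmas~\ref{lem:Rinv_spin1}, \ref{lem:Rinv_spin2}, Lemma~\ref{lemma:f0_bump} and the affine compatibility of $R$ as the relevant tools is correct. However, the step you yourself flag as the technical heart is not actually supplied, and the mechanism you propose for it does not work as stated. The assertion that a bijection $\pi_\lambda$ intertwining the two $e_0$-structures ``must be the identity because the affine move admits a unique pre-image'' is not an argument: an intertwiner of weight-$\lambda$ highest-weight sets is forced to be the identity only once the correspondence is anchored at a base case and propagated by induction. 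The paper does precisely this, inducting on $\lvert\lambda\rvert$ with anchor $R(u_{\varpi_n}\otimes u_{s\varpi_r})=u_{s\varpi_r}\otimes u_{\varpi_n}$ and a separate uniqueness argument for the terminal elements with $\eta=(s-1)\varpi_r+\varpi_k$ (its Step 3); neither anchor appears in your proposal. More seriously, you ignore the case $\varphi_0(s_1\otimes u_\eta)=0$ (which occurs whenever $\eta_1=s-1$ or $s$): there $f_0$ cannot be applied directly, and one must first apply a canonical string of classical operators $f_h$ before $f_0$ becomes admissible (the paper's Step 2, with its three sub-cases depending on how many minus signs of $s_1$ reach positions $1,2$) — this is where most of the verification lives. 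Finally, your bookkeeping is one-sided: on $B^{n,1}\otimes B^{r,s}$ the operator $f_0$ can act on the factor $u_\eta\in B^{r,s}$, so you need the explicit $f_0$-rules on $B^{r,s}$ coming from the $\pm$-diagram description (not only \eqref{eq:e0_spin}--\eqref{eq:f0_spin} on the spin column), and after each application of $f_0$ one must pass back to the $I_0$-highest element of the new component before the parameters $(\eta',\lambda')$ even make sense.

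For $r=n-1,n$ your route is correct in principle but needlessly heavy: once Proposition~\ref{prop:mohamad} gives multiplicity-freeness, each side has at most one $I_0$-highest element of weight $\lambda$ and, by the well-definedness of $\Phi$ (Theorem~\ref{th:welldef}), at most one rigged configuration in $\rc(L,\lambda)$ — which does not depend on the ordering of the tensor factors — so the two images of $\Phi$ coincide immediately, with no need to unwind $\delta_s$, $\emb$, $\beta^{(r)}$ and $\overline{\beta}$ cell by cell (a computation you in any case do not carry out).
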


\begin{proof}
If $r = n-1,n$, this follows from Proposition~\ref{prop:mohamad} and Theorem~\ref{th:welldef}.

Thus assume $r \leq n-2$. By Lemmas~\ref{lem:Rinv_spin1} and~\ref{lem:Rinv_spin2}, it suffices to show that under 
the combinatorial $R$-matrix, an $I_0$-highest weight element in $B^{n,1} \otimes B^{r,s}$ corresponding to the tuple 
$(\eta, \lambda)$ goes to the $I_0$-highest weight element in $B^{r,s} \otimes B^{n,1}$ with the same tuple 
$(\eta, \lambda)$ as given in the corresponding lemmas. We do so by induction on $\lvert \lambda \rvert$ (from largest to smallest)
by either increasing $\lvert \eta \rvert$ or decreasing $\lvert \eta/\overline{\lambda} \rvert$.

Note that there is a unique element of weight $s\varpi_r+\varpi_n$ in both $B^{n,1} \otimes B^{r,s}$
and $B^{r,s} \otimes B^{n,1}$, so that $R(u_{\varpi_n} \otimes u_{s\varpi_r}) = u_{s\varpi_r} \otimes u_{\varpi_n}$. 
We set up the induction in several steps. Let $s_1 \otimes u_\eta \in B^{n,1} \otimes B^{r,s}$ be a highest 
weight element different from $u_{\varpi_n} \otimes u_{s\varpi_r}$. Perform one of the following steps to obtain a new 
$s_1' \otimes u_{\eta'}$ closer to $u_{\varpi_n} \otimes u_{s\varpi_r}$ by induction:

\medskip

\noindent \textbf{Step 1.} Suppose $\varphi_0(s_1 \otimes u_\eta)>0$. Then let $s_1' \otimes u_{\eta'}$ be
the $I_0$-highest weight element in the same component as $f_0(s_1 \otimes u_\eta)$. 

\medskip

\noindent \textbf{Step 2.} Suppose $\varphi_0(s_1 \otimes u_\eta)=0$. Note that in this case $\eta_1=s-1$ or $s$.
Set $b_1 = u_\eta$ and repeat the following steps until $\varphi_0(s_1 \otimes b_1)>0$.
Then let $s_1' \otimes u_{\eta'}$ be the $I_0$-highest weight element in the same component as $f_0 (s_1 \otimes b_1)$.
\begin{enumerate}
\item Let $1\le h < n-1$ be minimal such that $\varphi_h(s_1 \otimes b_1)>0$.
\item Compute $s_1'\otimes b_1' = f_h(s_1\otimes b_1)$.
\item Reset $s_1 \otimes b_1$ to $s_1' \otimes b_1'$.
\end{enumerate}

\medskip

\noindent \textbf{Step 3.} If neither Step 1 nor Step 2 applies or yields a $c=s_1 \otimes b_1$ with $\varphi_0(c)>0$, 
we have $\eta = (s-1) \varpi_r + \varpi_k$ for some $0\le k < r$ and $s_1=(+,\ldots,+)$ or $(+,\ldots,+,-,+,\ldots,+,-)$ with one 
minus in position $n$ and the other in position $k$ or $r$. Note that, for fixed weight $\lambda$, these elements
minimize $\lvert \eta \rvert$ and there is a unique $I_0$-highest weight element corresponding to $(\eta,\lambda)$ with
minimal $\lvert \eta \rvert$. On the $B^{r,s} \otimes B^{n,1}$ side, these are precisely the $I_0$-highest weight elements
$b_2\otimes u_{\varpi_n}$ with $b_2 \in B(s\varpi_r) \subseteq B^{r,s}$ (since minimization of $\lvert \eta \rvert$ corresponds 
to maximization of $\lvert \mu \rvert$). Due to the uniqueness for a given weight class, they must map to each other
under $R$.

\medskip

We are now going to check for Steps 1 and 2 inductively, that if an $I_0$-highest weight element
$s_1 \otimes u_\eta$ characterized by the tuple $(\eta,\lambda)$ goes to $(\eta',\lambda')$ under a given step,
then $b_2 \otimes u_{\varpi_n}$ corresponding to the same $(\eta,\lambda)$ also goes to
$(\eta',\lambda')$ under the same step. 

We begin with Step 1. Suppose $f_0(s_1 \otimes u_\eta) = s_1 \otimes f_0(u_\eta)$.
Take $s_1 \otimes u_\eta$ of weight $\lambda$. By the combinatorial rules for $f_0$ on 
elements in $B^{r,s}$ provided in~\cite{Sch:2008}, we have that $\eta'$ (resp. $\lambda$) is $\eta$ (resp. $\lambda$)
with a vertical domino added to rows 1 and 2. More precisely, $\eta_i'=\eta_i+1$ for $i=1,2$.
Note also that $\eta_1<s$ (and in particular $r$ even) and $\overline{\lambda}_1<\eta_1$ if $\eta_1=s-1$ since 
otherwise $f_0$ does not apply. Now take $b_2 \otimes u_{\varpi_n}$ corresponding to $(\eta,\lambda)$ with $b_2\in B(\mu)$. 
The action of $f_0$ is given by Lemma~\ref{lemma:f0_bump}. If case  Lemma~\ref{lemma:f0_bump}(i) applies, we have
$\mu_i = \eta_i = \overline{\lambda}_i+1$ and $\eta_i'=\mu_i'+1= \overline{\lambda}_i'+1$ for $i=1,2$, so
that indeed $\eta_i'=\eta_i+1$.
Lemma~\ref{lemma:f0_bump}(ii) does not apply since $r$ cannot be odd. If Lemma~\ref{lemma:f0_bump}(iii)
applies, we have $\mu_i=\overline{\lambda}_i+1=\eta_i+1$ and $\mu_i'=\overline{\lambda}'_i+1=\eta'_i+1$
with $\mu_i'=\mu_i+1$ for $i=1,2$, so that again $\eta_i'=\eta_i+1$. This proves the claim for Step 1 with 
$f_0(s_1 \otimes u_\eta) = s_1 \otimes f_0(u_\eta)$.

Step 1 with $f_0(s_1 \otimes u_\eta) = f_0(s_1) \otimes u_\eta$ can only apply when $\eta_1=s$ and $s_1$ has $-$ in 
positions 1 and 2. Under $f_0$ these two minus in $s_1$ are turned into $+$, so that $\lambda_i'=\lambda_i+1$ for $i=1,2$, 
$\lambda_i'=\lambda_i$ for $i\ge 3$, and $\eta'=\eta$. When $r$ is even, the correspondence gives 
$\mu_i=\overline{\lambda}_i=s-1$ for $i=1,2$.
Then Lemma~\ref{lemma:f0_bump}(iii) applies and yields $\mu_i'=s=\overline{\lambda}_i'=\eta_i'$ for $i=1,2$
as desired. When $r$ is odd, the correspondence yields $\eta_1=s$, $\overline{\lambda}_1=s-1$,
and $\mu_1=s$. Lemma~\ref{lemma:f0_bump}(i) or (ii) applies and it can again be easily checked that
$\eta'=\eta$, proving Step~1 with $f_0(s_1 \otimes u_\eta) = f_0(s_1) \otimes u_\eta$.

Next consider Step 2. The algorithm of always picking the smallest applicable $f_h$ and applying it, 
has the following effect on $s_1 \otimes u_\eta$. It raises the entries in the shortest columns of $u_\eta$
and/or lowers the lowest minus signs in $s_1$ until $f_0$ is applicable. We are going to consider
several cases:
\begin{enumerate}
\item[(a)] No minus sign in $s_1$ moves to positions 1 and 2.
\item[(b)] Only one minus sign in $s_1$ moves to position 1 or 2.
\item[(c)] Two minus signs in $s_1$ move to positions 1 and 2.
\end{enumerate}
First consider Case~(a). Note that $\varepsilon_0(s_1)=1$, so that the sequence of $f_h$ must raise 
the entries in at least the two shortest columns in order to achieve $\varphi_0(s_1 \otimes u_\eta)>0$.
Let $h_1 \le h_2<r$ be the height of the two rightmost columns in $\eta$. Then the algorithm for Step 2
adds a vertical domino to the leftmost column of height $h_2$ in $\eta$ and does the same to $\overline{\la}$.
On the other side $b_2 \otimes u_{\varpi_n}$ with $b_2 \in B(\mu)$, the subpart of $b_2$ of shape $\overline{\la}$ 
is a tableau with $i$ in row $i$, whereas $\mu/\overline{\la}$ is filled with the letters $n$ and 
$\overline{n}$. Since we only apply $f_h$ with $1\le h\le n-2$, the letters $n$ and $\overline{n}$ never change
in $b_2$. By the correspondence between $(\eta,\la)$ and $\mu$ as described before Example~\ref{example:otherside},
the vertical strip $\mu/\overline{\la}$ has boxes at height $h$ with $h_1< h \le h_2$ and $b_2$ contains
letters $n$ and $\overline{n}$ in these cells. The steps in the algorithm add a vertical strip at height $h_2$
to $\overline{\la}$ and as a consequence push any letters $n$ and/or $\overline{n}$ at height $h_2+1$ and $h_2+2$
one column to the right. Hence $\mu'=\mu+\epsilon_{h_2+1}+\epsilon_{h_2+2}$ which implies that the
correspondence between $(\eta',\lambda')$ and $\mu'$ also holds, proving the claim.

Next consider Case~(b). Let $h_1 \le h_2<r$ again be the height of the two rightmost columns in $\eta$.
The spin column $s_1$ must have precisely one minus sign in position $1\le h \le h_2$, which is either 
in position $h_1$ or position $h_2$. First consider that it is in position $h_2>h_1$. Then the algorithm for Step 2
adds a vertical domino to the column of height $h_1$ in $\eta$ and moves the minus sign in $s_1$
from position $h_2$ to position $h_1+2$. This means $\eta'=\eta+\epsilon_{h_1+1} +\epsilon_{h_1+2}$
and $\overline{\la}' = \overline{\la} + \epsilon_{h_2} + \epsilon_{h_1+1}$. On the $b_2 \otimes u_{\varpi_n}$
side, the vertical strip $\mu/\overline{\la}$ has boxes at heights $h_1<h\le h_2-2$ and $h_2$.
The algorithm for Step 2 on this side again changes $\overline{\la}'=\overline{\la} + \epsilon_{h_2} + \epsilon_{h_1+1}$.
The letters $n$ and $\overline{n}$ in $b_2$ in the rightmost column move up one and the letter $\overline{n}$
at height $h_2$ moves from the second to last to last column. This implies $\eta'=\eta+\epsilon_{h_1+1} +\epsilon_{h_1+2}$
as before, proving the claim.

Next consider Case~(b) with a minus in $s_1$ at height $h_1 \le h_2<r$. In this case, the algorithm for Step 2
adds a vertical domino at height $h_2$ to $\eta$ and $\overline{\la}$, so that 
$\eta'=\eta +\epsilon_{h_2+1} + \epsilon_{h_2+2}$ and $\overline{\la}'=\overline{\la} +\epsilon_{h_2+1} + \epsilon_{h_2+2}$.
On the $b_2 \otimes u_{\varpi_n}$ side, the vertical strip $\mu/\overline{\la}$ has boxes at heights $h_1\le h\le h_2$.
The algorithm for Step 2 also changes $\overline{\la}'=\overline{\la} +\epsilon_{h_2+1} + \epsilon_{h_2+2}$
and pushes any $n$ and $\overline{n}$ at height $h_2+1$ and $h_2$ one column to the right.
This implies that indeed $\eta'=\eta +\epsilon_{h_2+1} + \epsilon_{h_2+2}$ as desired.

Finally consider Case~(c). In this case, there must be at least two minus signs in $s_1$ in positions
$1\le i \le h_2$, where $h_1\le h_2<r$ are again the heights of the rightmost columns in $\eta$.
The algorithm for Step 2 removes the lowest two minus signs in $s_1$. Let $i_1<i_2$ be the positions
of these two minus signs. Then $\overline{\la}'=\overline{\la}+\epsilon_{i_1}+\epsilon_{i_2}$
and $\eta'=\eta$. On the $b_2 \otimes u_{\varpi_n}$ side, the vertical strip $\mu/\overline{\la}$
has boxes at height $i_1\le i \le i_2-2$ and $i_2$ if $i_1\le i_2-2$ and at height $i_1$ and $i_1+1$
if $i_2=i_1+1$. The algorithm of Step 2 also changes $\overline{\la}'=\overline{\la}+\epsilon_{i_1}+\epsilon_{i_2}$.
In addition it moves the letters $n$ and $\overline{n}$ in $b_2$ up by one in the last column of $b_2$ and
$n$ at height $i_2$ one column to the right if $i_1\le i \le i_2-2$. When $i_2=i_1+1$, it moves the 
letters $n$ and $\overline{n}$ in the last column of $b_2$ up by two. Comparing $\mu'$ and $\overline{\la}'$
shows that indeed $\eta'=\eta$ as desired.
\end{proof}

Finally, we prepare the following proposition to prove the $R$-invariance.
Let $u\in B^{n,1}$ be the $I_0$-highest weight element. For $b\in B$ 
we define $\mathbb{T}(b)$ by $R(b\otimes u)=u'\otimes \mathbb{T}(b)$. 
Here we view $\mathbb{T} \colon B \to B$ as an operator
by passing through $u$ via the combinatorial $R$-matrix.

\begin{proposition} \label{prop:induction}
Suppose $r_2\le r_1$.
Let $b_2\ot b_1\in B^{r_2,s_2}\ot B^{r_1,s_1}$ be an $I_0$-highest weight element.
Then there exists an integer $N$ such that $\mathbb{T}^N(b_2\ot b_1)$ 
belongs to $B(s_2\varpi_{r_2})\ot B(s_1\varpi_{r_1})$ and the right factor 
is the $I_0$-highest weight element. Moreover,
\begin{enumerate}
\item if $r_2\le n-2$ and $r_1\le n-1$, then the left factor does not contain barred 
	letters;
\item otherwise, the left factor is the $I_0$-highest weight element.
\end{enumerate}
\end{proposition}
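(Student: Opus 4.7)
The plan is to show by induction on a complexity statistic that iterating $\mathbb{T}$ strictly simplifies $b_2 \otimes b_1$ until the orbit hits an element of the claimed form. The main tools are Lemmas~\ref{lem:Rinv_spin1} and~\ref{lem:Rinv_spin2}, which compute the combinatorial $R$-matrix $R \colon B^{n,1} \otimes B^{r,s} \to B^{r,s} \otimes B^{n,1}$ explicitly on $I_0$-highest weight elements when $1 \le r \le n-2$, together with Proposition~\ref{prop:mohamad} for spin--spin interactions. Since the combinatorial $R$-matrix commutes with every $e_i, f_i$, the operator $\mathbb{T}$ also does, so $\mathbb{T}$ preserves the $I_0$-highest weight property and the classical weight $\lambda$. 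In particular $\mathbb{T}^N(b_2 \otimes b_1)$ always lies in the finite set of $I_0$-highest weight elements of weight $\lambda$. Writing $b_i \in B(\mu_i) \subseteq B^{r_i, s_i}$, one computes $\mathbb{T}(b_2 \otimes b_1) = b_2' \otimes b_1'$ in two stages: first $R \colon b_1 \otimes u \mapsto u_1 \otimes b_1'$ in $B^{r_1, s_1} \otimes B^{n,1}$, and then $R \colon b_2 \otimes u_1 \mapsto u_2 \otimes b_2'$ in $B^{r_2, s_2} \otimes B^{n,1}$; the shapes $\mu_i' \supseteq \mu_i$ and the intermediate spin columns $u_1, u_2$ are read off from the cited lemmas.

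Next I would introduce a complexity statistic
\[
c(b_2 \otimes b_1) = \bigl| (s_1^{r_1}) \setminus \mu_1 \bigr| + \#\{\text{barred letters appearing in } b_2\},
\]
with an additional spin-adapted correction term used in case~(ii) to measure the distance of $b_2$ from $u_{s_2 \varpi_{r_2}}$, and then show by case analysis that $c$ is strictly decreased by $\mathbb{T}$ unless $b_2 \otimes b_1$ already has the claimed form. When $\mu_1 \ne (s_1^{r_1})$, Lemma~\ref{lem:Rinv_spin2} yields $|\mu_1'| = |\mu_1| + 2$ via completion of a vertical domino, while the number of barred letters possibly pushed into $b_2$ by the subsequent pass of $u_1$ through $b_2$ is controlled by Lemma~\ref{lem:Rinv_spin1}, giving a net decrease of $c$. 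When $\mu_1 = (s_1^{r_1})$, one verifies that $u_1 = u_{\varpi_n}$, so that $b_1' = b_1$ and the second pass acts only on $b_2$, strictly reducing its barred-letter count and eventually forcing $\mu_2 = (s_2^{r_2})$ together with the absence of barred letters in $b_2$. In case~(ii), once the shape conditions $\mu_i = (s_i^{r_i})$ are reached, Proposition~\ref{prop:mohamad} and the multiplicity-freeness of the relevant decompositions force further iterations of $\mathbb{T}$ to push the spin component of $b_2$ all the way to the $I_0$-highest weight element $u_{s_2 \varpi_{r_2}}$.

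The main obstacle is establishing strict monotonicity of $c$ at each non-terminal step: the two successive $R$-passes interact in a subtle way, so that completing a vertical domino in $\mu_1$ can simultaneously introduce new barred letters into $b_2$, and the bookkeeping is most delicate in the mixed spin and non-spin cases. Resolving this requires a careful case analysis based on the shape data $(\mu_1, \mu_2, \lambda)$ and on which factors are spin, using the precise form of $u_1$ predicted by Lemma~\ref{lem:Rinv_spin2}. Once monotonicity is verified, the finiteness of the $I_0$-highest weight set of weight $\lambda$ forces termination of the iteration at the desired form, yielding the required integer $N$.
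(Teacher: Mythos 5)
Your overall strategy (iterate $\mathbb{T}$, control it through the explicit description of the spin $R$-matrix in Lemmas~\ref{lem:Rinv_spin1} and~\ref{lem:Rinv_spin2}, and force termination) is in the same spirit as the paper's, but two of your supporting claims are false and the decisive step is missing. First, $\mathbb{T}$ does not commute with $e_i,f_i$ and does not preserve the classical weight: it is defined by tensoring with the fixed element $u=u_{\varpi_n}$ and projecting, so $\wt(\mathbb{T}(b))=\wt(b)+\varpi_n-\wt(u')$ with $u'\neq u$ in general (the examples following the proposition show the weight changing at every step --- indeed it must change, since the endpoint has no barred letters). What survives is only that $\mathbb{T}$ preserves the property of being $I_0$-highest weight, because the right factor of an $I_0$-highest weight tensor product is again $I_0$-highest weight. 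Hence ``the finite set of $I_0$-highest weight elements of weight $\lambda$'' is not an invariant set, and termination has to come entirely from the monotonicity of your statistic.

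Second, that monotonicity --- which you yourself flag as the main obstacle --- is exactly the content of the proposition, and it is not established; as stated it is doubtful. While the right factor is still growing, the spin column handed to $b_2$ contains minus signs, and passing it through $b_2$ can increase the number of barred letters in $b_2$ by more than the gain in $\lvert(s_1^{r_1})\setminus\mu_1\rvert$ (moreover the $(\eta,\lambda)$ rule does not give $\lvert\mu_1'\rvert=\lvert\mu_1\rvert+2$ per step in general). The paper sidesteps this interaction entirely with a two-phase, lexicographic argument: the evolution of the right factor under $\mathbb{T}$ is autonomous (the rightmost $R$ is applied first), and by the $(\eta,\lambda)$ description it reaches $u_{s_1\varpi_{r_1}}$ after finitely many steps, after which $R(u_{s_1\varpi_{r_1}}\ot u)=u\ot u_{s_1\varpi_{r_1}}$ fixes it and sends the unchanged highest weight spin column $u$ into $b_2$; only then is $b_2$ analyzed, with a finer ordered measure (first removal of $n,\ol{n}$ pairs, then strict increase of its classical component in dominance order until it lies in $B(s_2\varpi_{r_2})$, then strict decrease of the number of barred letters), where the highest weight constraint $e_i^{s_1\delta_{i,r_1}+1}b_2=0$ guarantees that the smallest barred letter of $b_2$ is $\ol{r}_1$, hence the outgoing spin column contains a minus sign and the barred count drops. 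You would either need to prove monotonicity of your combined statistic through the interacting phase (no argument is given, and none is apparent), or restructure along these autonomous-phase lines; you also leave the case $r_1=n-1$ and case (ii) (where Proposition~\ref{prop:mohamad} enters) essentially unaddressed.
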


\begin{proof}
We first assume
$r_1,r_2\le n-2$. Suppose $b_2\ot b_1\in B^{r_2,s_2}\ot B^{r_1,s_1}$ is $I_0$-highest 
weight. Then $b_1=u_\mu$ where $u_\mu$ is the $I_0$-highest weight element of
$B(\mu)\subset B^{r_1,s_1}$.
In view of the description of the combinatorial $R\colon B^{r_1,s_1}\ot B^{n,1}\rightarrow
B^{n,1}\ot B^{r_1,s_1}$ in terms of the pairs $(\eta,\lambda)$ as described in the proof of 
Proposition~\ref{prop:spin_Rinv}, successive applications of $R$ take this $u_\mu$ to 
$u_{s_1\varpi_{r_1}}$. Thus we can assume $b_1=u_{s_1\varpi_{r_1}}$ and 
\begin{equation} \label{induction}
e_i^{s_1\delta_{i,r_1}+1}b_2=0\text{ for }i\in I_0.
\end{equation}
We also know $R(u_{s_1\varpi_{r_1}}\ot u)=u\ot u_{s_1\varpi_{r_1}}$. So we consider
$R(b_2\ot u)$ next. Consider $\mathrm{high}(b_2\ot u)$. The left component
may contain $n,\ol{n}$ pairs. If so, $R\colon B^{r_2,s_2}\ot B^{n,1}\rightarrow
B^{n,1}\ot B^{r_2,s_2}$ removes them.
Therefore, we can assume that there do not exist any $n,\ol{n}$ pairs.
Once all $n,\ol{n}$ pairs are removed, suppose $b_2 \in B(\lambda)$.
Then the right component of $R(b_2\ot u)$ belongs to $B(\la')$, where $\la'$ is strictly
greater than $\la$ in the dominance order. Hence, we can assume 
$\la=s_2\varpi_{r_2}$. From \eqref{induction}, the first barred letter in the inverse
column reading word of $b_2$ is $\ol{r}_1$. Let 
$R(b_2\ot u)=c\ot \widetilde{b}_2\in B^{n,1}\ot B^{r_2,s_2}$. Since $c$ contains a $-$,
the number of barred letters in $\widetilde{b}_2$ is at least one smaller
than that of $b_2$. Therefore, the claim follows.

A similar argument when $r_2\le n-2$ and $r_1=n-1$ finishes the proof of (i). 
Case (ii) is easier.
\end{proof}

\begin{example}
Consider type $D_8^{(1)}$, $B = B^{4,4} \otimes B^{6,3}$ and
\[
b_2 \otimes b_1 =
\Yvcentermath1
\young(1111,227\mfour,35,4\meight) \otimes \young(111,222,333,444,5,6)
\]
Then we have
\begin{align*}
\mathbb{T}(b_2 \otimes b_1) & = \Yvcentermath1
\young(1111,2227,337,45\mseven) \otimes \young(111,222,333,444,55,66)\;,
&& u' = (+,-,-,-,-,-,+,-),
\\ \mathbb{T}^2(b_2 \otimes b_1) & = \Yvcentermath1
\young(1111,2222,3337,467\msix) \otimes \young(111,222,333,444,555,666)\;,
&& u' = (+,-,-,+,+,-,-,+),
\\ \mathbb{T}^3(b_2 \otimes b_1) & = \Yvcentermath1
\young(1111,2222,3333,4777) \otimes \young(111,222,333,444,555,666)\;,
&& u' = (+,+,-,+,+,+,-,+),
\end{align*}
where $u'$ is defined by $R(b\otimes u)=u'\otimes \mathbb{T}(b)$ with $b=b_2 \otimes b_1$.
This illustrates Case~(i) of Proposition~\ref{prop:induction}.
\end{example}

\begin{example}
Consider type $D_8^{(1)}$, $B = B^{4,4} \otimes B^{8,3}$ and
\[
b_2 \otimes b_1 =
\Yvcentermath1
\young(111\meight,222\mseven,3,4) \otimes u_{3\varpi_8}.
\]
Then we have
\begin{align*}
\mathbb{T}(b_2 \otimes b_1) & =
\Yvcentermath1
\young(111,222,33,44) \otimes u_{3\varpi_8},
&& u' = (+,+,-,-,+,+,-,-), 
\\ \mathbb{T}^2(b_2 \otimes b_1) & =
\Yvcentermath1
\young(1111,2222,333,444) \otimes u_{3\varpi_8},
&& u' = (+,+,-,-,+,+,-,-),
\\ \mathbb{T}^3(b_2 \otimes b_1) & =
\Yvcentermath1
\young(1111,2222,3333,4444) \otimes u_{3\varpi_8},
&& u' = (+,+,-,-,+,+,+,+). 
\end{align*}
This illustrates Case~(ii) of Proposition~\ref{prop:induction}.
\end{example}

The following property is one of the most profound properties of the rigged configuration bijection.

\begin{theorem}[$R$-invariance] \label{th:main}
Let $B=B^{r_k,s_k}\otimes\cdots\otimes B^{r_2,s_2}\otimes B^{r_1,s_1}$
and let $B'$ be an arbitrary reordering of the factors of $B$.
For a given element $b\in B$, suppose that $b'\in B'$ is isomorphic to
$b$ under the combinatorial $R$-matrix:
\[
R \colon b\iso b'.
\]
Then the corresponding unrestricted rigged configurations are invariant:
\[
\Phi(b)=\Phi(b').
\]
\end{theorem}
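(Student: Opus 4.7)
The plan is to reduce the problem, in stages, to the type $A_n^{(1)}$ $R$-invariance established in~\cite{KSS:2002}. First, since $\Phi$ is a classical crystal isomorphism (Theorem~\ref{th:welldef}) and the combinatorial $R$-matrix commutes with $e_i,f_i$ for all $i\in I$, both sides of the putative equation $\Phi(b)=\Phi(b')$ commute with the same Kashiwara operators under $R$. Hence it suffices to prove the statement for $I_0$-highest weight elements. Next, any reordering of the tensor factors in $B$ is achieved by iterated transpositions of adjacent factors, and the composition of equalities of rigged configurations is again an equality, so we may reduce to the case of two adjacent tensor factors $B = B^{r_2,s_2}\otimes B^{r_1,s_1}$, and (by symmetry of the assertion under swapping the two factors) we may assume $r_2\le r_1$.

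Second, I would use the operator $\mathbb{T}$ introduced before Proposition~\ref{prop:induction}. Passing the spin column $u\in B^{n,1}$ through each factor of $B$ in turn is a composition of combinatorial $R$-matrices of the form covered by Proposition~\ref{prop:spin_Rinv}. Applying that proposition at each step yields that $\Phi$ is invariant under $\mathbb{T}$ (the auxiliary spin columns on either end contribute nothing because $\Phi$ of the empty path is the empty configuration, and the vacancy number bookkeeping is controlled by Lemmas~\ref{lem:Rinv_spin1} and~\ref{lem:Rinv_spin2}). Consequently $\Phi(b)=\Phi(\mathbb{T}^N b)$ and $\Phi(b')=\Phi(\mathbb{T}^N b')$ for every $N$. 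Choosing $N$ as in Proposition~\ref{prop:induction}, we reduce to the case where $b_2\otimes b_1\in B(s_2\varpi_{r_2})\otimes B(s_1\varpi_{r_1})$ with $b_1$ the $I_0$-highest weight element and $b_2$ as described in cases~(i) and~(ii) of that proposition.

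In case~(ii), both $b_2$ and $b_1$ are $I_0$-highest weight in their respective classical components, so $R(b_2\otimes b_1)=b_1\otimes b_2$ by the uniqueness of the $I_0$-highest weight element of a given weight in the relevant KR crystals (which follows from Proposition~\ref{prop:mohamad} when spin factors are involved, and from the classical branching rules otherwise). Both $\Phi(b_2\otimes b_1)$ and $\Phi(b_1\otimes b_2)$ are then pinned down by Proposition~\ref{prop:highestRC} (they depend only on the underlying $I_0$-highest weights and the multiplicity array of $B$, and the latter is permutation-invariant), so the two rigged configurations coincide.

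In case~(i), where $r_2\le n-2$ and $r_1\le n-1$ and $b_2$ contains no barred letters, the elements $b_2\otimes b_1$ and $R(b_2\otimes b_1)$ both lie in the subcrystal of $B$ generated by $\{e_i,f_i\mid i\in\{1,\dots,n-1\}\}$ from highest weight elements with unbarred entries only; this subcrystal is canonically identified with a tensor product of type $A_{n-1}$ crystals, and on it the type $D_n^{(1)}$ combinatorial $R$-matrix reduces to the type $A_{n-1}^{(1)}$ combinatorial $R$-matrix. The main obstacle, and the step I would expect to require the most care, is to verify that $\Phi$ restricted to such elements agrees (via the obvious forgetful identification of the Dynkin sub-diagram) with the type $A$ rigged configuration bijection of~\cite{KSS:2002}: this is done by tracing through the algorithm $\delta$ of Definition~\ref{def:delta} on an element whose selected strings never reach the fork at nodes $n-1,n$, and checking that it coincides step-for-step with the type $A$ $\delta$. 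Granting this identification, the $R$-invariance of $\Phi$ in this case is a direct consequence of the type $A_n^{(1)}$ $R$-invariance proved in~\cite{KSS:2002}, completing the proof.
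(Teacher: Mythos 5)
Your overall strategy matches the paper's (reduce to $I_0$-highest weight elements and to two factors, then use the spin column operator $\mathbb{T}$ together with Propositions~\ref{prop:spin_Rinv} and~\ref{prop:induction} to land either in a type $A$ situation handled by~\cite{KSS:2002} or in a trivial one), but your central transfer step is false as stated. You claim that Proposition~\ref{prop:spin_Rinv} gives ``$\Phi$ is invariant under $\mathbb{T}$'', i.e.\ $\Phi(b)=\Phi(\mathbb{T}^N b)$. This cannot hold: $\mathbb{T}$ is defined through $R(b\otimes u)=u'\otimes\mathbb{T}(b)$, and since the outgoing spin column $u'$ is in general different from $u$, the element $\mathbb{T}(b)$ has a different classical weight than $b$ (see the examples following Proposition~\ref{prop:induction}), so $\Phi(b)$ and $\Phi(\mathbb{T}b)$ are rigged configurations of different weights and cannot coincide. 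What Proposition~\ref{prop:spin_Rinv} actually yields, and what the paper's argument uses, is the equality $\Phi(b\otimes u)=\Phi(u'\otimes\mathbb{T}(b))$ of the enlarged tensor products; from this one deduces that $\Phi(b)=\Phi(b')$ follows from $\Phi(\mathbb{T}b)=\Phi(\mathbb{T}b')$, so the problem for the pair $(b,b')$ is transferred to the pair $(\mathbb{T}b,\mathbb{T}b')$, and after $N$ steps to $(\mathbb{T}^N b,\mathbb{T}^N b')$, which Proposition~\ref{prop:induction} makes tractable. Moreover, even the equality $\Phi(b\otimes u)=\Phi(u'\otimes\mathbb{T}(b))$ is not simply ``a composition of $R$-matrices covered by Proposition~\ref{prop:spin_Rinv}'': that proposition only treats $I_0$-highest weight elements of a two-fold product with the spin column at one end, so to push the spin column past the left factor one must dualize with $\diamond$ and $\theta$ via Proposition~\ref{prop:Phi_welldef1}(7), apply Proposition~\ref{prop:spin_Rinv} to the new rightmost pair, and dualize back. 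Your sketch omits this duality step entirely; the same combination of the recursive structure of $\Phi$ and Proposition~\ref{prop:Phi_welldef1}(7) is also what justifies the reduction (which you merely assert) from an adjacent transposition inside a long tensor product to the genuine two-factor case.

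Two smaller points. In your Case~(ii) endgame, Proposition~\ref{prop:highestRC} concerns a single KR crystal and does not by itself ``pin down'' $\Phi$ of a two-fold product; the relevant observation is that both elements equal $u_{s_2\varpi_{r_2}}\otimes u_{s_1\varpi_{r_1}}$, whose image under $\Phi$ is the empty rigged configuration (and the case $r_1=n-1$ is handled by interchanging $n-1$ and $n$ throughout). In Case~(i) you correctly identify the two needed ingredients, namely that the $R$-matrix on unbarred elements is of type $A$ (\cite[Proposition 9.1]{LOS12}) and the type $A$ $R$-invariance of~\cite{KSS:2002}; this part is consistent with the paper, but the argument only becomes usable once the transfer step above is repaired.
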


\begin{proof}
We divide the proof into three steps.
\medskip

\noindent
{\bf Step 1.}
By Theorem \ref{th:welldef}, we can assume that $b$ is an $I_0$-highest weight element.
Suppose that $b=b_k\otimes b_{k-1}\otimes b_{k-2}\otimes\cdots\otimes b_1\in B$.
By the recursive structure of the algorithm of the rigged configuration bijection $\Phi$,
we see that it is enough to consider the case
$b'=\widetilde{b}_{k-1}\otimes \widetilde{b}_k\otimes b_{k-2}\otimes\cdots\otimes b_1$,
where $R \colon b_k\otimes b_{k-1} \iso \widetilde{b}_{k-1}\otimes \widetilde{b}_k$.
Then the relationship between the $\diamond$ duality and the $\theta$ operation
established in Proposition~\ref{prop:Phi_welldef1}(7) shows that it is enough to consider the case $k=2$.
Therefore we concentrate on the case when $b=b_2\otimes b_1$ is $I_0$-highest weight.
\medskip

\noindent
{\bf Step 2.}
Let $u\in B^{n,1}$ be the $I_0$-highest weight element and $b=b_2 \otimes b_1 \in B$ be $I_0$-highest weight. 
We shall show that
\begin{align}\label{eq:spin_Rinv}
\Phi(b\otimes u)=\Phi(u'\otimes \mathbb{T}(b)),
\end{align}
where $R \colon b\otimes u\iso u'\otimes \mathbb{T}(b)$.
By Proposition~\ref{prop:spin_Rinv} we have
\[
\Phi(b_2\otimes b_1\otimes u)=\Phi(b_2\otimes u''\otimes b_1'),
\]
where $R \colon b_1\otimes u\iso u''\otimes b_1'$.
Then by Proposition~\ref{prop:Phi_welldef1}(7) we have
\[
\Phi(\widehat{u}\otimes \widehat{b}_1 \otimes \widehat{b}_2) =
\Phi(\widehat{b}_1' \otimes \widehat{u}'' \otimes \widehat{b}_2),
\]
where $(b_2\otimes b_1\otimes u)^{\diamond} = \widehat{u} \otimes \widehat{b}_1 \otimes \widehat{b}_2$
and $(b_2\otimes u''\otimes b_1')^{\diamond} = \widehat{b}_1' \otimes \widehat{u}'' \otimes \widehat{b}_2$.
Since $\widehat{u}'' \in B^{n,1}$, again by Proposition~\ref{prop:spin_Rinv} we have
\[
\Phi(\widehat{b}_1' \otimes \widehat{u}'' \otimes \widehat{b}_2)=
\Phi(\widehat{b}_1' \otimes \widehat{b}_2' \otimes \widehat{u}'),
\]
where $R \colon \widehat{u}'' \otimes \widehat{b}_2 \iso \widehat{b}_2' \otimes \widehat{u}'$.
Note that $(\widehat{b}_1' \otimes \widehat{b}_2' \otimes \widehat{u}')^{\diamond}
= u'\otimes \mathbb{T}(b)$ since $\diamond$ is involutive.
Therefore by Proposition~\ref{prop:Phi_welldef1}(7) we obtain~\eqref{eq:spin_Rinv}.
\medskip

\noindent
{\bf Step 3.} 
Let $b_2\ot b_1\in B$ be $I_0$-highest weight and $R(b_2\ot b_1)=\tilde{b}_1\ot\tilde{b}_2$.
Then we have 
\[
b_2\ot b_1\ot u\simeq u'\ot\mathbb{T}(b_2\ot b_1)
\text{ and }
\tilde{b}_1\ot\tilde{b}_2\ot u\simeq u'\ot\mathbb{T}(\tilde{b}_1\ot\tilde{b}_2).
\]
Thus by Step 2, we have
\[
\Phi(b_2\ot b_1\ot u)=\Phi(u'\ot\mathbb{T}(b_2\ot b_1))
\text{ and }
\Phi(\tilde{b}_1\ot\tilde{b}_2\ot u)=\Phi(u'\ot\mathbb{T}(\tilde{b}_1\ot\tilde{b}_2)).
\]
Since $\Phi(b_2\ot b_1\ot u)=\Phi(\tilde{b}_1\ot\tilde{b}_2\ot u)$ is equivalent to
$\Phi(b_2\ot b_1)=\Phi(\tilde{b}_1\ot\tilde{b}_2)$, the claim is reduced to showing
$\Phi(\mathbb{T}(b_2\ot b_1))=\Phi(\mathbb{T}(\tilde{b}_1\ot\tilde{b}_2))$.
Thanks to Proposition~\ref{prop:induction}, we are left to show 
$\Phi(b_2\ot b_1)=\Phi(\tilde{b}_1\ot\tilde{b}_2)$ when either $b_2\ot b_1$ 
or $\tilde{b}_1\ot\tilde{b}_2$ has the property described in the proposition. 
If $r_1,r_2\le n-2$, this $R$-matrix is nothing but type $A$ by 
\cite[Proposition 9.1]{LOS12} and the $R$-invariance of type $A$ case is already 
treated in~\cite[Lemma 8.5]{KSS:2002}. 
If $r_1=n$, by Proposition~\ref{prop:induction} both factors are $I_0$-highest weight and the images of $\Phi$ 
are empty. When $r_1=n-1$, then by symmetry we can interchange in all arguments $n-1$ and $n$
(including in the definition of $\mathbb{T}$), so that this case reduces to the $r_1=n$ case.
\end{proof}

Theorem~\ref{th:main} means that the rigged configuration bijection gives
an explicit algorithm to compute the combinatorial $R$-matrices.
It is noteworthy that the rigged configuration bijection gives not only
the combinatorial $R$-matrices for twofold tensor products but also
much more general reorderings of multiple tensor products
by only computing $\Phi$ and $\Phi^{-1}$.
In the following, we give one such example.

\begin{example}\label{ex:combR}
Consider the following element of $B=B^{3,3}\otimes B^{2,4}\otimes B^{2,2}$ of type $D^{(1)}_5$:
\[
b=\Yvcentermath1
\young(11\mtwo,22,\mfive\mfive) \otimes
\young(123\mthree,23\mfour\mone)\otimes
\young(13,34)\;.
\]
The corresponding unrestricted rigged configuration is
\begin{center}
\unitlength 10pt
\begin{picture}(35,7)
\put(0,3.35){
\put(0,0){\yng(4,1,1)}
\put(1.4,0.2){0}
\put(1.4,1.2){0}
\put(4.8,2.5){$-1$}
}
\put(7,0){
\put(0,0){\yng(4,3,2,1,1,1)}
\multiput(1.4,0.2)(0,1.1){3}{$-1$}
\multiput(2.5,3.6)(1.1,1.1){3}{$-2$}
}
\put(14.0,0){
\put(0,0){\yng(5,2,2,1,1,1)}
\multiput(1.4,0.2)(0,1.1){3}{1}
\multiput(2.6,3.6)(0,1.1){2}{0}
\put(6,5.8){3}
}
\put(21.7,3.35){
\put(0,0){\yng(3,1,1)}
\multiput(1.4,0.2)(0,1.1){2}{0}
\put(3.8,2.5){0}
}
\put(27.2,3.35){
\put(0,0){\yng(5,1,1)}
\multiput(1.4,0.2)(0,1.1){2}{0}
\put(6,2.5){$-2$}
}
\end{picture}
\raisebox{30pt}{.}
\end{center}
From this unrestricted rigged configuration, we can obtain the image in
$B'=B^{2,2}\otimes B^{2,4}\otimes B^{3,3}$ as follows:
\[
b'=\Yvcentermath1
\young(1,\mfive) \otimes
\young(113\mthree,22\mfive\mone)\otimes
\young(123,234,3\mfour\mtwo)\;.
\]
Then by Theorem \ref{th:main} we have $R:b\iso b'$.
In this way we can obtain the combinatorial $R$-matrices as $R=\Phi^{-1}_{B'}\circ\Phi_B$.
\end{example}

\section{Energy function and fermionic formula} \label{sec:energy}

\subsection{Involution $\varsigma$ and energy function}

In \cite{LOS11}, the involution $\varsigma$ on $B^{r,s}$ ($1\le r\le n-2,s\ge1$)
was shown to exist, which is the unique map satisfying 
\begin{subequations}
\label{varsigma}
\begin{align}
\varsigma(e_ib) & = e_{n-i}\varsigma(b),
\\ \varsigma(f_ib) & =f_{n-i}\varsigma(b),
\end{align}
\end{subequations}
for any $i\in I,b\in B^{r,s}$. We need the following lemma later.

\begin{lemma}  \label{lem:sigma on KN}
For $u_\la\in B(\la)\subseteq B^{r,s}$, $\varsigma(u_\la)$ belongs to $B(s\varpi_r)$ and is given
as follows. To $\la$ associate a tuple $\mu=(\la'_s,\la'_{s-1},\ldots, \la'_1,r,\ldots,r)$ with $2s$ entries,
where $\lambda'$ is the transpose of the partition $\lambda$.
Denote the $j$-th entry of $\mu$ by $\mu_j$. Let $t$ be the KN tableau for $\varsigma(u_\la)$. Then the
$k$-th column of $t$ is given by
\[
\ol{n-\mu_{2k-1}+1} \cdots \ol{n-1} \ol{n} n \cdots n \ol{n} (n - \mu_{2k}) \cdots (n-r+2) (n-r+1).
\]
\end{lemma}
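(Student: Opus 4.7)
The plan is to identify $\varsigma(u_\lambda)$ with the tableau $t$ described in the lemma by using the characterization of $\varsigma$ from the intertwining relations~\eqref{varsigma}. Since $u_\lambda$ is annihilated by $e_i$ for all $i\in I_0$, its image satisfies $e_j \varsigma(u_\lambda) = 0$ for every $j\in \{0,1,\dots,n-1\}$ (the image of $I_0$ under the affine Dynkin involution $i\mapsto n-i$), and its weight is the corresponding image of $\lambda$. Since $B^{r,s}$ is affine-connected, $\varsigma$ is uniquely determined by these intertwining relations, so $\varsigma(u_\lambda)$ is the unique element of $B^{r,s}$ of the appropriate weight annihilated by $\{e_j : 0\le j\le n-1\}$. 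It therefore suffices to verify that $t$ has the correct weight, is a valid KN tableau of shape $(s^r)$ (so $t\in B(s\varpi_r)$), and is annihilated by each such $e_j$.

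For the weight, each column $k$ contributes $-\sum_{i=n-\mu_{2k-1}+1}^{n}\epsilon_i + \sum_{i=n-\mu_{2k}+1}^{n-r+1}\epsilon_i$ (the central $n,\overline{n}$-chain cancels), and after summing over $k$ and substituting $\mu=(\lambda'_s,\dots,\lambda'_1,r,\dots,r)$ the result reorganizes into the required image of $\lambda$ in the $\epsilon_i$-basis. That $t$ is a valid KN tableau of shape $(s^r)$ is a direct check on the column structure: the barred prefix of length $\mu_{2k-1}$, the central $n,\overline{n}$-chain, and the classical suffix together fill the column to height $r$ and respect type $D$ admissibility. For annihilation, the cases $2\le j\le n-1$ are immediate from strict column-increase (no column admits a $(j,j+1)$ or $(\overline{j+1},\overline{j})$ configuration where $e_j$ acts), and $e_1 t = 0$ is automatic because $t$ contains no letter $1$ (since $n-r+1 \ge 3$ for $r\le n-2$).

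The main obstacle is the remaining case $e_0 t = 0$, which by~\eqref{eq:e0} requires computing $\sigma(t)$ via~\eqref{eq:01_involution} and checking that $e_1\sigma(t)=0$. To handle this, I would first dispose of the base case $\lambda = s\varpi_r$ (where $\mu=(r,\dots,r)$ and $t$ is the reverse-highest-weight element of $B(s\varpi_r)$) by a direct $\pm$-diagram computation: extract the $\{2,\dots,n\}$-highest representative of $t$, determine its $\pm$-diagram, apply $\mathfrak{S}$ and $\kappa$, and verify no letter $1$ appears on which $e_1$ can act. For general $\lambda$, either carry out the analogous $\pm$-diagram computation leveraging the specific block structure of $t$, or proceed by induction using the intertwining $\varsigma\circ f_0 = f_n\circ\varsigma$ together with Lemma~\ref{lemma:f0_bump} to connect the predicted tableau for $\lambda$ with that for a neighboring weight, reducing the inductive step to elementary tableau surgery.
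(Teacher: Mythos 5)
Your framework coincides with the paper's: characterize $\varsigma(u_\la)$ as the unique element of the transformed weight killed by $e_j$ for $j=0,1,\dots,n-1$, then verify that $t$ is a valid KN tableau of shape $(s^r)$ with the right weight and is annihilated by these operators, the only serious point being $e_0t=0$. The gap is precisely there: you do not actually establish $e_0t=0$. Your first option ("carry out the analogous $\pm$-diagram computation") is the computation that the paper performs, but it hinges on an observation you have not supplied and which is what makes the step short: the $\{2,\dots,n\}$-highest weight element in the same component as $t$ is the tableau all of whose columns are $23\cdots(r+1)$, whose $\pm$-diagram carries no signs and hence is fixed by $\mathfrak{S}$; therefore $\sigma(t)=t$ by \eqref{eq:01_involution}, and $e_0t=\sigma e_1\sigma(t)=\sigma e_1 t=0$ by \eqref{e0 f0} once $e_1t=0$ is known. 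Without identifying this representative (for general $\la$, not just $\la=s\varpi_r$), the $\pm$-diagram route is only a promise, not a proof.

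Your second option, induction via $\varsigma\circ f_0=f_n\circ\varsigma$ and Lemma~\ref{lemma:f0_bump}, is not workable as sketched. The elements $u_\la$ for different classical components are connected through the affine operators only via intermediate elements that are not classically highest weight, and the formula you are trying to prove says nothing about $\varsigma$ of such elements; moreover $f_0u_\mu$ is in general not of the form $u_{\mu'}$, so Lemma~\ref{lemma:f0_bump} does not hand you a clean induction on $\la$, and the "elementary tableau surgery" hides exactly the difficulty you are trying to avoid. (Also a small slip: $e_1t=0$ is not "automatic because $t$ contains no letter $1$"; the relevant fact is that $t$ contains no letter $2$ and no $\overline{1}$, since those are the letters on which $e_1$ acts — the conclusion is right, the stated reason is not.) To complete the proof you should replace both options by the direct verification that $e_{\bf a}t$ is the all-$23\cdots(r+1)$ tableau, from which $\sigma(t)=t$ and hence $e_0t=0$ follow at once.
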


\begin{proof}
Since $u_\la$ is the unique element of $B^{r,s}$ of weight $\la$ satisfying $e_iu_\la=0$ 
for $i=1,\ldots,n$, the element $\varsigma(u_\la)$ should be the unique element of $-w^{A_{n-1}}_0\la$
satisfying $e_i\varsigma(u_\la)=0$ for $i=0,1,\ldots,n-1$, where $w^{A_{n-1}}_0$ 
is the longest element of the Weyl group of $A_{n-1}\subseteq D_n$. 
One checks that $t$ is an allowed KN tableau \cite{KN:1994} of the desired weight. Hence,
it is sufficient to show that $e_it=0$ for $i=0,1,\ldots,n-1$. For $i=1,\ldots,n-1$, this
is immediate. To see $e_0t=0$, calculate the $\{2,\ldots,n\}$-highest weight
element in the same component as $t$, which is the tableau whose columns are all $23\cdots r+1$, and hence
$\sigma(t)=t$ with $\sigma$ as in~\eqref{eq:01_involution}. In view of the definition of $e_0$
given in~\eqref{e0 f0}, we have $e_0t=0$, since $e_1t=0$.
\end{proof}

We need an involution $\varsigma$ also on $B^{n-1,s}$ and $B^{n,s}$. When $n$
is odd, it becomes a map from $B^{n-1,s}$ to $B^{n,s}$ and vice versa. To incorporate
this situation, we introduce $\eta$ defined to be $0$ if $n$ is even and $1$ otherwise.
Then there exists a map $\varsigma \colon B^{n-1,s} \to B^{n-1+\eta,s}$ (resp. $B^{n,s} \to B^{n-\eta,s}$)
satisfying \eqref{varsigma}. In fact, it is characterized by~\eqref{varsigma} and defining the image of 
$u_{s\varpi_{n-1}}$ (resp. $u_{s\varpi_n}$)
to be the unique element in $B^{n-1+\eta,s}$ (resp. $B^{n-\eta,s}$) of weight 
$-w^{A_{n-1}}_0(s\varpi_{n-1})$ (resp. $-w^{A_{n-1}}_0(s\varpi_n)$).

For a multiple tensor product of KR crystals $B=B^{r_k,s_k}\ot\cdots\ot B^{r_1,s_1}$, we define 
\[
\varsigma(b)=\varsigma(b_k)\ot\cdots\ot\varsigma(b_1)
\]
for $b=b_k\ot\cdots\ot b_1\in B$

\begin{proposition} \label{prop:rc and sigma}
For any $I_0$-highest weight element $u_\la$ of a single KR crystal $B^{r,s}$, we have
$(\rs\circ\varsigma)(u_\la)=(\varsigma\circ\rs)(u_\la)$.
\end{proposition}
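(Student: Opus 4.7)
I would prove the proposition by splitting into the cases $r\in\{n-1,n\}$ and $1\le r\le n-2$.

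The spin cases $r\in\{n-1,n\}$ are essentially immediate: $B^{r,s}$ contains the unique $I_0$-highest weight element $u_{s\varpi_r}$, and $\rs(u_{s\varpi_r})=u_{(s-1)\varpi_r}\otimes u_{\varpi_r}$ by definition of $\rs$ on highest weight vectors. Combined with the defining property~\eqref{varsigma} of $\varsigma$ on spin crystals (specified by the image of the unique highest weight vector of the appropriate weight in $B^{n-1+\eta,s}$ or $B^{n-\eta,s}$), both $\rs\circ\varsigma(u_{s\varpi_r})$ and $\varsigma\circ\rs(u_{s\varpi_r})=\varsigma(u_{(s-1)\varpi_r})\otimes\varsigma(u_{\varpi_r})$ can be computed directly and seen to agree.

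For the main case $1\le r\le n-2$, I would compare both sides using the explicit formulas. By Proposition~\ref{lem:rc on KN}, $\rs(u_\la)=t\otimes u_{\varpi_r}$ for an explicit KN tableau $t\in B^{r,s-1}$ (whose first column is $\overline{p+1}\cdots\overline{r}\, q\cdots 1$ and whose remainder is $u_\mu$), so $\varsigma\circ\rs(u_\la)=\varsigma(t)\otimes\varsigma(u_{\varpi_r})$ by componentwise action of $\varsigma$ on tensor products. On the other hand, Lemma~\ref{lem:sigma on KN} realizes $\varsigma(u_\la)\in B(s\varpi_r)\subseteq B^{r,s}$ as a rectangular tableau of shape $(s^r)$, so its KR tableau equals its KN tableau; by the KR tableau realization of~\cite{OSS:2013} the right-split $\rs$ then acts by peeling off its rightmost column, giving $\rs\circ\varsigma(u_\la)=t'\otimes c'$.

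The task reduces to checking the two identities $c'=\varsigma(u_{\varpi_r})$ and $t'=\varsigma(t)$. The first identity follows from a direct comparison, specializing Lemma~\ref{lem:sigma on KN} to $k=1$ (with parameters $\mu_1=\la'_s$ and $\mu_2=\la'_{s-1}$) against the same lemma for $s=1,\ \la=\varpi_r$; a short case analysis mirroring the three cases of Definition~\ref{def:left split} (on the two rightmost column heights of $\la$) handles all situations, with the middle $n$'s and the unbarred tail collapsing appropriately in each case. The second identity is verified by matching the remaining $s-1$ columns of $\varsigma(u_\la)$ against the columns of $\varsigma(t)$, using the explicit KN tableau of $t$ together with the characterization~\eqref{varsigma} of $\varsigma$.

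The main obstacle is the explicit description of $\varsigma(t)$, because $t$ is not $I_0$-highest weight (its first column contains barred letters) and so Lemma~\ref{lem:sigma on KN} is not directly applicable. I would resolve this by induction on $s$: once the proposition is known in $B^{r,s-1}$, applying it to the smaller tensor product allows $\varsigma(t)$ to be identified column by column with the KR columns of $\varsigma(u_\la)$ remaining after removing the rightmost column, which is exactly $t'$ by construction.
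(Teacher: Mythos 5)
Your overall framework coincides with the paper's: dispose of the spin cases directly, write $\rs(u_\la)=t\otimes u_{\varpi_r}$ via Proposition~\ref{lem:rc on KN}, compute $(\rs\circ\varsigma)(u_\la)$ by cutting the rightmost column of the rectangular tableau $\varsigma(u_\la)$ given by Lemma~\ref{lem:sigma on KN}, and reduce to matching the two tensor factors. The right factors do agree, and more easily than you suggest: the rightmost column of $\varsigma(u_\la)$ corresponds to $\mu_{2s-1}=\mu_{2s}=r$ in Lemma~\ref{lem:sigma on KN}, independently of $\la$, so no case analysis along Definition~\ref{def:left split} is needed. The genuine gap is in the left-factor identity $\varsigma(t)=t'$, which you correctly single out as the main obstacle but do not actually resolve. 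Your induction on $s$ gives no mechanism for evaluating $\varsigma(t)$: the inductive hypothesis (the proposition for $B^{r,s-1}$) concerns only $I_0$-highest weight elements of $B^{r,s-1}$, whereas $t$ is not $I_0$-highest weight whenever $p<r$; and since $\varsigma$ is not an $I_0$-crystal morphism --- by \eqref{varsigma} it conjugates the Kashiwara operators by $i\mapsto n-i$ on the \emph{affine} index set, in particular exchanging the nodes $0$ and $n$ --- one cannot transport information from the highest weight element of the component containing $t$ down to $t$, nor does knowing $(\rs\circ\varsigma)(u_\mu)=(\varsigma\circ\rs)(u_\mu)$ for highest weight $u_\mu\in B^{r,s-1}$ say anything about $\varsigma$ at the specific non-highest-weight element $t$. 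The sentence ``allows $\varsigma(t)$ to be identified column by column'' asserts the desired conclusion rather than deriving it.

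The paper closes exactly this hole by a \emph{decreasing induction on the parameter $p$} of Proposition~\ref{lem:rc on KN}, not on $s$: the base case $p=r$ is precisely the case in which $t$ is $I_0$-highest weight, so Lemma~\ref{lem:sigma on KN} applies; for $p<r$ one connects $t$ to the tableau $t'$ whose first column has its barred part shortened by two, via an explicit affine crystal operator string $t'=e_{\mathbf{a}_2}f_0f_{\mathbf{a}_1}t$, and then uses $\varsigma\circ f_0=f_n\circ\varsigma$ and $\varsigma\circ f_i=f_{n-i}\circ\varsigma$ to turn this into an explicit computation of $\varsigma$ on the left factor, checked directly against Lemma~\ref{lem:sigma on KN} (with $\la'_1$ replaced by $\la'_1+2$). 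Some appeal to the affine node $0$ (or an equivalent device) is unavoidable here, because away from the elements covered by Lemma~\ref{lem:sigma on KN} the only handle on $\varsigma$ is its affine intertwining property \eqref{varsigma}; your outline never invokes it at this step, so the proof as proposed does not go through.
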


\begin{proof}
The case when $r=n-1$ or $n$ is straightforward. Both $\rs\circ\varsigma$ and 
$\varsigma\circ\rs$ send $u_{s\varpi_{n-1}}$ (resp. $u_{s\varpi_n}$) to
$v_{-(s-1)w_0\varpi_{n-1}}\ot v_{-w_0\varpi_{n-1}}$ 
(resp. $v_{-(s-1)w_0\varpi_n}\ot v_{-w_0\varpi_n}$) where $w_0$ is the longest element
of $A_{n-1}\subseteq D_n$ and $v_\tau$ stands for the unique element in the crystal
of weight $\tau$.

Suppose $r\le n-2$. In this case, $(\rs\circ\varsigma)(u_\la)$ is given by 
cutting the rightmost column of $\varsigma(u_\la)$.
From Proposition~\ref{lem:rc on KN} we see $\rs(u_\la)=t\ot u_{\varpi_r}$,
where the tableau $t$ is described in the proposition. We use all the notation for $t$ from
Proposition~\ref{lem:rc on KN}.
Hence $(\varsigma\circ\rs)(u_\la)=\varsigma(t)\ot\varsigma(u_{\varpi_r})$.
By Lemma~\ref{lem:sigma on KN}, the right components
of $(\rs\circ\varsigma)(u_\la)$ and $(\varsigma\circ\rs)(u_\la)$ agree. 
We prove that the left components also agree by decreasing induction on $p$, where $p$
is as in Proposition~\ref{lem:rc on KN} for $t$.
If $p=r$, the claim is true, since in this case $t$ is $I_0$-highest weight and
the agreement can be shown by Lemma~\ref{lem:sigma on KN}.
Suppose $p<r$. Let $t'$ be the tableau that differs from $t$ only in the first column
with $\ol{p+1} \cdots \ol{r-1} \ol{r} q \cdots 21$ replaced by $\ol{p+3} \cdots \ol{r-1} \ol{r} q \cdots 21$. 
These tableaux are connected in the crystal graph as $t'=e_{{\bf a}_2}f_0f_{{\bf a}_1}t$ where
\begin{align*}
{\bf a}_1&={\bf b}(2,1+\mu_{\ge 2})\cdots{\bf b}(p-2,1+\mu_{\ge p-2})
{\bf b}(p,1+\mu_{\ge p}),\\
{\bf a}_2&={\bf b}(p,\mu_{\ge p})\cdots{\bf b}(4,\mu_{\ge 4}){\bf b}(2,\mu_{\ge 2}),
\end{align*}
${\bf b}(j,a)=j^a(j-1)^a(j+1)^aj^a$ and $\mu_{\ge c}=\sum_{j=c}^{\ell(\mu)}\mu_j$.
(See \eqref{e sequence} for the definition of $e_{\bf{a}}$. The definition of $f_{\bf{a}}$ is similar.)
In fact, the tableau $f_{{\bf a}_1}t$ has the following form. The first column is 
\[
\mone\,\mtwo\,\ol{p+3}\cdots \ol{r-1}\, \ol{r}\, (q+2) \cdots 43,
\]
and the entries in the other part of $t$ are shifted by 2. Thus $f_0$ removes 
$\mone\,\mtwo$ from the first column.

Since $\varsigma(e_{{\bf a}_2}f_0f_{{\bf a}_1}t)=e_{n-{\bf a}_2}f_nf_{n-{\bf a}_1}t$, where
for ${\bf a}=a_1\cdots a_m$, $n-{\bf a}$ stands for $(n-a_1)\cdots(n-a_m)$, we are left 
to show that $e_{n-{\bf a}_2}f_nf_{n-{\bf a}_1}t$ agrees with the first $s-1$ columns
of $\varsigma(u_\la)$ of Lemma \ref{lem:sigma on KN} with $\la'_1$ replaced by
$\la'_1+2$. This can be checked directly.
\end{proof}

In Section \ref{subsec:path}, we defined the combinatorial $R$-matrix
$R \colon B^{r_2,s_2}\otimes B^{r_1,s_1} \to B^{r_1,s_1}\otimes B^{r_2,s_2}$.
In addition, there exists a function $\ol{H} \colon B^{r_2,s_2}\otimes B^{r_1,s_1}\to \mathbb{Z}$, 
called the \defn{local energy function}, unique up to a global additive constant.
It is constant on $I_0$-components and satisfies for all $b_2\in B^{r_2,s_2}$
and $b_1\in B^{r_1,s_1}$ with $R(b_2\otimes b_1)=b_1'\otimes b_2'$
\begin{equation} \label{def H}
  \ol{H}(e_0(b_2\otimes b_1))=
  \ol{H}(b_2\otimes b_1)+
  \begin{cases}
    1 & \text{if $\varepsilon_0(b_2)>\varphi_0(b_1)$ and
    $\varepsilon_0(b_1')>\varphi_0(b_2')$,} \\
    -1 & \text{if $\varepsilon_0(b_2)\le\varphi_0(b_1)$ and
    $\varepsilon_0(b_1')\le \varphi_0(b_2')$,} \\
    0 & \text{otherwise.}
  \end{cases}
\end{equation}
We shall normalize the local energy function by the condition
$\ol{H}(u_{s_2\varpi_{r_2}}\otimes u_{s_1\varpi_{r_1}})=0$.

For a crystal $B^{r,s}$, the \defn{intrinsic energy} $\ol{D}_{B^{r,s}} \colon B^{r,s}\to \mathbb{Z}$ is defined as follows. 
When $r\le n-2$, we define $\ol{D}_{B^{r,s}}(b)=(rs - \lvert\la\rvert )/2$ for $b\in B(\la)\subseteq B^{r,s}$ 
(see~\eqref{eq:classical_decomposition_1}). 
When $r=n-1,n$, there is only one $I_0$-component 
(see~\eqref{eq:classical_decomposition_2}) and
we set $\ol{D}_{B^{r,s}}(b)=0$ for $b\in B^{r,s}$.
On the tensor product $B=B_k\otimes \cdots\otimes B_1$ of single KR crystals 
$B_j$, there is an intrinsic energy function $\ol{D}_B \colon B\to \mathbb{Z}$  defined 
in~\cite[(3.8)]{HKOTT02} and~\cite[Section 2.14]{OSS03III} by
\begin{equation} \label{eq:DNY}
  \ol{D}_B = \sum_{1 \le i < j \le k} \ol{H}_i R_{i+1} R_{i+2}\dotsm R_{j-1}
   + \sum_{j=1}^k \ol{D}_{B_j} \pi_1 R_1 R_2 \dotsm R_{j-1}.
\end{equation}
Here $R_i$ and $\ol{H}_i$ denote the combinatorial $R$-matrix and local energy
function, respectively, acting on the $i$-th and $(i+1)$-th tensor factors counting from the right.
Also $\pi_1$ is the projection onto the rightmost tensor factor.
The intrinsic energy $\ol{D}_B$ is constant on any $I_0$-component of $B$.
Let $B'$ be the same tensor product of KR crystals as $B$ except that the $i$-th and
$(i+1)$-th positions are interchanged. Let $b'=b'_k\ot\cdots\ot b'_1\in B'$ be
such that $b'_{i+1}\ot b'_i=R(b_{i+1}\ot b_i)$ and $b'_j=b_j$ for $j\ne i,i+1$. Then we have
$\ol{D}_B(b)=\ol{D}_{B'}(b')$. Similarly, let $B'$ be any reordering of the tensor
factors of $B$. The image $b'$ of $b$ under the compositions of the combinatorial
$R$-matrices does not depend on the order of applications due to the 
Yang--Baxter equation, and we also have $\ol{D}_B(b)=\ol{D}_{B'}(b')$,
see~\cite[Prop. 3.9]{HKOTT02} and~\cite[Prop. 2.15]{OSS03III}.
In what follows we often drop the subscript $B$ from $\ol{D}_B(b)$.

In \cite{LOS11,LOS12} we proved the following formula on the intrinsic energy
$\ol{D}$ under the assumption that the KR crystals $B^{r_j,s_j}$ appearing in $B$ 
satisfy $r_j\le n-2$. 
\begin{proposition}[\cite{LOS11,LOS12}] \label{prop:strange}
For an $I_0$-highest weight element $b$ in $B$ we have
\begin{equation} \label{strange}
\ol{D}(b)=\ol{D}(\varsigma(b))+\frac{\lvert B \rvert - \lvert \la(b) \rvert}2,
\end{equation}
where $\lvert B \rvert = \sum_{j=1}^k r_j s_j$, $\la(b)$ is the weight of $b$ and $\lvert \mu \rvert
=\sum_{j=1}^n\mu_j$ if $\mu=\sum_{j=1}^n\mu_j\epsilon_j$.
\end{proposition}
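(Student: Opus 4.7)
The plan is induction on the number $k$ of tensor factors in $B$.

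Base case ($k=1$). For $B=B^{r,s}$ with $r\le n-2$ and $b=u_\la$, Lemma~\ref{lem:sigma on KN} places $\varsigma(u_\la)$ in the top $I_0$-component $B(s\varpi_r)$, so $\ol{D}_B(\varsigma(u_\la))=0$ by the single-factor definition of intrinsic energy. Since $|s\varpi_r|=rs$ while $\ol{D}_B(u_\la)=(rs-|\la|)/2$ by definition, the identity holds.

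Reduction to column crystals. For $k\ge 2$ I would apply $\rs$ iteratively to each factor with $s_j\ge 2$. Proposition~\ref{prop:rc and sigma}, applied componentwise, shows $\rs\circ\varsigma=\varsigma\circ\rs$ on $I_0$-highest weight elements. Since both $|B|$ and $\la(b)$ are unchanged by $\rs$, and $\ol{D}$ is preserved by $\rs$ (a consequence of~\eqref{eq:DNY} together with the invariance of $\ol{D}$ under combinatorial $R$-matrix reorderings), it suffices to prove the statement when every $s_j=1$.

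Reduction to two factors. For a product of column crystals, $\varsigma$ acts factor by factor, and because $\varsigma$ is a crystal morphism for the Dynkin diagram automorphism $i\mapsto n-i$ while the combinatorial $R$-matrix commutes with every $e_i,f_i$ ($i\in I$), $\varsigma$ also commutes with $R$. The sum formula~\eqref{eq:DNY} then expresses $\ol{D}_B(b)-\ol{D}_B(\varsigma(b))$ as a sum, over consecutive pairs obtained after $R$-matrix transport, of differences $\ol{H}(c_2\ot c_1)-\ol{H}(\varsigma(c_2)\ot\varsigma(c_1))$, plus one single-factor contribution per slot already controlled by the base case. A telescoping weight computation then reduces the whole identity to the two-factor formula
\[
	\ol{H}(b_2\ot b_1)-\ol{H}(\varsigma(b_2)\ot\varsigma(b_1))=\frac{|\wt(b_2)|+|\wt(b_1)|-|\wt(b_2\ot b_1)|}{2}
\]
for $I_0$-highest weight $b_2\ot b_1\in B^{r_2,1}\ot B^{r_1,1}$.

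The hard step is this two-column identity. The difficulty is that $\ol{H}$ is defined through $e_0$-increments and is constant on $I_0$-orbits (in particular $e_n$-invariant), while $\varsigma$ exchanges the roles of $0$ and $n$, so $\ol{H}$ and $\ol{H}\circ\varsigma$ are not \emph{a priori} comparable. My proposal is to introduce the parallel ``$n$-local energy'' $\ol{H}_n$, defined by the same rule as in~\eqref{def H} with $e_0$ replaced by $e_n$ and $\varepsilon_0,\varphi_0$ by $\varepsilon_n,\varphi_n$; the intertwining $\varsigma(e_0c)=e_n\varsigma(c)$ together with normalization at the highest-weight element then identifies $\ol{H}_n(c)$ with $\ol{H}(\varsigma(c))$. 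It remains to compute the difference $\ol{H}-\ol{H}_n$, which is a classical quantity on $B^{r_2,1}\ot B^{r_1,1}$: by a finite case analysis over the $\{2,\dots,n\}$-highest weight elements of this tensor product via the $\pm$-diagram description of~\cite{Sch:2008}, together with the classical $D_n$ branching rule, the difference can be checked to equal the right-hand side of the displayed formula. An alternative route, which may be cleaner, is to embed the situation in the virtual crystal of type $A_{2n-1}^{(2)}$ obtained by folding, where $\varsigma$ descends from a classical Dynkin diagram automorphism of the simply-laced cover and the weight shift becomes transparent.
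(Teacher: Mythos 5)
The paper does not actually prove this statement: it is imported from \cite{LOS11,LOS12} (the proof of Prop.~3.1 of \cite{LOS11}), and the only new content in the present paper is the remark following the proposition that the cited proof still works when spin columns are present, provided $\lvert B\rvert$ is redefined. So your proposal is an attempt to reprove the cited result from scratch, and as such it has genuine gaps. The most serious one is the reduction to column crystals: you assert that $\ol{D}\bigl(\rs(b)\bigr)=\ol{D}(b)$ is ``a consequence of~\eqref{eq:DNY} together with the invariance of $\ol{D}$ under combinatorial $R$-matrix reorderings.'' It is not. In the paper this is exactly Proposition~\ref{prop:rs invariance}, a substantive result whose proof requires the local-energy sum rule~\eqref{toshow}, established by a case-by-case analysis using Lemmas~\ref{lem:e0 action} and~\ref{lem:H value}, and moreover its proof \emph{uses} Proposition~\ref{prop:strange} (together with Proposition~\ref{prop:rc and sigma}) to reduce to maximal elements. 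So your route is circular relative to the paper's logic, and even read independently of the paper you have simply assumed a statement of comparable depth to the one you are trying to prove.

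Two further steps are also not carried out. First, the telescoping reduction to a two-factor identity ignores the single-factor terms $\ol{D}_{B_j}\circ\pi_1 R_1\dotsm R_{j-1}$ in~\eqref{eq:DNY}: the elements occupying each slot after $R$-matrix transport are not $I_0$-highest weight, and since $\varsigma$ is not an $I_0$-crystal morphism, the classical component of $\varsigma(x)$ for such an $x$ is not determined by the component of $x$; hence these terms are not ``controlled by the base case,'' which only treats $I_0$-highest weight elements of a single factor. Second, the ``hard step,'' the two-column identity comparing $\ol{H}$ with $\ol{H}\circ\varsigma$, is exactly where the content of the proposition lies, and you do not prove it: identifying $\ol{H}\circ\varsigma$ with an $e_n$-local energy via $\varsigma(e_0 c)=e_n\varsigma(c)$ is fine, but the claim that the difference $\ol{H}-\ol{H}_n$ on all classical components of $B^{r_2,1}\ot B^{r_1,1}$ ``can be checked'' by a finite $\pm$-diagram analysis (or by folding to $A_{2n-1}^{(2)}$) is an assertion, not an argument, and for two columns it is essentially equivalent to the statement being proved. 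As it stands the proposal is a plausible outline resting on two unproven pillars, not a proof; the honest alternatives are either to cite \cite{LOS11,LOS12} as the paper does, or to actually establish an analogue of~\eqref{toshow} and the two-column energy comparison.
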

However, if one extends the definition of $|B|$ suitably and examines the proof of 
\cite[Prop. 3.1]{LOS11} carefully, one understands that it is valid even when $B$ 
contains $B^{n-1,s}$ or $B^{n,s}$. In fact, setting $\lvert B \rvert =\sum_{j=1}^k \lvert B_j \rvert$ where
\[
\lvert B^{r,s} \rvert = \lvert s\varpi_r \rvert = \begin{cases}
	rs\quad&\text{if $r\le n-2$,}\\
	(n-2)s/2&\text{if $r=n-1$,}\\
	ns/2&\text{if $r=n$,}
	\end{cases}
\]
one checks that the above proposition remains true.
When the weight $\la(b)$ of an element $b \in B$ satisfies $|\la(b)|=|B|$, we call $b$ 
\defn{maximal}. Note that if $b=b_k\ot\cdots\ot b_1$ is maximal, then $b_j$ is maximal 
for any $j$. If $b\in B^{r,s}$ is a maximal $I_0$-highest weight element, then $b=u_{s\varpi_r}$.
Recall that $\high(b)$ is the $I_0$-highest weight element in the same $I_0$-component as $b$.

\begin{lemma} \label{lem:H value}
We use the notation $b(\alpha)$ in Lemma~\ref{lem:e0 action} indicating the 
dependences of $r,s$ explicitly as $b^{r,s}(\alpha)$. If $r\le n-1$ and $\alpha\le s$, 
then we have
\[
\ol{H}(b^{n-1,s'}(\alpha)\ot u_{s\varpi_r})=\alpha.
\]
\end{lemma}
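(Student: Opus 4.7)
The plan is induction on $\alpha$, using the defining change rule~\eqref{def H} for $\ol{H}$ under $e_0$. The base case $\alpha=0$ is immediate: $b^{n-1,s'}(0)=c(n)^{s'}=u_{s'\varpi_{n-1}}$, so the normalization $\ol{H}(u_{s'\varpi_{n-1}}\otimes u_{s\varpi_r})=0$ matches the claim.

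For the inductive step, I would first record the relevant $\varepsilon_0$ and $\varphi_0$ values. Lemma~\ref{lem:e0 action}~(2) supplies $\varepsilon_0(b^{n-1,s'}(\alpha))=s'-\alpha$ and $\varphi_0(b^{n-1,s'}(\alpha))=0$. For the right factor, $\varphi_0(u_{s\varpi_r})=0$ for all $1\le r\le n-1$: the case $r=n-1$ follows immediately from~\eqref{eq:f0_spin} since $u_{s\varpi_{n-1}}=c(n)^s$ has no $(-,-)$ pattern at positions $(n-1,n)$, while the case $1\le r\le n-2$ follows from case~(iii) of Lemma~\ref{lemma:f0_bump}, because the outer shape $\mu=s\varpi_r$ satisfies $\mu_1=s$. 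Hence the tensor product rule yields $e_0(b^{n-1,s'}(\alpha)\otimes u_{s\varpi_r})=(e_0b^{n-1,s'}(\alpha))\otimes u_{s\varpi_r}$ whenever $\alpha<s'$, and the ``left'' inequality $\varepsilon_0(b_2)>\varphi_0(b_1)$ of~\eqref{def H} holds throughout.

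The remaining ingredient is the behavior under the combinatorial $R$-matrix. Writing $R(b^{n-1,s'}(\alpha)\otimes u_{s\varpi_r})=b_1'\otimes b_2'$, I would decompose $B^{n-1,s'}\hookrightarrow(B^{n-1,1})^{\otimes s'}$ via the spin embedding and pass the $s'-\alpha$ copies of $c(n)$ and $\alpha$ copies of $c(2)$ through $u_{s\varpi_r}$ one at a time. The $c(2)$ columns deposit the ``minus-sign memory'' into the $B^{r,s}$-side factor $b_1'$, producing $\varepsilon_0(b_1')>0=\varphi_0(b_2')$, so the ``right'' inequality of~\eqref{def H} also holds. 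Thus each relevant application of $e_0$ contributes $+1$ to $\ol{H}$, and iterating a chain of $e_0$ applications linking $b^{n-1,s'}(\alpha)\otimes u_{s\varpi_r}$ to the normalized point $u_{s'\varpi_{n-1}}\otimes u_{s\varpi_r}$ yields exactly $\alpha$ increments of $+1$, giving the identity.

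The main obstacle will be the explicit computation of $R(b^{n-1,s'}(\alpha)\otimes u_{s\varpi_r})$ and the bookkeeping needed to verify that the chosen $e_0$-path realizes exactly $\alpha$ ``$+1$'' steps: for $1\le r\le n-2$ the crystal $B^{r,s}$ is not $I_0$-irreducible, so the column-by-column passage of $R$ must be tracked through several classical components. A cleaner alternative route is to reduce to the intrinsic energy via~\eqref{eq:DNY}: since $\ol{D}_{B^{n-1,s'}}\equiv 0$ (as $B^{n-1,s'}$ is $I_0$-irreducible) and $\ol{D}_{B^{r,s}}(u_{s\varpi_r})=0$ by definition, one gets $\ol{H}(b^{n-1,s'}(\alpha)\otimes u_{s\varpi_r})=\ol{D}(b^{n-1,s'}(\alpha)\otimes u_{s\varpi_r})$; then Proposition~\ref{prop:strange} (extended to include spin factors via the remark after it), together with Lemma~\ref{lem:sigma on KN} to evaluate the $\varsigma$-image, reduces the problem to identifying the dominant weight $\lambda_\alpha$ of the relevant $I_0$-component and computing the correction $(|B|-|\lambda_\alpha|)/2$.
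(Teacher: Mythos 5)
Your plan founders on the energy bookkeeping with~\eqref{def H}. Under~\eqref{def H}, a step in which $e_0$ acts on the left factor of both the element and of its $R$-image raises $\ol{H}$ by $1$; so if, as you claim, a chain of exactly $\alpha$ such $+1$ steps (and otherwise neutral steps) linked $b^{n-1,s'}(\alpha)\ot u_{s\varpi_r}$ to the $I_0$-component of the normalized element $u_{s'\varpi_{n-1}}\ot u_{s\varpi_r}$, you would obtain $\ol{H}(b^{n-1,s'}(\alpha)\ot u_{s\varpi_r})=-\alpha$, the wrong sign. The decrease needed to land at $0$ can only come from the $-1$ case of~\eqref{def H} (both $e_0$'s acting on the right tensor factors), which your argument never invokes. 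Moreover the induction on $\alpha$ has no actual step: by Lemma~\ref{lem:e0 action}(2), $e_0$ does not relate $b^{n-1,s'}(\alpha)$ to $b^{n-1,s'}(\alpha\mp 1)$ (its $e_0$-string terminates at $c(2)^{\alpha}c(1,2,n)^{s'-\alpha}$), so nothing in your inductive step connects the energy at $\alpha$ with that at $\alpha-1$. Finally, your treatment of $R$ is both unjustified and too weak: $B^{n-1,s'}$ is not an affine subcrystal of $(B^{n-1,1})^{\ot s'}$, so "passing spin columns through one at a time" is not a valid computation of $R$, and knowing only $\varepsilon_0(b_1')>0$ is insufficient for the step-by-step classification of $+1/0/-1$.

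For comparison, the paper's argument first identifies the $R$-image exactly, $R(b^{n-1,s'}(\alpha)\ot u_{s\varpi_r})=b^{r,s}(\alpha)\ot u_{s'\varpi_{n-1}}$, by a weight argument, and then uses \emph{both} parts of Lemma~\ref{lem:e0 action} (giving $\varepsilon_0=s'-\alpha$ and $2s-\alpha$ for the two left factors and $\varphi_0=0$ for the right ones) to track the whole string $e_0^{\max}$ applied to $b^{n-1,s'}(\alpha)\ot u_{s\varpi_r}$: one gets $\min(s'-\alpha,2s-\alpha)$ steps of type $+1$ and $\min(s',2s)$ steps of type $-1$, hence a net change of $-\alpha$, and $e_0^{\max}\bigl(b^{n-1,s'}(\alpha)\ot u_{s\varpi_r}\bigr)$ lies in the same $I_0$-component as $u_{s'\varpi_{n-1}}\ot u_{s\varpi_r}$, giving $\ol{H}=\alpha$. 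Your alternative route does correctly observe via~\eqref{eq:DNY} that $\ol{H}$ equals $\ol{D}$ on this two-fold product (both intrinsic energies vanish), but Proposition~\ref{prop:strange} only relates two unknown energies and applies to $I_0$-highest weight elements, so one must pass to $\high$ and iterate $\high\circ\varsigma$ to a maximal element while tracking the weights; none of this is carried out, so that route as written is also incomplete.
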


\begin{proof}
Suppose $2\le r\le n-2$. By weight consideration, we see 
$R(b^{n-1,s'}(\alpha)\ot u_{s\varpi_r})=b^{r,s}(\alpha)\ot u_{s'\varpi_{n-1}}$.
Using Lemma \ref{lem:e0 action} and \eqref{def H}, one finds
\[
\ol{H}(b^{n-1,s'}(\alpha)\ot u_{s\varpi_r})-\alpha=
\ol{H}\bigl(e_0^{\max}(b^{n-1,s'}(\alpha)\ot u_{s\varpi_r})\bigr).
\]
Since $e_0^{\max}(b^{n-1,s'}(\alpha)\ot u_{s\varpi_r})$ belongs to the same 
$I_0$-component as $u_{s'\varpi_{n-1}}\ot u_{s\varpi_r}$, we obtain the desired result.

The cases $r=1$ or $n-1$ are similar.
\end{proof}

\begin{proposition} \label{prop:rs invariance}
Suppose the rightmost factor of $B$ is a KR crystal $B^{r,s}$ such that $s\ge2$.
For an $I_0$-highest weight element $b \in B$, we have
\begin{equation} \label{rs invariance}
\ol{D}(b)=\ol{D}\bigl(\rs(b)\bigr).
\end{equation}
\end{proposition}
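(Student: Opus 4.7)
The plan is to reduce the statement via the ``strange formula'' (Proposition~\ref{prop:strange}), exploiting that the rightmost factor of an $I_0$-highest weight element of a tensor product is necessarily $I_0$-highest. Since $\rs$ commutes with $e_i, f_i$ for $i\in I_0$ and $\ol D$ is constant on classical components, $\rs(b)$ is $I_0$-highest of the same classical weight $\lambda:=\lambda(b)$. A direct case-by-case check ($r\le n-2$ versus $r\in\{n-1,n\}$) gives $|B^{r,s}|=|B^{r,s-1}|+|B^{r,1}|$, hence $|\rs(B)|=|B|$. Applying Proposition~\ref{prop:strange} to both $b$ and $\rs(b)$ and subtracting yields
\[
\ol D(b)-\ol D(\rs(b))=\ol D(\varsigma(b))-\ol D(\varsigma(\rs(b))),
\]
reducing the problem to showing $\ol D(\varsigma(b))=\ol D(\varsigma(\rs(b)))$.

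Write $b=c\ot b_1$ with $b_1\in B^{r,s}$. From the tensor product rule, $e_i(b)=0$ for all $i\in I_0$ forces $\varepsilon_i(b_1)=0$ for all $i\in I_0$, so $b_1=u_\mu$ is $I_0$-highest in $B^{r,s}$ for some $\mu$. Then $\rs(b)=c\ot\rs(u_\mu)$, and since $\varsigma$ acts factorwise, Proposition~\ref{prop:rc and sigma} gives
\[
\varsigma(\rs(b))=\varsigma(c)\ot\varsigma(\rs(u_\mu))=\varsigma(c)\ot\rs(\varsigma(u_\mu)).
\]
By Lemma~\ref{lem:sigma on KN}, $\varsigma(u_\mu)$ lies in the maximal-weight classical component $B(s\clfw_r)\subseteq B^{r,s}$. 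Since $\rs$ commutes with $e_i,f_i$ for $i\in I_0$ and sends $u_{s\clfw_r}$ to $u_{(s-1)\clfw_r}\ot u_{\clfw_r}$, the element $\rs(\varsigma(u_\mu))$ lies in the corresponding maximal component $B(s\clfw_r)\subseteq B^{r,s-1}\ot B^{r,1}$.

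It thus remains to verify that $\ol D_B(\varsigma(c)\ot Y)=\ol D_{\rs(B)}(\varsigma(c)\ot Y')$ whenever $Y\in B(s\clfw_r)\subseteq B^{r,s}$ and $Y'\in B(s\clfw_r)\subseteq B^{r,s-1}\ot B^{r,1}$ correspond under the unique $I_0$-isomorphism between these two copies of $B(s\clfw_r)$. Expanding both sides via~\eqref{eq:DNY}, every single-factor term involving a rightmost max-weight factor vanishes ($\ol D_{B^{r,s}}$, $\ol D_{B^{r,s-1}}$, and $\ol D_{B^{r,1}}$ all vanish on max components), and the new $\ol H(u_{(s-1)\clfw_r}\ot u_{\clfw_r})=0$ by normalization. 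For the remaining pair contributions between factors of $\varsigma(c)$ and the rightmost max-weight factor(s), one uses $R$-invariance of $\ol D$ to reduce each pair term to a two-factor computation with a max-weight right factor, and then matches them using explicit local-energy evaluations as in Lemma~\ref{lem:H value}.

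The main obstacle is this last step: after the reduction, verifying that the $\ol H$-contributions produced by splitting a maximal-weight right factor via $\rs$ agree with those of the unsplit version requires careful bookkeeping with the combinatorial $R$-matrix, the explicit description of $\rs(u_\mu)$ from Proposition~\ref{lem:rc on KN}, and the evaluation of $\ol H$ on tensor products with a max-weight right factor provided by Lemma~\ref{lem:H value}. This technical matching is the heart of the proof.
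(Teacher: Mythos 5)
Your opening move coincides with the paper's: apply \eqref{strange} to $b$ and to $\rs(b)$ and use Proposition~\ref{prop:rc and sigma} to trade the problem for comparing $\ol{D}(\varsigma(b))$ with $\ol{D}(\varsigma(\rs(b)))$. But the paper does not stop after one application of $\varsigma$: since $\rs$ commutes with $e_i,f_i$ ($i\in I_0$) and $\ol{D}$ is constant on $I_0$-components, it replaces $\varsigma(b)$ by $(\high\circ\varsigma)(b)$ and \emph{iterates}, and Proposition~\ref{prop:strange} guarantees that finitely many iterations produce a maximal element, so that every tensor factor is maximal and the rightmost factors are literally $u_{s\varpi_r}$, $u_{(s-1)\varpi_r}$, $u_{\varpi_r}$. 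That full maximality is what makes the expansion \eqref{eq:DNY} collapse: $b_j^{(0)}=b_j^{(1)}=u_{s_j\varpi_{r_j}}$, so all single-factor terms vanish and only the pair terms with the rightmost factor survive. Your reduction, applying $\varsigma$ once, only places the rightmost factor in the component $B(s\varpi_r)$ while the left part $\varsigma(c)$ stays arbitrary; consequently your term-by-term comparison does not go through as stated. The single-factor terms $\ol{D}_{B_j}\,\pi_1 R_1\dotsm R_{j-1}$ for $j\ge 2$ do not vanish and differ a priori between $B$ and $\rs(B)$ (the factor $B_j$ is carried through $B^{r,s}$ on one side and through $B^{r,s-1}\ot B^{r,1}$ on the other), and the surviving pair terms involve $\ol{H}$ evaluated on non-maximal, non-highest-weight left factors against elements of $B(s\varpi_r)$ that need not be $u_{s\varpi_r}$ itself; Lemma~\ref{lem:H value} only evaluates $\ol{H}$ on the specific elements $b^{n-1,s'}(\alpha)\ot u_{s\varpi_r}$ and does not cover this situation.

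More decisively, you never prove the identity to which everything must ultimately be reduced. In the paper this is the relation $\ol{H}(b\ot u_{(s-1)\varpi_r})+\ol{H}(\tilde{b}\ot u_{\varpi_r})=\ol{H}(b\ot u_{s\varpi_r})$ for a \emph{maximal} $I_0$-highest weight element $b\ot u_{s\varpi_r}\in B^{r',s'}\ot B^{r,s}$, where $\tilde{b}$ is the right factor of $R(b\ot u_{(s-1)\varpi_r})$; it is established by a six-case analysis on $(r,r')$, using that energies of maximal elements agree with their type $A_{n-1}^{(1)}$ values \cite{LOS12,KSS:2002} together with the explicit computations of Lemmas~\ref{lem:e0 action} and~\ref{lem:H value}. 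You explicitly defer exactly this matching as ``the main obstacle'' and ``the heart of the proof.'' Since both the maximality reduction (which is what makes the bookkeeping tractable) and the local energy identity are missing, the proposal has a genuine gap rather than an alternative proof.
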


\begin{proof}
First we reduce the proposition to the case when $b$ maximal. Substituting $\rs(b)$ 
into $b$ in~\eqref{strange}, using Proposition~\ref{prop:rc and sigma}
and noting that $\rs$ preserves the weight, we obtain $\ol{D}(b)-\ol{D}\bigl(\rs(b)\bigr)
=\ol{D}\bigl(\varsigma(b)\bigr)-\ol{D}\bigl(\rs\bigl(\varsigma(b)\bigr)\bigr)$. Since $\rs$ commutes with
$e_i,f_i$ for $i\in I_0$ and $\ol{D}$ is constant on $I_0$-components, we have
\[
\ol{D}(b)-\ol{D}\bigl(\rs(b)\bigr) =
\ol{D}\bigl((\high\circ\varsigma)(b)\bigr)-\ol{D}\bigl(\rs\bigl((\high\circ\varsigma)(b)\bigr)\bigr).
\]
From Proposition~\ref{prop:strange}, we know that applying $\high\circ\varsigma$
finitely many times sends any $b$ to a maximal element. Hence we have reduced
the proposition to showing~\eqref{rs invariance} for all maximal elements of $B$.

Next we reduce the proposition to showing a certain relation for the energy function
$\ol{H}$. Suppose $b$ is a maximal $I_0$-highest weight element in $B$. 
Then the rightmost factor $b_1$ of $b$ is also an $I_0$-highest weight
element and maximal. Hence $b_1=u_{s\varpi_{r}}$. Denote it simply by $u$.
Let $\rs(u)=u_1\ot u_0\in B^{r,s-1}\ot B^{r,1}$, then $u_1 = u_{(s-1)\varpi_r}$,
$u_0 = u_{\varpi_r}$ as in Section~\ref{subsec:lr-split}.  So we have
\begin{align*}
b&=b_k\ot\cdots\ot b_2\ot u,\\
\rs(b)&=b_k\ot\cdots\ot b_2\ot u_1\ot u_0.
\end{align*}
For consistency, we regard the rightmost factor of $\rs(b)$ as the $0$-th
and the left adjacent one as the first factor. Define $u_1^{(0)}=\pi_0R_0(u_1 \otimes u_0)$ and for $j\ge2$
\[
b_j^{(0)}=\pi_0 R_0 R_1 \dotsm R_{j-1}\bigl(\rs(b)\bigr),\quad
b_j^{(i)}=\pi_i R_i R_{i+1} \dotsm R_{j-1}(b)\text{ for }i=1,2.
\]
For the definition of $R_i$ and $\pi_i$, see the explanations after~\eqref{eq:DNY}.
With this notation, one calculates
\[
\ol{D}(\rs(b))-\ol{D}(b)=\sum_{j=2}^k
\left(\ol{H}(b_j^{(2)}\ot u_1)+\ol{H}(\tilde{b}_j^{(2)}\ot u_0)-\ol{H}(b_j^{(2)}\ot u)\right).
\]
Here $\tilde{b}_j^{(2)}$ is the right component of $R(b_j^{(2)}\ot u_1)$ and
we have used 
\begin{alignat*}{2}
&u_1^{(0)}=u_{(s-1)\varpi_r}&&\\
&b_j^{(0)}=b_j^{(1)}=u_{s_j\varpi_{r_j}} && \text{for $b_j\in B^{r_j,s_j}$},\\
&\ol{H}(u_1\ot u_0)=0=\ol{D}(u_{s'\varpi_{r'}})\qquad && \text{for $u_{s'\varpi_{r'}}\in B^{r',s'}$}.
\end{alignat*}

Finally, suppose $b\ot u_{s\varpi_r}$ is a maximal $I_0$-highest weight element of 
$B^{r',s'}\ot B^{r,s}$ and let $\tilde{b}$ be the right factor of 
$R(b\ot u_{(s-1)\varpi_r}$). To complete the proof it is sufficient to show that
\begin{equation} \label{toshow}
\ol{H}(b\ot u_{(s-1)\varpi_r})+\ol{H}(\tilde{b}\ot u_{\varpi_r})=\ol{H}(b\ot u_{s\varpi_r}).
\end{equation}
We show this by dividing into the following cases:
\begin{alignat*}{3}
&\text{(i)}\; r,r'\le n-2,\quad&
&\text{(ii)}\; r\le n-2,r'=n,\quad&
&\text{(iii)}\; r\le n-2,r'=n-1, \\
&\text{(iv)}\; r=r'=n,&
&\text{(v)}\; r=n,r'=n-1,&
&\text{(vi)}\; r=r'=n-1.
\end{alignat*}

For (i), recall that the value of the energy function for maximal elements
for type $D_n^{(1)}$ is equal to that for type $A_{n-1}^{(1)}$ \cite[Section 9]{LOS12}.
Since \eqref{toshow} is true for type $A_{n-1}^{(1)}$, the proof is done for this case.

For (ii), since $b$ is maximal, it should agree with $u_{s'\varpi_{r'}}$ and also
$\tilde{b}=u_{s'\varpi_{r'}}$. Hence both sides of \eqref{toshow} are 0.
(iv) and (v) are similar.

For (iii), suppose $r\ge3$ first. The possibilities for $b$ are $\hat{b}(\alpha)$
for some $0\le\alpha\le\min(s,s')$, where $\hat{b}(\alpha)$ is the tableau whose left
$(s'-\alpha)$ half columns are $12\dots(n-1)\ol{n}$ and right $\alpha$ half columns
are $1\dots (r-1)(r+1)\dots n\ol{r}$. Suppose $\alpha\le s-1$. If $b=\hat{b}(\alpha)$,
then $\tilde{b}=\hat{b}(0)$. Since $f_{\bf a}(\hat{b}(\alpha)\ot u_{s\varpi_r})
=b^{n-1,s'}(\alpha)\ot u_{s\varpi_r}$, where 
${\bf a}=(2^\alpha 3^\alpha\dots (r-1)^\alpha)$, we have $\alpha+0=\alpha$
for~\eqref{toshow} from Lemma~\ref{lem:H value}. Now suppose $\alpha=s$. 
If $b=\hat{b}(s)$, then $\tilde{b}=\hat{b}(1)$. Noting that $f_{((n-1)\dots(r+1)r)}
(\hat{b}(s)\ot u_{(s-1)\varpi_r})=\hat{b}(s-1)\ot u_{(s-1)\varpi_r}$, we have 
$(\alpha-1)+1=\alpha$ for~\eqref{toshow} this time. 
The cases $r=1,2$ are similar. (vi) is also similar.
\end{proof}

\subsection{Preservation of statistics}

Similar to the intrinsic energy, there is a statistic called \defn{cocharge}
on the set of rigged configurations given by $cc(\nu,J)=cc(\nu)+\sum_{a,i} \lvert J^{(a,i)} \rvert$, where
\begin{equation}\label{eq:cc}
cc(\nu)=\frac{1}{2} \sum_{a,b\in J} \sum_{j,k\ge 1}
 (\alpha_a \mid \alpha_b) \min(j,k) m_j^{(a)}m_k^{(b)}.
\end{equation}
Let $\tilde{\Phi}=\theta\circ\Phi$, where $\theta \colon \rc(L,\la)\to\rc(L,\la)$
with $\theta(\nu,J)=(\nu,\tilde{J})$ being the function that complements the
riggings, meaning that $\tilde{J}$ is obtained from $J$ by
complementing all partitions $J^{(a,i)}$ in the $m_i^{(a)}\times p_i^{(a)}$ 
rectangle.

\begin{theorem}\label{thm:stat}
Let $B=B^{r_k,s_k}\otimes \cdots\otimes B^{r_1,s_1}$ be a tensor product of
KR crystals and $\la$ a dominant integral weight.
The bijection $\tilde{\Phi} \colon \p(B,\la)\to\rc(L(B),\la)$ preserves the statistics,
that is $\ol{D}(b)=cc(\tilde{\Phi}(b))$ for all $b\in \p(\la,B)$.
\end{theorem}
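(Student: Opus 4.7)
The strategy is induction on $N := \sum_{i=1}^{k} (s_i - 1)$. The base case $N = 0$ corresponds to a tensor product of single columns, $B = B^{r_k,1} \otimes \cdots \otimes B^{r_1,1}$ (including possibly the spin columns $r_i \in \{n-1, n\}$), for which the statement of Theorem~\ref{thm:stat} is established in~\cite{S:2005}.

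For the inductive step, suppose $N \geq 1$, so that some $s_j \geq 2$. The combinatorial $R$-matrix preserves both sides of the claimed identity: on one hand, Theorem~\ref{th:main} shows that it induces the identity on $\RC(L)$, so $cc \circ \tilde\Phi$ is invariant under reordering of tensor factors; on the other hand, as recalled after~\eqref{eq:DNY}, the intrinsic energy $\ol{D}$ is invariant under the $R$-matrix. Hence we may assume without loss of generality that the rightmost tensor factor $B^{r_1,s_1}$ satisfies $s_1 \geq 2$.

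Now apply right-split. On the path side, Proposition~\ref{prop:rs invariance} gives $\ol{D}(b) = \ol{D}(\rs(b))$, using that paths are $I_0$-highest weight (Definition~\ref{definition.paths}). On the rigged configuration side, Proposition~\ref{prop:Phi_welldef1}(6) yields $\Phi \circ \rs = \tilde{\gamma} \circ \Phi$; since $\tilde\gamma = \theta \circ \gamma \circ \theta$ and $\theta^2 = \mathrm{id}$, we obtain
\[
\tilde\Phi \circ \rs = \theta \circ \Phi \circ \rs = \theta \circ \tilde\gamma \circ \Phi = \gamma \circ \theta \circ \Phi = \gamma \circ \tilde\Phi.
\]
By Definition~\ref{def:RC_operations}(1), $\gamma$ only modifies the multiplicity array $\mu(L)$ and leaves $(\nu, J)$ unchanged. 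Since the cocharge $cc(\nu,J) = cc(\nu) + \sum_{a,i} |J^{(a,i)}|$ depends only on $(\nu, J)$, this implies $cc(\tilde\Phi(b)) = cc(\gamma \tilde\Phi(b)) = cc(\tilde\Phi(\rs(b)))$. Combining with the equality of the energies and noting that $N$ strictly decreases under $\rs$ (the factor $B^{r,s}$ contributing $s-1$ is replaced by $B^{r,s-1} \otimes B^{r,1}$ contributing $(s-2) + 0$), the inductive hypothesis gives $\ol{D}(b) = cc(\tilde\Phi(b))$.

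The main subtlety is verifying the identity $\tilde\Phi \circ \rs = \gamma \circ \tilde\Phi$ together with cocharge invariance under $\gamma$; both are essentially bookkeeping once Proposition~\ref{prop:Phi_welldef1}(6) is available. The deeper analytic input is Proposition~\ref{prop:rs invariance}, which itself relies on the subtle strange formula Proposition~\ref{prop:strange} and compatibility of $\rs$ with the involution $\varsigma$ (Proposition~\ref{prop:rc and sigma}). Finally, everything is anchored to the base case~\cite{S:2005}, whose treatment of spin columns hinges on the doubling map $\emb$ and the algorithm $\delta_s$ defined in~\eqref{eq:delta_spin}.
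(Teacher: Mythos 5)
Your proposal is correct and follows essentially the same route as the paper's proof: reduce to the single-column case of~\cite{S:2005} by combining $R$-invariance (Theorem~\ref{th:main} together with the $R$-invariance of $\ol{D}$) with right-split, using Proposition~\ref{prop:rs invariance} on the energy side and the identity $\tilde{\Phi}\circ\rs=\gamma\circ\tilde{\Phi}$ (from Proposition~\ref{prop:Phi_welldef1}(6)) plus invariance of $cc$ under $\gamma$ on the rigged configuration side. The only difference is presentational: the paper packages the repeated splittings and $R$-matrix moves into a single map $\mathrm{col}$, whereas you organize them as an induction on $\sum_i(s_i-1)$.
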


\begin{proof}
We define a map 
\[
\mathrm{col} \colon B=B^{r_k,s_k}\otimes \cdots\otimes B^{r_1,s_1}\longrightarrow
(B^{r_k,1})^{\ot s_k}\ot\cdots\ot(B^{r_1,1})^{\ot s_1}
\]
as follows. If $s_1>1$, apply $\rs$ to $B$ first. Exchange $B^{r_1,s_1-1}$ and 
$B^{r_1,1}$ by using the combinatorial $R$-matrix so that we have 
$B^{r_1,1}\ot B^{r_1,s_1-1}$ at the rightmost two factors. 
If $s_1-1>1$, apply again $\rs$ and $R$ so that we have 
$(B^{r_1,1})^{\ot 2}\ot B^{r_1,s_1-2}$ at the rightmost three factors. 
Continuing this procedure, we have $(B^{r_1,1})^{\ot s_1}$ at the rightmost $s_1$
factors. Next applying a sequence of combinatorial $R$-matrices, bring
$B^{r_2,s_2}$ to the rightmost factor. We then apply a similar procedure
to the rightmost $B^{r_2,s_2}$ and bring $(B^{r_2,1})^{\ot s_2}$ back to the original
position. At this point we have $(B^{r_2,1})^{\ot s_2}\ot(B^{r_1,1})^{\ot s_1}$ at the 
rightmost $(s_1+s_2)$ factors. Continuing these, we arrive at the image of
$\mathrm{col}$. 

Since $\rs$ and $R$ commute with $e_i,f_i$ ($i\in I_0$),
$\mathrm{col}$ restricts to the map from $\p(B,\la)$ to $\p(\mathrm{col}(B),\la)$.
By Proposition \ref{prop:rs invariance} and the invariance of $\ol{D}$ by the
combinatorial $R$-matrix, we have $\ol{D}(b)=\ol{D}(\mathrm{col}(b))$ for
$b\in\p(B,\la)$.

Finally, note that for $(\nu,J)\in\rc(L(B),\la)$ we have 
$cc(\nu,J)=cc(\gamma(\nu,J))$. In view of 
$\tilde{\Phi}\circ\rs=\gamma\circ\tilde{\Phi}$ and Theorem \ref{th:main},
the proof is reduced to the case when all the components of $B$ are of column
type $B^{r_j,1}$. The corresponding statement is proven in~\cite[Theorem 5.1]{S:2005}.
\end{proof}

An immediate corollary of Theorem~\ref{th:welldef_highest} and Theorem~\ref{thm:stat} is
the equality
\begin{equation*}
\sum_{b\in\p(B,\la)} q^{\ol{D}(b)} 
 = \sum_{(\nu,J)\in \rc(L(B),\la)} q^{cc(\nu,J)}.
\end{equation*}
The left-hand side is in fact the one-dimensional configuration sum
\begin{equation*}
X(B,\la)=\sum_{b\in\p(B,\la)} q^{\ol{D}(b)}.
\end{equation*}
The right-hand side can be simplified slightly by observing that the 
generating function of partitions in a box of width $p$ and height $m$
is the $q$-binomial coefficient
\begin{equation*}
\genfrac{[}{]}{0pt}{}{m+p}{m}=\frac{(q)_{p+m}}{(q)_m(q)_p},
\end{equation*}
where $(q)_m=(1-q)(1-q^2)\cdots (1-q^m)$. Hence the right-hand side becomes
the \defn{fermionic formula}
\begin{equation*}
M(B,\la)=\sum_{\nu\in C(L(B),\la)} q^{cc(\nu)} \prod_{\substack{i\ge 1\\ a\in I_0}} 
\genfrac{[}{]}{0pt}{}{m_i^{(a)}+P_i^{(a)}}{m_i^{(a)}},
\end{equation*}
where $m_i^{(a)}$ and $P_i^{(a)}$ are as defined in Section~\ref{sec:RC}.
This proves the following result conjectured in~\cite{HKOTY99,HKOTT02}.

\begin{corollary} 
\label{corollary.X=M}
Let $B=B^{r_k,s_k}\otimes \cdots\otimes B^{r_1,s_1}$ be 
a tensor product of KR crystals of type $D_n^{(1)}$ and $\la$ a dominant integral weight. Then we have
\begin{equation*}
X(B,\la)=M(B,\la).
\end{equation*}
\end{corollary}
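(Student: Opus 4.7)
The plan is to derive this identity as a direct combination of the main results already established in the paper together with a standard $q$-binomial identity, so no substantial new obstacle remains at this final stage.

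First, I would invoke Theorem~\ref{th:welldef_highest}, which gives a bijection $\Phi \colon \p(B,\lambda) \to \rc(L(B),\lambda)$. Since $\theta$ is an involution on $\rc(L(B),\lambda)$ (it merely complements each rigging within its vacancy-number box and so preserves weight and the underlying configuration), the composition $\tilde{\Phi} = \theta \circ \Phi$ is itself a bijection $\p(B,\lambda) \to \rc(L(B),\lambda)$. Then Theorem~\ref{thm:stat} supplies the crucial statistic-preserving property $\ol{D}(b) = cc(\tilde{\Phi}(b))$, giving
\begin{equation*}
X(B,\lambda) = \sum_{b \in \p(B,\lambda)} q^{\ol{D}(b)} = \sum_{(\nu,J) \in \rc(L(B),\lambda)} q^{cc(\nu,J)}.
\end{equation*}

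Next, I would sum over riggings with the configuration $\nu$ held fixed. By the definition of cocharge, $cc(\nu,J) = cc(\nu) + \sum_{a,i} |J^{(a,i)}|$, and by the defining constraint~\eqref{eq:def_RC} each rigging $J^{(a,i)}$ corresponds to a partition contained in the $m_i^{(a)}(\nu) \times P_i^{(a)}(\nu)$ rectangle. The classical generating function for partitions in a box,
\begin{equation*}
\sum_{\mu \subseteq m \times p} q^{|\mu|} = \genfrac{[}{]}{0pt}{}{m+p}{m},
\end{equation*}
applied to each pair $(a,i)$ then yields
\begin{equation*}
\sum_{(\nu,J) \in \rc(L(B),\lambda)} q^{cc(\nu,J)} = \sum_{\nu \in C(L(B),\lambda)} q^{cc(\nu)} \prod_{\substack{a \in I_0 \\ i \geq 1}} \genfrac{[}{]}{0pt}{}{m_i^{(a)}+P_i^{(a)}}{m_i^{(a)}} = M(B,\lambda),
\end{equation*}
which establishes the desired equality $X(B,\lambda) = M(B,\lambda)$.

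The real difficulty lies not in this concluding deduction but in the two theorems it invokes. Theorem~\ref{th:welldef_highest} rests on the well-definedness proof of $\Phi$ carried out in Section~\ref{sec:weldef} via the intertwining diagrams for $\gamma, \beta, \delta, \delta_s$ and their duals, together with the counting input from~\cite{kedem.difrancesco.2008,naoi.2012}. Theorem~\ref{thm:stat} in turn depends on the $R$-invariance of rigged configurations (Theorem~\ref{th:main}), on Proposition~\ref{prop:rs invariance} reducing the energy calculation to the column-shape case through $\rs$ and the combinatorial $R$-matrix, and on the type $A_n^{(1)}$ statistic preservation proved in~\cite{S:2005,KSS:2002}. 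Once all of that machinery is in place, the $X = M$ identity follows by the short manipulation above.
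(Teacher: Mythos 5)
Your proposal is correct and follows essentially the same route as the paper: combining Theorem~\ref{th:welldef_highest} and Theorem~\ref{thm:stat} to equate $X(B,\la)$ with $\sum_{(\nu,J)} q^{cc(\nu,J)}$, then summing the riggings over partitions in the $m_i^{(a)} \times P_i^{(a)}$ box via the $q$-binomial coefficient to obtain $M(B,\la)$. Nothing is missing.
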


\appendix
\section{Example of the rigged configuration bijection}
\label{appendix:def_RCbijection}

In this section, we provide an example of the algorithm $\Phi^{-1}$. The reader may easily infer from the following example the meaning of the correspondence between the operators summarized in Table~\ref{table:operations}.

\begin{example}
Consider the unrestricted rigged configuration $r_1 \in \RC(L(B))$ for $B=B^{3,2}\otimes B^{3,3}\otimes B^{2,3}$
of type $D^{(1)}_5$:
\begin{center}
\unitlength 10pt
\begin{picture}(30,10)
\put(0,3.25){
\put(-0.8,0.2){0}
\put(-0.8,1.3){0}
\put(0,0){\young(\ku,\ku)}
\put(1.4,0.2){0}
\put(1.4,1.3){0}
}
\put(4,1){
\put(-0.8,0.2){0}
\put(-0.8,1.3){0}
\put(-0.8,2.4){0}
\put(-0.8,3.5){1}
\put(0,0){\young(\ku\ku\ku,\ku\ku,\ku,\ku)}
\put(1.4,0.2){$-1$}
\put(1.4,1.3){0}
\put(2.5,2.4){0}
\put(3.7,3.5){1}
}
\put(10.3,-0.12){
\put(-0.8,0.2){1}
\put(-0.8,1.3){1}
\put(-0.8,2.4){1}
\put(-0.8,3.6){0}
\put(-0.8,4.7){0}
\put(0,0){$\young(\ku\ku\ku,\ku\ku\ku,\ku\ku,\ku,\ku)$}
\put(1.4,0.2){1}
\put(1.4,1.3){1}
\put(2.5,2.4){1}
\put(3.7,3.6){$-1$}
\put(3.7,4.7){0}
}
\put(18,2.11){
\put(-1.5,0.2){$-1$}
\put(-1.5,1.3){$-1$}
\put(-0.8,2.4){0}
\put(0,0){\young(\ku\ku,\ku,\ku)}
\put(1.4,0.2){$-1$}
\put(1.4,1.3){$-1$}
\put(2.5,2.4){$-1$}
}
\put(24,3.25){
\put(-0.8,0.2){1}
\put(-0.8,1.3){0}
\put(0,0){\young(\ku\ku\ku\ku,\ku)}
\put(1.4,0.2){1}
\put(4.8,1.3){0}
}
\put(0.3,8.3){
\put(0,0){$\emptyset$}
}
\put(4,8){
\put(0,0){\young(\ku\ku\ku)}
}
\put(10.3,6.87){
\put(0,0){\young(\ku\ku\ku,\ku\ku)}
}
\end{picture}\raisebox{30pt}{.}
\end{center}
In the above diagram, we show the partition $\mu^{(a)}$ as defined in Section~\ref{section.definition rc}
over the corresponding rigged partition $(\nu^{(a)},J^{(a)})$ in order to make it easier to see the operations 
$\gamma$ and $\beta$. In $\Phi^{-1}$, let us remove the $B^{3,2}$ part first.
We begin by applying $\gamma$ and obtain $r_2:=\gamma(r_1)$ which looks as follows:
\begin{center}
\unitlength 10pt
\begin{picture}(30,11)
\put(0,3.25){
\put(-0.8,0.2){0}
\put(-0.8,1.3){0}
\put(0,0){\young(\ku,\ku)}
\put(1.4,0.2){0}
\put(1.4,1.3){0}
}
\put(4,1){
\put(-0.8,0.2){0}
\put(-0.8,1.3){0}
\put(-0.8,2.4){0}
\put(-0.8,3.5){1}
\put(0,0){\young(\ku\ku\ku,\ku\ku,\ku,\ku)}
\put(1.4,0.2){$-1$}
\put(1.4,1.3){0}
\put(2.5,2.4){0}
\put(3.7,3.5){1}
}
\put(10.3,-0.12){
\put(-0.8,0.2){2}
\put(-0.8,1.3){2}
\put(-0.8,2.4){1}
\put(-0.8,3.6){0}
\put(-0.8,4.7){0}
\put(0,0){$\young(\ku\ku\ku,\ku\ku\ku,\ku\ku,\ku,\ku)$}
\put(1.4,0.2){1}
\put(1.4,1.3){1}
\put(2.5,2.4){1}
\put(3.7,3.6){$-1$}
\put(3.7,4.7){0}
}
\put(18,2.11){
\put(-1.5,0.2){$-1$}
\put(-1.5,1.3){$-1$}
\put(-0.8,2.4){0}
\put(0,0){\young(\ku\ku,\ku,\ku)}
\put(1.4,0.2){$-1$}
\put(1.4,1.3){$-1$}
\put(2.5,2.4){$-1$}
}
\put(24,3.25){
\put(-0.8,0.2){1}
\put(-0.8,1.3){0}
\put(0,0){\young(\ku\ku\ku\ku,\ku)}
\put(1.4,0.2){1}
\put(4.8,1.3){0}
}
\put(0.3,9.3){
\put(0,0){$\emptyset$}
}
\put(4,9){
\put(0,0){\young(\ku\ku\ku)}
}
\put(10.3,6.75){
\put(0,0){$\young(\ku\ku\ku,\ku,\ku)$}
}
\end{picture}\raisebox{30pt}{.}
\end{center}
The changes are the shape of $\mu^{(3)}$ and the resulting change of
the vacancy numbers for $P^{(3)}_1(\nu)$ which makes the length 1 strings of $(\nu,J)^{(3)}$
non-singular.\footnote{
In general, if we consider $\gamma$ for $B^{r,s}$, the strings in $(\nu,J)^{(r)}$
which are shorter than $s$ become non-singular.}
This operation corresponds to
\begin{align}\label{eq:example_Phi_-1_0}
\operatorname{ls}\colon B^{3,2}\longrightarrow B^{3,1}\otimes B^{3,1}.
\end{align}
Then $r_3:=\beta(r_2)$ looks as follows:
\begin{center}
\unitlength 10pt
\begin{picture}(30,10)
\put(0,2.11){
\put(-0.8,0.2){0}
\put(-0.8,1.3){0}
\put(-0.8,2.4){0}
\put(0,0){$\young(\ku,\ku,\times)$}
\put(1.4,0.2){0}
\put(1.4,1.3){0}
\put(1.4,2.4){0}
}
\put(4,-0.12){
\put(-0.8,0.2){1}
\put(-0.8,1.3){1}
\put(-0.8,2.4){1}
\put(-0.8,3.5){1}
\put(-0.8,4.7){2}
\put(0,0){$\young(\ku\ku\ku,\ku\ku,\times,\ku,\ku)$}
\put(1.4,0.2){$-1$}
\put(1.4,1.3){0}
\put(1.4,2.4){1}
\put(2.5,3.5){0}
\put(3.7,4.7){1}
}
\put(10.3,-0.12){
\put(-0.8,0.2){2}
\put(-0.8,1.3){2}
\put(-0.8,2.4){1}
\put(-0.8,3.6){0}
\put(-0.8,4.7){0}
\put(0,0){$\young(\ku\ku\ku,\ku\ku\ku,\ku\times,\ku,\ku)$}
\put(1.4,0.2){1}
\put(1.4,1.3){1}
\put(2.5,2.4){1}
\put(3.7,3.6){$-1$}
\put(3.7,4.7){0}
}
\put(18,2.11){
\put(-1.5,0.2){$-1$}
\put(-1.5,1.3){$-1$}
\put(-0.8,2.4){0}
\put(0,0){\young(\ku\ku,\ku,\ku)}
\put(1.4,0.2){$-1$}
\put(1.4,1.3){$-1$}
\put(2.5,2.4){$-1$}
}
\put(24,3.25){
\put(-0.8,0.2){1}
\put(-0.8,1.3){0}
\put(0,0){$\young(\ku\ku\ku\times,\ku)$}
\put(1.4,0.2){1}
\put(4.8,1.3){0}
}
\put(0,8){
\put(0,0){$\young(\times)$}
}
\put(4,6.87){
\put(0,0){$\young(\ku\ku\ku,\ku)$}
}
\put(10.3,6.87){
\put(0,0){\young(\ku\ku\ku,\ku)}
}
\end{picture}\raisebox{30pt}{.}
\end{center}
This corresponds to
\begin{align}\label{eq:example_Phi_-1}
\operatorname{lb} \colon B^{3,1}\longrightarrow B^{1,1}\otimes B^{2,1}.
\end{align}
Note that the vacancy numbers for $\nu^{(3)}$ do not change.
Since $\beta$ adds length 1 singular strings to $(\nu,J)^{(1)}$ and $(\nu,J)^{(2)}$,
applying $\delta$ removes the boxes with ``$\times$" in
the above diagram.\footnote{In general, if we consider $\beta$ for $B^{r,1}$,
the next $\delta$ removes length 1 singular strings of $(\nu,J)^{(1)},(\nu,J)^{(2)},\ldots,(\nu,J)^{(r-1)}$
added by $\beta$.}
Then $\delta$ gives the following rigged configuration $r_4:=\delta(r_3)$
together with the output letter $\overline{5}$ which fills the bottom left corner
of $B^{3,2}$ as $\,\Yvcentermath1\young(\ku\ku,\ku\ku,\mfive\ku)\,$.
\begin{center}
\unitlength 10pt
\begin{picture}(30,10)
\put(0,3.25){
\put(-0.8,0.2){0}
\put(-0.8,1.3){0}
\put(0,0){\young(\ku,\ku)}
\put(1.4,0.2){0}
\put(1.4,1.3){0}
}
\put(4,1){
\put(-0.8,0.2){1}
\put(-0.8,1.3){1}
\put(-0.8,2.4){0}
\put(-0.8,3.5){1}
\put(0,0){\young(\ku\ku\ku,\ku\ku,\ku,\ku)}
\put(1.4,0.2){$-1$}
\put(1.4,1.3){0}
\put(2.5,2.4){0}
\put(3.7,3.5){1}
}
\put(10.3,-0.12){
\put(-0.8,0.2){1}
\put(-0.8,1.3){1}
\put(-0.8,2.4){1}
\put(-0.8,3.6){1}
\put(-0.8,4.7){1}
\put(0,0){$\young(\ku\ku\ku,\ku\ku\ku,\ku,\ku,\ku)$}
\put(1.4,0.2){1}
\put(1.4,1.3){1}
\put(1.4,2.4){1}
\put(3.7,3.6){$-1$}
\put(3.7,4.7){0}
}
\put(18,2.11){
\put(-1.5,0.2){$-1$}
\put(-1.5,1.3){$-1$}
\put(-1.5,2.4){$-1$}
\put(0,0){\young(\ku\ku,\ku,\ku)}
\put(1.4,0.2){$-1$}
\put(1.4,1.3){$-1$}
\put(2.5,2.4){$-1$}
}
\put(24,3.25){
\put(-0.8,0.2){1}
\put(-0.8,1.3){1}
\put(0,0){$\young(\ku\ku\ku,\ku)$}
\put(1.4,0.2){1}
\put(3.7,1.3){1}
}
\put(0.3,8.3){
\put(0,0){$\emptyset$}
}
\put(4,6.87){
\put(0,0){$\young(\ku\ku\ku,\ku)$}
}
\put(10.3,6.87){
\put(0,0){\young(\ku\ku\ku,\ku)}
}
\end{picture}\raisebox{30pt}{.}
\end{center}
Since the above $\delta$ determines $B^{1,1}$ of~\eqref{eq:example_Phi_-1},
we start to apply $\gamma$, $\beta$, and $\delta$ corresponding to $B^{2,1}$ of~\eqref{eq:example_Phi_-1}.
Since $\gamma(r_4) = r_4$, the unrestricted rigged configuration $r_5 := \beta(r_4)$ looks as follows:
\begin{center}
\unitlength 10pt
\begin{picture}(30,10)
\put(0,2.11){
\put(-0.8,0.2){0}
\put(-0.8,1.3){0}
\put(-0.8,2.4){0}
\put(0,0){$\young(\ku,\ku,\times)$}
\put(1.4,0.2){0}
\put(1.4,1.3){0}
\put(1.4,2.4){0}
}
\put(4,1){
\put(-0.8,0.2){1}
\put(-0.8,1.3){1}
\put(-0.8,2.4){0}
\put(-0.8,3.5){1}
\put(0,0){$\young(\ku\ku\ku,\ku\times,\ku,\ku)$}
\put(1.4,0.2){$-1$}
\put(1.4,1.3){0}
\put(2.5,2.4){0}
\put(3.7,3.5){1}
}
\put(10.3,-0.12){
\put(-0.8,0.2){1}
\put(-0.8,1.3){1}
\put(-0.8,2.4){1}
\put(-0.8,3.6){1}
\put(-0.8,4.7){1}
\put(0,0){$\young(\ku\ku\ku,\ku\ku\ku,\ku,\ku,\ku)$}
\put(1.4,0.2){1}
\put(1.4,1.3){1}
\put(1.4,2.4){1}
\put(3.7,3.6){$-1$}
\put(3.7,4.7){0}
}
\put(18,2.11){
\put(-1.5,0.2){$-1$}
\put(-1.5,1.3){$-1$}
\put(-1.5,2.4){$-1$}
\put(0,0){\young(\ku\ku,\ku,\ku)}
\put(1.4,0.2){$-1$}
\put(1.4,1.3){$-1$}
\put(2.5,2.4){$-1$}
}
\put(24,3.25){
\put(-0.8,0.2){1}
\put(-0.8,1.3){1}
\put(0,0){$\young(\ku\ku\ku,\ku)$}
\put(1.4,0.2){1}
\put(3.7,1.3){1}
}
\put(0,6.87){
\put(0,0){$\young(\ku,\times)$}
}
\put(4,8){
\put(0,0){$\young(\ku\ku\ku)$}
}
\put(10.3,6.87){
\put(0,0){\young(\ku\ku\ku,\ku)}
}
\end{picture}\raisebox{30pt}{.}
\end{center}
This corresponds to
\begin{align}\label{eq:example_Phi_-1_2}
\operatorname{lb}:B^{2,1}\longrightarrow B^{1,1}\otimes B^{1,1}.
\end{align}
Then $\delta$ removes the boxes with ``$\times$" in the above diagram
which determines one of $B^{1,1}$ in~\eqref{eq:example_Phi_-1_2}.
As the result, we obtain the following unrestricted rigged configuration $r_6:=\delta(r_5)$
and the output letter within $B^{3,2}$ as $\,\Yvcentermath1\young(\ku\ku,3\ku,\mfive\ku)\,$:
\begin{center}
\unitlength 10pt
\begin{picture}(30,10)
\put(0,3.25){
\put(-0.8,0.2){1}
\put(-0.8,1.3){1}
\put(0,0){$\young(\ku,\ku)$}
\put(1.4,0.2){0}
\put(1.4,1.3){0}
}
\put(4,1){
\put(-0.8,0.2){0}
\put(-0.8,1.3){0}
\put(-0.8,2.4){0}
\put(-0.8,3.5){2}
\put(0,0){$\young(\ku\ku\ku,\ku,\ku,\ku)$}
\put(1.4,0.2){$-1$}
\put(1.4,1.3){0}
\put(1.4,2.4){0}
\put(3.7,3.5){1}
}
\put(10.3,-0.12){
\put(-0.8,0.2){1}
\put(-0.8,1.3){1}
\put(-0.8,2.4){1}
\put(-0.8,3.6){0}
\put(-0.8,4.7){0}
\put(0,0){$\young(\ku\ku\ku,\ku\ku\ku,\ku,\ku,\ku)$}
\put(1.4,0.2){1}
\put(1.4,1.3){1}
\put(1.4,2.4){1}
\put(3.7,3.6){$-1$}
\put(3.7,4.7){0}
}
\put(18,2.11){
\put(-1.5,0.2){$-1$}
\put(-1.5,1.3){$-1$}
\put(-1.5,2.4){$-1$}
\put(0,0){\young(\ku\ku,\ku,\ku)}
\put(1.4,0.2){$-1$}
\put(1.4,1.3){$-1$}
\put(2.5,2.4){$-1$}
}
\put(24,3.25){
\put(-0.8,0.2){1}
\put(-0.8,1.3){1}
\put(0,0){$\young(\ku\ku\ku,\ku)$}
\put(1.4,0.2){1}
\put(3.7,1.3){1}
}
\put(0,8){
\put(0,0){$\young(\times)$}
}
\put(4,8){
\put(0,0){$\young(\ku\ku\ku)$}
}
\put(10.3,6.87){
\put(0,0){\young(\ku\ku\ku,\ku)}
}
\end{picture}\raisebox{30pt}{.}
\end{center}
The next $\delta$ removes the box with ``$\times$" in the above diagram
and determines the remaining $B^{1,1}$ in~\eqref{eq:example_Phi_-1_2}.
As the result, we obtain the following unrestricted rigged configuration $r_7 := \delta(r_6)$
and the output letter within $B^{3,2}$ as $\,\Yvcentermath1\young(1\ku,3\ku,\mfive\ku)\,$:
\begin{center}
\unitlength 10pt
\begin{picture}(30,10)
\put(0,3.25){
\put(-0.8,0.2){0}
\put(-0.8,1.3){0}
\put(0,0){$\young(\ku,\ku)$}
\put(1.4,0.2){0}
\put(1.4,1.3){0}
}
\put(4,1){
\put(-0.8,0.2){0}
\put(-0.8,1.3){0}
\put(-0.8,2.4){0}
\put(-0.8,3.5){2}
\put(0,0){$\young(\ku\ku\ku,\ku,\ku,\ku)$}
\put(1.4,0.2){$-1$}
\put(1.4,1.3){0}
\put(1.4,2.4){0}
\put(3.7,3.5){1}
}
\put(10.3,-0.12){
\put(-0.8,0.2){1}
\put(-0.8,1.3){1}
\put(-0.8,2.4){1}
\put(-0.8,3.6){0}
\put(-0.8,4.7){0}
\put(0,0){$\young(\ku\ku\ku,\ku\ku\ku,\ku,\ku,\ku)$}
\put(1.4,0.2){1}
\put(1.4,1.3){1}
\put(1.4,2.4){1}
\put(3.7,3.6){$-1$}
\put(3.7,4.7){0}
}
\put(18,2.11){
\put(-1.5,0.2){$-1$}
\put(-1.5,1.3){$-1$}
\put(-1.5,2.4){$-1$}
\put(0,0){\young(\ku\ku,\ku,\ku)}
\put(1.4,0.2){$-1$}
\put(1.4,1.3){$-1$}
\put(2.5,2.4){$-1$}
}
\put(24,3.25){
\put(-0.8,0.2){1}
\put(-0.8,1.3){1}
\put(0,0){$\young(\ku\ku\ku,\ku)$}
\put(1.4,0.2){1}
\put(3.7,1.3){1}
}
\put(0.3,8.3){
\put(0,0){$\emptyset$}
}
\put(4,8){
\put(0,0){$\young(\ku\ku\ku)$}
}
\put(10.3,6.87){
\put(0,0){\young(\ku\ku\ku,\ku)}
}
\end{picture}\raisebox{30pt}{.}
\end{center}
Next we determine the remaining $B^{3,1}$ in~\eqref{eq:example_Phi_-1_0}.
$r_8 = (\delta \circ \beta)(r_7)$ is the following unrestricted rigged configuration
with the output letter in $B^{2,3}$ as $\,\Yvcentermath1\young(1\ku,3\ku,\mfive\mone)\,$:
\begin{center}
\unitlength 10pt
\begin{picture}(30,8)
\put(0,2.11){
\put(-0.8,0.2){1}
\put(0,0){$\young(\ku)$}
\put(1.4,0.2){0}
}
\put(4,-0.12){
\put(-0.8,0.2){0}
\put(-0.8,1.3){0}
\put(-0.8,2.4){2}
\put(0,0){$\young(\ku\ku\ku,\ku,\ku)$}
\put(1.4,0.2){$-1$}
\put(1.4,1.3){0}
\put(3.7,2.4){1}
}
\put(10.3,-0.12){
\put(-0.8,0.2){1}
\put(-0.8,1.3){0}
\put(-0.8,2.4){0}
\put(0,0){$\young(\ku\ku\ku,\ku\ku\ku,\ku)$}
\put(1.4,0.2){1}
\put(3.7,1.3){$-1$}
\put(3.7,2.4){0}
}
\put(18,1){
\put(-1.5,0.2){$-1$}
\put(-1.5,1.3){$-1$}
\put(0,0){\young(\ku\ku,\ku)}
\put(1.4,0.2){$-1$}
\put(2.5,1.3){$-1$}
}
\put(24,2.11){
\put(-0.8,0.2){1}
\put(0,0){$\young(\ku\ku\ku)$}
\put(3.7,0.2){1}
}
\put(0.3,6.3){
\put(0,0){$\emptyset$}
}
\put(4,4.87){
\put(0,0){$\young(\ku\ku\ku,\ku)$}
}
\put(10.3,6){
\put(0,0){\young(\ku\ku\ku)}
}
\end{picture}\raisebox{30pt}{.}
\end{center}
$r_9 = (\delta \circ \beta)(r_8)$ is the following unrestricted rigged configuration
with the output letter in $B^{2,3}$ as $\,\Yvcentermath1\young(1\ku,3\mthree,\mfive\mone)\,$:
\begin{center}
\unitlength 10pt
\begin{picture}(30,6)
\put(0,1){
\put(-0.8,0.2){1}
\put(0,0){$\young(\ku)$}
\put(1.4,0.2){0}
}
\put(4,-0.12){
\put(-0.8,0.2){0}
\put(-0.8,1.3){1}
\put(0,0){$\young(\ku\ku\ku,\ku)$}
\put(1.4,0.2){$-1$}
\put(3.7,1.3){1}
}
\put(10.3,-0.12){
\put(-0.8,0.2){1}
\put(-0.8,1.3){1}
\put(0,0){$\young(\ku\ku\ku,\ku\ku)$}
\put(2.5,0.2){1}
\put(3.7,1.3){$-1$}
}
\put(18,1){
\put(-0.8,0.2){0}
\put(0,0){\young(\ku\ku)}
\put(2.5,0.2){$-1$}
}
\put(24,1){
\put(-0.8,0.2){0}
\put(0,0){$\young(\ku\ku)$}
\put(2.5,0.2){0}
}
\put(0,4){
\put(0,0){$\young(\ku)$}
}
\put(4,4){
\put(0,0){$\young(\ku\ku\ku)$}
}
\put(10.3,4){
\put(0,0){\young(\ku\ku\ku)}
}
\end{picture}\raisebox{30pt}{.}
\end{center}
$r_{10} = \delta(r_9)$ is the following unrestricted rigged configuration
with the output letter in $B^{2,3}$ as $\,\Yvcentermath1\young(11,3\mthree,\mfive\mone)\,$:
\begin{center}
\unitlength 10pt
\begin{picture}(30,6)
\put(0,1){
\put(-0.8,0.2){0}
\put(0,0){$\young(\ku)$}
\put(1.4,0.2){0}
}
\put(4,-0.12){
\put(-0.8,0.2){0}
\put(-0.8,1.3){1}
\put(0,0){$\young(\ku\ku\ku,\ku)$}
\put(1.4,0.2){$-1$}
\put(3.7,1.3){1}
}
\put(10.3,-0.12){
\put(-0.8,0.2){1}
\put(-0.8,1.3){1}
\put(0,0){$\young(\ku\ku\ku,\ku\ku)$}
\put(2.5,0.2){1}
\put(3.7,1.3){$-1$}
}
\put(18,1){
\put(-0.8,0.2){0}
\put(0,0){\young(\ku\ku)}
\put(2.5,0.2){$-1$}
}
\put(24,1){
\put(-0.8,0.2){0}
\put(0,0){$\young(\ku\ku)$}
\put(2.5,0.2){0}
}
\put(0.3,4.3){
\put(0,0){$\emptyset$}
}
\put(4,4){
\put(0,0){$\young(\ku\ku\ku)$}
}
\put(10.3,4){
\put(0,0){\young(\ku\ku\ku)}
}
\end{picture}\raisebox{30pt}{.}
\end{center}
For the KN tableau representation of the rectangular tableau, we need to apply the inverse of the
filling map of Definition~\ref{def:left split} to obtain $\,\Yvcentermath1\young(1\mthree,3,\mfive)$.
If we further determine $B^{3,3}$ and $B^{2,3}$ in this order, we obtain the empty rigged configuration
and the following path
\[
\Phi^{-1}(r_1)=
\Yvcentermath1
\young(1\mthree,3,\mfive)\otimes
\young(111,234,3\mfive\mthree)\otimes
\young(113,225) \; .
\]
\end{example}

\noindent
{\bf Summary.}
As we see in the above example, the algorithm $\Phi^{-1}$ is recursively defined as follows.
Suppose that we consider the unrestricted rigged configuration $(\nu,J)$ associated with the
tensor product of type
\[
B=B^{r,s}\otimes B'.
\]
Then we determine $b\in B^{r,s}$ by the following procedure.
In correspondence with the operation
\begin{align}\label{eq:def_Phi-1_1}
\ls \colon B^{r,s} \longrightarrow B^{r,1}\otimes B^{r,s-1},
\end{align}
we apply $\gamma$ to the unrestricted rigged configuration.
Then apply $\operatorname{lb}$ on $B^{r,1}$ in~\eqref{eq:def_Phi-1_1}
\begin{align}\label{eq:def_Phi-1_2}
\lb \colon B^{r,1}\longrightarrow B^{1,1}\otimes B^{r-1,1},
\end{align}
which on the rigged configuration side corresponds to $\beta$.
Finally we use $\lh$ and $\delta$ to remove $B^{1,1}$ of~\eqref{eq:def_Phi-1_2}.
In order to process the remaining $B^{r-1,1}$ of~\eqref{eq:def_Phi-1_2},
we apply $\beta$ and $\delta$ repeatedly for $(r-1)$-times.
We fill the leftmost column of $B^{r,s}$ from bottom to top
by the letters obtained by $\delta$ during this procedure.
In order to determine the remaining $B^{r,s-1}$ of~\eqref{eq:def_Phi-1_1},
we repeat the same procedure for the first column and fill
the remaining columns of $B^{r,s}$ from left to right.
Once $B^{r,s}$ is fully determined, we proceed to the leftmost rectangle
of $B'$ and repeat the same procedure used for $B^{r,s}$.
In this manner, we obtain the filling of shape $B$, which we denote by $b\in B$.
Then we define
\[
\Phi^{-1}:(\nu,J)\longmapsto b.
\]
The inverse procedure $\Phi$ is obtained by reversing all the steps of $\Phi^{-1}$.


\bibliographystyle{alpha}
\bibliography{bijection}{}
 
\end{document}